\title{\Large Higher-Order Phase Transitions with Line-Tension Effect \vspace{-0.35cm}}
\author{Bernardo Galv\~{a}o-Sousa}
\address{Department of Mathematics and Statistics, McMaster University\\
Hamilton ON L8S 4K1, Canada
\\
\emph{\texttt{beni@mcmaster.ca}}}
\def\Per {\text{Per}}
\def\keywords {\paragraph{Keywords} }
\def\msc {\paragraph{AMS subject classification} }
\def\ack {\paragraph{Acknowledgements} }
\numberwithin{equation}{section}
\def\R {\mathbb{R}}
\def\N {\mathbb{N}}
\def\eps {\varepsilon}
\def\bs {\backslash}
\def\loc {\text{loc}}
\def\grad {\nabla}
\def\divf {\text{div }}
\def\mint {\ds -\hspace{-11pt}\int}
\def\mmint {\text{-}\hspace{-5.5pt}\int}
\newcommand{\fpar}[2]{\frac{\partial #1}{\partial #2}}
\DeclareMathOperator*{\esssup}{ess\, sup}
\DeclareMathOperator*{\argmin}{arg\, min} 
\def\supp {\text{supp }}
\def\xto {\xrightarrow}
\def\ul #1{\underline{#1}}
\def\ol #1{\overline{#1}}
\def\til{\widetilde}
\def\Leb #1{{\cal L}^{#1}}
\def\Haus #1{{\cal H}^{#1}}
\def\restr #1{\lfloor_{#1}}
\def\ssubset {\Subset}
\def\leq {\leqslant}
\def\geq {\geqslant}
\def\ds {\displaystyle}
\def\ts {\textstyle}
\def\dist {\text{dist }}
\def\I {\mathbb{I}}
\def\supp {\text{supp }}
\newtheorem{defi}{Definition}[section]
\newtheorem{thm}[defi]{Theorem}
\newtheorem*{thm*}{Theorem}
\newtheorem{prop}[defi]{Proposition}
\newtheorem*{prop*}{Proposition}
\newtheorem{lem}[defi]{Lemma}
\newtheorem{cor}[defi]{Corollary}
\newtheorem*{conj}{Conjecture}
\theoremstyle{definition}
\newtheorem*{rem}{Remark}
\def\subpar#1{#1 \quad }
\begin{document}

\maketitle

\begin{abstract}

The behavior of energy minimizers at the boundary of the domain is of great importance in the Van de Waals-Cahn-Hilliard theory for fluid-fluid phase transitions, since it describes the effect of the container walls on the configuration of the liquid. 
This problem, also known as the liquid-drop problem, was studied by Modica in \cite{Modica_wb}, and in a different form by Alberti, Bouchitt\'e, and Seppecher in \cite{ABS} for a first-order perturbation model.
This work shows that using a second-order perturbation Cahn-Hilliard-type model, the boundary layer is intrinsically connected with the transition layer in the interior of the domain. Precisely, considering the energies
$$
{\cal F}_{\eps}(u) := \eps^{3} \int_{\Omega} |D^{2}u|^{2} + \frac{1}{\eps} \int_{\Omega} W (u) + \lambda_{\eps} \int_{\partial \Omega} V(Tu),
$$
where $u$ is a scalar density function and $W$ and $V$ are double-well potentials, the exact scaling law is identified in the critical regime, when $\eps \lambda_{\eps}^{\frac{2}{3}} \sim 1$.
\keywords Gamma limit \and functions of bounded variations \and functions of bounded variations on manifolds \and phase transitions
\msc 49Q20, 49J45, 58E50, 76M30
\end{abstract}

\section{Introduction}

In this paper we seek to estimate the asymptotic behavior of the family of energies
$$
\eps^{3} \int_{\Omega} |D^{2}u|^{2} \, dx + \frac{1}{\eps} \int_{\Omega} W(u) \, dx + \lambda_{\eps} \int_{\partial \Omega} V(Tu) \, d\Haus{N-1},
$$
where $u \in H^{2}(\Omega)$, $\Omega$ is a bounded open set in $\R^{N}$ of class $C^2$, $Tu$ is the trace of $u$ on $\partial\Omega$, $W$ and $V$ are continuous and non-negative double-well potentials with quadratic growth at infinity, and $\ds \lim_{\eps \rightarrow 0^{+}} \lambda_{\eps}=\infty$.

It is known that the transition layer in the interior of the domain has width of order $\eps$ (see \cite{ModicaMortola}, \cite{Modica_wob}, \cite{Modica_wb}, \cite{ABS}, \cite{FM}, \cite{CFL}, \cite{Garroni}).
To formally find the order of the width of the transition layer on the boundary, it suffices to study the case $N=2$.
Therefore, by focusing on a neighborhood of a point on the boundary (assuming the boundary is flat), consider a $2-D$ energy in the half ball of radius $\delta$ centered at that point $x_0$ of the boundary, and changing variables to a fixed domain, e.g. the unit ball, we obtain
$$
\frac{\eps^{3}}{\delta^{2}} \iint_{B^{+}} |D^{2}u|^{2} \, dx\,dy + \frac{\delta^{2}}{\eps} \iint_{B^{+}} W(u) \, dx\,dy + \lambda_{\eps}\delta \int_{E} V(Tu) \, d\Haus{1}.
$$

Equi-partition of energy between the first and last terms leads to $\delta \approx \eps \lambda_{\eps}^{-\frac{1}{3}}$ which, in turn, yields $\frac{\delta^{2}}{\eps} \approx \eps\lambda_{\eps}^{-\frac{2}{3}}$, which vanishes with $\eps$, which seems to indicate that the middle term will not contribute for the transition on the boundary% as it will be proven rigorously (see Propositions \ref{prop:boundary_flat} and \ref{prop:recoveryC2})
.
One also concludes that on the boundary, the energy will scale as $\frac{\eps^{3}}{\delta^{2}} \approx \lambda_{\eps} \delta \approx \eps \lambda_{\eps}^{\frac{2}{3}}$.
Hence there are three essential regimes for this energy depending on how the quantity $\eps \lambda_{\eps}^{\frac{2}{3}}$ behaves as $\eps\to0^+$. 

In this paper we study the case in which $\eps \lambda_{\eps}^{\frac{2}{3}}$ converges to a finite and strictly positive value. The other two regimes will be treated in a forthcoming paper.

%%%%%%%%%%%%%%%%%%%%%%%%%%%%%%%%%%%% 
%
%			Main Theorems
%
%%%%%%%%%%%%%%%%%%%%%%%%%%%%%%%%%%%%

Consider the functional
\begin{equation}\label{eq:F}
{\cal F}_{\eps}(u) :=
	\begin{cases}
		\ds \eps^{3} \int_{\Omega} |D^{2}u|^{2} \, dx + \frac{1}{\eps} \int_{\Omega} W(u) \, dx + \lambda_{\eps} \int_{\partial \Omega} V(Tu) \, d\Haus{N-1}
			& \text{if } u \in H^{2}(\Omega), \\
		\infty
			& \text{otherwise.}
	\end{cases}
\end{equation}

\begin{thm}[Compactness]\label{thm:compactness}

Let $\Omega \subset \R^{N}$ be a bounded open set of class $C^{2}$ and let $W:\R \rightarrow [0, \infty)$ be such that 
\begin{align*}
(H^W_1)  \quad & W \text{ is continuous and } W^{-1}(\{0\}) = \{a, b\} \text{ for some } a,b\in \R, a<b; \\
(H^W_2) \quad & W(z) \geq C|z|^{2} - \frac{1}{C} \text{ for all } z \in \R \text{ and for some } C > 0.
\end{align*}
Let $V:\R \rightarrow [0, \infty)$ be such that 
\begin{align*}
(H^V_1)  \quad & V \text{ is continuous and } V^{-1}(\{0\}) = \{\alpha, \beta\} \text{ for some } \alpha,\beta\in \R, \alpha<\beta; \\
(H^V_2)  \quad & V(z) \geq C|z|^{2} - \frac{1}{C} \text{ for all } z \in \R \text{ and for some } C > 0; \\
(H^V_3)  \quad & V(z) \geq \frac{1}{C} \min \bigl\{ |z - \beta|, |z - \alpha| \bigr\}^2 \text{ for all } z \in (\alpha-\rho,\alpha+\rho) \cup (\beta-\rho,\beta+\rho)\\
			&  \text{ and for some } C, \rho > 0. 
\end{align*}
Assume that $\eps \lambda_{\eps}^{\frac23} \to L \in (0,\infty)$ as $\eps \to 0^+$ and consider a sequence $\{u_{\eps}\} \subset H^2(\Omega)$ such that $\sup_{\eps>0} {\cal F}_{\eps}(u_{\eps}) < \infty$.
Then there exist a subsequence $\{u_{\eps}\}$ (not relabeled), $u \in BV\bigl(\Omega;\{a,b\}\bigr)$, and $v \in BV\bigl(\partial \Omega; \{\alpha, \beta\}\bigr)$ such that $u_{\eps} \to u$ in $L^2(\Omega)$ and $Tu_{\eps} \to v$ in $L^2(\partial\Omega)$.

\end{thm}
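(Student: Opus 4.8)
The plan is to treat the interior and the boundary separately, exploiting that the last term of ${\cal F}_\eps$ is nonnegative.

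\emph{Interior.} From $\sup_\eps{\cal F}_\eps(u_\eps)<\infty$ and $\lambda_\eps\int_{\partial\Omega}V(Tu_\eps)\,d\Haus{N-1}\geq0$ one gets
$$\sup_\eps\Bigl(\eps^{3}\int_\Omega|D^{2}u_\eps|^{2}\,dx+\tfrac1\eps\int_\Omega W(u_\eps)\,dx\Bigr)<\infty .$$
This is the bound governing the second-order phase-transition functional analyzed in \cite{FM}; the compactness part of that analysis (or a direct adaptation of it) yields, under $(H^W_1)$–$(H^W_2)$, a subsequence and $u\in BV(\Omega;\{a,b\})$ with $u_\eps\to u$ in $L^{1}(\Omega)$. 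To pass to $L^{2}$: by $(H^W_2)$ there is $M>0$ with $W(z)\geq\tfrac{C}{2}|z|^{2}$ for $|z|\geq M$, hence $\int_{\{|u_\eps|>M\}}|u_\eps|^{2}\,dx\leq\tfrac2C\int_\Omega W(u_\eps)\,dx\leq C\eps\to0$, while $|u_\eps|\leq M$ elsewhere; thus $\{u_\eps^{2}\}$ is equi-integrable on $\Omega$, and together with $u_\eps\to u$ in measure and $u\in L^{\infty}(\Omega)$, Vitali's theorem gives $u_\eps\to u$ in $L^{2}(\Omega)$.

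\emph{The trace is asymptotically two-valued.} Since $\lambda_\eps\to\infty$, the energy bound forces $\int_{\partial\Omega}V(Tu_\eps)\,d\Haus{N-1}\leq C/\lambda_\eps\to0$. By $(H^V_2)$ the same tail argument shows $Tu_\eps$ bounded in $L^{2}(\partial\Omega)$ with $\{(Tu_\eps)^{2}\}$ equi-integrable, while $(H^V_1)$ and coercivity give $\mathrm{dist}(Tu_\eps,\{\alpha,\beta\})\to0$ in measure on $\partial\Omega$; using the quadratic lower bound $(H^V_3)$ near the wells and equi-integrability away from them, $\mathrm{dist}(Tu_\eps,\{\alpha,\beta\})\to0$ in $L^{2}(\partial\Omega)$. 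Consequently, if $Tu_\eps$ has an $L^{1}(\partial\Omega)$ subsequential limit $v$, then $v\in\{\alpha,\beta\}$ a.e.\ and the convergence to $v$ is in fact in $L^{2}$. It remains to produce such a limit and to show $v\in BV$.

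\emph{Boundary compactness and $BV$ regularity (the core step).} Fix a bounded Lipschitz $\phi:\R\to\R$ that is constant on a neighbourhood of $\alpha$ and on a disjoint neighbourhood of $\beta$ and satisfies $\phi(\alpha)\neq\phi(\beta)$; such $\phi$ exists because $\alpha\neq\beta$. It suffices to prove $\sup_\eps\|\phi(Tu_\eps)\|_{BV(\partial\Omega)}<\infty$: then $\phi(Tu_\eps)$ is precompact in $L^{1}(\partial\Omega)$, an $L^{1}$-limit $\Psi$ is $BV$ by lower semicontinuity of the total variation, and along a further subsequence the two a.e.\ statements $\phi(Tu_\eps)\to\Psi$ and $\mathrm{dist}(Tu_\eps,\{\alpha,\beta\})\to0$ force, pointwise and using $\phi(\alpha)\neq\phi(\beta)$, that $Tu_\eps$ has a unique limit point $v\in\{\alpha,\beta\}$; the affine relabelling $v=\alpha+\tfrac{\beta-\alpha}{\phi(\beta)-\phi(\alpha)}\bigl(\Psi-\phi(\alpha)\bigr)$ then lies in $BV(\partial\Omega;\{\alpha,\beta\})$, and $Tu_\eps\to v$ in $L^{2}(\partial\Omega)$ by Vitali. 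To obtain the $BV$ bound one works through a finite $C^{2}$ atlas flattening $\partial\Omega$ (so the curvature and the change of variables contribute only lower-order errors on small charts) and, on each chart, slices in the normal direction: for a.e.\ tangential point $\sigma$ the one-dimensional profile $t\mapsto u_\eps(\sigma+t\nu(\sigma))$ on $(0,\delta_\eps)$, $\delta_\eps:=\eps\lambda_\eps^{-1/3}$, joins the boundary value $Tu_\eps(\sigma)$ to the bulk. Rescaling the layer by $\delta_\eps$ turns the three coefficients into $\eps^{3}/\delta_\eps^{2}=\eps\lambda_\eps^{2/3}\to L$, $\lambda_\eps\delta_\eps=\eps\lambda_\eps^{2/3}\to L$ and $\delta_\eps^{2}/\eps=\eps\lambda_\eps^{-2/3}\to0$, so in the critical regime the layer carries energy comparable to $L\bigl(\iint|D^{2}u_\eps|^{2}+\int V(Tu_\eps)\bigr)$. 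The decisive ingredient is a one-dimensional lower bound: whenever the trace passes from the $\alpha$-region to the $\beta$-region the corresponding normal layer must spend at least a fixed positive fraction of $L$; integrating this over the $(N-2)$-dimensional set where such a passage occurs and comparing with $\sup_\eps{\cal F}_\eps(u_\eps)<\infty$ bounds its $\Haus{N-2}$ measure, which with the a.e.\ two-valuedness yields $\sup_\eps\|\phi(Tu_\eps)\|_{BV(\partial\Omega)}<\infty$.

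\emph{Main obstacle.} The interior part is a citation plus a routine equi-integrability upgrade and the two-valuedness of the trace is soft; the heart of the matter is the last step, namely making rigorous, uniformly in $\eps$, that a macroscopic jump of the boundary trace is necessarily ``paid for'' in the bulk at a cost $\gtrsim L$ per unit $\Haus{N-2}$, while simultaneously controlling the interaction between the interior transition layers (width $\sim\eps$), the boundary layer (width $\sim\delta_\eps$), the geometry of $\partial\Omega$, and the possible unboundedness of $u_\eps$. A naive trace inequality is of no help here: $\phi(u_\eps)$ is not bounded in $W^{2,1}(\Omega)$ — indeed $\int_\Omega|D^{2}u_\eps|^{2}$ is only controlled by $C\eps^{-3}$ — so it is the one-dimensional slicing estimate, not a trace bound, that must carry the argument.
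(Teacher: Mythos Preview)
Your treatment of the interior and the soft observations about the trace (equi-integrability, asymptotic two-valuedness of any limit) are correct and match the paper. The gap is in your boundary compactness step.

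The sketched argument --- slice in the \emph{normal} direction and claim that a normal layer ``pays'' at least $\sim L$ whenever the trace jumps --- does not work as written. A one-dimensional normal profile $t\mapsto u_\eps(\sigma+t\nu(\sigma))$ at a fixed $\sigma\in\partial\Omega$ sees only the single number $Tu_\eps(\sigma)$; it is blind to the \emph{tangential} transition of the trace from $\alpha$ to $\beta$. The energy that penalises such a tangential jump lives in tangential and mixed second derivatives, precisely the components discarded by normal slicing. Relatedly, your proposed uniform bound $\sup_\eps\|\phi(Tu_\eps)\|_{BV(\partial\Omega)}<\infty$ amounts to controlling $\int_{\partial\Omega}|\nabla_\tau\phi(Tu_\eps)|$, and nothing in your sketch does this: the trace is only in $H^{3/2}$, and your rescaling computation pertains to a two-dimensional blow-up, not to a one-dimensional normal fibre.

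The paper proceeds differently. After flattening $\partial\Omega$ locally, it slices \emph{tangentially}: for each $y$ orthogonal to a tangential direction $e_i$ one keeps the two-dimensional half-disc $D^y$ (coordinates $x_i$ and $x_N$). The new ingredient is a lifting inequality (Proposition~\ref{prop:lifting}) comparing $\iint_{D^y}|D^2u^y|^2$ with the fractional seminorm $|(Tu^y)'|_{H^{1/2}}$ on the one-dimensional edge $E^y$, which yields
\[
\mathcal{F}_\eps(u_\eps;D,E)\ \ge\ \int_{E_{e_i}}G_\eps\bigl(Tu_\eps^y;E^y\bigr)\,dy,
\qquad
G_\eps(v;J):=\frac{\eps^3}{8}\,|v'|_{H^{1/2}(J)}^2+\lambda_\eps\!\int_J V(v).
\]
Compactness for the trace is thereby reduced to a separate one-dimensional problem for the nonlocal functional $G_\eps$ (Theorem~\ref{thm:compactness_G1D}), handled via Young measures and the positivity of the transition constant $c_\eta$; Proposition~\ref{prop:slicing} then reassembles the slices. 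Note that the paper never bounds $\|Tu_\eps\|_{BV(\partial\Omega)}$ uniformly: it first produces the $L^2$ limit $v$ by the slicing-precompactness criterion, and only afterwards shows $v\in BV$ by integrating $\Haus{0}(Sv^y)$ over $y$.

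So the missing idea is the lifting inequality and the auxiliary fractional functional $G_\eps$; that, not a normal-fibre estimate, is where the cost $\gtrsim L$ per unit $\Haus{N-2}$ of trace jump is actually extracted.
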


The next theorem concerns the critical regime where $\eps$ and $\lambda_{\eps}$ are ``balanced'', i.e. $\eps \lambda_{\eps}^{\frac23} \sim 1$, and all terms play an important role.
Here $\lambda_{\eps}$ is large enough to render the energy sensitive to the transition that occurs on the boundary, but not too big as to force the value on the boundary to converge to a constant.

We define
\begin{enumerate}
\item $E_{a} := \left\{ x \in \Omega: u(x) = a \right\}$ for all $u \in BV\bigl(\Omega;\{a,b\}\bigr)$;

\item $m$ is the energy density per unit area on the transition interfaces between the interior potential wells, precisely,
\begin{equation}\label{eq:def_m}
m  := \inf \biggl\{ \int_{-R}^{R} \bigl( W(f(t)) + |f''(t)|^{2} \bigr) \, dt : 
		 f\in H^{2}_{\loc}(\R), f(-t) = a, f(t) = b \text{ for all } t \geq R, R >0 \biggr\};
\end{equation}

\item $\sigma$ is the interaction energy on the transition interface between bulk wells and boundary wells, i.e.,
\begin{equation}\label{eq:def_sigma}
\sigma(z,\xi) 
	:= \inf \biggl\{ \int_{0}^{R} \bigl( W(f(t)) + |f''(t)|^{2} \bigr) \, dt:
			f \in H^{2}_{\loc}\bigl((0,\infty)\bigr), f(0)=\xi, f(t) = z \text{ for all } t \geq R, R > 0 \biggr\};
\end{equation}

\item $F_{\alpha} := \left\{ x \in \partial\Omega: v(x) = \alpha \right\}$ for all $v \in BV\bigl(\partial \Omega;\{\alpha,\beta\}\bigr)$;

\item $\ul{c}$ is a lower bound to the energy on a transition interface between the wells of the boundary potential,
\begin{multline}\label{eq:def_under_c}
\ul{c} := \inf \left\{ \frac{1}{8} \int_{-R}^{R} \int_{-R}^{R} \frac{|f'(x) - f'(y)|^{2}}{|x-y|^{2}} \, dx \, dy + \int_{-R}^{R} V\bigl(f(x)\bigr) \, dx: \right. f \in H_{\loc}^{\frac{3}{2}}(\R),\\
	 \left.  f' \in H^{\frac{1}{2}}(\R), f(-t)= \alpha, f(t)=\beta \text{ for all } t \geq R, R > 0 \right\};
\end{multline}

 \item $\ol{c}$ is an upper bound to the energy on a transition interface between the wells of the boundary potential,
\begin{multline}\label{eq:def_over_c}
\ol{c} := \inf \left\{ \frac{7}{16} \int_{-\infty}^{\infty} \int_{-\infty}^{\infty} \frac{|f'(x) - f'(y)|^{2}}{|x-y|^{2}} \, dx \, dy + \int_{-\infty}^{\infty} V\bigl(f(x)\bigr) \, dx : \right. \\
	 \left. f \in H_{\loc}^{\frac{3}{2}}(\R), f(-t)= \alpha, f(t)=\beta \text{ for all } t \geq R, R > 0 \right\}.
\end{multline}

\end{enumerate}

\begin{thm}[Critical case]\label{thm:critical}

Under the same hypotheses of Theorem \ref{thm:compactness}
the following statements hold:
\begin{enumerate}
\item[(i)] {\bf (Lower bound)} For every $u \in BV(\Omega; \{a,b\})$ and $v \in BV(\partial\Omega;\{\alpha,\beta\})$ and for every sequence $\{u_{\eps}\} \subset H^{2}(\Omega)$ such that $u_{\eps} \rightarrow u$ in $L^{2}(\Omega)$, $Tu_{\eps} \rightarrow v$ in $L^{2}(\partial\Omega)$, we have
\begin{equation*}
\liminf_{\eps \rightarrow 0^{+}}{\cal F}_{\eps}(u_{\eps}) 
	\geq m \Per_{\Omega}(E_{a}) 
	+ \sum_{z =a,b} \sum_{\xi=\alpha,\beta} \sigma(z,\xi) \Haus{N-1}\bigl( \{ Tu = z\} \cap \{v = \xi\} \bigr)
	+ \ul{c} L \Per_{\partial\Omega}(F_{\alpha});
\end{equation*}

\item[(ii)] {\bf (Upper bound)} For every $u \in BV(\Omega; \{a,b\})$ and $v \in BV(\partial\Omega;\{\alpha,\beta\})$, there exists a sequence $\{u_{\eps}\} \subset H^{2}(\Omega)$ such that $u_{\eps} \rightarrow u$ in $L^{2}(\Omega)$, $Tu_{\eps} \rightarrow v$ in $L^{2}(\partial\Omega)$, and
\begin{equation*}
\limsup_{\eps \rightarrow 0^{+}}{\cal F}_{\eps}(u_{\eps}) 
	\leq m \Per_{\Omega}(E_{a}) 
	+ \sum_{z =a,b} \sum_{\xi=\alpha,\beta} \sigma(z,\xi) \Haus{N-1}\bigl( \{ Tu = z\} \cap \{v = \xi\} \bigr)
	+ \ol{c} L \Per_{\partial\Omega}(F_{\alpha}).
\end{equation*}
\end{enumerate}
\end{thm}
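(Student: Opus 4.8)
\noindent The result is proved as two independent one-sided estimates; I describe each, the order of the steps, and the main difficulty.

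\emph{Lower bound.} The plan is the localization (blow-up) method. Given $\{u_\eps\}$ as in Theorem \ref{thm:compactness} with (after relabelling) $\sup_\eps\mathcal F_\eps(u_\eps)<\infty$, introduce the positive Radon measures
\[
\mu_\eps:=\Bigl(\eps^{3}|D^{2}u_\eps|^{2}+\tfrac1\eps W(u_\eps)\Bigr)\,\Leb{N}\restr\Omega\;+\;\lambda_\eps V(Tu_\eps)\,\Haus{N-1}\restr{\partial\Omega}
\]
on $\overline\Omega$; by the energy bound we may assume $\mu_\eps\stackrel{*}{\rightharpoonup}\mu$ for a finite Radon measure $\mu$. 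Since the interior interface $\partial^{*}E_{a}\cap\Omega$, the boundary $\partial\Omega$, and the reduced boundary $\partial^{*}F_{\alpha}$ of $F_{\alpha}$ relative to the manifold $\partial\Omega$ have mutually negligible measure in one another's dimension, it suffices to prove three lower bounds on Radon--Nikod\'ym derivatives: $\tfrac{d\mu}{d\Haus{N-1}\restr{\partial^{*}E_{a}}}\ge m$ at $\Haus{N-1}$-a.e.\ point of $\partial^{*}E_{a}\cap\Omega$; $\tfrac{d\mu}{d\Haus{N-1}\restr{\partial\Omega}}(x_{0})\ge\sigma\!\bigl(Tu(x_{0}),v(x_{0})\bigr)$ at $\Haus{N-1}$-a.e.\ $x_{0}\in\partial\Omega$; and $\tfrac{d\mu}{d\Haus{N-2}\restr{\partial^{*}F_{\alpha}}}\ge\ul c\,L$ at $\Haus{N-2}$-a.e.\ point of $\partial^{*}F_{\alpha}$. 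Adding these gives part (i).

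\emph{The interior and contact terms.} The bound for $m$ is the second-order singular perturbation analysis: blowing up at scale $\eps$ about a point where $\partial^{*}E_{a}$ has an approximate normal and estimating $|D^{2}u_\eps|^{2}$ from below by the square of the second normal derivative, a one-dimensional slicing argument reduces the problem to \eqref{eq:def_m}; only $(H^W_1)$, $(H^W_2)$ and the interpolation inequality $\|g'\|_{L^{2}}\le C\|g\|_{L^{2}}^{1/2}\|g''\|_{L^{2}}^{1/2}$ are needed. For $\sigma$, localize the bulk energy near $\partial\Omega$, flatten the boundary near a point $x_{0}$ that is simultaneously a Lebesgue point of $v$ and a point of density one of $E_{Tu(x_{0})}$ (true $\Haus{N-1}$-a.e., by $BV$ trace theory), discard the nonnegative boundary term, and slice in the inner normal direction: on $\Haus{N-1}$-a.e.\ normal segment the restricted energy, rescaled by $\eps$, is bounded below by the half-line profile energy \eqref{eq:def_sigma} with endpoint value converging to $v(x_{0})$ and far value converging to $Tu(x_{0})$; integrating over $\partial\Omega$ produces $\sum_{z,\xi}\sigma(z,\xi)\Haus{N-1}\bigl(\{Tu=z\}\cap\{v=\xi\}\bigr)$.

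\emph{The line-tension term (the crux).} The relevant scale is $\delta_\eps:=\eps\lambda_\eps^{-1/3}$ (note $\delta_\eps\ll\eps\ll1$), at which the Hessian and boundary-potential terms balance. Flatten $\partial\Omega$ near $x_{0}\in\partial^{*}F_{\alpha}$, set $v_\eps(y):=u_\eps(x_{0}+\delta_\eps y)$, and compute: $\delta_\eps^{2-N}$ times the Hessian, bulk-potential and boundary-potential parts of $\mu_\eps\bigl(B_{M\delta_\eps}(x_{0})\bigr)$ equal, up to the factor $M^{2-N}$, respectively $\eps\lambda_\eps^{2/3}\int_{B_{M}^{+}}|D^{2}v_\eps|^{2}$, $\eps\lambda_\eps^{-2/3}\int_{B_{M}^{+}}W(v_\eps)$ and $\eps\lambda_\eps^{2/3}\int_{B_{M}'}V(Tv_\eps)$; the middle prefactor vanishes, so the bulk potential term is discarded. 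Letting $\eps\to0$ (using $\eps\lambda_\eps^{2/3}\to L$), the bounds on $\mu$ force $\int_{B_{M}^{+}}|D^{2}v_\eps|^{2}$ and $\int_{B_{M}'}V(Tv_\eps)$ to be bounded, so the traces $Tv_\eps$ are bounded in $H^{3/2}$ and, along a subsequence, converge strongly in $L^{2}$ and weakly in $H^{3/2}$ to some $f$. By lower semicontinuity of the $H^{2}$-Dirichlet energy and of the fractional seminorm, and Fatou's lemma for $\int V$, one is reduced — after a Fubini/slicing reduction to the two-dimensional profile in the plane spanned by the inner normal and the tangent transverse to $\partial^{*}F_{\alpha}$ — to the trace inequality
\[
\int_{B_{M}^{+}}|D^{2}v|^{2}\,dx\,dy\;\ge\;\frac18\int_{-M}^{M}\!\int_{-M}^{M}\frac{|f'(x)-f'(y)|^{2}}{|x-y|^{2}}\,dx\,dy,\qquad f=Tv,
\]
($B_{M}^{+}$ the half-disk), proved by bounding the left-hand side below by a fixed fraction of the Dirichlet energy of $\partial_{x}v$, whose trace is $f'$; the half-ball geometry and the cut-offs account for the non-sharp constant $\tfrac18$. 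Finiteness of $\int_{\R}V(f)$ forces $f\to\alpha,\beta$ at $\mp\infty$ (and $x_{0}\in\partial^{*}F_{\alpha}$ fixes the orientation), and a truncation argument based on $(H^V_3)$ replaces these asymptotic conditions by the exact ones in \eqref{eq:def_under_c} at a cost vanishing as $M\to\infty$; the resulting infimum is $\ge\ul c\,L$. The main obstacle is this step: organizing the blow-up so that the three scales $\delta_\eps\ll\eps\ll1$ do not interfere — in particular that neither the bulk transition nor the contact $\sigma$-layers corrupt the limiting boundary profile — and carrying out the trace estimate on the half-ball rather than on the half-space.

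\emph{Upper bound.} By a density argument (continuity of the target functional under approximation of $E_{a}$ and $F_{\alpha}$ by smooth sets in general position) it suffices to treat $u,v$ with smooth interfaces, $\partial^{*}E_{a}$ meeting $\partial\Omega$ transversally. Build $u_\eps$ by gluing optimal-profile layers: in a tube of width $\sim\eps$ about $\partial^{*}E_{a}\cap\Omega$, a near-optimal profile for \eqref{eq:def_m} in the normal variable; in the collar $\{0<\dist(\cdot,\partial\Omega)<c\eps\}$ over the set where $Tu=z$ and $v=\xi$, a near-optimal $\sigma(z,\xi)$-profile; in a tube of radius $\sim\delta_\eps$ about $\partial^{*}F_{\alpha}$, the $\delta_\eps$-rescaling of a biharmonic-type extension into the bulk of a near-optimal competitor $f$ for \eqref{eq:def_over_c}, joined to the constants $\alpha,\beta$ outside; and the appropriate well elsewhere. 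Because $\delta_\eps\ll\eps$ the line-tension layer is far thinner than the contact and interior layers, so the three constructions are essentially decoupled and $\mathcal F_\eps(u_\eps)$ converges to the sum of the three contributions with $\ol c$ in place of $\ul c$; the constant $\tfrac{7}{16}$ (and the full-line double integral in \eqref{eq:def_over_c}) is the price of an explicit, non-optimal extension chosen to be compatible with the constant far-field data and the $H^{2}$-regularity needed across the interfaces. The delicate point is the $H^{2}$ gluing near the junctions where $\partial^{*}E_{a}$, $\partial\Omega$ and the bulk continuation of $\partial^{*}F_{\alpha}$ meet: the cut-offs at scales $1$, $\eps$ and $\delta_\eps$ must be chosen so that every cross term in $\eps^{3}\int|D^{2}u_\eps|^{2}$ is negligible in the limit. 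That the constants $\ul c<\ol c$ do not coincide is expected, since neither the lower nor the upper estimate for the half-plane trace/extension problem is sharp.
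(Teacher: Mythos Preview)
Your overall architecture---localization via the measures $\mu_\eps\stackrel{*}{\rightharpoonup}\mu$, three mutually singular density estimates for the lower bound, and a smooth-interface recovery sequence glued from profile layers for the upper bound---matches the paper. For the interior term $m$ and the contact term $\sigma$, your slicing reductions are essentially those of the paper (Theorem~\ref{thm:interior1D} combined with Proposition~\ref{prop:Bv_bdy}). For the upper bound your outline is correct and close to the paper's Propositions~\ref{prop:interior}--\ref{prop:recoveryC2}; the paper implements the line-tension layer via the explicit lifting $\ol w(t,s)=\tfrac1{2s}\int_{t-s}^{t+s}h$, which produces the constant $\tfrac7{16}$, and the tails of the nonlocal seminorm outside the rescaled window are precisely what force the full-line integral in $\ol c$.

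The genuine divergence, and the place where your argument has a gap, is the line-tension \emph{lower} bound. You propose to blow up at scale $\delta_\eps$, extract a weak $H^{3/2}$-limit profile $f$ of the rescaled traces, and then invoke ``a truncation argument based on $(H^V_3)$'' to replace the asymptotic conditions $f\to\alpha,\beta$ by the exact ones in \eqref{eq:def_under_c}. That truncation is exactly the step that fails for this second-order, nonlocal functional: any local modification of $f$ near $\pm\infty$ interacts with the whole of $f'$ through the $H^{1/2}$ seminorm, and there is no monotone rearrangement available to control the cost (the paper flags this obstruction in the introduction). Relatedly, your compactness claim is fragile: after blow-up you only know that $\int_{B_M'}V(Tv_\eps)$ is \emph{bounded}, not small, so there is no a priori reason the rescaled traces converge to anything with the correct endpoint behaviour on a \emph{fixed} window $(-M,M)$.

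The paper avoids both issues by never passing to a limit profile. It first reduces, via slicing and the trace inequality of Proposition~\ref{prop:lifting}, to the one-dimensional nonlocal functional $G_\eps$ of \eqref{eq:bdyG}, and then (Theorem~\ref{thm:boundary1D}) works directly on the sequence $v_\eps$: near each jump of $v$ it selects ``good points'' $x_i^{\pm}$ at which $|v_\eps-\alpha|$, $|v_\eps-\beta|$, $\eps|v_\eps'|$, $\eps^3 Sv_\eps$ and $\eps^3 Qv_\eps$ are all small (see \eqref{eq:xis-}--\eqref{eq:xis+-}), and then grafts explicit cubic polynomials onto $v_\eps$ outside $(x_i^-,x_i^+)$ (Lemma~\ref{lem:estimate}, Corollaries~\ref{cor:estimate}--\ref{cor:estimate_infinity}) to manufacture, at $\eps$-level, a competitor for $\ul c$. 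The nonlocal cost of the graft is controlled by the good-point quantities, which is why the selection of $x_i^{\pm}$ is made with the extra weights $\eps^3\int_J|v_\eps'(x)-v_\eps'(y)|^2/|x-y|^2\,dy$ and $\eps^3|v_\eps'(x)-v_\eps'(x_i^-)|^2/|x-x_i^-|^2$. This is the missing idea in your proposal; without it the passage from a limit profile with merely asymptotic boundary values to admissibility in \eqref{eq:def_under_c} is not justified.
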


The main results, Theorems \ref{thm:compactness} and \ref{thm:critical}, imply, in particular, that
$$
\min_{\substack{a<\mmint_{\Omega}u \, dx<b \\ \alpha< \mmint_{\partial\Omega}v \, d\Haus{N-1} <\beta}} \mathcal{F}_{\eps} = O(1) \quad\text{as } \eps\to 0^+,
$$
where we impose a mass constraint to avoid trivial solutions which yield no energy. Note that these conditions pose no difficulties to the $\Gamma$-convergence due to the strong convergence of $u_\eps$ and $Tu_\eps$. 
Thus we identify the precise scaling law for the minimum energy in the parameter regime $\eps\lambda_{\eps}^{\frac23}\sim 1$.

Observe that, although Theorem \ref{thm:critical} does not prove that the sequence $\{\mathcal{F}_{\eps}\}_{\eps>0}$ $\Gamma$-converges as $\eps\to 0^+$, since the constants of the lower and upper bounds for the last transition term do not match, we can apply Theorem 8.5 from \cite{DalMaso} to prove that there exists a subsequence $\eps_n \to 0^+$ such that the corresponding subsequence of functionals $\Gamma$-converges.

Hence Theorem \ref{thm:critical} shows that the limiting functional concentrates on the three different kinds of transition layers: an interior transition layer of dimension $N-1$, where the limiting value of $u$ makes the transition between $a$ and $b$; the boundary of the domain, also of dimension $N-1$, where there is the transition between the interior phases $a$ and $b$ and the boundary phases $\alpha$ and $\beta$; and a transition interface on the boundary, of dimension $N-2$, where the limiting value of the trace $Tu$ makes the transition between $\alpha$ and $\beta$.

The difficulties in proving a $\Gamma$-convergence result arise mainly from the nature of the functional under consideration. 
On one hand, the energy involves second-order derivatives, which prevents us from following the usual techniques in phase transitions, such as truncation and rearrangement arguments to obtain monotonically increasing test functions for the constant $\ol{c}$. In \cite{ABS}, these techniques are crucial to find a test function that matches both the lifting constant and the optimal profile problem for the boundary wells.
On the other hand, for the boundary term, the functionals are also nonlocal. Thus the estimates for the recovery sequence have to be sharper, since the nonlocality extends its contribution beyond the characteristic length of the phase transition. The usual methods for localization make use of truncation arguments, which do not apply in this setting due to the fact that the fractional seminorm is of higher-order.

Similar difficulties can also be found in the papers \cite{CK, CKO1,CKO2, CCKO} where, similarly, the $\Gamma$-convergence is not established.

The difference between the constants $\ul{c}$ and $\ol{c}$ arises from two factors. First, from Proposition \ref{prop:lifting} it does not follow that the lifting constant is independent of the value of the trace $g$. And second, when estimating the upper bound for the recovery sequence, the transition between $\alpha$ and $\beta$ is accomplished on a layer of thickness $\delta_{\eps} = o(\eps)$. So we rescale the integrals by $\delta_\eps$, but because of the non-locality of the fractional energy, it obtains a contribution from a layer of thickness $\eps$, which after rescaling becomes of thickness $\eps / \delta_\eps \to \infty$. This accounts for the fact that the integration limits of the constant $\ol{c}$ extend to infinity, while for $\ul{c}$ they are bounded.

\hfil

The proofs of Theorems \ref{thm:compactness} and \ref{thm:critical} are divided through the next sections.  
We begin by studying two auxiliary one-dimensional problems.
More precisely, let $I, J \subset \R$ be two open intervals and define the following functionals
\begin{equation}\label{eq:bulkF}
F_{\eps}(u;I) := \begin{cases}
\ds \eps^3 \int_I |u''(x)|^2 \, dx + \frac{1}{\eps} \int_I W\bigl(u(x)\bigr)\, dx
   & \text{if } u \in H^2(I), \\
\infty
   & \text{otherwise,}
\end{cases}
\end{equation}
and
\begin{equation}\label{eq:bdyG}
G_{\eps}(v;J) := \begin{cases}
\ds \frac{\eps^3}{8} \int_J \int_J \frac{\bigl| v'(x)-v'(y) \bigr|^2}{|x-y|^2} \, dx \, dy + \lambda_{\eps} \int_J V\bigl( v(x)\bigr)\, dx
   & \text{if } v \in H^{\frac{3}{2}}(J), \\
\infty
   & \text{otherwise.}
\end{cases}
\end{equation}

In Sections \ref{ssec:bulkF1D_compc} and \ref{ssec:bulkF1D_LB} we prove a compactness result and a lower bound for $F_{\eps}$ which follows the techniques developed in \cite{FM}.
In Section \ref{ssec:bdyG1D_compc} we will prove a compactness result for $G_{\eps}$, while in Section \ref{ssec:bdyG1D_LB} we will prove a lower bound by finding ``good points'' $x_i^{\pm}$ such that most of the transition energy is concentrated between $x_i^-$ and $x_i^+$ and we modify the original sequence $\{u_n\}$ on a small set to be admissible for $\ul{c}$.
In Section \ref{ssec:Compactness} we will prove Theorem \ref{thm:compactness} in the critical regime using a slicing argument to reduce the compactness in the interior to the auxiliary problem studied in Section \ref{ssec:bulkF1D_compc}, and analogously, we reduce the compactness on the boundary to the one-dimensional problem for $G_{\eps}$ studied in Section \ref{ssec:bdyG1D_compc}.
In Section \ref{ssec:LB_ND} we prove the lower bound result for Theorem \ref{thm:critical} using the fact that the energy concentrates in different mutually singular sets.
Finally, in Section \ref{ssec:UB} we prove the upper bound for Theorem \ref{thm:critical}.

\hfil

From Theorem \ref{thm:critical}, we deduce the following corollary.

\begin{cor}
Under the same hypotheses of Theorem \ref{thm:compactness}, and assuming that $\alpha=\beta$, then the sequence $\{\mathcal{F}_{\eps}\}_{\eps >0}$ $\Gamma$-converges as $\eps \to 0^+$ to 
$$
\mathcal{F}_0(u) := \begin{cases}
	\ds m \Per_{\Omega}(E_{a}) 
	+ \sum_{z =a,b} \sigma(z,\alpha) \Haus{N-1}\bigl( \{ Tu = z\} \bigr)
		& \text{ if } u \in BV(\Omega;\{a,b\}), \\
	\infty	& \text{ otherwise,}
\end{cases}
$$
where $m$ is defined as in \eqref{eq:def_m} and $\sigma$ is defined as in \eqref{eq:def_sigma}.
\end{cor}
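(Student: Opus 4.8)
The plan is to read the Corollary off directly from Theorems \ref{thm:compactness} and \ref{thm:critical}, exploiting the fact that the hypothesis $\alpha=\beta$ collapses the boundary potential $V$ to a single well. The key preliminary observation is that, when $\alpha=\beta$, the space $BV\bigl(\partial\Omega;\{\alpha,\beta\}\bigr)$ reduces to the single constant function $v\equiv\alpha$, so that $F_\alpha=\{v=\alpha\}=\partial\Omega$; since $\partial\Omega$ is a compact $C^2$ hypersurface without boundary, $\Per_{\partial\Omega}(F_\alpha)=0$. Hence in both bounds of Theorem \ref{thm:critical} the transition-interface term drops out, $\ul{c}\,L\,\Per_{\partial\Omega}(F_\alpha)=\ol{c}\,L\,\Per_{\partial\Omega}(F_\alpha)=0$, and, $\alpha$ and $\beta$ now coinciding, the double sum over the boundary wells reduces to the single sum $\sum_{z=a,b}\sigma(z,\alpha)\,\Haus{N-1}\bigl(\{Tu=z\}\bigr)$. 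Thus, for $u\in BV\bigl(\Omega;\{a,b\}\bigr)$, the lower and upper bounds of Theorem \ref{thm:critical} coincide and both equal $\mathcal{F}_0(u)$.

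Next I would verify the two $\Gamma$-convergence inequalities in the $L^2(\Omega)$ topology. For the recovery-sequence part ($\Gamma$-$\limsup$): if $u\notin BV\bigl(\Omega;\{a,b\}\bigr)$ then $\mathcal{F}_0(u)=\infty$ and there is nothing to prove; if $u\in BV\bigl(\Omega;\{a,b\}\bigr)$, apply Theorem \ref{thm:critical}(ii) to the pair $(u,v)$ with $v\equiv\alpha$ to obtain $\{u_\eps\}\subset H^2(\Omega)$ with $u_\eps\to u$ in $L^2(\Omega)$ and, by the first paragraph, $\limsup_{\eps\to0^+}\mathcal{F}_\eps(u_\eps)\leq\mathcal{F}_0(u)$.

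For the $\Gamma$-$\liminf$ part: let $u_\eps\to u$ in $L^2(\Omega)$; one may assume $\liminf_\eps\mathcal{F}_\eps(u_\eps)<\infty$, and after passing to a subsequence attaining this value as a limit one has $\sup_\eps\mathcal{F}_\eps(u_\eps)<\infty$. Theorem \ref{thm:compactness} then yields a further subsequence along which $u_\eps\to\tilde u$ in $L^2(\Omega)$ with $\tilde u\in BV\bigl(\Omega;\{a,b\}\bigr)$ and $Tu_\eps\to v$ in $L^2(\partial\Omega)$ with $v\in BV\bigl(\partial\Omega;\{\alpha,\beta\}\bigr)$; necessarily $\tilde u=u$ and $v\equiv\alpha$. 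Theorem \ref{thm:critical}(i) applied to this $(u,v)$ gives $\liminf_\eps\mathcal{F}_\eps(u_\eps)\geq\mathcal{F}_0(u)$, which in particular forces $\liminf_\eps\mathcal{F}_\eps(u_\eps)=\infty=\mathcal{F}_0(u)$ when $u\notin BV\bigl(\Omega;\{a,b\}\bigr)$. A standard subsequence argument upgrades this to the inequality for the whole family $\{\mathcal{F}_\eps\}_{\eps>0}$.

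The proof is essentially bookkeeping, so there is no substantial obstacle; the only points that need care are (a) the matching of the two constants, which works precisely because the $\ul{c}$ and $\ol{c}$ terms are multiplied by $\Per_{\partial\Omega}(F_\alpha)=0$, where the $C^2$-regularity of $\partial\Omega$ is used, and (b) the fact that the $\Gamma$-$\liminf$ inequality of Theorem \ref{thm:critical}(i) presupposes convergence of the traces $Tu_\eps$, which is not part of the $\Gamma$-convergence hypothesis and must be supplied, along subsequences, by the Compactness Theorem \ref{thm:compactness}.
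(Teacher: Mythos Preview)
Your proposal is correct and follows precisely the route the paper intends: the Corollary is stated in the paper as an immediate consequence of Theorem~\ref{thm:critical} without a written proof, and your argument spells out exactly that deduction---when $\alpha=\beta$ the only admissible $v$ is the constant $\alpha$, so $\Per_{\partial\Omega}(F_\alpha)=0$ kills the discrepancy between $\ul{c}$ and $\ol{c}$, the double sum collapses, and the matching lower and upper bounds of Theorem~\ref{thm:critical} together with the compactness of Theorem~\ref{thm:compactness} give the full $\Gamma$-convergence. Your care in invoking compactness to supply the trace convergence needed for Theorem~\ref{thm:critical}(i) is the only nontrivial point, and you handle it correctly.
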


%:

From the result of Theorem \ref{thm:critical}, we know that the $\Gamma$-limit of the functionals $\mathcal{F}_{\eps}$ as $\eps \to 0^+$ will concentrate its energy on three surfaces: the discontinuity surface of $u$, the boundary $\partial\Omega$, and the discontinuity surface of $v$. Moreover, we know the precise energy of the first two terms. For the last term, we expect it to be the product of the perimeter of the surface times the value $c$ of the transition between the two boundary preferred phases $\alpha$ and $\beta$. Since the fractional norm on the boundary is non-local, the definition of $c$ should span the whole real line and the lifting constant should be independent of the function $g$, as in the first-order case (see \cite{ABS}). 
We offer the following conjecture.

\begin{conj}
Under the same hypotheses of Theorem \ref{thm:compactness}, then the sequence $\{\mathcal{F}_{\eps}\}_{\eps >0}$ $\Gamma$-converges as $\eps \to 0^+$ to 
$$
\mathcal{F}_0(u,v) := \begin{cases}
	\ds m \Per_{\Omega}(E_{a}) 
	+ \sum_{z =a,b} \sum_{\xi=\alpha,\beta} \sigma(z,\xi) \Haus{N-1}\bigl( \{ Tu = z\} \cap \{v = \xi\} \bigr)
	+ c L \Per_{\partial\Omega}(F_{\alpha})
		& \text{ if } (u,v) \in \mathcal{V}, \\
	\infty	& \text{ otherwise,}
\end{cases}
$$
where $\mathcal{V} := BV(\Omega;\{a,b\}) \times BV(\partial\Omega;\{\alpha,\beta\})$, $m$ is defined as in \eqref{eq:def_m}, $\sigma$ is defined as in \eqref{eq:def_sigma}, and $c$ is defined by
\begin{equation}\label{eq:def_c}
c := \inf \left\{ \zeta \int_{-\infty}^{\infty} \int_{-\infty}^{\infty} \frac{|f'(x) - f'(y)|^{2}}{|x-y|^{2}} \, dx \, dy + \int_{-\infty}^{\infty} V\bigl(f(x)\bigr) \, dx: f \in H_{\loc}^{\frac{3}{2}}(\R),%\\
	  \lim_{x \to \infty} f(-x)= \alpha, \lim_{x\to \infty} f(x)=\beta \right\}, %\text{ for all } t \geq R, R > 0 \right\},
\end{equation}
and $\zeta$ is defined by
\begin{equation}\label{eq:def_beta}
\zeta := \inf \left\{ \frac{\iint_{\R \times \R^{+}} \bigl|D^{2} u(x,y)\bigr|^{2} \, dx \, dy}{ \int_{\R} \int_{\R} \frac{\bigl| g'(x) - g'(y) \bigr|^{2}}{|x-y|^{2}} \, dx \, dy} :
	 u \in H^{2}(\R \times \R^+), Tu(\cdot,0)= g \text{ in } \R \right\},
\end{equation}
which is independent of $g \in H^{\frac32}_{\loc}(\R)$ such that $\lim g(-x)=\alpha$ as $x\to\infty$ and $\lim g(x) = \beta$ as $x \to \infty$.
\end{conj}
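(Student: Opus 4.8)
The plan is to establish the full $\Gamma$-convergence by combining the lower bound from Theorem \ref{thm:critical}(i), an improved upper bound matching the constant $c L$, and the compactness from Theorem \ref{thm:compactness}, which together yield the liminf and recovery-sequence inequalities with respect to the topology $u_\eps \to u$ in $L^2(\Omega)$, $Tu_\eps \to v$ in $L^2(\partial\Omega)$. The compactness theorem already guarantees that any sequence with bounded energy has a subsequence converging to some $(u,v)\in\mathcal V$, so it remains to sharpen the two bounds so that they agree. Since the first two transition terms in the lower and upper bounds of Theorem \ref{thm:critical} already coincide (the constants $m$ and $\sigma(z,\xi)$ match), the entire difficulty is concentrated in the third term, the $(N-2)$-dimensional boundary transition interface, where one must replace $\ul c$ and $\ol c$ by the common value $c$.

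The key structural fact to prove first is that $\zeta$ in \eqref{eq:def_beta} is well-defined and independent of the boundary datum $g$ (among admissible profiles making the transition from $\alpha$ to $\beta$). This should follow by a scaling and translation argument together with the trace characterization of $H^2(\R\times\R^+)$: the numerator is the Dirichlet-type energy of the harmonic-like (more precisely, biharmonic-type) extension minimizing $\iint |D^2u|^2$ with prescribed trace $g$ on $\R\times\{0\}$, and the Gagliardo $H^{1/2}$-seminorm of $g'$ in the denominator is, up to a universal constant, exactly the trace norm associated with this minimization. Thus the ratio is a fixed universal constant; the role of the hypothesis that $g$ interpolates between $\alpha$ and $\beta$ is merely to ensure both numerator and denominator are finite and positive. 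Once $\zeta$ is identified, one shows $\ul c \le cL \le \ol c$ is actually an equality by producing, for the upper bound, a recovery sequence whose boundary profile realizes the optimal extension constant $\zeta$ rather than the suboptimal constants $\tfrac18$ or $\tfrac7{16}$ appearing in $G_\eps$ and in \eqref{eq:def_over_c}, \eqref{eq:def_under_c}.

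For the recovery sequence, I would build on the construction in Section \ref{ssec:UB}: near the $(N-2)$-dimensional interface $\partial^* F_\alpha$ on $\partial\Omega$, flatten the boundary, introduce the transition layer of thickness $\delta_\eps = o(\eps)$ for the trace, and then, crucially, choose the \emph{interior} extension of $u_\eps$ off the boundary not as an arbitrary $H^2$ extension but as (an approximation of) the minimizer defining $\zeta$, rescaled appropriately. Because $\eps/\delta_\eps\to\infty$, the nonlocal fractional energy on the boundary picks up its contribution from a layer of thickness $\eps$, which after rescaling by $\delta_\eps$ becomes infinite — this is exactly why the integrals in \eqref{eq:def_c} span all of $\R$. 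The delicate point, and the main obstacle, is the \textbf{localization of the nonlocal energy}: one must show that gluing the optimal profile near the interface to the constant values $\alpha,\beta$ away from it produces only a negligible error in the Gagliardo seminorm, despite the fact that truncation arguments (the standard tool, used in \cite{ABS}) are unavailable here because the seminorm is of order $3/2$ rather than $1/2$. I expect this to require a careful decomposition of the double integral into near-diagonal and far-diagonal parts, with the far-diagonal interaction terms controlled by the decay of the optimal profile toward $\alpha$ and $\beta$ and by the quadratic lower bound $(H^V_3)$, together with a diagonal argument to pass $R\to\infty$ and $\delta_\eps\to 0$ simultaneously. The lower bound requires the symmetric statement: any competitor, restricted to a neighborhood of the interface and rescaled, must asymptotically have fractional energy at least $\zeta$ times the squared trace seminorm, which is precisely the content of the definition of $\zeta$ combined with a lower-semicontinuity and blow-up argument at $\Haus{N-2}$-a.e. point of $\partial^* F_\alpha$.
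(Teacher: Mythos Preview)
The statement you are attempting is a \emph{conjecture} in the paper, not a theorem: the paper offers no proof and explicitly identifies the obstacles (see the discussion following Theorem~\ref{thm:critical} and the paragraph preceding the conjecture). So there is no paper proof to compare against; the relevant question is whether your proposal actually closes the gap the paper leaves open. It does not.

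Your outline is structurally correct --- the only discrepancy between the lower and upper bounds is the constant in front of $\Per_{\partial\Omega}(F_\alpha)$ --- and you have identified the two genuine obstructions the paper names: (a) whether the lifting constant $\zeta$ is independent of the boundary datum $g$, and (b) how to localize the nonlocal $H^{1/2}$-seminorm of the derivative when building and analyzing the recovery sequence. But in both places your proposal \emph{restates} the difficulty rather than resolving it. For (a), you assert that ``the ratio is a fixed universal constant'' via a trace/biharmonic-extension heuristic; the paper says precisely the opposite, that from Proposition~\ref{prop:lifting} this does \emph{not} follow, and your sketch gives no mechanism (e.g.\ an explicit Fourier computation on $\R\times\R^+$ minimizing $\iint|D^2u|^2$ with only the trace prescribed) that would establish it. Note also a domain mismatch you do not address: the lower bound in the paper is obtained by slicing through \emph{triangles} $T_R^+$, and the lifting constant there is bounded below only by $\tfrac18$; to upgrade the liminf to $cL$ with $\zeta$ defined on the full half-plane you must argue that the triangle-based localization asymptotically recovers the half-plane constant, which is a nontrivial step your blow-up sketch does not supply.

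For (b), the sentence ``I expect this to require a careful decomposition of the double integral into near-diagonal and far-diagonal parts\ldots together with a diagonal argument'' is exactly the step the paper declares it cannot carry out, because truncation and rearrangement --- the tools used in \cite{ABS} for the first-order case --- fail for the order-$\tfrac32$ seminorm. Naming the decomposition is not the same as controlling the cross terms: the far-diagonal interaction between the glued profile and the constant tails contributes at the same order as the main term (this is why $\ol c$ has integration over all of $\R$ while $\ul c$ does not), and nothing in your outline explains why the optimal extension realizing $\zeta$ would make that contribution vanish. Until (a) and (b) are genuinely proved, the proposal remains a restatement of the conjecture's difficulties rather than a proof.
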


%%%%%%%%%%%%%%%%%%%%%%%%%%%%%%%%%%%%
%%%%%%%%%%%%%%%%%%%%%%%%%%%%%%%%%%%%
%%%%%%%%%%%%%%%%%%%%%%%%%%%%%%%%%%%%
%%%%%%%%%%%%%%%%%%%%%%%%%%%%%%%%%%%%
%%%%%%%%%%%%%%%%%%%%%%%%%%%%%%%%%%%%
%%%%%%%%%%%%%%%%%%%%%%%%%%%%%%%%%%%%
%%%%%%%%%%%%%%%%%%%%%%%%%%%%%%%%%%%%
%%%%%%%%%%%%%%%%%%%%%%%%%%%%%%%%%%%%
%%%%%%%%%%%%%%%%%%%%%%%%%%%%%%%%%%%%
%%%%%%%%%%%%%%%%%%%%%%%%%%%%%%%%%%%%
%%%%%%%%%%%%%%%%%%%%%%%%%%%%%%%%%%%%
%%%%%%%%%%%%%%%%%%%%%%%%%%%%%%%%%%%%
%%%%%%%%%%%%%%%%%%%%%%%%%%%%%%%%%%%%
%%%%%%%%%%%%%%%%%%%%%%%%%%%%%%%%%%%%
%%%%%%%%%%%%%%%%%%%%%%%%%%%%%%%%%%%%
%%%%%%%%%%%%%%%%%%%%%%%%%%%%%%%%%%%%
%%%%%%%%%%%%%%%%%%%%%%%%%%%%%%%%%%%%
%%%%%%%%%%%%%%%%%%%%%%%%%%%%%%%%%%%%
%%%%%%%%%%%%%%%%%%%%%%%%%%%%%%%%%%%%
%%%%%%%%%%%%%%%%%%%%%%%%%%%%%%%%%%%%
%%%%%%%%%%%%%%%%%%%%%%%%%%%%%%%%%%%%
%%%%%%%%%%%%%%%%%%%%%%%%%%%%%%%%%%%%
%%%%%%%%%%%%%%%%%%%%%%%%%%%%%%%%%%%%

%%%%%%%%%%%%%%%%%%%%%%%%%%%%%%%%%%%%
%
%	Preliminaries
%
%%%%%%%%%%%%%%%%%%%%%%%%%%%%%%%%%%%%%

\section{Preliminaries}\label{sec:pre}

\subsection{Slicing}\label{ssec:slicing}

We now show a slicing argument introduced by \cite{ABS} and improved in \cite{FM}.
First we fix some notation. Given a bounded open set $A \subset \R^N$, a unit vector $e$ in $\R^N$, and a function $u:A \to \R$, we denote by
\begin{align*}
& M \text{ the orthogonal complement of } e, \\
& A_e \text{ the projection of } A \text{ onto } M,  \\
& A_e^y := \{ t \in \R: y + te \in A \}, \text{ for all } y \in A_e,\\
& u_e^y \text{ the trace of } u \text{ on } A_e^y \text{, i.e., } u_e^y(t) := Tu(y + te), \text{ for all } y \in A_e.
\end{align*}

\begin{defi}\label{def:deltaclose}

For every $\delta>0$, two sequences $\{v_{\eps}\}, \{w_{\eps}\} \subset L^1(E)$ are said to be $\delta-$close if for every $\eps>0$ $\|v_{\eps} - w_{\eps}\|_{L^{1}(E)} < \delta$.

\end{defi}

\begin{prop}\label{prop:slicing}

Assume that $E$ is a Lipschitz, bounded and open subset of $\R^{N-1}$. If $\{ w_{\eps} \} \subset L^1(E)$ is equi-integrable and if there are $N-1$ linearly independent unit vectors $e_i$ such that for every $\delta > 0$ and for every fixed $i=1,\ldots, N-1$, there exist a sequence $\{v_{\eps} \}$ (depending on $i$) that is $\delta-$close to $\{w_{\eps} \}$ with $\{v^{y}_{\eps}\}$ precompact in $L^{1}(E^{y}_{e_i})$ for $\Haus{N-2}$-a.e. $y\in E_{e_i}$, then $\{w_{\eps} \}$ is precompact in $L^{1}(E)$. 

\end{prop}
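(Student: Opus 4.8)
The plan is to deduce precompactness from the Riesz--Fr\'echet--Kolmogorov criterion applied to the zero-extensions $\bar w_\eps \in L^1(\R^{N-1})$ of the $w_\eps$. Since $E$ is bounded, all the $\bar w_\eps$ vanish outside a fixed compact set, and equi-integrability of $\{w_\eps\}$ in particular yields $\sup_\eps\|w_\eps\|_{L^1(E)}<\infty$; hence the only condition left to check is equi-continuity in the mean, $\sup_\eps\|\tau_h\bar w_\eps-\bar w_\eps\|_{L^1(\R^{N-1})}\to 0$ as $h\to 0$ (where $\tau_h$ denotes translation by $h$), and the equi-integrability hypothesis will serve as a tool in establishing it. Writing an arbitrary $h$ in the basis $\{e_i\}$ as $h=\sum_{i=1}^{N-1}t_ie_i$ with $|t_i|\leq C|h|$, and telescoping along the commuting $L^1(\R^{N-1})$-isometries $\tau_{t_ie_i}$, the problem reduces to the one-directional statement: for each fixed $i$, $\sup_\eps\|\tau_{te_i}\bar w_\eps-\bar w_\eps\|_{L^1}\to 0$ as $t\to 0$.

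To prove the one-directional statement, fix $i$ and $\eta>0$ and invoke the hypothesis with a small $\delta=\delta(\eta)$ to obtain $\{v_\eps\}$ that is $\delta$-close to $\{w_\eps\}$ and has $\{(v_\eps)^y_{e_i}\}$ precompact in $L^1(E^y_{e_i})$ for $\Haus{N-2}$-a.e.\ $y\in E_{e_i}$. Since translation preserves the $L^1(\R^{N-1})$-norm, the triangle inequality gives $\sup_\eps\|\tau_{te_i}\bar w_\eps-\bar w_\eps\|_{L^1}\leq 2\delta+\sup_\eps\|\tau_{te_i}\bar v_\eps-\bar v_\eps\|_{L^1}$, so it suffices to treat the $v$-sequence. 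By Fubini along $e_i$, $\|\tau_{te_i}\bar v_\eps-\bar v_\eps\|_{L^1(\R^{N-1})}=\int_{E_{e_i}}h^y_\eps(t)\,d\Haus{N-2}(y)$, where $h^y_\eps(t):=\|\tau_t\overline{(v_\eps)^y_{e_i}}-\overline{(v_\eps)^y_{e_i}}\|_{L^1(\R)}$ and the overbar is zero-extension from $E^y_{e_i}$ to $\R$. For $\Haus{N-2}$-a.e.\ $y$ the family $\{\overline{(v_\eps)^y_{e_i}}\}_\eps$ is precompact in $L^1(\R)$ and supported in a common bounded set, hence equi-continuous in the mean, so $\omega^y(r):=\sup_{|t|\leq r}\sup_\eps h^y_\eps(t)\to 0$ as $r\to 0^+$.

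The crux is that this decay is only pointwise in $y$ and carries no uniformity in $\eps$, and so must be combined with the \emph{global} equi-integrability of $\{w_\eps\}$ to absorb the exceptional set of directions. Since $\Haus{N-2}(E_{e_i})<\infty$ and $g_n(y):=\omega^y(1/n)$ decreases to $0$ for a.e.\ $y$ --- $g_n$ being measurable, as one sees by replacing both suprema by suprema over countable dense sets and using continuity of $t\mapsto h^y_\eps(t)$ and countability of the index set --- Egorov's theorem furnishes, for any $\kappa>0$, a set $G\subset E_{e_i}$ with $\Haus{N-2}(E_{e_i}\setminus G)<\kappa$ on which $g_n\to 0$ uniformly. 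Splitting $\int_{E_{e_i}}h^y_\eps(t)\,d\Haus{N-2}(y)=\int_G(\cdots)+\int_{E_{e_i}\setminus G}(\cdots)$: over $G$, picking $n_0$ with $\sup_{y\in G}g_{n_0}(y)$ as small as desired and using monotonicity of $\omega^y$, the first integral is at most $\Haus{N-2}(E_{e_i})\sup_{y\in G}g_{n_0}(y)$ for $|t|\leq 1/n_0$; over $E_{e_i}\setminus G$ one uses $h^y_\eps(t)\leq 2\|(v_\eps)^y_{e_i}\|_{L^1(E^y_{e_i})}\leq 2\|(w_\eps)^y_{e_i}\|_{L^1}+2\|(v_\eps-w_\eps)^y_{e_i}\|_{L^1}$ together with Fubini to obtain $\int_{E_{e_i}\setminus G}h^y_\eps(t)\,d\Haus{N-2}(y)\leq 2\int_S|w_\eps|\,dx+2\|v_\eps-w_\eps\|_{L^1(E)}$, where $S\subset E$ is the tube over $E_{e_i}\setminus G$, whose Lebesgue measure is at most $\kappa\diam(E)$. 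Choosing $\kappa$ small enough --- before applying Egorov --- that equi-integrability of $\{w_\eps\}$ forces $\sup_\eps\int_S|w_\eps|\,dx$ to be small, and then $\delta$ small, every contribution is $\leq\eta$ once $|t|$ is small. This establishes the one-directional statement, and the telescoping of the first paragraph completes the proof.

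I expect the one genuine obstacle to be exactly this last step: the slices of $\{v_\eps\}$ are precompact only for $\Haus{N-2}$-a.e.\ $y$, with no control uniform in $y$ or in $\eps$, so the one-dimensional moduli of continuity cannot simply be integrated. Egorov's theorem converts the a.e.\ convergence into uniform convergence off an arbitrarily small set $E_{e_i}\setminus G$, and the equi-integrability of $\{w_\eps\}$ --- transported to the tube over that set via Fubini --- is precisely what controls its contribution; the remaining ingredients (reduction to a single direction, working with zero-extensions, and the $w_\eps$-versus-$v_\eps$ comparison) are routine.
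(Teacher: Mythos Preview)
The paper does not include a proof of this proposition; it is stated in Section~\ref{ssec:slicing} as a preliminary result attributed to \cite{ABS} and \cite{FM}, so there is no in-paper argument to compare against.

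Your proof via the Riesz--Fr\'echet--Kolmogorov criterion is correct and is the natural route. The reduction to single directions by telescoping along the commuting isometries $\tau_{t_ie_i}$, the Fubini slicing of $\|\tau_{te_i}\bar v_\eps-\bar v_\eps\|_{L^1}$, and the combination of Egorov with the equi-integrability of $\{w_\eps\}$ to pass from pointwise-in-$y$ decay of $\omega^y$ to integrated decay are exactly the expected ingredients, and you handle the order of quantifiers correctly (choosing $\kappa$ from equi-integrability first, then $\delta$, then applying Egorov to the $v_\eps$-dependent functions $g_n$). Two small points worth stating explicitly if you write this up: first, precompactness of $\{(v_\eps)^y_{e_i}\}$ in $L^1(E^y_{e_i})$ transfers to precompactness of the zero-extensions in $L^1(\R)$ because zero-extension is an isometric embedding, and it is the converse direction of Kolmogorov--Riesz that then gives $\omega^y(r)\to 0$; second, your measurability argument for $g_n$ relies on countability of the index set, which is legitimate here since throughout the paper $\eps$ ranges over a sequence (cf.\ the usage in the proofs of Theorems~\ref{thm:compactness_interior} and~\ref{thm:compactness_boundary}). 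With those clarifications the argument is complete.
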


%%%%%%%%%%%%%%%%%%%%%%%%%%%%%%%%%%%%%%%%%%
%%%%%%%%%%%%%%%%%%%%%%%%%%%%%%%%%%%%%%%%%%
%%
%%
%%					Fractional Spaces
%%
%%
%%%%%%%%%%%%%%%%%%%%%%%%%%%%%%%%%%%%%%%%%%
%%%%%%%%%%%%%%%%%%%%%%%%%%%%%%%%%%%%%%%%%%

\subsection{Fractional order Sobolev spaces}\label{ssec:fractional}

We will use the norms and seminorms of several fractional order spaces, introduced by Besov and Nikol'skii and summarized in \cite{Adams} and \cite{Ziemer}.
Consider the following norms and seminorms for the space $W^{\frac{3}{2},2}(J)$ where $J \subset \R$ is an open interval.
\begin{align*}
& |u|_{H^{\frac{1}{2}}(J)}^{2}:= \int_{J} \int_{J} \frac{\bigl|u(x)-u(y)\bigr|^{2}}{|x-y|^{2}} \, dx \, dy, \\
& |u|_{H^{\frac{3}{2}}(J)}^{2}:= \int_{J} \int_{J} \frac{\left|u(x)-2u\bigl(\frac{x+y}{2}\bigr)+u(y)\right|^{2}}{|x-y|^{4}} \, dx \, dy, \\
& \|u\|_{W^{\frac{3}{2},2}(J)}^{2}:= \|u\|_{H^{1}(J)}^{2} + |u'|_{H^{\frac{1}{2}}(J)}^{2}, \\
& \|u\|_{H^{\frac{3}{2}}(J)}^{2}:= \|u\|_{L^{2}(J)}^{2} + |u|_{H^{\frac{3}{2}}(J)}^{2}.
\end{align*}

We will need to compare the two seminorms and for that we invoke an auxiliary result (see \cite{FL,Stein}).

\begin{prop}\label{prop:Hardy}
Let $r>1$ and let $u:(a,b) \longrightarrow [0,\infty]$ be a Borel function.
Then
$$
\int_{a}^{b} \frac{1}{(x-a)^{r}} \left( \int_{a}^{x} u(y) \, dy \right) \, dx
	\leq \frac{1}{r-1} \int_{a}^{b} \frac{u(x)}{(x-a)^{r-1}} \, dx.
$$
\end{prop}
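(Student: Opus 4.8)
The plan is to deduce this classical Hardy-type inequality from a single application of the Tonelli theorem, using that every integrand in sight is nonnegative and measurable. Set $F(x) := \int_a^x u(y)\,dy$ and regard the double integral on the left as an integral over the triangular region $\{(x,y): a<y<x<b\}$ of the nonnegative function $(x,y)\mapsto (x-a)^{-r}u(y)$. Interchanging the order of integration (Tonelli, so that no integrability hypothesis is needed and both sides are well-defined elements of $[0,\infty]$) gives
$$
\int_{a}^{b} \frac{1}{(x-a)^{r}} \left( \int_{a}^{x} u(y) \, dy \right) dx
 = \int_{a}^{b} u(y) \left( \int_{y}^{b} \frac{dx}{(x-a)^{r}} \right) dy.
$$

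The next step is to evaluate the inner integral explicitly. Since $r>1$, an antiderivative of $(x-a)^{-r}$ is $-\tfrac{1}{r-1}(x-a)^{1-r}$, so
$$
\int_{y}^{b} \frac{dx}{(x-a)^{r}} = \frac{1}{r-1}\Bigl[(y-a)^{1-r} - (b-a)^{1-r}\Bigr] \leq \frac{1}{r-1}\,(y-a)^{1-r},
$$
where the inequality holds because $1-r<0$ forces $(b-a)^{1-r}>0$; the identity is still valid, with the subtracted term equal to $0$, when $b=+\infty$. Substituting this bound back into the previous display yields
$$
\int_{a}^{b} \frac{1}{(x-a)^{r}} \left( \int_{a}^{x} u(y) \, dy \right) dx \leq \frac{1}{r-1} \int_{a}^{b} \frac{u(y)}{(y-a)^{r-1}} \, dy,
$$
which is the asserted inequality after renaming the variable $y$ as $x$.

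I do not expect a genuine obstacle here. The one point deserving a word of care is the legitimacy of the interchange of the two integrals, but this is immediate from Tonelli because the integrand $(x,y)\mapsto (x-a)^{-r}u(y)$ restricted to $\{y<x\}$ is nonnegative and measurable on $(a,b)^2$; in particular the estimate is vacuously true whenever the right-hand side is $+\infty$. An alternative derivation by integrating $(x-a)^{-r}$ and differentiating $F$ in an integration-by-parts step is also possible, but it requires $F$ to be absolutely continuous and the boundary contribution at $x=a$ to be controlled, so the Tonelli argument is the cleaner route at this level of generality.
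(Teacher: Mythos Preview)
Your proof is correct. The paper does not actually prove this proposition; it merely states it as an auxiliary result and cites \cite{FL,Stein}, so there is no in-paper argument to compare against. Your Tonelli argument---rewriting the left-hand side as an iterated integral over $\{a<y<x<b\}$, swapping the order, and bounding $\int_y^b (x-a)^{-r}\,dx \le \frac{1}{r-1}(y-a)^{1-r}$---is the standard derivation and works exactly as you describe, with no integrability hypothesis needed since all integrands are nonnegative.
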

\begin{lem}\label{lem:seminorms}
Let $J \subset \R$ be an open interval and let $u \in H^{\frac{3}{2}}(J)$.
Then
$$
|u|_{H^{\frac{3}{2}}(J)}^{2} \leq \frac{1}{8} |u'|_{H^{\frac{1}{2}}(J)}^{2}.
$$
\end{lem}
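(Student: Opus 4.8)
The plan is to relate the "second-difference" seminorm $|u|_{H^{3/2}(J)}$ to the "first-difference of the derivative" seminorm $|u'|_{H^{1/2}(J)}$ by rewriting the second difference $u(x)-2u(\tfrac{x+y}{2})+u(y)$ as an integral of $u'$. Indeed, for $x<y$ one has the identity
$$
u(x) - 2u\Bigl(\tfrac{x+y}{2}\Bigr) + u(y) = \int_{\frac{x+y}{2}}^{y} \bigl( u'(s) - u'(s - \tfrac{y-x}{2}) \bigr) \, ds,
$$
obtained by writing both $u(y)-u(\tfrac{x+y}{2})$ and $u(\tfrac{x+y}{2})-u(x)$ as integrals of $u'$ over intervals of equal length $\tfrac{y-x}{2}$ and subtracting after a change of variables. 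So the numerator in $|u|^2_{H^{3/2}(J)}$ is controlled, via Cauchy--Schwarz on an interval of length $\tfrac{y-x}{2}$, by $\tfrac{y-x}{2}\int_{\frac{x+y}{2}}^{y}|u'(s)-u'(s-\tfrac{y-x}{2})|^2\,ds$.

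Next I would substitute this bound into the double integral defining $|u|^2_{H^{3/2}(J)}$, change variables to make the "gap" $h := \tfrac{y-x}{2}$ and one endpoint the new variables, and recognize the inner integrand $|u'(s)-u'(s-h)|^2$ as exactly what appears in $|u'|^2_{H^{1/2}}$ once we also reintroduce the $|x-y|^{-4}=(2h)^{-4}$ weight. After carrying out the Fubini rearrangement, the remaining one-dimensional integral in $h$ (coming from integrating the weight $(2h)^{-4}\cdot h$ against the constraint that $s$ and $s-h$ lie in $J$) should produce a convergent factor; Proposition \ref{prop:Hardy} with a suitable $r$ is the tool to bound this after recasting the $h$-integral as a Hardy-type integral in the variable measuring distance to an endpoint of $J$. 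Keeping careful track of the constants $\tfrac12$, $(2)^{-4}$, and the Hardy constant $\tfrac{1}{r-1}$ should deliver exactly the factor $\tfrac18$.

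The main obstacle I anticipate is the bookkeeping of the change of variables and the endpoint constraints: the set $\{(x,y)\in J\times J : x<y\}$ together with the midpoint and the shifted point all lying in $J$ has to be translated into constraints on $(s,h)$, and it is there that one must be careful not to lose the sharp constant — a crude estimate would give a worse constant than $\tfrac18$. A secondary point is justifying the pointwise integral identity for $u\in H^{3/2}(J)$ (so that $u'\in H^{1/2}(J)\subset L^2_{\loc}$ and $u$ is absolutely continuous), which lets us use the fundamental theorem of calculus; this is routine by density or by Sobolev embedding but should be stated. I would structure the write-up as: (1) the second-difference identity; (2) Cauchy--Schwarz; (3) Fubini and change of variables; (4) the Hardy estimate to handle the $h$-integral; (5) collecting constants.
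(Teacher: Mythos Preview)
Your approach is correct and yields the sharp constant $\tfrac18$. The paper does not actually prove this lemma; it only states it, preceded by the Hardy-type inequality (Proposition~\ref{prop:Hardy}) as an ``auxiliary result,'' so there is no proof to compare against in detail.

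One remark on your step (4): you will find that Hardy's inequality is not needed. After your steps (1)--(3), with the change of variables $(x,y)\mapsto(m,h)$ where $m=\tfrac{x+y}{2}$ and $h=\tfrac{y-x}{2}$ (Jacobian $2$), Fubini in $(m,s)$ for fixed $h>0$ gives
\[
\int_{m}\int_{m}^{m+h}\bigl|u'(s)-u'(s-h)\bigr|^{2}\,ds\,dm
\;\le\; h\int_{\{s\in J,\;s-h\in J\}}\bigl|u'(s)-u'(s-h)\bigr|^{2}\,ds,
\]
since for fixed $s$ the set of admissible $m$ has length at most $h$. Substituting back, the factor $h$ cancels one of the three powers of $h$ in the denominator, and the remaining $\int_{h>0}\int_{s}\frac{|u'(s)-u'(s-h)|^{2}}{h^{2}}\,ds\,dh$ is exactly $\tfrac12|u'|_{H^{1/2}(J)}^{2}$ after the substitution $t=s-h$. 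Collecting the constants $2\cdot\tfrac{1}{16}\cdot 2\cdot\tfrac12=\tfrac18$ gives the claim directly. So your outline is right, just simpler than you anticipated: the ``obstacle'' you flagged dissolves once you bound the $m$-set by its length $h$, and no appeal to Proposition~\ref{prop:Hardy} is required.
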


\begin{prop}[Gagliardo-Nirenberg-type inequality]
\label{prop:gagliardo}

Let $J \subset \R$ be an open interval.
Then there exists $C = C(J)>0$ such that
$$
\| u \|_{H^{1}(J)} \leq C \left( \|u\|_{L^{2}(J)}^{\frac{1}{3}} | u' |_{H^{\frac{1}{2}}(J)}^{\frac{2}{3}} + \|u\|_{L^{2}(J)} \right)
$$
for all $u \in H^{\frac{3}{2}}(J)$.
\end{prop}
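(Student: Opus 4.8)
The plan is to reduce the inequality to an interpolation estimate for the single term $\|u'\|_{L^{2}(J)}$: since $\|u\|_{H^{1}(J)}^{2} = \|u\|_{L^{2}(J)}^{2} + \|u'\|_{L^{2}(J)}^{2}$ and $\|u\|_{L^{2}(J)}$ already appears on the right-hand side, it suffices to prove
\[
\|u'\|_{L^{2}(J)} \le C(J)\Bigl( \|u\|_{L^{2}(J)}^{\frac13}\,|u'|_{H^{1/2}(J)}^{\frac23} + \|u\|_{L^{2}(J)} \Bigr),
\]
after which the claim follows from $\sqrt{s+t}\le\sqrt{s}+\sqrt{t}$.

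The core of the argument is a difference-quotient estimate. Recall that $u\in H^{3/2}(J)$ means $u\in H^{1}(J)$ with $u'\in H^{1/2}(J)$, so $u$ is absolutely continuous and $u'\in L^{2}(J)$. For $h>0$ and any $x$ with $[x,x+h]\subset J$, the fundamental theorem of calculus gives the identity
\[
u'(x) = \frac1h\bigl(u(x+h)-u(x)\bigr) + \frac1h\int_{x}^{x+h}\bigl(u'(x)-u'(t)\bigr)\,dt .
\]
I would take absolute values, square, and estimate the two pieces: for the first, $\int |u(x+h)-u(x)|^{2}\,dx\le 4\|u\|_{L^{2}(J)}^{2}$; for the second, Cauchy--Schwarz in $t$ followed by the inequality $|u'(x)-u'(t)|^{2}\le h^{2}\,|u'(x)-u'(t)|^{2}/|x-t|^{2}$ (valid since $|x-t|\le h$) gives, after integrating in $x$, a bound by $h\,|u'|_{H^{1/2}(J)}^{2}$. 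Integrating in $x$ over $\{x:[x,x+h]\subset J\}$ and running the symmetric computation with the left-shifted interval $[x-h,x]$ to cover the strip near the right endpoint of $J$ (when $J$ is bounded), I obtain
\[
\|u'\|_{L^{2}(J)}^{2} \le C\Bigl( \frac{1}{h^{2}}\,\|u\|_{L^{2}(J)}^{2} + h\,|u'|_{H^{1/2}(J)}^{2} \Bigr),
\]
valid for all $0<h\le |J|/2$ with $C$ an absolute constant, and for all $h>0$ when $|J|=\infty$.

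It remains to optimize in $h$. The unconstrained minimizer of the right-hand side is $h_{*}=\bigl(2\|u\|_{L^{2}(J)}^{2}/|u'|_{H^{1/2}(J)}^{2}\bigr)^{1/3}$. If $h_{*}\le|J|/2$ — automatic when $|J|=\infty$ — substituting $h=h_{*}$ yields $\|u'\|_{L^{2}(J)}^{2}\le C\,\|u\|_{L^{2}(J)}^{2/3}\,|u'|_{H^{1/2}(J)}^{4/3}$, which is the desired bound with no lower-order term. If instead $h_{*}>|J|/2$, then necessarily $|u'|_{H^{1/2}(J)}^{2}<16\,\|u\|_{L^{2}(J)}^{2}/|J|^{3}$; taking $h=|J|/2$ and using this smallness to absorb the fractional term into the first one gives $\|u'\|_{L^{2}(J)}^{2}\le C(J)\,\|u\|_{L^{2}(J)}^{2}$, and this is precisely where the constant acquires its dependence on $|J|$ and where the lone $\|u\|_{L^{2}(J)}$ term on the right is genuinely needed. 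Combining the two cases finishes the proof. The only real difficulty is bookkeeping: treating the two endpoint strips of width $h$ so that the final constant depends on $J$ only through $|J|$, and verifying that the estimate degenerates gracefully into a Poincar\'e-type inequality when the fractional seminorm is small. For the special case $J=\R$ one could alternatively argue on the Fourier side, using $|u'|_{H^{1/2}(\R)}^{2}=c\int_{\R}|\xi|^{3}|\widehat u(\xi)|^{2}\,d\xi$ together with H\"older's inequality with exponents $3$ and $\tfrac32$, and then pass to a general bounded interval via a bounded $H^{3/2}$-extension operator; the difference-quotient route is preferable here because it needs no extension and handles every interval $J$ at once.
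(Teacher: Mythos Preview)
The paper does not supply a proof of this proposition; it is stated as a preliminary result alongside the classical Gagliardo--Nirenberg inequalities, which are recalled with references to \cite{Gagliardo,Nirenberg}. There is therefore nothing to compare against, and the relevant question is simply whether your argument is correct.

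It is. The difference-quotient identity
\[
u'(x)=\frac{u(x+h)-u(x)}{h}+\frac{1}{h}\int_{x}^{x+h}\bigl(u'(x)-u'(t)\bigr)\,dt
\]
together with Cauchy--Schwarz and the trivial bound $1\le h^{2}/|x-t|^{2}$ on $[x,x+h]$ gives exactly
\[
\|u'\|_{L^{2}(J)}^{2}\le C\Bigl(h^{-2}\|u\|_{L^{2}(J)}^{2}+h\,|u'|_{H^{1/2}(J)}^{2}\Bigr),\qquad 0<h\le \tfrac{|J|}{2},
\]
and the optimization in $h$ splits cleanly into the two cases you describe, yielding the interpolation term when $h_{*}\le|J|/2$ and the $\|u\|_{L^{2}}$ term otherwise. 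Your treatment of the endpoint strips via the mirrored interval $[x-h,x]$ and your bookkeeping of where the dependence on $|J|$ enters are both correct. The Fourier alternative you mention for $J=\R$ is also valid, and your remark that the direct argument avoids extension operators is well taken.
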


We recall two inequalities due to Gagliardo and Nirenberg (see \cite{Gagliardo,Nirenberg}).

\begin{prop}\label{prop:gagliardo-nirenberg}

Let $\Omega \subset \R^N$ be a bounded open set satisfying the cone property.
If $u \in L^{2}(\Omega)$ and $\grad^2 u \in L^{2}(\Omega)$, then $u \in H^{2}(\Omega)$ and
$$
\|\grad u \|_{L^{2}(\Omega)} \leq C \Leb{N}(\Omega)\left( \|u\|_{L^{2}(\Omega)}^{\frac{1}{2}} \|\grad^2 u \|_{L^{2}(\Omega;\R^{N\times N})}^{\frac{1}{2}} + \|u\|_{L^{2}(\Omega)} \right),
$$
where $C >0$ is independent of $u$ and $\Omega$.
\end{prop}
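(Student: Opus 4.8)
The plan is to reduce the inequality to a one–dimensional interpolation estimate and then lift it to $\R^N$ by slicing along the directions supplied by the cone property. It suffices to establish the displayed bound for $u\in C^\infty(\overline\Omega)$, since the general case — and the membership $u\in H^2(\Omega)$ — then follow from a routine mollification and weak–compactness argument.

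\emph{Step 1 (one dimension).} Let $I$ be an interval of length $h$ and $f\in C^2(\overline I)$. Taylor's formula with integral remainder gives, for all $x\neq y$ in $I$,
$$
|f'(y)|\leq\frac{|f(x)|+|f(y)|}{|x-y|}+\int_I|f''(t)|\,dt .
$$
For each fixed $y\in I$ there is a subinterval $J_y\subset I$ of length $h/4$ along which $|x-y|\geq h/4$; I would average the inequality above over $x\in J_y$, then square, integrate in $y$ over $I$, and use Cauchy–Schwarz in the form $\|g\|_{L^1(I)}^2\leq h\,\|g\|_{L^2(I)}^2$, obtaining a universal constant $C_0$ with
$$
\int_I|f'|^2\leq\frac{C_0}{h^2}\int_I|f|^2+C_0\,h^2\int_I|f''|^2 .
$$
Passing through Taylor's formula is what makes $f'$ disappear from the right–hand side, so that nothing needs to be absorbed.

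\emph{Step 2 (lifting to $\R^N$).} By the cone property there is a finite cone $\mathcal C$, of opening $\theta$ and height $\rho$, and a covering $\Omega\subset\bigcup_j\Omega_j$ of bounded overlap such that for every $x\in\Omega_j$ a congruent copy of $\mathcal C$ with vertex $x$ and axis $\xi_j$ is contained in $\Omega$; in particular, for every direction $\omega$ with $\angle(\omega,\xi_j)<\theta$ the whole segment $\{x+t\omega:0<t<c\rho\}$ lies in $\Omega$. Fixing $j$ and such an $\omega$, I would slice $\Omega_j$ by lines parallel to $\omega$, apply Step 1 on each sliced interval with free length $h\leq c\rho$, and integrate over the transversal variables (Fubini), obtaining
$$
\int_{\Omega_j}|\partial_\omega u|^2\leq\frac{C}{h^2}\int_\Omega|u|^2+C\,h^2\int_\Omega|\nabla^2 u|^2 .
$$
Integrating this over $\omega$ in the spherical cap about $\xi_j$ and using that $\int_{\mathrm{cap}}\omega\otimes\omega\,d\omega$ is positive definite — so that $\int_{\mathrm{cap}}|\omega\cdot\nabla u(x)|^2\,d\omega\geq c_\theta\,|\nabla u(x)|^2$ — upgrades the estimate to one for $\int_{\Omega_j}|\nabla u|^2$. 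Summing over $j$ yields
$$
\|\nabla u\|_{L^2(\Omega)}^2\leq\frac{C_1}{h^2}\|u\|_{L^2(\Omega)}^2+C_1\,h^2\,\|\nabla^2 u\|_{L^2(\Omega)}^2 ,\qquad 0<h\leq c\rho ,
$$
with $C_1$ depending only on the cone $\mathcal C$ and on the multiplicity of the covering. Finally I would take $h=\min\{c\rho,(\|u\|_{L^2}/\|\nabla^2 u\|_{L^2})^{1/2}\}$ and use $\min_{h>0}(a/h+hb)\leq2\sqrt{ab}$, the constraint $h\leq c\rho$ costing at most an extra term $a/(c\rho)$; this produces exactly the right–hand side of the statement, the weight $\Leb{N}(\Omega)$ recorded there being a (generous) bound for $C_1$ in terms of the number of pieces of the covering.

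\emph{Main obstacle.} Step 1 and the final optimization in $h$ are routine; the real work lies in Step 2 — extracting from the cone property a covering with uniformly controlled axes, openings and overlaps, and verifying that the resulting constant depends only on $\mathcal C$ (and, through the covering, at worst on $\Leb{N}(\Omega)$) and on nothing else about $u$ or $\Omega$. On more regular (for instance, Lipschitz) domains one could instead extend $u$ to all of $\R^N$ and read the inequality off the Fourier–side bound $\int|\xi|^2|\widehat u|^2\leq\bigl(\int|\widehat u|^2\bigr)^{1/2}\bigl(\int|\xi|^4|\widehat u|^2\bigr)^{1/2}$, but this requires a bounded $H^2$–extension operator, which the cone property alone does not provide; since the domains used in this paper are of class $C^2$, the bookkeeping in Step 2 is in any case elementary, and one may simply appeal to the classical inequality, see \cite{Adams}.
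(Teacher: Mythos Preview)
The paper does not prove this proposition; it is stated without proof, introduced by the sentence ``We recall two inequalities due to Gagliardo and Nirenberg (see \cite{Gagliardo,Nirenberg}).'' So there is no proof in the paper to compare against.

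Your sketch follows the standard route to such interpolation inequalities and is essentially correct in outline: the one-dimensional Taylor estimate of Step~1, the slicing in Step~2 along the cone directions, and the final optimization over $h$ are all classical. Your own closing remark --- that on the $C^2$ domains actually used in the paper one may simply cite the classical inequality --- is precisely what the paper does.

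One minor point: the specific form of the constant, $C\,\Leb{N}(\Omega)$ with $C$ independent of $\Omega$, is not quite what your argument produces naturally (your $C_1$ depends on the cone parameters and the covering multiplicity, not directly on $\Leb{N}(\Omega)$), and indeed the statement as written is dimensionally peculiar. But since the paper only invokes the inequality on fixed domains, this bookkeeping is immaterial to the applications.
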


\begin{prop}\label{prop:GagliardoNirenberg2}
Let $J \subset \R$ be an open bounded interval.
If $u \in L^1(J)$ and $u'' \in L^2(J)$ then $u \in H^2(J)$ and 
$$
\|u'\|_{L^{\frac{4}{3}}(J)} \leq C \left( \|u\|_{L^1(J)}^{\frac{1}{2}} \|u''\|_{L^2(J)}^{\frac{1}{2}} + \|u\|_{L^1(J)} \right),
$$
for some constant $C>0$.
\end{prop}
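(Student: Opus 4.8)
The estimate is a special case of the Gagliardo--Nirenberg interpolation inequality: with $n=1$, $j=1$, $m=2$, $p=\tfrac43$, $r=2$ and $q=1$, the scaling relation $\tfrac1p-\tfrac jn=\theta\bigl(\tfrac1r-\tfrac mn\bigr)+(1-\theta)\tfrac1q$ forces the interpolation parameter $\theta=\tfrac12$, which is admissible since $\tfrac jm=\tfrac12\le\theta\le1$; so one option is simply to quote \cite{Gagliardo,Nirenberg}. For a self-contained argument, the plan is as follows.

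First I would reduce to a fixed reference interval: under the change of variables $y\mapsto |J|\,y+x_0$ the quantities $\|u'\|_{L^{4/3}}$, $\|u\|_{L^1}$ and $\|u''\|_{L^2}$ acquire the factors $|J|^{1/4}$, $|J|^{-1}$ and $|J|^{3/2}$, and a direct computation shows that the inequality on $(0,1)$ implies it on $J$ with a constant depending only on $|J|$ (the $|J|$-dependence enters only through the additive lower-order term, which is harmless since $J$ is fixed). Next I would upgrade regularity on $(0,1)$: since $u''\in L^2(0,1)\subset L^1(0,1)$, the double antiderivative $\psi(y):=\int_0^y\!\int_0^s u''(\tau)\,d\tau\,ds$ belongs to $H^2(0,1)$ with $\psi''=u''$ distributionally, so $u-\psi$ has vanishing distributional second derivative and hence coincides a.e.\ with an affine function; thus $u\in H^2(0,1)$ and, after modification on a null set, $u\in C^1([0,1])$, which legitimizes the pointwise manipulations below.

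The heart of the matter is then the interpolation estimate on $(0,1)$. For $x\in(0,1)$ and $r>0$ admissible, averaging $|u|$ over two intervals of length $\sim r$ lying on either side of $x$ produces points $\xi_\pm$ with $|u(\xi_\pm)|\lesssim r^{-1}\|u\|_{L^1}$ and $\xi_+-\xi_-\gtrsim r$; the mean value theorem yields $\eta$ with $|u'(\eta)|\lesssim r^{-2}\|u\|_{L^1}$, and integrating $u''$ from $\eta$ to $x$ and applying Cauchy--Schwarz gives
$$
|u'(x)|\le \frac{C\,\|u\|_{L^1}}{r^{2}}+\sqrt{2r}\;\Bigl(\,\int_{(x-r,\,x+r)\cap(0,1)}|u''|^{2}\,\Bigr)^{1/2}.
$$
I would then insert this into $\int_0^1|u'|^{4/3}$ and choose the scale $r=r(x)$ adaptively --- small where $u''$ concentrates in $L^2$, large elsewhere, organized by a dyadic/Calder\'on--Zygmund decomposition of $(0,1)$ --- to obtain $\|u'\|_{L^{4/3}(0,1)}\lesssim\|u\|_{L^1}^{1/2}\|u''\|_{L^2}^{1/2}+\|u\|_{L^1}$. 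A cleaner equivalent route is real interpolation: $u'$ lies in $B^{-1}_{1,\infty}$ (because $u\in L^1$) and in $H^1=B^1_{2,2}$ (because $u''\in L^2$), and $[B^{-1}_{1,\infty},B^1_{2,2}]_{1/2,1}=B^0_{4/3,1}\hookrightarrow L^{4/3}$, with the $K$-functional bound giving directly $\|u'\|_{L^{4/3}}\lesssim\|u'\|_{B^{-1}_{1,\infty}}^{1/2}\,\|u'\|_{B^1_{2,2}}^{1/2}\lesssim\|u\|_{L^1}^{1/2}\,\|u''\|_{L^2}^{1/2}$, with a bounded extension operator accounting for the additive $\|u\|_{L^1}$ in the passage to $\R$.

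The main obstacle is precisely this last step. The naive approach --- partition $(0,1)$ into subintervals and bound $\|u'\|_{L^{4/3}(I)}$ by $|I|^{3/4}\|u'\|_{L^\infty(I)}$ on each, or equivalently a non-adaptive choice of $r$ above --- loses in the interpolation exponent and only yields the weaker bound $\|u'\|_{L^{4/3}}\lesssim\|u\|_{L^1}^{4/9}\|u''\|_{L^2}^{5/9}+\|u\|_{L^1}$, which does not imply the assertion when $\|u''\|_{L^2}\gg\|u\|_{L^1}$. Extracting the sharp exponent $\tfrac12$ is exactly the nontrivial content of Gagliardo--Nirenberg, and genuinely requires either the scale-adaptive/dyadic selection or the interpolation-space viewpoint above.
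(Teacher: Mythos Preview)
Your opening remark is exactly how the paper handles this: Proposition~\ref{prop:GagliardoNirenberg2} is stated without proof, merely ``recalled'' as one of the classical inequalities due to Gagliardo and Nirenberg with a citation to \cite{Gagliardo,Nirenberg}. So your first sentence already matches the paper's treatment in full.

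Everything after that---the scaling reduction, the regularity upgrade via the double antiderivative, the pointwise mean-value bound, and the two alternative routes (adaptive dyadic scale selection versus real interpolation of Besov spaces)---goes well beyond what the paper offers and constitutes a genuine self-contained sketch. The outline is sound and your identification of the parameters $(n,j,m,p,r,q,\theta)=(1,1,2,\tfrac43,2,1,\tfrac12)$ is correct. One small point worth tightening in the interpolation route: to place $u'$ in $B^1_{2,2}=H^1$ you implicitly need $u'\in L^2$, not just $u''\in L^2$; on a bounded interval this does follow once you have established $u\in H^2$, but the resulting bound then involves $\|u'\|_{L^2}$ rather than $\|u''\|_{L^2}$ directly, so an extra interpolation or the extension-to-$\R$ device you mention is indeed needed to absorb that term into the stated right-hand side. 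Your honest flag about the naive non-adaptive approach losing the sharp exponent is accurate and is precisely the content of the original Gagliardo--Nirenberg argument.
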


%%%%%%%%%%%%%%%%%%%%%%%%%%%%%%%%%%%%%%%%%%
%%%%%%%%%%%%%%%%%%%%%%%%%%%%%%%%%%%%%%%%%%
%%
%%
%%					Lifting inequalities
%%
%%
%%%%%%%%%%%%%%%%%%%%%%%%%%%%%%%%%%%%%%%%%%
%%%%%%%%%%%%%%%%%%%%%%%%%%%%%%%%%%%%%%%%%%
%
%
%
%
%
%
%
%
%
%
%
%
%
%
\subsection{Lifting inequalities}\label{sec:lifting}
We need to relate the $L^2$ norm of the hessian with its equivalent on the boundary, i.e., the $H^{\frac{1}{2}}$ fractional seminorm of the derivative of the trace.
In this section, we estimate the ratio between these two seminorms.
We start with an auxiliary lemma from \cite{diBenedetto}.
\begin{lem}\label{lem:fubini-like}
Let $1\leq p < \infty$, let $E \subset \R^N$ and $F \subset \R^m$ be measurable sets and let $u \in L^p(E\times F)$.
Then
$$
\left( \int_{F} \left( \int_E |u(x,y)| \, dx \right)^p \,dy \right)^{\frac{1}{p}}
  \leq \int_E \left( \int_{F} |u(x,y)|^p \,dy \right)^{\frac{1}{p}} \, dx.
$$
\end{lem}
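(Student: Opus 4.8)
This is the classical \emph{Minkowski integral inequality}, and the plan is to prove it by duality. The case $p=1$ is in fact an equality, immediate from Tonelli's theorem since the integrand $|u|$ is non-negative, so I would assume from now on that $1<p<\infty$ and write $p':=p/(p-1)$ for the conjugate exponent.

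Set $g(y):=\int_E|u(x,y)|\,dx$; this is a well-defined non-negative function of $y$, measurable by Tonelli's theorem (applicable since $\{u\neq 0\}=\bigcup_n\{|u|>1/n\}$ is $\sigma$-finite), and the assertion to be proved is $\|g\|_{L^p(F)}\leq I$, where $I:=\int_E\|u(x,\cdot)\|_{L^p(F)}\,dx$. If $I=\infty$ there is nothing to prove, so assume $I<\infty$. Now invoke the dual description of the $L^p(F)$ norm: $\|g\|_{L^p(F)}=\sup\bigl\{\int_F g(y)h(y)\,dy\bigr\}$, the supremum being over measurable $h\geq0$ with $\|h\|_{L^{p'}(F)}\leq1$ (this identity holds unconditionally, both sides being $+\infty$ simultaneously when $g\notin L^p(F)$). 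Fix one such $h$. Since all the integrands involved are non-negative, Tonelli's theorem lets me interchange the order of integration, and then Hölder's inequality applied to the inner integral gives
$$
\int_F g(y)h(y)\,dy=\int_E\Bigl(\int_F|u(x,y)|h(y)\,dy\Bigr)dx\leq\int_E\|u(x,\cdot)\|_{L^p(F)}\|h\|_{L^{p'}(F)}\,dx\leq I.
$$
Taking the supremum over all admissible $h$ yields $\|g\|_{L^p(F)}\leq I$, which is the claim.

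I expect no real obstacle here; the only points deserving a modicum of care are the measurability of $g$ and the two appeals to Tonelli's theorem, together with the observation that the duality formula for $\|g\|_{L^p(F)}$ is valid even when that norm is infinite, so that no separate truncation step is needed. One can also avoid duality altogether, writing $\|g\|_{L^p(F)}^p=\int_E\bigl(\int_F g(y)^{p-1}|u(x,y)|\,dy\bigr)dx$, applying Hölder with exponents $p',p$ and using $\|g^{p-1}\|_{L^{p'}(F)}=\|g\|_{L^p(F)}^{p-1}$, and then dividing; but that variant first requires reducing to the case $0<\|g\|_{L^p(F)}<\infty$ by truncating $u$ and passing to the limit via monotone convergence, so the argument above is the more economical of the two.
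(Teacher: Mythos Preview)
Your argument is correct: this is exactly the standard duality proof of Minkowski's integral inequality, and all the steps (Tonelli for non-negative integrands, the dual characterisation of the $L^p$ norm valid even in the $+\infty$ case, H\"older on the inner integral) are sound. One cosmetic remark: since $E\subset\R^N$ and $F\subset\R^m$ carry Lebesgue measure, both spaces are automatically $\sigma$-finite, so your parenthetical about $\{u\neq0\}$ being $\sigma$-finite is not needed to invoke Tonelli.

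As for comparison with the paper: there is nothing to compare. The paper does not prove this lemma at all; it merely states it as an auxiliary result and cites diBenedetto's textbook for the proof. Your write-up therefore supplies what the paper omits.
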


\begin{prop}\label{prop:lifting}
Let $g \in H^{\frac{3}{2}}(0,R)$ and consider the triangle $T_R^+ := \{(x,y)\in\R^2: 0 < y < \frac{R}{2}, y < x < R-y \}$.
Then,
\begin{equation}\label{eq:lifting}
\frac{1}{8} \leq \zeta_{R,g} := \inf \left\{ \frac{\iint_{T_{R}^{+}} \bigl|D^{2} u(x,y)\bigr|^{2} \, dx \, dy}{ \int_{0}^{R} \int_{0}^{R} \frac{\bigl| g'(x) - g'(y) \bigr|^{2}}{|x-y|^{2}} \, dx \, dy} :
	 u \in H^{2}(T_R^+), Tu(\cdot,0)= g \text{ in } (0,R) \right\} \leq \frac{7}{16}.
\end{equation}
\end{prop}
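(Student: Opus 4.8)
The plan is to prove the two bounds separately, since they are of a very different nature. The lower bound $\zeta_{R,g} \geq \tfrac18$ should follow from a trace inequality comparing the full $L^2$ norm of $D^2 u$ on the triangle with the $H^{1/2}$ seminorm of $g' = (Tu)'$ on the base. The natural way to get this is to first reduce to the case $N=1$ fibers: for a function $u \in H^2(T_R^+)$ write $\partial_x u$ restricted to $y=0$ as $g'$, and use the one-dimensional trace theory relating $\|h\|_{H^{1/2}((0,R))}$ to $\|h\|_{H^1}$ of an extension. More precisely, I would extend $u$ (or rather its $x$-derivative) suitably and apply Lemma \ref{lem:seminorms} together with the elementary inequality $|g|^2_{H^{3/2}} \leq \tfrac18 |g'|^2_{H^{1/2}}$ in reverse: the key is that the harmonic-type extension minimizing the Dirichlet energy of $g'$ into a half-plane has Dirichlet energy comparable to $|g'|^2_{H^{1/2}}$ with an explicit constant, and restricting that extension to the triangle can only decrease the energy while the geometry of $T_R^+$ (base of length $R$, apex at height $R/2$) is exactly tuned so that the constant comes out to $\tfrac18$. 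The factor $\tfrac18$ here should be inherited directly from the constant appearing in the definition \eqref{eq:def_under_c} and in Lemma \ref{lem:seminorms}, so I expect the lower bound to be essentially a careful bookkeeping of that same computation restricted to the triangular domain.

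For the upper bound $\zeta_{R,g} \leq \tfrac{7}{16}$, I would exhibit an explicit competitor $u$. The obvious choice is to take $u(x,y)$ to be an affine-in-$y$ interpolation or, better, a one-parameter family built from $g$: for instance $u(x,y) := g(x) + y\,\varphi(x) + \tfrac{y^2}{2}\psi(x)$ for suitable auxiliary functions, or more cleanly $u(x,y) := \int g$-type mollification of $g$ at scale $y$. The cleanest candidate is probably the Poisson-type extension $u(x,y) = (P_y * g)(x)$ truncated to the triangle, for which $\iint |D^2 u|^2$ is computed exactly in Fourier variables as a constant times $\int |\xi|^3 |\hat g(\xi)|^2 \, d\xi = c\,|g'|^2_{H^{1/2}}$. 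The number $\tfrac{7}{16}$ presumably comes out of optimizing such an explicit extension — note $\tfrac{7}{16} = \tfrac12 \cdot \tfrac78$, and the discrepancy with the lower bound $\tfrac18$ is precisely the gap the authors flag between $\ul c$ and $\ol c$. So for the upper bound I would: (1) fix the explicit extension, (2) compute $\iint_{T_R^+} |D^2 u|^2$ exactly or estimate it from above by the half-plane integral, (3) divide by $|g'|^2_{H^{1/2}((0,R))}$ and verify the ratio is at most $\tfrac{7}{16}$ uniformly in $g$.

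The main obstacle I anticipate is the geometry of the triangular domain $T_R^+$ rather than a half-strip or half-plane: the standard trace/extension theorems are stated on half-spaces, so I will need to either (a) extend $u$ from $T_R^+$ to a larger, more convenient domain without increasing the $L^2$ norm of the Hessian by more than a controlled factor, or (b) redo the Fourier/integration-by-parts computation directly on the triangle, where the cross terms from the slanted sides $x = y$ and $x = R-y$ must be shown to have a sign or be negligible. For the lower bound in particular, the difficulty is that we must pass from a two-dimensional Hessian bound to a one-dimensional fractional seminorm along a single edge while only integrating over a triangle that "sees" the base; I expect this is handled by slicing $T_R^+$ into vertical segments, applying the one-dimensional lifting estimate (a $1$D analogue of Lemma \ref{lem:seminorms} plus an interpolation/Hardy argument à la Proposition \ref{prop:Hardy}) on each slice, and then using Lemma \ref{lem:fubini-like} to reassemble. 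The constants $\tfrac18$ and $\tfrac{7}{16}$ should survive this procedure because the triangle's aspect ratio is chosen to make the slice-wise estimates tight.
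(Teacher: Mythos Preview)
Your upper-bound plan is in the right direction but not yet operational. The paper's competitor is exactly the ``mollification of $g$ at scale $y$'' you mention in passing, namely
\[
u(x,y) := \frac{1}{2y}\int_{x-y}^{x+y} g(t)\,dt,
\]
and the point is that this averaging is well-defined precisely on $T_R^+$ when $g$ is only given on $(0,R)$; the Poisson extension you favor lives on a half-plane and needs $g$ on all of $\R$, so it does not directly yield a competitor here. The constant $\tfrac{7}{16}$ is not obtained by Fourier analysis but by computing the three second partials of the averaging extension explicitly: $\|\partial_{xx}u\|_{L^2(T_R^+)}^2 = \tfrac14 |g'|_{H^{1/2}}^2$ by a direct change of variables, while $\|\partial_{xy}u\|_{L^2}^2$ and $\|\partial_{yy}u\|_{L^2}^2$ are each bounded by $\tfrac{1}{16}|g'|_{H^{1/2}}^2$ via Hardy's inequality (Proposition~\ref{prop:Hardy}). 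Summing with the cross term counted twice gives $\tfrac14 + \tfrac{2}{16} + \tfrac{1}{16} = \tfrac{7}{16}$.

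Your lower-bound plan has a genuine gap. Invoking Lemma~\ref{lem:seminorms} ``in reverse'' is not meaningful, and the sentence ``restricting that extension to the triangle can only decrease the energy'' points the inequality the wrong way: for a lower bound on $\zeta_{R,g}$ you must show that the Hessian energy over the \emph{small} triangle already dominates $\tfrac18|g'|_{H^{1/2}}^2$ for \emph{every} admissible $u$, so shrinking the domain works against you. Vertical slicing does not help either, because a one-dimensional estimate on the fiber $\{x\}\times(0,\min(x,R-x))$ controls $g'(x)$ pointwise, not the double-integral quantity $\iint |g'(x)-g'(y)|^2/|x-y|^2$. The paper's key idea, which you are missing, is to slice \emph{diagonally}: for $v:=\nabla u$ and any pair $x\pm y\in(0,R)$, write
\[
v(x+y,0)-v(x-y,0) = -\int_0^1 \nabla v(x+y-ty,ty)\cdot(-y,y)\,dt + \int_0^1 \nabla v(x-y+ty,ty)\cdot(y,y)\,dt,
\]
i.e.\ integrate along the two slanted segments from $(x\pm y,0)$ up to the interior point $(x,y)\in T_R^+$. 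Dividing by $2y$, squaring, and applying Minkowski's integral inequality (Lemma~\ref{lem:fubini-like}) over the parametrization then gives $|Tv(\cdot,0)|_{H^{1/2}(0,R)}^2 \leq 8\|\nabla v\|_{L^2(T_R^+)}^2$, which is exactly the lower bound after setting $v=\nabla u$. The triangle geometry enters because the apex $(x,y)$ of each such sub-triangle lies in $T_R^+$ exactly when $0<x-y<x+y<R$.
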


\begin{proof}
We divide the proof in two steps.

\paragraph{Step 1:} Upper bound.

Define the diamond 
\begin{equation}\label{eq:diamond}
T_R := \left\{(x,y)\in\R^2: 0 \leq x \leq R, |y| \leq \min\{ x, R-x\} \right\}.
\end{equation}

Given a function $g \in H^{\frac{3}{2}}(0, R)$, we lift it to the diamond $T_R$ by
$$
u(x,y) := \frac{1}{2y} \int_{x-y}^{x+y} g(t) \, dt.
$$

We are only interested in the lifting on the positive part of the diamond, i.e., on the triangle $T_R^+$, but observe that $u(x,\cdot)$ is even, and we will take advantage of that fact for some estimates.
Since $g$ is continuous, one deduces immediately that $u$ is continuous and
$$
Tu'(x,0)
  = \lim_{y \to 0^+} \fpar{u}{x}(x,y)
  = \lim_{y \to 0^+} \frac{g(x+y)-g(x-y)}{2y}
  = g'(x).
$$

Moreover,
\begin{align*}
& \fpar{^2u}{x^2}(x,y) = \frac{g'(x+y)-g'(x-y)}{2y}, \\
& \fpar{^2u}{x \partial y}(x,y) = \frac{g'(x+y)+g'(x-y)}{2y} - \frac{g(x+y)-g(x-y)}{2y^2}, \\
& \fpar{^2u}{y^2}(x,y) = \frac{g'(x+y)-g'(x-y)}{2y} - \frac{g(x+y)+g(x-y)}{y^2}+ \frac{1}{y^3} \int_{x-y}^{x+y} g(t) \, dt.
\end{align*}

We can easily deduce that
$ \left\| \fpar{^2u}{x^2}\right\|_{L^2(T_R^+)}^2 = \frac14 |g'|_{H^{\frac{1}{2}}(0,R)}^2$, and note that
$$
\fpar{^2u}{x \partial y}(x,y) 
	= \frac{1}{2y^2}\int_{0}^{y}\left(g'(x+y) -g'(s+x) + g'(x-y) -g'(s+x-y)\right) \,ds.
$$
Use Hardy's inequality from Proposition \ref{prop:Hardy} to obtain
$$
\left\| \fpar{^2u}{x\partial y} \right\|_{L^2(T_R^+)}^2 \leq \frac{1}{16} |g'|_{H^{\frac{1}{2}}(0,R)}^2.
$$

Finally, notice that
$$
\fpar{^2u}{y^2}(x,y) 
 = \frac{1}{y^3} \int_{0}^{y} f_2(r;x,y)\, dr,
$$
where $\ds f_2(r;x,y):=\int_{r+x}^{x+y} \left( g'(x+y) - g'(s) \right) \, ds + \int_{x-y}^{r+(x-y)} \left( g'(s) - g'(x-y) \right) \, ds$.
Using Hardy's inequality in Proposition \ref{prop:Hardy} again, we deduce that
$$
\left\| \fpar{^2u}{y^2} \right\|_{L^2(T_R^+)}^2 \leq \frac{1}{16} |g'|_{H^{\frac{1}{2}}(0,R)}^2.
$$
We finally put the three estimates for the partial derivatives of $u$ of second order together to obtain
$$
\iint_{T_R^+} | \grad^2 u|^2 \, dx \, dy \leq \frac{7}{16} |g'|_{H^{\frac{1}{2}}(0,R)}^2.
$$

\paragraph{Step 2:} Lower Bound in \eqref{eq:lifting}

{\bf Case 1:} Assume that $v\in L^1(T_R^+;\R^2) \cap C^{\infty}(T_R^+;\R^2)$ is such that $\grad v \in L^2(T_R^+;\R^{2\times 2})$.

First it is easy to prove that
$$
\left|\frac{v(x+y,0) - v(x-y,0)}{2y}\right|^2
  \leq \frac{1}{2} \left( \int_0^1 \bigl| \grad v( x+y - ty, ty) \bigr| \, dt  + \int_0^1 \bigl| \grad v(x-y + ty, ty) \bigr| \, dt \right)^2.
$$

By estimating the right-hand side using Lemma \ref{lem:fubini-like} and Minkowski inequality, we obtain
$$
|v(\cdot,0)|_{H^{\frac{1}{2}}(0,R)}^2 \leq 8 \|\grad v\|_{L^2(T_R^+)}^2.
$$

{\bf Case 2:} Assume that $v\in L^1(T_R^+;\R^2)$ is such that $\grad v \in L^2(T_R^+;\R^{2\times 2})$.

First by reflection, extend the function to $v\in L^1(T_R;\R^2)$ with $\grad v \in L^2(T_R;\R^{2\times 2})$.
Let $\varphi_{\eps}$ be the standard mollifiers and consider $v_{\eps} := v \star \varphi_{\eps}$
defined in $T_R^{\eps} := \left\{ (x,y) \in T_R: d\bigl((x,y),\partial T_R\bigr) > \eps \right\}$. 
Then $v_{\eps} \to u$ in $L^1_{\loc}(A;\R^2)$, $\grad v_{\eps} \to \grad v$ in $L^2(A;\R^{2\times 2})$ and $v_{\eps}(\cdot ,0) \to Tv$ in $L^1\bigl(A \cap (\R \times \{0\});\R^2 \bigr)$ for any open set $A \ssubset T_R$. 
We can find a subsequence (not relabeled) such that $v_{\eps}(x ,0) \to Tv(x)$ for $\Leb{1}$-a.e. $x \in A \cap (\R \times \{0\})$.
Then by Case 1, we have
\begin{align*}
\int_{A \cap (\R \times \{0\})} \int_{A \cap (\R \times \{0\})} \left|\frac{Tv(x) - Tv(y)}{x-y}\right|^2 \, dx \, dy
	& \leq \liminf_{\eps \to 0^+} \int_{A \cap (\R \times \{0\})} \int_{A \cap (\R \times \{0\})} \left|\frac{v_{\eps}(x,0) - v_{\eps}(y,0)}{x-y}\right|^2 \, dx \, dy  \\
	& \leq 8 \lim_{\eps \to 0^+} \iint_{A \cap T_R^+} |\grad v_{\eps}|^2 \, dx \, dy
	= 8 \iint_{A \cap T_R^+} |\grad v|^2 \, dx \, dy.
\end{align*}

Let $A_n \subset A_{n+1} \ssubset T_R$ be such that $T_R = \bigcup A_n$. Then one deduces that
$$
\int_0^R \int_0^R \left|\frac{Tv(x) - Tv(y)}{x-y}\right|^2 \, dx \, dy \\
  \leq 8 \iint_{T_R^+} |\grad v|^2 \, dx \, dy.
$$

Apply this result to $v := \grad u$ to deduce
$$
\int_0^R \int_0^R \left|\frac{g(x) - g(y)}{x-y}\right|^2 \, dx \, dy
  \leq 8 \iint_{T_R^+} |\grad^2 u|^2 \, dx \, dy,
$$
which proves the lower bound in \eqref{eq:lifting}.
\end{proof}

%%%%%%%%%%%%%%%%%%%%%%%%%%%%%%%%%%%%%%%%%%
%%%%%%%%%%%%%%%%%%%%%%%%%%%%%%%%%%%%%%%%%%
%%
%%
%%					BV in R^N
%%
%%
%%%%%%%%%%%%%%%%%%%%%%%%%%%%%%%%%%%%%%%%%%
%%%%%%%%%%%%%%%%%%%%%%%%%%%%%%%%%%%%%%%%%%

\subsection{Slicing on BV}\label{sec:BV}

We use here the same notation as in section \ref{ssec:slicing}.

\begin{thm}[slicing of $BV$ functions]\label{thm:slicing_BV}
Let $u \in L^1(\Omega)$.
Then $u \in BV(\Omega)$ if and only if there exist $N$ linearly independent unit vectors $e_i$ such that $u^y_{e_i} \in BV(\Omega^y_{e_i})$ for $\Leb{N-1}$-a.e. $y \in \Omega_{e_i}$ and
$$
\int_{\Omega^y_{e_i}} |Du^y_{e_i}|(\Omega^y_{e_i}) \, dy < \infty
$$
for all $i=1,\ldots,N$.
\end{thm}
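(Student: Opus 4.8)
The plan is to prove this by slicing in coordinate directions and reducing to the well-known one-dimensional characterization of $BV$ via the variation on lines (the classical result of the theory of $BV$ functions, see e.g.\ the monograph of Ambrosio--Fusco--Pallara). The statement splits naturally into two implications, and the forward implication is the standard and easy one, while the converse is the substantive part.

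First I would treat the ``only if'' direction. If $u \in BV(\Omega)$, then for any unit vector $e$ one uses the structure theorem for $BV$ functions: the distributional derivative $Du$ is a finite $\R^N$-valued Radon measure, and by the coarea-type disintegration of $Du\cdot e$ along the fibers $\Omega^y_e$ one obtains that $u^y_e \in BV(\Omega^y_e)$ for $\Leb{N-1}$-a.e.\ $y \in \Omega_e$, together with the identity (or at least the inequality) $\int_{\Omega_e} |Du^y_e|(\Omega^y_e)\, dy = |D_e u|(\Omega) \leq |Du|(\Omega) < \infty$. Taking the $e_i$ to be the standard basis vectors $e_1,\dots,e_N$ (any $N$ linearly independent unit vectors work, but the canonical basis makes the disintegration cleanest) gives the conclusion. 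This part I would state quickly, citing the classical one-dimensional sections theorem.

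The converse is where the work lies. Suppose $u \in L^1(\Omega)$ and the $N$ line-variation integrals are finite. I would first reduce to showing that each distributional partial derivative $D_{e_i} u$ is a finite Radon measure; since the $e_i$ span $\R^N$, finiteness of all $N$ of these partials forces $Du$ to be a finite $\R^N$-valued measure, hence $u \in BV(\Omega)$. To bound $D_{e_i}u$, fix $i$ and a test function $\varphi \in C_c^1(\Omega)$ with $\|\varphi\|_\infty \leq 1$. Using Fubini to write the integral $\int_\Omega u\, \partial_{e_i}\varphi$ as an iterated integral over $\Omega_{e_i}$ and then over the fiber $\Omega^y_{e_i}$, the inner integral is $\int_{\Omega^y_{e_i}} u^y_{e_i}(t)\, (\varphi^y_{e_i})'(t)\, dt$, which by the one-dimensional $BV$ property is bounded in absolute value by $|Du^y_{e_i}|(\Omega^y_{e_i})$. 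Integrating over $y$ and invoking the finiteness hypothesis yields $|\langle D_{e_i}u, \varphi\rangle| \leq \int_{\Omega_{e_i}} |Du^y_{e_i}|(\Omega^y_{e_i})\, dy < \infty$, so by the Riesz representation theorem $D_{e_i}u$ is a finite signed Radon measure.

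The main obstacle, and the point requiring care, is the measurability and integrability in $y$ of the map $y \mapsto |Du^y_{e_i}|(\Omega^y_{e_i})$, and more subtly, justifying that the pointwise (in $y$) restrictions $u^y_{e_i}$ are genuinely well-defined in $BV$ for a.e.\ $y$ starting only from $u \in L^1$ — Fubini gives $u^y_{e_i} \in L^1(\Omega^y_{e_i})$ for a.e.\ $y$, but one must argue that the finiteness of the stated integral already presupposes (or implies, after choosing a good representative) that these slices lie in $BV$. I would handle this by working with a precise representative of $u$ and approximating $\varphi^y_{e_i}$ by a countable dense family, so that the measurability of the relevant suprema is automatic; the integrability then follows directly from the hypothesis. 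Once $D_{e_i}u$ is controlled for each $i$, assembling $Du$ and concluding $u \in BV(\Omega)$ is immediate from linear algebra and the definition of $BV$.
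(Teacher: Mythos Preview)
The paper does not actually prove this theorem: it is stated in the preliminaries (Section~\ref{sec:BV}) as a known background result, in the spirit of the references \cite{AFP,EG} cited nearby, and is used as a black box in the compactness argument. Your proposal is the standard proof one finds in those references (disintegration of $D_e u$ along fibers for the forward direction; Fubini plus the one-dimensional duality definition of variation, followed by Riesz representation, for the converse), and it is correct in outline. The measurability concern you flag is real but routine, handled exactly as you suggest by taking a countable dense family of test functions.
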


We state an immediate corollary of Theorem 1.24 from \cite{Giusti}.

\begin{prop}\label{prop:giusti}

Let $\Omega \subset \R^N$ be a bounded open Lipschitz set and let $E \subset \Omega$ be a set of finite perimeter.

Then there are sets $E_n \subset \Omega$ of class $C^2$ such that
\begin{equation} \label{eq:approxSuSv}
\begin{cases}
\Leb{N}(E \triangle E_n) \to 0, \\
\Haus{N-1}(\partial E \triangle  \partial E_n) \to 0.
\end{cases}
\end{equation}

\end{prop}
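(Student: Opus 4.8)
The plan is to apply Theorem~1.24 of \cite{Giusti} (approximation of $BV$ functions by smooth functions in the strict sense) to the characteristic function $\chi_E \in BV(\Omega)$. This produces a sequence $u_n \in C^{\infty}(\Omega)$ with $u_n \to \chi_E$ in $L^1(\Omega)$ and $\int_{\Omega} |\grad u_n| \, dx \to |D\chi_E|(\Omega) = \Per_{\Omega}(E)$. The sets $E_n$ will then be chosen among the superlevel sets $\{x \in \Omega : u_n(x) > t\}$ with $t \in (0,1)$, and the whole difficulty lies in selecting, uniformly in $n$, a single level $t^{\ast}$ for which these sets are simultaneously of class $C^2$, close to $E$ in measure, and of nearly optimal perimeter.

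To do this I would combine three classical ingredients. First, the coarea formula gives $\int_{\R} \Per_{\Omega}(\{u_n > t\}) \, dt = \int_{\Omega} |\grad u_n| \, dx$, while the layer-cake identity gives $\int_{\R} \| \chi_{\{u_n > t\}} - \chi_{\{\chi_E > t\}} \|_{L^1(\Omega)} \, dt = \| u_n - \chi_E \|_{L^1(\Omega)} \to 0$, with $\chi_{\{\chi_E > t\}} = \chi_E$ for $t \in (0,1)$. Second, Sard's theorem, applied to each $u_n \in C^{\infty}(\Omega)$ and unioned over $n$, shows that $\Leb{1}$-a.e.\ $t$ is a regular value of every $u_n$, so that $\{u_n = t\} \cap \Omega$ is a $C^{\infty}$ (hence $C^2$) hypersurface, which is the relative boundary of $\{u_n > t\} \cap \Omega$ in $\Omega$. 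Third, after passing to a subsequence along which $u_n \to \chi_E$ pointwise $\Leb{N}$-a.e., one has $\chi_{\{u_n > t\}} \to \chi_E$ in $L^1(\Omega)$ for every $t \in (0,1)$ (since $\chi_E \ne t$ a.e.), so lower semicontinuity of the perimeter yields $\liminf_n \Per_{\Omega}(\{u_n > t\}) \geq \Per_{\Omega}(E)$ for every such $t$; feeding this bound into the coarea identity and applying Fatou's lemma forces $\liminf_n \Per_{\Omega}(\{u_n > t\}) = \Per_{\Omega}(E)$ for $\Leb{1}$-a.e.\ $t \in (0,1)$.

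One then fixes a single $t^{\ast} \in (0,1)$ that is a regular value of every $u_n$ and realizes this last equality, extracts a further subsequence (not relabeled) along which $\Per_{\Omega}(\{u_n > t^{\ast}\}) \to \Per_{\Omega}(E)$, and sets $E_n := \{x \in \Omega : u_n(x) > t^{\ast}\}$. Then $E_n$ is a set of class $C^2$ (relative to $\Omega$), $\Leb{N}(E \triangle E_n) = \| \chi_{E_n} - \chi_E \|_{L^1(\Omega)} \to 0$, and $\Haus{N-1}(\partial E_n \cap \Omega) = \Per_{\Omega}(E_n) \to \Per_{\Omega}(E)$; combined with the $L^1$ convergence, the latter also gives that $|D\chi_{E_n}|$ converges weakly-$\ast$ to $|D\chi_E|$ as Radon measures on $\Omega$, which is the form of \eqref{eq:approxSuSv} used in the sequel. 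I expect the main obstacle to be precisely this level selection — reconciling the ``$\Leb{1}$-a.e.\ $t$'' coming from Sard's theorem with the ``$\Leb{1}$-a.e.\ $t$'' coming from the coarea--Fatou argument, and then carrying out the diagonal extraction in $n$. No genuine difficulty arises near $\partial\Omega$, since Theorem~1.24 is an interior statement and the superlevel sets are automatically relatively open in $\Omega$ with smooth relative boundary; if one insisted instead on sets whose full topological boundary is a $C^2$ manifold, it would suffice to first extend $\chi_E$ to a set of finite perimeter in $\R^N$ using the Lipschitz regularity of $\partial\Omega$ and run the same argument on a slightly larger domain.
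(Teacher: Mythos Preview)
The paper does not actually prove this proposition; it merely states it as ``an immediate corollary of Theorem~1.24 from \cite{Giusti}.'' Your proposal is precisely the standard derivation of that corollary --- smooth approximation in the strict $BV$ sense, coarea formula, Sard's theorem, and a Fatou argument to select a good level --- so you and the paper are in complete agreement on approach, with the only difference being that you have written out the details the paper omits.

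One remark worth keeping: your observation that what you actually obtain is strict convergence ($\Leb{N}(E\triangle E_n)\to 0$ together with $\Per_{\Omega}(E_n)\to\Per_{\Omega}(E)$, hence weak-$\ast$ convergence of $|D\chi_{E_n}|$), rather than the literal statement $\Haus{N-1}(\partial E\triangle\partial E_n)\to 0$, is well taken. Read verbatim with topological boundaries, the displayed conclusion \eqref{eq:approxSuSv} is problematic (the topological $\partial E$ need not even be $\Haus{N-1}$-$\sigma$-finite, and for generic levels $\partial E_n$ and $\partial^\ast E$ are transversal, so their symmetric difference has measure $\Per_\Omega(E)+\Per_\Omega(E_n)$, not $0$). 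What the paper actually uses in the proof of Theorem~\ref{thm:critical}(ii) is exactly the perimeter convergence you establish, so your reformulation is the correct one.
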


\begin{prop}[see section 5.10 in \cite{EG}]
\label{prop:Bv_bdy}

Let $A \subset \R^N$ be an open set, let $E \subset A$ be a Borel set, let $e$ be an arbitrary unit vector, and $E$ has finite perimeter in $A$.
Then $E^y_e$ has finite perimeter in $A^y_e$ and $\partial E^y_e \cap A^y_e = (\partial E \cap A)^y_e$, and
$$
\int_{A_e} \Haus{0}(\partial E^y_e \cap A^y_e) \, dy
	= \int_{A\partial E \cap A}  \left< \nu_E, e\right> \, d\Haus{N-1}.
$$
Conversely, $E$ has finite perimeter in $A$ if there exist $N$ linearly independent unit vectors $e_i$, $i=1,\ldots,N$ such that
$$
\int_{A_{e_i}} \Haus{0}(\partial E^y_{e_i} \cap A^y_{e_i}) \, dy < \infty
$$
for all $i=1,\ldots,N$.
\end{prop}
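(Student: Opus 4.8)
The plan is to deduce this from the $BV$ slicing Theorem \ref{thm:slicing_BV}, using throughout that $E$ has finite perimeter in $A$ if and only if $\chi_E \in BV(A)$ and that $(\chi_E)^y_e = \chi_{E^y_e}$. Here I read $\partial E$ as the reduced boundary of $E$ (equivalently, up to $\Haus{N-1}$-null sets, the essential boundary) and $\nu_E$ as its measure-theoretic unit normal, so that by De Giorgi's structure theorem (see \cite{EG}, \cite{Giusti}) one has $D\chi_E = \nu_E\,\Haus{N-1}\restr{(\partial E\cap A)}$ with $|\nu_E|=1$ $\Haus{N-1}$-a.e. I would also recall the elementary one-dimensional fact that a set $S$ of finite perimeter in an open interval $J\subset\R$ coincides $\Leb1$-a.e.\ with a finite union of disjoint intervals, so that $S$ has finite perimeter in $J$ exactly when $\partial S\cap J$ is finite, in which case $|D\chi_S|(J)=\Haus0(\partial S\cap J)$.

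For the forward implication, assuming $\chi_E\in BV(A)$, I would apply Theorem \ref{thm:slicing_BV} in the direction $e$ to get $\chi_{E^y_e}\in BV(A^y_e)$ for $\Leb{N-1}$-a.e.\ $y\in A_e$, hence $E^y_e$ has finite perimeter in $A^y_e$; the one-dimensional fact above then identifies $|D\chi_{E^y_e}|(A^y_e)$ with $\Haus0(\partial E^y_e\cap A^y_e)$. Next I would invoke the quantitative slicing identity underlying Theorem \ref{thm:slicing_BV} (see \cite{EG}), namely $|D_e\chi_E|(A)=\int_{A_e}|D\chi_{E^y_e}|(A^y_e)\,dy$, and combine it with $D_e\chi_E=\langle\nu_E,e\rangle\,\Haus{N-1}\restr{(\partial E\cap A)}$, whose total variation is $\int_{\partial E\cap A}|\langle\nu_E,e\rangle|\,d\Haus{N-1}$. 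Chaining these equalities gives
$$
\int_{A_e}\Haus0(\partial E^y_e\cap A^y_e)\,dy=\int_{\partial E\cap A}\bigl|\langle\nu_E,e\rangle\bigr|\,d\Haus{N-1},
$$
which is the asserted formula (the integrand carrying the absolute value, since the left-hand side counts points). The pointwise set identity $\partial E^y_e\cap A^y_e=(\partial E\cap A)^y_e$ for a.e.\ $y$ I would then extract from the fine form of the slicing together with the remark that the ``bad set'' $B:=\{x\in\partial E\cap A:\langle\nu_E(x),e\rangle=0\}$ satisfies $\int_{A_e}\Haus0(B^y_e)\,dy=\int_B|\langle\nu_E,e\rangle|\,d\Haus{N-1}=0$ by the coarea formula for the $\Haus{N-1}$-rectifiable set $B$, so $B^y_e=\emptyset$ for a.e.\ $y$ and $B$ is invisible in almost every slice.

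For the converse, I would assume $\Leb{N}(E\cap A)<\infty$ (so that $\chi_E\in L^1(A)$) and that there are $N$ linearly independent unit vectors $e_i$ with $\int_{A_{e_i}}\Haus0(\partial E^y_{e_i}\cap A^y_{e_i})\,dy<\infty$ for each $i$. By the one-dimensional fact this says $(\chi_E)^y_{e_i}\in BV(A^y_{e_i})$ for a.e.\ $y$ with $\int_{A_{e_i}}|D(\chi_E)^y_{e_i}|(A^y_{e_i})\,dy<\infty$, so the converse half of Theorem \ref{thm:slicing_BV} yields $\chi_E\in BV(A)$, i.e.\ $E$ has finite perimeter in $A$.

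The only delicate point---and the step I expect to be the real obstacle---is the measure-theoretic bookkeeping in the forward direction that matches slices of the reduced boundary of $E$ with reduced boundaries of the slices, i.e.\ the behaviour at points where $\nu_E\perp e$; everything else (the one-dimensional $BV$ structure, the coarea identity for $B$, and the reduction to Theorem \ref{thm:slicing_BV}) is routine.
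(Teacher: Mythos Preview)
The paper does not give its own proof of this proposition: it is stated as a preliminary result quoted from \cite{EG}, section~5.10, and no argument is supplied. So there is nothing in the paper to compare your proposal against.

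That said, your proposal is the standard route and is essentially what one finds in \cite{EG} or \cite{AFP}: reduce to $\chi_E\in BV(A)$, invoke the slicing theorem for $BV$ functions (here Theorem~\ref{thm:slicing_BV}) together with its quantitative form $|D_e\chi_E|(A)=\int_{A_e}|D\chi_{E^y_e}|(A^y_e)\,dy$, use De~Giorgi's structure theorem to rewrite $|D_e\chi_E|$ as $|\langle\nu_E,e\rangle|\,\Haus{N-1}\restr{(\partial E\cap A)}$, and identify one-dimensional perimeters with point counts. Your handling of the set identity via the coarea formula on the $\Haus{N-1}$-rectifiable ``bad set'' $\{\langle\nu_E,e\rangle=0\}$ is also the standard device. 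You are right that the integrand in the displayed formula should carry $|\langle\nu_E,e\rangle|$ rather than $\langle\nu_E,e\rangle$; the paper's statement has this and an evident typo in the domain of integration. Your flagged ``delicate point'' is genuinely the only non-routine step, and it is exactly the one treated in the cited reference.
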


%%%%%%%%%%%%%%%%%%%%%%%%%%%%%%%%%%%%%%%%%%
%%%%%%%%%%%%%%%%%%%%%%%%%%%%%%%%%%%%%%%%%%
%%
%%
%%					BV on a manifold
%%
%%
%%%%%%%%%%%%%%%%%%%%%%%%%%%%%%%%%%%%%%%%%%
%%%%%%%%%%%%%%%%%%%%%%%%%%%%%%%%%%%%%%%%%%

\subsection{Functions of bounded variation on a manifold} \label{ssec:manifold}

We consider several spaces of functions with domains $A \subset \R^{N}$ which are not open.  
Specifically, $A$ will be the boundary of an open and bounded set $\Omega$ of class $C^{2}$ and so it will be a compact Riemannian manifold (without boundary) of class $C^{2}$ and dimension $N-1$ in $\R^{N}$. Such a manifold is endowed with a unit normal field $\nu$ which is continuous and defined for every $x \in A$.
 In this section we give a brief definition of these spaces. For more details see \cite{AFP,EG,Hebey2}.

\paragraph{The space of integrable functions on a manifold.} 

Let $A \subset \R^N$ be a compact Riemannian manifold (without boundary) of class $C^1$ and dimension $N-1$ and define the restriction measure $\Haus{N-1}\restr{A}(E) := \Haus{N-1}(E \cap A)$.
A function $v$ is said to be integrable on $A$, and we write $v \in L^1(A;\Haus{N-1}\restr{A})$, if and only if $v$ is $\Haus{N-1}\restr{A}$-measurable and $\Haus{N-1}\restr{A}$-summable, precisely
\begin{align*}
& v^{-1}(J) \text{ is } \Haus{N-1}\restr{A}\text{-measurable for every open set } J \subset \R; \\
& \int_A |v(x)| \, d\Haus{N-1}(x) < \infty.
\end{align*}

\paragraph{The space of functions of bounded variation on a manifold.}

We give a short introduction to the space of functions of bounded variation on a manifold. For more details we refer to \cite{MirPal}.

Let $T^{\star}A$ be the cotangent bundle of $A$ and let $\Gamma(T^{\star}A)$ be the space of $1$-forms on $A$.
Then, given a function $v \in L^1(A)$, define the variation of $v$ by
\begin{equation}\label{eq:def_var_v}
|Dv|(A) := \sup \left\{ \int_A v \,\divf w \, d\Haus{N-1}: w \in \Gamma_c(T^{\star}A), |w|\leq 1\right\}.
\end{equation}
Then $v\in L^1(A)$ is said to be a function of bounded variation, i.e., $v \in BV(A)$ if $|Dv|(A)<\infty$.
Moreover, if $v=\chi_E$ for some set $E \subset A$, then $E$ has finite perimeter if and only if $v \in BV(A)$, and
$$
\Per_A(E) = |Dv|(A)=\Haus{N-2}(E\cap A) < \infty.
$$

\begin{prop}\label{prop:giusti:finiteper}

Let $\Omega \subset \R^N$ be an open bounded set of class $C^2$ and let $E \subset \partial \Omega$ be a set of finite perimeter with respect to $\Haus{N-2}$.
Then there are sets $E_n \subset \partial\Omega$ of class $C^2$ such that
$$
\begin{cases}
\Haus{N-1}(E \triangle E_n) \to 0, \\
\Haus{N-2}(\partial_{\partial\Omega}E \triangle\partial_{\partial\Omega}E_n) \to 0.
\end{cases}
$$
\end{prop}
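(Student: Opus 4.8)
The plan is to reduce this statement to the Euclidean approximation result already recorded as Proposition \ref{prop:giusti}, by working in local charts of the $C^2$ manifold $\partial\Omega$ and using a partition of unity. First I would cover $\partial\Omega$ by finitely many coordinate neighborhoods $U_1,\dots,U_K$, each diffeomorphic via a $C^2$ chart $\Phi_j : V_j \subset \R^{N-1} \to U_j \subset \partial\Omega$ to an open set $V_j$ in $\R^{N-1}$, with $\Phi_j$ and $\Phi_j^{-1}$ bi-Lipschitz and with $C^2$ regularity; the point is that under such a map, sets of finite perimeter correspond to sets of finite perimeter, $C^2$ boundaries to $C^2$ boundaries, and the measures $\Haus{N-1}\restr{\partial\Omega}$ and $\Haus{N-2}\restr{\partial_{\partial\Omega}(\cdot)}$ are comparable (up to the Jacobian weight, which is bounded above and below and continuous) to $\Leb{N-1}$ and $\Haus{N-2}$ in the chart. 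Fix a $C^\infty$ partition of unity $\{\psi_j\}_{j=1}^K$ subordinate to $\{U_j\}$.

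Next I would pull the set $E$ back through each chart: $E \cap U_j$ corresponds to a set $\widetilde E_j := \Phi_j^{-1}(E \cap U_j) \subset V_j$ of finite perimeter in $V_j$ by the change of variables (using that $E$ has finite perimeter with respect to $\Haus{N-2}$, i.e. $\chi_E \in BV(\partial\Omega)$ in the sense of \eqref{eq:def_var_v}, and that the perimeter in each chart is controlled by the global one restricted to $U_j$). Apply Proposition \ref{prop:giusti} in $V_j$ to obtain $C^2$ sets $\widetilde E_j^n \subset V_j$ with $\Leb{N-1}(\widetilde E_j \triangle \widetilde E_j^n) \to 0$ and $\Haus{N-2}(\partial \widetilde E_j \triangle \partial \widetilde E_j^n) \to 0$. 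Push these forward to $C^2$ sets $E_j^n := \Phi_j(\widetilde E_j^n) \subset U_j$; the bi-Lipschitz and $C^2$ character of $\Phi_j$ gives $\Haus{N-1}(E_j^n \triangle (E\cap U_j)) \to 0$ and $\Haus{N-2}(\partial_{\partial\Omega} E_j^n \triangle \partial_{\partial\Omega}(E\cap U_j)) \to 0$. To reassemble a single set on $\partial\Omega$, I would not take a naïve union but rather glue using the partition of unity at the level of indicator functions — more precisely, define $E^n$ by a Lipschitz threshold of $\sum_j \psi_j \chi_{E_j^n}$ and invoke the coarea formula to pick a good level set of class $C^2$ for almost every threshold (a Sard-type argument applied to a mollified version), ensuring $\Haus{N-1}(E^n \triangle E) \to 0$ and, by the continuity of $\Haus{N-2}$ of the relative boundary under such a construction, $\Haus{N-2}(\partial_{\partial\Omega} E^n \triangle \partial_{\partial\Omega} E) \to 0$.

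The main obstacle is the gluing step: the pieces $E_j^n$ approximate $E$ well inside each $U_j$, but their boundaries inside the overlaps $U_i \cap U_j$ need not agree, so a direct union can create spurious $(N-2)$-dimensional boundary along the seams. The clean way around this is to carry out the approximation for $\chi_E$ as a $BV(\partial\Omega)$ function — mollifying on the manifold (e.g. via the charts and the partition of unity) to get smooth functions converging strictly in $BV$, then applying the coarea formula for $BV$ functions on the manifold together with a Sard argument to extract, for a.e. level $t \in (0,1)$, a superlevel set $\{u^n > t\}$ that is of class $C^2$ (for a.e. $t$, $t$ is a regular value of the smooth function) and whose perimeter is close to $\Per_{\partial\Omega}(E)$ by strict convergence. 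One then checks, via lower semicontinuity and the convergence of total variations, that $\Haus{N-2}(\partial_{\partial\Omega}\{u^n>t\}) \to \Per_{\partial\Omega}(E)$ and that $\{u^n > t\} \to E$ in measure, and that this forces the symmetric-difference convergence of the relative boundaries as well. This mirrors exactly the proof strategy of Theorem 1.24 in \cite{Giusti} (cited for Proposition \ref{prop:giusti}), now transported to the compact $C^2$ manifold $\partial\Omega$, and requires only that the local regularity theory and the coarea formula are available there, which they are since $\partial\Omega$ is of class $C^2$.
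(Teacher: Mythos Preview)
The paper does not supply a proof of Proposition~\ref{prop:giusti:finiteper}; it is stated without argument in Section~\ref{ssec:manifold}, immediately after the analogous Euclidean statement (Proposition~\ref{prop:giusti}, itself just cited as ``an immediate corollary of Theorem~1.24 from \cite{Giusti}''). So there is no proof to compare against, and the proposition is evidently meant to be taken as the manifold counterpart of Giusti's result, obtained by the same mechanism.

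Your plan is exactly the natural way to fill this in: mollify $\chi_E$ on the compact $C^2$ manifold (via charts and a partition of unity), obtain strict $BV$ convergence, and use coarea plus Sard to extract $C^2$ superlevel sets. This is precisely Giusti's Euclidean argument transported to $\partial\Omega$, and it is what the paper is implicitly invoking. Your identification of the gluing obstacle and the resolution via mollification at the level of functions (rather than patching sets chart-by-chart) is the right move. One point to be careful with: strict $BV$ convergence gives $\Per_{\partial\Omega}(E_n) \to \Per_{\partial\Omega}(E)$, and together with $L^1$ convergence of the characteristic functions this is what the paper actually needs downstream (in the diagonalization of the proof of Theorem~\ref{thm:critical}(ii)); the stronger statement $\Haus{N-2}(\partial_{\partial\Omega}E \triangle \partial_{\partial\Omega}E_n)\to 0$ as literally written requires an additional argument (or should be read with $\partial_{\partial\Omega}E$ meaning the reduced boundary), but this is already present in the paper's formulation of Proposition~\ref{prop:giusti} and is not specific to your approach.
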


%%%%%%%%%%%%%%%%%%%%%%%%%%%%%%%%%%%%
%
%			Characterization of Constants
%
%%%%%%%%%%%%%%%%%%%%%%%%%%%%%%%%%%%%

\section{Characterization of constants}

\begin{lem}\label{lem:c_compact}

Assume that $V:\R\to[0,\infty)$ satisfies $(H^V_1)-(H^V_3)$.
Then the constant $\ul{c}$ defined in \eqref{eq:def_under_c} belongs to $(0,\infty)$.
\end{lem}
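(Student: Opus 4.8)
The plan is to show separately that $\ul{c}>0$ and that $\ul{c}<\infty$.

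For the \emph{upper bound}, I would simply exhibit a competitor. Take any smooth, monotone function $f_0:\R\to\R$ with $f_0(-t)=\alpha$ and $f_0(t)=\beta$ for all $t\geq R_0$, for some fixed $R_0>0$; such an $f_0$ certainly lies in $H^{\frac32}_{\loc}(\R)$ with $f_0'\in H^{\frac12}(\R)$, since $f_0'$ is smooth and compactly supported. Both terms in the functional defining $\ul{c}$ are then finite: the double integral because $f_0'\in H^{\frac12}(\R)$, and $\int_{-R}^{R}V(f_0)\,dx$ because $V$ is continuous, hence bounded on the compact range of $f_0$, and the integration domain is bounded (with $R=R_0$). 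Therefore $\ul{c}\leq \tfrac18|f_0'|_{H^{\frac12}(\R)}^2+\int_{-R_0}^{R_0}V(f_0)\,dx<\infty$.

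For the \emph{lower bound} $\ul{c}>0$, the point is that one cannot make both terms simultaneously small while still connecting $\alpha$ to $\beta$. Argue by contradiction: suppose $\ul{c}=0$, and pick admissible $f_n$ (with parameters $R_n$) such that $\tfrac18|f_n'|_{H^{\frac12}}^2+\int_{-R_n}^{R_n}V(f_n)\,dx\to 0$. Since the constraint forces $f_n(-R_n)=\alpha$ and $f_n(R_n)=\beta$, and $V$ vanishes only at $\alpha$ and $\beta$, the hypothesis $(H^V_3)$ gives quadratic control of $V$ near the wells; combined with $(H^V_2)$ (or just continuity and positivity of $V$ away from the wells) one gets that $V(z)\geq c_0\min\{|z-\alpha|,|z-\beta|\}^2$ cannot hold globally, but one still has: on the set where $f_n$ stays near $\alpha$ or near $\beta$, $\int V(f_n)$ controls the $L^2$-distance to the nearest well; and on the complementary set (where $f_n$ is bounded away from both wells by $\rho$), $\int V(f_n)$ is bounded below by a positive constant times the measure of that set, forcing that set to have vanishing measure. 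Meanwhile the $H^{\frac12}$-seminorm of $f_n'\to 0$; by the fractional Gagliardo–Nirenberg/Sobolev embedding $H^{\frac12}(\R)\hookrightarrow$ (a space controlling oscillation), $f_n'$ must be close in an integral sense to a constant, and since $f_n'\in H^{\frac12}(\R)$ forces decay, to $0$; hence $f_n$ is close to a constant on each bounded window. But $f_n$ must pass from (near) $\alpha$ to (near) $\beta$, a jump of size $\beta-\alpha>0$, which a near-constant function cannot do without paying either in the $V$-term (time spent away from the wells) or in the seminorm (the transition itself costs a fixed amount of $H^{\frac12}$-energy, by a scaling/interpolation argument showing that a transition of fixed height over \emph{any} length has $H^{\frac12}$-seminorm bounded below, or pays in $V$). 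Making this dichotomy quantitative — essentially a one-dimensional De Giorgi-type slicing estimate for the nonlocal energy, showing $\tfrac18|f'|_{H^{\frac12}}^2+\int V(f)\geq \kappa(\beta-\alpha)>0$ uniformly — is the crux.

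\textbf{Main obstacle.} The hard part is the lower bound, specifically extracting a \emph{uniform} positive bound from the nonlocal $H^{\frac12}$-seminorm together with the potential term, without compactness (the competitors live on windows $[-R_n,R_n]$ with $R_n$ possibly unbounded, and translations are a non-compact symmetry). I expect to handle this by a localization argument: reduce to a bounded window where $f_n$ actually realizes most of the transition between $\alpha$ and $\beta$, use the continuity and coercivity hypotheses $(H^V_1)$–$(H^V_3)$ on $V$ to control the potential term, and combine with a lower bound for the fractional seminorm of a function with a jump of fixed size — using that $|u|_{H^{1/2}}^2$ is scale-invariant in the relevant sense, so a transition of height $\beta-\alpha$ contributes an amount bounded below independently of where and over what length it occurs, unless the function instead lingers far from both wells, which is penalized by $V$. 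Balancing these two alternatives yields the strict positivity.
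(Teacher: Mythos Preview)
Your upper bound is fine and matches the paper's: exhibit a smooth competitor with compactly supported derivative.

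For the lower bound, your outline (contradiction; localize to where the transition actually happens; exploit the tension between the potential and the seminorm) is exactly the paper's strategy, but your execution contains a misconception that blocks the argument. You assert that ``a transition of fixed height over \emph{any} length has $H^{1/2}$-seminorm bounded below'' by ``scale-invariance''. This is false for the seminorm of the \emph{derivative}: if $f(x)=\phi(x/\ell)$ with $\phi$ a fixed profile connecting $\alpha$ to $\beta$, then $f'(x)=\ell^{-1}\phi'(x/\ell)$ and a change of variables gives
\[
\iint\frac{|f'(x)-f'(y)|^2}{|x-y|^2}\,dx\,dy=\frac{1}{\ell^{2}}\iint\frac{|\phi'(s)-\phi'(t)|^2}{|s-t|^2}\,ds\,dt,
\]
which vanishes as $\ell\to\infty$. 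So the seminorm alone gives no lower bound; positivity must come from the \emph{interaction} with $V$, and your vague ``balancing'' needs to be made quantitative. Likewise, your attempt to conclude $f_n'\to 0$ directly from $|f_n'|_{H^{1/2}(-R_n,R_n)}\to 0$ fails on a growing domain: the seminorm vanishing only forces $f_n'$ toward a constant, with no uniform control.

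The paper supplies the missing concrete step. Assuming $\ul c=0$, take $f_n,R_n$ with energy $\to 0$. By the intermediate value theorem, pick $(S_n,T_n)\subset(-R_n,R_n)$ on which $f_n$ crosses from $\alpha+\delta$ to $\beta-\delta$ and stays in $[\alpha+\delta,\beta-\delta]$. Since $V\geq C_\delta>0$ there, $\int_{S_n}^{T_n}V(f_n)\to 0$ forces $T_n-S_n\to 0$. Now rescale to the \emph{fixed} interval $[0,1]$ via $g_n(t):=f_n\bigl((T_n-S_n)t+S_n\bigr)$; the seminorm transforms as
\[
\int_{S_n}^{T_n}\!\!\int_{S_n}^{T_n}\frac{|f_n'(x)-f_n'(y)|^2}{|x-y|^2}\,dx\,dy
=\frac{1}{(T_n-S_n)^{2}}\int_0^1\!\!\int_0^1\frac{|g_n'(s)-g_n'(t)|^2}{|s-t|^2}\,ds\,dt,
\]
so $\bigl|g_n'/(T_n-S_n)\bigr|_{H^{1/2}(0,1)}\to 0$, and since $T_n-S_n\to 0$ one gets $g_n'\to 0$ in $L^2(0,1)$. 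But $\int_0^1 g_n'=g_n(1)-g_n(0)=\beta-\alpha-2\delta>0$, a contradiction. The rescaling to a fixed interval is precisely what converts your ``localization'' into a quantitative statement and neutralizes the noncompactness you correctly flagged.
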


\begin{proof}

Assume by contradiction that $\ul{c}=0$.
Then there exist two sequences $\{f_n\} \subset  H^{\frac{3}{2}}_{\loc}(\R)$ and $\{R_n\} \subset (0,\infty)$ satisfying
\begin{align}
& f_n(-x)= \alpha, \quad f_n(x)=\beta \quad \text{ for all } x \geq R_n, \label{eq:prop1}\\
& \frac{1}{8}\int_{-R_n}^{R_n} \int_{-R_n}^{R_n} \frac{|f_n'(x) - f_n'(y)|^{2}}{|x-y|^{2}} \, dx \, dy + \int_{-R_n}^{R_n} V\bigl(f_n(x)\bigr) \, dx \xto{n\to\infty} 0. \label{eq:cto0}
\end{align}

Let $0 < 2\delta< \beta - \alpha$. Since $f_n(-R_n)=\alpha$, $f_n(R_n)=\beta$, and $f_n$ is continuous, there exists an interval $(S_n,T_n)$ such that
\begin{equation}\label{eq:prop2}
f_n(S_n) = \alpha + \delta < \beta - \delta = f_n(T_n),
\qquad f_n\bigl([S_n,T_n]\bigr) = [\alpha+\delta,\beta-\delta].
\end{equation}

By $(H^V_1)$ and the continuity of $V$ we have that $\ds C_{\delta} := \min_{z \in [\alpha+\delta,\beta-\delta]} V(z) > 0$.
Then by \eqref{eq:cto0}, 
$$
0 
	 = \lim_{n \to \infty} \int_{-R_n}^{R_n} V(f_n(x))\, dx 
	 \geq \lim_{n \to \infty} \int_{S_n}^{T_n} V(f_n(x))\, dx 
	 \geq \liminf_{n \to \infty} C_{\delta} (T_n-S_n),
$$
and so $T_n-S_n \to 0$.
For any $t \in [0,1]$, define
$$
g_n(t) := f_n\bigl(T_n t + S_n(1-t)\bigr).
$$

Then $g_n(0) = \alpha+\delta$ and $g(1)=\beta-\delta$. Changing variables in \eqref{eq:cto0} yields
$$
\int_{S_n}^{T_n} \int_{S_n}^{T_n} \frac{|f'_n(x)-f'_n(y)|^2}{|x-y|^2} \, dx \,dy
	= \frac{1}{(T_n-S_n)^2} \int_{0}^{1} \int_{0}^{1} \frac{|g'_n(s)-g'_n(t)|^2}{|s-t|^2} \, ds \,dt
	\to 0.
$$
This implies that $\left| \frac{g'_n}{T_n-S_n} \right|_{H^{\frac{1}{2}}(0,1)} \to 0$, and so, up to a subsequence (not relabeled), $\frac{g'_n}{T_n-S_n} \to \text{ constant}$ in $L^2(0,1)$.
Since $T_n-S_n \to 0$, this implies that $g'_n \to 0$ in $L^2(0,1)$.

On the other hand, 
$$
0 < \beta - \delta - (\alpha+\delta) = g_n(1) - g_n(0) = \int_0^1 g_n'(t) \, dt.
$$
Letting $n \to \infty$, we obtain a contradiction.
This shows that $\ul{c}>0$.

To prove that $\ul{c}<\infty$, take any function $f \in C^2$ such that $f(t) \leq \alpha$ for $t \leq -1$ and $f(t) = \beta$ for $t\geq 1$. It is easy to verify that the energy is finite.
\end{proof}

\begin{rem}
From the proof of the previous lemma, it follows that for every $0 < \delta < \frac{\beta-\alpha}{2}$, the constant
\begin{multline}\label{eq:cdelta}
c_{\delta} := \inf \biggl\{ \frac{1}{8}\int_{S}^{T} \int_{S}^{T} \frac{|f'(x) - f'(y)|^{2}}{|x-y|^{2}} \, dx \, dy 
+ \int_{S}^{T} V\bigl(f(x)\bigr) \, dx : f \in H_{\loc}^{\frac{3}{2}}(\R),\\
		f(S)= \alpha+\delta, f(T)=\beta-\delta, f_n\bigl((S_n,T_n)\bigr) = [\alpha+\delta,\beta-\delta], \text{ for some } S,T \in \R \biggr\}
\end{multline}
also belongs to $(0,\infty)$.
\end{rem}

\begin{lem}\label{lem:c}

Define the constant $\ol{c}$ as before by
\begin{multline*}
\ol{c} := \inf \left\{ \frac{7}{16}\int_{-\infty}^{\infty}\int_{-\infty}^{\infty} \frac{|f'(x) - f'(y)|^{2}}{|x-y|^{2}} \, dx \, dy 
+ \int_{-\infty}^{\infty} V\bigl(f(x)\bigr) \, dx : \right. \\
	 \left. f \in H_{\loc}^{\frac{3}{2}}(\R), f(-t)= \alpha, f(t)=\beta, \quad \text{ for all } t \geq R, R > 0 \right\},
\end{multline*}
where $V$ satisfies the properties of Theorem \ref{thm:compactness}.

Then $\ol{c} \in (0,\infty)$.

\end{lem}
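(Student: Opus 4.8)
The plan is to mimic the structure of the proof of Lemma \ref{lem:c_compact}, adapting the argument to the unbounded domain $\R$ and to the larger constant $\frac{7}{16}$ in front of the fractional seminorm (which is harmless, since it only scales the energy by a fixed factor). For the finiteness $\ol c<\infty$, I would simply exhibit an admissible competitor: take any $f\in C^2(\R)$ with $f(t)=\alpha$ for $t\le -1$ and $f(t)=\beta$ for $t\ge 1$ (e.g. a smooth monotone interpolation in between). Such an $f$ lies in $H^{3/2}_{\loc}(\R)$, has $f'$ compactly supported and smooth, so $|f'|^2_{H^{1/2}(\R)}=\int_\R\int_\R\frac{|f'(x)-f'(y)|^2}{|x-y|^2}\,dx\,dy<\infty$ (the integrand is bounded near the diagonal and decays like $|x-y|^{-2}$ at infinity with $f'(x)-f'(y)$ vanishing once both variables leave $[-1,1]$), and $\int_\R V(f(x))\,dx<\infty$ because $V(f(x))=V(\alpha)=0$ for $x\le -1$ and $V(f(x))=V(\beta)=0$ for $x\ge1$, so the integral reduces to an integral of a bounded function over $[-1,1]$. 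Hence the energy of $f$ is finite and $\ol c<\infty$; note $\ol c\ge 0$ trivially since both terms in the functional are nonnegative.

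The real content is $\ol c>0$, and here I would argue by contradiction exactly as in Lemma \ref{lem:c_compact}. Assume $\ol c=0$; then there are sequences $\{f_n\}\subset H^{3/2}_{\loc}(\R)$ and $R_n>0$ with $f_n(-x)=\alpha$, $f_n(x)=\beta$ for $x\ge R_n$, and $\frac{7}{16}|f_n'|^2_{H^{1/2}(\R)}+\int_\R V(f_n(x))\,dx\to 0$. Fix $0<2\delta<\beta-\alpha$. Since $f_n$ is continuous with $f_n(-R_n)=\alpha$ and $f_n(R_n)=\beta$, there is an interval $[S_n,T_n]$ on which $f_n$ goes monotonically (in the sense of the intermediate-value level sets used in \eqref{eq:prop2}) from $\alpha+\delta$ to $\beta-\delta$, with $f_n([S_n,T_n])=[\alpha+\delta,\beta-\delta]$. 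As before, $C_\delta:=\min_{z\in[\alpha+\delta,\beta-\delta]}V(z)>0$ by $(H^V_1)$ and continuity, so $\int_\R V(f_n)\,dx\ge C_\delta(T_n-S_n)$ forces $T_n-S_n\to 0$. Then rescale: set $g_n(t):=f_n(T_nt+S_n(1-t))$ for $t\in[0,1]$, so $g_n(0)=\alpha+\delta$, $g_n(1)=\beta-\delta$; changing variables in the double integral over $[S_n,T_n]^2$ gives $|g_n'/(T_n-S_n)|^2_{H^{1/2}(0,1)}=|f_n'|^2_{H^{1/2}(S_n,T_n)}\le|f_n'|^2_{H^{1/2}(\R)}\to 0$. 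Hence $g_n'/(T_n-S_n)\to$ const in $L^2(0,1)$ along a subsequence, so $g_n'\to 0$ in $L^2(0,1)$ (since $T_n-S_n\to0$), contradicting $\int_0^1 g_n'(t)\,dt=g_n(1)-g_n(0)=\beta-\alpha-2\delta>0$.

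The only place where care is needed — and the step I expect to be the main obstacle — is the choice of the interval $[S_n,T_n]$ and the restriction-monotonicity bookkeeping, because $f_n$ need not be monotone and a priori oscillates; one must select a sub-interval on which the trace attains every value in $[\alpha+\delta,\beta-\delta]$ while keeping $f_n([S_n,T_n])\subset[\alpha+\delta,\beta-\delta]$, so that the restriction of the $H^{1/2}$ seminorm to $[S_n,T_n]^2$ is controlled by the full seminorm and the rescaling is clean. This is exactly the selection performed in the proof of Lemma \ref{lem:c_compact} (around \eqref{eq:prop2}), so I would invoke that construction verbatim. The passage from $f$ to the rescaled $g_n$ and the $L^2$ compactness of $g_n'/(T_n-S_n)$ use only the weak lower semicontinuity / compactness of the $H^{1/2}$ seminorm on the bounded interval $(0,1)$, which is standard. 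Everything else is identical, word for word, to Lemma \ref{lem:c_compact} with $\frac18$ replaced by $\frac{7}{16}$ and $[-R_n,R_n]$ replaced by $\R$; the larger constant and the unbounded domain only make the hypotheses weaker, so the same contradiction goes through. Thus $\ol c\in(0,\infty)$.
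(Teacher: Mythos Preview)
The paper states Lemma~\ref{lem:c} without proof, presumably because it is immediate from Lemma~\ref{lem:c_compact}. Your approach of rerunning that argument with $\tfrac18$ replaced by $\tfrac{7}{16}$ and $[-R_n,R_n]$ replaced by $\R$ is correct, and your identification of the only delicate step (the selection of $[S_n,T_n]$ with $f_n([S_n,T_n])=[\alpha+\delta,\beta-\delta]$, exactly as in \eqref{eq:prop2}) is accurate; nothing else changes.

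That said, there is a shorter route that the paper's placement of the lemma suggests. For positivity, simply observe that $\ol c\ge\ul c$: any $f$ admissible for $\ol c$ with finite energy has $f'\equiv 0$ outside $[-R,R]$ (so $f'\in L^2(\R)$ since $f\in H^{3/2}_{\loc}$) and $|f'|_{H^{1/2}(\R)}<\infty$, hence $f'\in H^{1/2}(\R)$ and $f$ is admissible for $\ul c$; moreover its $\ul c$-energy is dominated by its $\ol c$-energy because $\tfrac18\le\tfrac{7}{16}$ and $[-R,R]\subset\R$. Thus $\ol c\ge\ul c>0$ by Lemma~\ref{lem:c_compact}. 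Finiteness follows from the explicit $C^2$ competitor you describe. This bypasses the contradiction argument entirely; your version works but does more than necessary.
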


\begin{prop}

Under the conditions of Theorem \ref{thm:compactness}, $\ul{c} = c_{\star}$,
where $c_{\star}$ is defined by
\begin{multline*}
c_{\star}	:= \inf\left\{ \frac{3}{2^{\frac{5}{3}}} \left(\int_{-1}^1 \int_{-1}^1\frac{|g'(x) - g'(y)|^{2}}{|x-y|^{2}} \, dx \, dy\right)^{\frac{1}{3}} \left(\int_{-1}^{1} V\bigl(g(x)\bigr) \, dx\right)^{\frac{2}{3}} : \right. \\
	 \left. 	 g \in H_{\rm loc}^{\frac{3}{2}}(\R), g' \in H^{\frac{1}{2}}(\R), g(-t)= \alpha, g(t)=\beta \text{ for all } t \geq 1\right\}.
\end{multline*}

\end{prop}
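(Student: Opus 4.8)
The idea is to show the two infima coincide by proving inequalities in both directions, exploiting a scaling argument that optimizes the length of the transition interval. The key observation is that in $\ul c$ the integration window $[-R,R]$ and the normalization $f(-t)=\alpha$, $f(t)=\beta$ for $t\ge R$ are coupled: the $H^{1/2}$-seminorm of $f'$ scales like an inverse length while $\int V(f)$ scales like a length, so one can freely rescale the domain to trade one against the other. Concretely, given an admissible $f$ for $\ul c$ with cutoff radius $R$, the substitution $f_\mu(x):=f(x/\mu)$ is admissible with radius $\mu R$, and one computes
\begin{equation*}
\frac18 |f_\mu'|_{H^{1/2}}^2 = \frac{1}{8\mu}|f'|_{H^{1/2}}^2, \qquad \int V(f_\mu) = \mu \int V(f).
\end{equation*}
Minimizing $\frac{1}{8\mu}A + \mu B$ over $\mu>0$ gives the value $2\sqrt{AB/8} = \frac{1}{\sqrt2}\sqrt{AB}$, and pinning the rescaled radius to $1$ recovers exactly the constant $\frac{3}{2^{5/3}}$ after also using the degree of freedom in $R$; so one should first carry out this one-parameter optimization carefully to see that the product form $A^{1/3}B^{2/3}$ with the stated prefactor emerges.

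First I would prove $\ul c \le c_\star$: take any $g$ admissible for $c_\star$ (so $g'\in H^{1/2}(\R)$, $g(-t)=\alpha$, $g(t)=\beta$ for $t\ge 1$), and for each $\mu>0$ set $f(x):=g(x/\mu)$, which is admissible for $\ul c$ with $R=\mu$. Plugging into the definition of $\ul c$ and optimizing over $\mu$ as above yields $\ul c \le \frac{3}{2^{5/3}}\bigl(|g'|_{H^{1/2}(-1,1)}^2\bigr)^{1/3}\bigl(\int_{-1}^1 V(g)\bigr)^{2/3}$; here one must note that $|g'|_{H^{1/2}(\R)}\ge|g'|_{H^{1/2}(-1,1)}$, so truncating the window to $(-1,1)$ only decreases the $A$-factor and the inequality goes the right way. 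Taking the infimum over such $g$ gives $\ul c\le c_\star$. For the reverse inequality $c_\star\le\ul c$, start from an admissible $f$ for $\ul c$ with radius $R$; rescale by $\mu=1/R$ to get $\tilde f(x):=f(Rx)$, which satisfies $\tilde f(-t)=\alpha$, $\tilde f(t)=\beta$ for $t\ge 1$ — hence is a candidate for $c_\star$ \emph{provided} $\tilde f'\in H^{1/2}(\R)$, which holds because $f$ is eventually constant and locally $H^{3/2}$. Then $c_\star \le \frac{3}{2^{5/3}}\bigl(|\tilde f'|_{H^{1/2}(-1,1)}^2\bigr)^{1/3}\bigl(\int_{-1}^1 V(\tilde f)\bigr)^{2/3}$, and by the AM-GM inequality $\frac{3}{2^{5/3}}A^{1/3}B^{2/3}\le \frac{1}{8\mu}A+\mu B$ at the optimal $\mu$, which after undoing the rescaling is exactly $\frac18|f'|_{H^{1/2}(-R,R)}^2+\int_{-R}^R V(f)$. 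Infimizing over $f$ gives $c_\star\le\ul c$.

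The main obstacle, and the point requiring the most care, is the bookkeeping between the two seemingly different normalizations: in $\ul c$ the transition window matches the cutoff radius $R$ (both are $[-R,R]$), whereas in $c_\star$ the window is fixed at $(-1,1)$ but the cutoff is also at $1$, with the domain of $g$ being all of $\R$ and the extra requirement $g'\in H^{1/2}(\R)$. One has to check that these two formulations are genuinely equivalent after rescaling: for the $\ge$ direction, that any $\ul c$-competitor — which a priori only has $f\in H^{3/2}_{\loc}$ with eventual constancy — indeed yields a valid $c_\star$-competitor with global $H^{1/2}$ derivative, and that restricting the double integral from $(-R,R)^2$ to $(-1,1)^2$-after-rescaling captures all of it since $f'\equiv 0$ outside $(-R,R)$ makes the seminorm over the larger window no smaller; for the $\le$ direction, the analogous check that a $c_\star$-competitor restricted to the window $(-1,1)$ (after rescaling to $(-R,R)$) is admissible for $\ul c$. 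A subsidiary technical point is verifying the exact AM-GM identity: that $\min_{\mu>0}\bigl(\tfrac{1}{8\mu}A+\mu B\bigr)=\tfrac{1}{\sqrt2}A^{1/2}B^{1/2}$ and matching this against $\frac{3}{2^{5/3}}A^{1/3}B^{2/3}$ requires also using the freedom to rescale $g$ in $c_\star$ itself — i.e., $c_\star$ is \emph{scale-invariant} in the same way, which should be verified first so that the prefactor $\frac{3}{2^{5/3}}$ is pinned down consistently on both sides.
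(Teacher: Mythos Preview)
Your overall strategy---rescale the profile to trade the seminorm term against the potential term, then optimize over the scaling parameter---is exactly the paper's argument. But you have the scaling law wrong, and this is the source of all the confusion in your last paragraph.

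If $f_\mu(x):=f(x/\mu)$, then $f_\mu'(x)=\mu^{-1}f'(x/\mu)$, and the change of variables $u=x/\mu$, $v=y/\mu$ gives
\[
\int\!\!\int \frac{|f_\mu'(x)-f_\mu'(y)|^2}{|x-y|^2}\,dx\,dy
=\frac{1}{\mu^2}\int\!\!\int \frac{|f'(u)-f'(v)|^2}{\mu^2|u-v|^2}\,\mu^2\,du\,dv
=\frac{1}{\mu^{2}}\,|f'|_{H^{1/2}}^{2},
\]
not $\mu^{-1}|f'|_{H^{1/2}}^{2}$ as you wrote. (Dimensionally: $|f'|_{H^{1/2}}^2$ carries units of $[\text{length}]^{-2}$, so it must scale like $\mu^{-2}$.) With the correct exponent, the quantity to minimize is
\[
\Phi(\mu):=\frac{A}{8\mu^{2}}+\mu B,
\qquad A:=|g'|_{H^{1/2}(-1,1)}^{2},\quad B:=\int_{-1}^{1}V(g),
\]
whose minimum, attained at $\mu=(A/(4B))^{1/3}$, is precisely $\dfrac{3}{2^{5/3}}A^{1/3}B^{2/3}$. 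This is exactly the $c_\star$ integrand, so both inequalities go through cleanly: for $\ul c\le c_\star$, take $g$ admissible for $c_\star$, set $f:=g(\cdot/\mu)$ with $R=\mu$, and optimize; for $c_\star\le\ul c$, take $f,R$ admissible for $\ul c$, set $g:=f(R\,\cdot)$ (admissible for $c_\star$ since $f'\in H^{1/2}(\R)$ is already part of the definition of $\ul c$), and observe that the $\ul c$-energy of $f$ equals $\Phi(R)\ge\min_\mu\Phi(\mu)$.

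Once the scaling is fixed there is no mismatch to reconcile: the $A^{1/2}B^{1/2}$ form you obtained, the appeal to an additional ``freedom to rescale $g$ in $c_\star$'', and the worry about pinning down the prefactor all evaporate. The paper's proof is nothing more than this computation, written with $S$ playing the role of your $\mu$.
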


\begin{proof}

First we prove that $\ul{c} \geq c_{\star}$.
Let $\eta >0$, and $f \in H^{\frac{3}{2}}_{\loc}(\R), R>0$ be such that
\begin{align*}
& f' \in H^{\frac{1}{2}}(\R),
	\qquad f(-t)= \alpha, f(t)=\beta, \text{ for all } t \geq R, \\
& \frac{1}{8}\int_{-R}^{R}\int_{-R}^{R} \frac{|f'(x) - f'(y)|^{2}}{|x-y|^{2}} \, dx \, dy 
+ \int_{-R}^{R} V\bigl(f(x)\bigr) \, dx \leq \ul{c} + \eta.
\end{align*}

Then
\begin{align*}
\ul{c}+\eta
	& \geq \frac{1}{8R^2}\int_{-1}^{1}\int_{-1}^{1} \frac{|(f(Rx))' - (f(Ry))'|^{2}}{|x-y|^{2}} \, dx \, dy 
+ R \int_{-1}^{1} V\bigl(f(Rx)\bigr) \, dx \\
	& \geq \frac{1}{8S_R^2}\int_{-1}^{1}\int_{-1}^{1} \frac{|g_R'(x) - g_R'(y)|^{2}}{|x-y|^{2}} \, dx \, dy 
+ S_R \int_{-1}^{1} V\bigl(g_R(x)\bigr) \, dx %\\
	 \geq c_{\star}
\end{align*}
where $g_R(x) = f(Rx)$ which is admissible for $c_{\star}$, and 
\begin{align*}
S_R 
	& = \argmin_{S>0} \left[\frac{1}{8S^2}\int_{-1}^{1}\int_{-1}^{1} \frac{|g_R'(x) - g_R'(y)|^{2}}{|x-y|^{2}} \, dx \, dy + S \int_{-1}^{1} V\bigl(g_R(x)\bigr) \, dx \right]\\
	& = \left(\frac{\ds \int_{-1}^{1}\int_{-1}^{1} \frac{|g_R'(x) - g_R'(y)|^{2}}{|x-y|^{2}} \, dx \, dy}{\ds 4 \int_{-1}^{1} V\bigl(g_R(x)\bigr) \, dx} \right)^{\frac{1}{3}}.
\end{align*}
Let $\eta \to 0^+$ to deduce that $\ul{c} \geq c_{\star}$.
The converse inequality follows trivially from following the first part of the proof from the end to the beginning.
\end{proof}

\begin{prop}

Under the conditions of Theorem \ref{thm:compactness}, $\ol{c} = c^{\star}$,
where $c^{\star}$ is defined by
\begin{multline*}
c^{\star} := \inf\left\{ \frac{3 \cdot 7^{\frac{1}{3}}}{4} \left(\int_{-\infty}^{\infty} \int_{-\infty}^{\infty} \frac{|g'(x) - g'(y)|^{2}}{|x-y|^{2}} \, dx \, dy\right)^{\frac{1}{3}} \left(\int_{-\infty}^{\infty} V\bigl(g(x)\bigr) \, dx\right)^{\frac{2}{3}} : \right. \\
	 \left. 	 g \in H_{\rm loc}^{\frac{3}{2}}(\R), g(-t)= \alpha, g(t)=\beta, \text{ for all } t \geq 1\right\}.
\end{multline*}

\end{prop}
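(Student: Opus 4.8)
The plan is to prove the equality $\ol{c} = c^{\star}$ by the same rescaling-and-optimization argument used in the preceding proposition for $\ul{c} = c_{\star}$, the only essential difference being that here the integration domain is all of $\R$ rather than a bounded interval, so the scaling $x \mapsto Rx$ is replaced by a scaling $x \mapsto Sx$ where $S$ ranges over all of $(0,\infty)$ freely. First I would show $\ol{c} \geq c^{\star}$: given $\eta > 0$, pick an admissible $f \in H^{\frac32}_{\loc}(\R)$ with $f(-t) = \alpha$, $f(t) = \beta$ for $t \geq R$ and
\[
\frac{7}{16}\int_{-\infty}^{\infty}\int_{-\infty}^{\infty} \frac{|f'(x) - f'(y)|^{2}}{|x-y|^{2}} \, dx \, dy + \int_{-\infty}^{\infty} V\bigl(f(x)\bigr) \, dx \leq \ol{c} + \eta.
\]
For each $S > 0$ set $g_S(x) := f(Sx)$, which is admissible for $c^{\star}$ (with transition radius $R/S$, and note that $c^{\star}$ as written only requires $g(\pm t) = \alpha,\beta$ for $t \geq 1$, so after a further trivial rescaling it is admissible). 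A change of variables gives that the energy of $f$ equals, for \emph{every} $S$,
\[
\frac{7}{16 S}\int_{-\infty}^{\infty}\int_{-\infty}^{\infty} \frac{|g_S'(x) - g_S'(y)|^{2}}{|x-y|^{2}} \, dx \, dy + S \int_{-\infty}^{\infty} V\bigl(g_S(x)\bigr) \, dx,
\]
using that the fractional $H^{1/2}$-seminorm of the derivative scales like $S^{-1}$ while the potential term scales like $S$ under $x \mapsto Sx$; here the crucial point is that rescaling $f$ by $S$ and then $g_S$ by $1/S$ leaves the seminorm invariant, so both $f$ and every $g_S$ share the same pair of seminorm values, call them $A$ and $B$. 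Minimizing the displayed quantity over $S > 0$ (the optimal $S_\star = (7A/(16B))^{1/3}$ up to the appropriate constant) yields exactly $\frac{3\cdot 7^{1/3}}{4} A^{1/3} B^{2/3} \geq c^{\star}$, and since the original (unscaled) energy of $f$ is at least this minimum, we conclude $\ol{c} + \eta \geq c^{\star}$; letting $\eta \to 0^+$ gives $\ol{c} \geq c^{\star}$.

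For the converse, $\ol{c} \leq c^{\star}$, I would run the same computation backwards: given an admissible $g$ for $c^{\star}$ and any $\eta > 0$, choose $S > 0$ to be precisely the minimizer above, and then $f(x) := g(x/S)$ is admissible for $\ol{c}$, with energy equal to $\frac{3\cdot 7^{1/3}}{4}$ times the product of the two seminorms of $g$ raised to the powers $1/3$ and $2/3$. Taking the infimum over all admissible $g$ shows $\ol{c} \leq c^{\star}$. Combined with the first inequality this gives $\ol{c} = c^{\star}$.

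The only subtlety — and what I would expect to be the main point needing care rather than a genuine obstacle — is the bookkeeping of the admissibility classes under rescaling when one of the integrals (the potential one) could in principle be infinite: if $\int_{-\infty}^{\infty} V(f) \, dx = \infty$ then $f$ contributes nothing to $\ol{c}$ and can be ignored, and if it is zero then $V(f) \equiv 0$ a.e., forcing $f$ to be (a.e.) constant by $(H^V_1)$, which contradicts the boundary conditions $f(-t) = \alpha \neq \beta = f(t)$; hence for any $f$ with finite energy both seminorm values $A, B$ are strictly positive and finite, so the minimization over $S \in (0,\infty)$ is attained at an interior point and the arithmetic is legitimate. One should also note that $\ol{c} \in (0,\infty)$ is already guaranteed by Lemma~\ref{lem:c}, so the equality $\ol{c} = c^{\star}$ transfers finiteness and positivity to $c^{\star}$ as well. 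A final routine check is that the transition-radius normalization in the definition of $c^{\star}$ (requiring the limits to be reached for $t \geq 1$ rather than $t \geq R$) is harmless: any $g$ reaching the wells by radius $R$ can be rescaled by $R$ without changing the value of the scale-invariant product $A^{1/3}B^{2/3}$, so the two formulations of $c^{\star}$ coincide.
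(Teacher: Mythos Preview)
Your strategy is exactly the one the paper intends (it omits the proof here, but the argument is meant to mirror verbatim the proof given for $\ul{c}=c_\star$ just above). There is, however, a genuine slip in your scaling computation: under $g_S(x):=f(Sx)$ one has $g_S'(x)=S\,f'(Sx)$, and since the $H^{1/2}(\R)$ seminorm is scale-invariant this gives $|g_S'|_{H^{1/2}(\R)}^2 = S^2\,|f'|_{H^{1/2}(\R)}^2$, while $\int_\R V(g_S)\,dx = S^{-1}\int_\R V(f)\,dx$. Hence the energy of $f$ rewrites as
\[
\frac{7}{16}\,|f'|_{H^{1/2}}^2 + \int_\R V(f)\,dx \;=\; \frac{7}{16\,S^2}\,|g_S'|_{H^{1/2}}^2 + S\int_\R V(g_S)\,dx,
\]
with $S^{-2}$, not $S^{-1}$, in front of the seminorm. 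Your displayed formula with $\tfrac{7}{16S}$ would, upon minimizing in $S$, produce a quantity proportional to $A^{1/2}B^{1/2}$ rather than the required $A^{1/3}B^{2/3}$; only with the correct exponent $S^{-2}$ does the minimization over $S>0$ give exactly $\tfrac{3\cdot 7^{1/3}}{4}\,A^{1/3}B^{2/3}$, matching the definition of $c^\star$.

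With that correction the argument goes through: fix $g:=g_R$ admissible for $c^\star$ (it reaches the wells at $\pm 1$), observe that the $\ol{c}$-energy of $f$ equals $\tfrac{7}{16R^2}A_g + R\,B_g$, bound this below by $\min_{S>0}\bigl(\tfrac{7}{16S^2}A_g+SB_g\bigr)=\tfrac{3\cdot 7^{1/3}}{4}A_g^{1/3}B_g^{2/3}\geq c^\star$, and let $\eta\to 0^+$. The converse is, as you say, the same computation in reverse. Your remarks on positivity and finiteness of $A,B$ (so the interior minimum in $S$ is attained) and on the harmlessness of the radius-$1$ normalization are correct and worth keeping.
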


\section{Two auxiliary one-dimensional problems}\label{sec:critical1D}

%%%%%%%%%%%%%%%%%%%%%%%%%%%%%%%%%%%%%%%%%%
%%%%%%%%%%%%%%%%%%%%%%%%%%%%%%%%%%%%%%%%%%
%%
%%
%%					Bulk 1D - Compactness and Lower Bound
%%
%%
%%%%%%%%%%%%%%%%%%%%%%%%%%%%%%%%%%%%%%%%%%
%%%%%%%%%%%%%%%%%%%%%%%%%%%%%%%%%%%%%%%%%%

\subsection{Compactness for $F_{\eps}$}\label{ssec:bulkF1D_compc}

\begin{thm}\label{thm:compactness_F1D}
Assume that $W:\R \to [0,\infty)$ satisfies $(H^W_1)-(H^W_2)$.
Let $I \subset \R$ be an open, bounded interval, let $\{\eps_n\}$ be a positive sequence converging to $0$, and let $\{u_n\} \subset H^{2}(I)$ be such that
\begin{equation}\label{eq:energyfiniteF}
\sup_{n} F_{\eps_n}(u_n;I) < \infty.
\end{equation}

Then there exist a subsequence (not relabeled) of $\{u_n\}$ and a function $u \in BV\bigl(I;\{a,b\}\bigr)$ such that $u_n \to u$ in $L^2(I)$.
\end{thm}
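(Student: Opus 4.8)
The plan is to follow the now-classical strategy for second-order Modica--Mortola functionals as in \cite{FM}, reducing the compactness statement to a control on the total variation of a primitive of $W^{1/2}(u_n)$. First I would normalize: set $E := \sup_n F_{\eps_n}(u_n;I) < \infty$. By $(H^W_2)$, $\frac{1}{\eps_n}\int_I W(u_n) \leq E$ gives $\int_I W(u_n) \to 0$, and combined with the quadratic lower bound $W(z) \geq C|z|^2 - \frac1C$ this yields $\sup_n \|u_n\|_{L^2(I)} < \infty$. The key quantity is the composite function $\Phi(z) := \int_a^z \sqrt{W(s)}\,ds$ (a fixed continuous function, finite on bounded sets since $W$ is continuous), and I would estimate the $BV$-norm of $\Phi(u_n)$ using a Young-type inequality together with the Gagliardo--Nirenberg interpolation inequality of Proposition \ref{prop:GagliardoNirenberg2}, which controls $\|u_n'\|_{L^{4/3}(I)}$ by $\|u_n\|_{L^1(I)}^{1/2}\|u_n''\|_{L^2(I)}^{1/2} + \|u_n\|_{L^1(I)}$.

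The central computation is as follows. Writing $\Phi(u_n)' = \sqrt{W(u_n)}\,u_n'$ and integrating, one has
\begin{equation*}
\int_I |\Phi(u_n)'| \,dx = \int_I \sqrt{W(u_n)}\,|u_n'|\,dx \leq \left(\int_I W(u_n)\,dx\right)^{1/2}\left(\int_I |u_n'|^2\,dx\right)^{1/2}.
\end{equation*}
This is not yet good enough because the $H^1$ seminorm of $u_n$ is not directly controlled by $F_{\eps_n}$ with the right power of $\eps_n$; instead I would localize. On each subinterval where $u_n$ stays in a fixed compact set, interpolate $\|u_n''\|_{L^2}$ (controlled by $\eps_n^{-3/2}E^{1/2}$) against $\|u_n\|_{L^1}$ to bound $\|u_n'\|_{L^{4/3}}$, then use Hölder with exponents $4$ and $4/3$ to write $\int \sqrt{W(u_n)}|u_n'| \leq \|W(u_n)^{1/2}\|_{L^4}\|u_n'\|_{L^{4/3}}$, and finally absorb the $\eps_n$ powers using the equipartition-of-energy device: the weighted Young inequality $2\sqrt{W(u_n)}\,\eps_n |u_n''| \cdot \eps_n^{-1} \leq \eps_n^{-1}W(u_n) + \eps_n |u_n''|^2$ shows that terms of the shape $\int \sqrt{W(u_n)}\,\eps_n|u_n''|$ are bounded by $E$. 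Iterating this with the chain rule on $\Phi(u_n)$ (whose second derivative involves $u_n''$ and $(u_n')^2$, the latter handled by the $L^{4/3}$ bound raised appropriately) produces $\sup_n \|\Phi(u_n)\|_{BV(I)} < \infty$. I expect this interpolation-and-absorption bookkeeping to be the main obstacle, since one must track the precise powers of $\eps_n$ so that every term closes against the uniform energy bound rather than blowing up; this is exactly the point where \cite{FM} differs from the first-order theory.

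Once $\sup_n \|\Phi(u_n)\|_{BV(I)} < \infty$ is established, the conclusion is routine. By compactness of $BV(I) \hookrightarrow L^1(I)$, up to a subsequence $\Phi(u_n) \to \psi$ in $L^1(I)$ and a.e.; since $\Phi$ is continuous and strictly monotone on $(a,b)$ (as $\sqrt{W}>0$ there) it has a continuous inverse on its range, so $u_n \to u := \Phi^{-1}(\psi)$ a.e. The condition $\int_I W(u_n) \to 0$ together with Fatou forces $\int_I W(u)\,dx = 0$, hence $u(x) \in \{a,b\}$ for a.e. $x$ by $(H^W_1)$, so $u \in BV(I;\{a,b\})$ (the $BV$ regularity being inherited from the $BV$ bound on $\Phi(u_n)$, since $\Phi$ takes distinct values at $a$ and $b$). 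Finally, the $L^2(I)$ convergence $u_n \to u$ follows from a.e. convergence plus the uniform $L^2$ bound on $\{u_n\}$ and equi-integrability: indeed the quadratic growth $(H^W_2)$ and $\int_I W(u_n)\to 0$ give $\limsup_n \int_I |u_n|^2 \leq \frac1C(\limsup_n\int_I W(u_n) + \frac{|I|}{C}) = \frac{|I|}{C^2}$ on the bad set where $|u_n|$ is large, which combined with a.e. convergence and Vitali's theorem upgrades $L^1$ convergence to $L^2$.
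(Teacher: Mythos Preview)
Your proposal has a genuine gap: the first-order Modica--Mortola device of bounding $\|\Phi(u_n)\|_{BV(I)}$ via $\int_I \sqrt{W(u_n)}\,|u_n'|$ does not close for this second-order functional, because the $\eps_n$-powers never balance. The energy controls $\eps_n^{3}\int|u_n''|^{2}$ and $\eps_n^{-1}\int W(u_n)$; your ``equipartition'' step writes $2\sqrt{W(u_n)}\,|u_n''|\le \eps_n^{-1}W(u_n)+\eps_n|u_n''|^{2}$, but integrating the right-hand side gives $E+\eps_n\cdot E\eps_n^{-3}=E(1+\eps_n^{-2})$, which diverges---the functional carries $\eps_n^{3}|u_n''|^{2}$, not $\eps_n|u_n''|^{2}$. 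The H\"older route fares no better: with exponents $(4,4/3)$ and the most favourable assumption that $W(u_n)$ stays bounded (so $\int W(u_n)^{2}\le C\int W(u_n)\le C\eps_n$), one obtains
\[
\int_I\sqrt{W(u_n)}\,|u_n'|\;\le\;\Bigl(\int W(u_n)^{2}\Bigr)^{1/4}\|u_n'\|_{L^{4/3}}\;\le\;C\,\eps_n^{1/4}\cdot\eps_n^{-3/4}\;=\;C\,\eps_n^{-1/2}\to\infty.
\]
A scaling check shows that \emph{every} choice of H\"older pair and Gagliardo--Nirenberg interpolation leaves the exponent of $\eps_n$ strictly negative, so no amount of bookkeeping will make this close. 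Everything you wrote after ``Once $\sup_n\|\Phi(u_n)\|_{BV}<\infty$ is established'' is correct, but that hypothesis is not reachable by the outlined method.

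The paper itself simply invokes the compactness theorem of \cite{FM} together with $(H^W_2)$ to upgrade to $L^{2}$ convergence. The argument in \cite{FM} is \emph{not} the $\Phi$-trick; it proceeds instead by showing that each transition of $u_n$ from a $\delta$-neighbourhood of $a$ to a $\delta$-neighbourhood of $b$ costs, after the rescaling $t\mapsto t/\eps_n$, at least a fixed positive amount of the scale-invariant energy $\int(|w''|^{2}+W(w))\,dt$. Since the total energy is bounded, the number of such transitions is bounded independently of $n$, and this is what delivers the $BV$ structure of the limit. The paper carries out exactly this transition-counting/Young-measure strategy in full detail for the boundary functional $G_\eps$ in Theorem~\ref{thm:compactness_G1D}; if you want a self-contained proof for $F_\eps$ rather than a citation, adapt that argument rather than the first-order $\Phi$-trick.
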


\begin{proof}

Given a sequence $\{u_n\} \subset H^2(I)$ satisfying \eqref{eq:energyfiniteF}, by the compactness result in \cite{FM} and $(H^W_2)$, we obtain a subsequence $\{u_{n}\}$ (not relabeled) and a function $u \in BV\bigl(I;\{a,b\}\bigr)$ such that $u_n \to u$ in $L^2(I)$.
\end{proof}

%%%%%%%%%%%%%%%%%%%%%%%%%%%%%%%%%%%%%%%%%%
%%%%%%%%%%%%%%%%%%%%%%%%%%%%%%%%%%%%%%%%%%
%%
%%
%%					Lower Bound for the Bulk Energy - 1D
%%
%%
%%%%%%%%%%%%%%%%%%%%%%%%%%%%%%%%%%%%%%%%%%
%%%%%%%%%%%%%%%%%%%%%%%%%%%%%%%%%%%%%%%%%%

\subsection{Lower bound for $F_{\eps}$}\label{ssec:bulkF1D_LB}

\begin{thm}[Lower bound estimate for $F_{\eps}$]\label{thm:interior1D}

Let $I \subset \R$ be an open and bounded interval and let $W:\R\to[0,\infty)$ satisfy $(H^W_1)-(H^W_2)$.
Let $u \in BV\bigl(I;\{a,b\}\bigr)$, let $v \in BV\bigl(\partial I; \{\alpha,\beta\}\bigr)$, and let $\{u_{\eps}\} \subset H^2(I)$ be such that $\ds \sup_{\eps} F_{\eps}(u_{\eps};I) =: C < \infty$, $u_{\eps} \to u$ in $L^2(I)$ and $Tu_{\eps} \rightarrow v$ in $\Haus{0}(\partial I)$.
Then
$$
\liminf_{\eps \to 0^+} F_{\eps}(u_{\eps};I) 
   \geq m \Haus{0}(S(u)) + \int_{\partial I} \sigma\bigl(Tu(x),v(x)\bigr) \, d\Haus{0}(x),
$$
where $m$ and $\sigma$ are defined in \eqref{eq:def_m} and \eqref{eq:def_sigma}, respectively.
\end{thm}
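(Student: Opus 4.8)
The plan is to localize the energy $F_\eps(u_\eps;I)$ around the (finitely many) jump points of $u$ in the interior and around the two endpoints of $I$, and to recognize each piece as a rescaled version of the optimal-profile problems defining $m$ and $\sigma$. First I would fix a small $r>0$ such that the intervals $(x_0-r,x_0+r)$ for $x_0\in S(u)$ and the half-intervals adjacent to $\partial I$ are pairwise disjoint and contained in $I$; since $F_\eps\geq0$ and the energy is additive over disjoint subintervals, it suffices to bound the liminf on each such piece separately and sum. By a standard diagonal/subsequence argument one may assume the liminf is a genuine limit and that $F_{\eps}(u_\eps;(x_0-r,x_0+r))$ converges for each of these subintervals.

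Next, for an interior jump point $x_0$ where $u$ jumps between $a$ and $b$, I would use the $L^2$ convergence $u_\eps\to u$ together with the Gagliardo–Nirenberg inequality (Proposition~\ref{prop:GagliardoNirenberg2}, or its variant from \cite{FM}) to extract, for each small $\eta>0$, points $t_\eps^-<t_\eps^+$ near $x_0$ with $u_\eps(t_\eps^\pm)$ close to $a$ and $b$ respectively and $|t_\eps^+-t_\eps^-|\to0$; this is exactly the ``good points'' device from \cite{FM}. Modifying $u_\eps$ slightly near $t_\eps^\pm$ (at negligible energy cost, controlled via the quadratic lower bound $(H^W_2)$ and the Gagliardo–Nirenberg control of $u_\eps'$) produces a competitor that, after the affine change of variables $s\mapsto \eps s$ carrying the rescaled interval to all of $\R$, is admissible for the infimum defining $m$. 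The scaling is designed so that $\eps^3\int|u_\eps''|^2 + \tfrac1\eps\int W(u_\eps)$ becomes exactly $\int(|f''|^2+W(f))$, giving the bound $\geq m-\text{(error)}$ on that piece; letting $\eta\to0$ removes the error. For an endpoint $x_0\in\partial I$, the same argument applies but now one side of the rescaled interval is anchored at the trace value $v(x_0)\in\{\alpha,\beta\}$ while the other end reaches $Tu(x_0)\in\{a,b\}$, using $Tu_\eps\to v$ on $\partial I$; after rescaling this is admissible for the half-line infimum defining $\sigma(Tu(x_0),v(x_0))$, yielding $\geq \sigma(Tu(x_0),v(x_0))-\text{(error)}$.

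Summing the interior contributions over $S(u)$ (which contributes $m\,\Haus0(S(u))$) and the two endpoint contributions (which contribute $\int_{\partial I}\sigma(Tu(x),v(x))\,d\Haus0(x)$) gives the claimed lower bound. The main obstacle I anticipate is the endpoint/trace analysis: one must be careful that the trace $Tu_\eps(x_0)$ is genuinely close to $v(x_0)$ along the subsequence and that the modification of $u_\eps$ near the endpoint — needed to make it exactly attain $v(x_0)$ and to match a constant $Tu(x_0)$ on the far side — can be done inside a shrinking neighborhood with asymptotically vanishing energy, since here we cannot freely translate the interval and the boundary condition is rigid. A secondary technical point is ensuring the good points $t_\eps^\pm$ can be chosen strictly inside the localization interval so that the cut-and-paste modifications do not interact across adjacent pieces; this is handled by first choosing $r$ small and then $\eta$ small, exactly as in the interior construction of \cite{FM}.
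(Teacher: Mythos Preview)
Your proposal is correct and follows essentially the same route as the paper's proof: localize around the finitely many jump points and the two endpoints, use pointwise convergence of $u_\eps$ and of $\eps u_\eps'$ (the latter via Gagliardo--Nirenberg) to find good points, then graft short polynomial transitions to make the rescaled function admissible for $m$, respectively $\sigma$. For the endpoint obstacle you correctly flag, the paper's device is not to work at the boundary point itself (where $\eps u_\eps'$ is uncontrolled) but to find, via the mean value theorem, a point $x_{0,\eps}^-$ just inside $I$ at which either $u_\eps(x_{0,\eps}^-)=v(s_0)$ exactly or $u_\eps'(x_{0,\eps}^-)\bigl(u_\eps(x_{0,\eps}^-)-v(s_0)\bigr)>0$; this sign condition allows one to connect $v(s_0)$ to $u_\eps(x_{0,\eps}^-)$ by a linear or quadratic arc over an interval of length $r\to0$ whose $|f''|^2$ and $W$ costs both vanish, which is precisely the missing piece you anticipated.
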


\def\awaybdy{Step 1 of the proof of Theorem \ref{thm:interior1D}}

\begin{proof}

Passing to a subsequence (not relabeled), we can assume that
$$
\liminf_{\eps \to 0^+} F_{\eps}(u_{\eps};I)
	= \lim_{\eps\to 0^+} F_{\eps}(u_{\eps};I).
$$

Since $u_{\eps} \to u$ in $L^1(I)$ and $\|W(u_{\eps})\|_{L^1(I)} \leq C \eps$, by the growth condition $(H^W_2)$, we have that, up to a subsequence (not relabeled), $u_{\eps} \to u$ in $L^2(I)$, and $\sup_{\eps} \|u_{\eps}\|_{L^2(I)} \leq C$.

In turn, by Proposition \ref{prop:gagliardo-nirenberg} and the fact that $\|u_{\eps}''\|_{L^2(I)} \leq C \eps^{-\frac{3}{2}}$, we deduce that $\|u_{\eps}'\|_{L^2(I)} \leq C \eps^{-\frac{3}{4}}$, and so
$$
\lim_{\eps \to 0^+} \int_I |\eps u'_{\eps}(x)|^2 \, dx = 0.
$$

Thus, up to a subsequence (not relabeled), we may assume that
\begin{equation}\label{eq:pointwise}
\eps u_{\eps}'(x) \to 0,
\quad \text{ and }
\quad 
u_{\eps}(x) \to u(x)
\end{equation}
for $\Leb{1}$-a.e. $x \in I$.
Since $u \in BV(I;\{a,b\})$, its jump set is a finite set, so we can write $S(u) := \{s_1, \ldots, s_{\ell}\}$,
where
$$
s_0 := \inf I < s_1 < \cdots < s_{\ell} < s_{\ell+1} := \sup I.
$$
Fix $0< \eta < \frac{\beta-\alpha}{2}$, and $0< \delta_{0} := \frac{1}{2}\min \left\{ s_{i+1} - s_{i}: i=0, \ldots, \ell \right\}$.
Using \eqref{eq:pointwise}, for every $i=1, \ldots, \ell$, we may find $x_i^{\pm} \in (s_i - \delta_0, s_i+\delta_0)$ such that 
\begin{equation}\label{eq:xieps}
|u_{\eps}(x_i^+) - b| < \eta,
\qquad
|u_{\eps}(x_i^-) - a| < \eta,
\quad \text{ and }
\quad 
|\eps u'_{\eps}(x_i^{\pm})| < \eta.
\end{equation}

Moreover, since $u$ is a constant in $(s_0,s_1)$, we assume that $u(x) \equiv a$ in this interval (the case $u(x) \equiv b$ is analogous), and we have that $Tu(s_0)=a$.
Using \eqref{eq:pointwise} once more, we may find a point $x_0^+$ such that
\begin{equation}\label{eq:x0+}
|u_{\eps}(x_0^+) - a| < \eta,
\quad \text{ and }
\quad 
|\eps u'_{\eps}(x_0^{+})| < \eta
\end{equation}
for all $\eps$ sufficiently small.

On the other hand, $Tu_{\eps}(s_0) \to v(s_0)$, and so $|Tu_{\eps}(s_0) - v(s_0)| < \frac{\eta}{2}$ for all $\eps$ sufficiently small.
Since $\lim_{x \to s_0^+} u_{\eps}(x) = Tu_{\eps}(s_0)$, there is $0<\rho_{\eps} < x_0^+<s_0$ such that
$|u_{\eps}(x) - v(s_0)| < \eta$ for all $x \in (s_0,s_0+\rho_{\eps})$.

There are now two cases.
If $u_{\eps}(x_{\eps}) = v(s_0)$ for some $x_{\eps} \in (s_0,x_0^+)$, then take $x_{0,\eps}^- := x_{\eps}$.
If $u_{\eps}(x_{\eps}) \neq v(s_0)$ for all $x_{\eps} \in (s_0,x_0^+)$, then we claim that there exists $x_{0,\eps}^- \in (s_0,x_0^+)$ such that
$$
u_{\eps}'(x_{0,\eps}^-) \bigl(u_{\eps}(x_{0,\eps}^-) - v(s_0) \bigr) > 0.
$$

Indeed, if say $u_{\eps}(x) > v(s_0)$ in $(s_0,x_0^+)$, then for $\eta>0$ such that $|v(s_0)-a| > 2\eta$, we have that
$$
\bigl| u_{\eps}(x_{0}^+) - v(s_0) \bigr|\geq |v(s_0) - a| - |u_{\eps}(x_0^+)-a| > \eta,
$$
and so there exists a first point $x_{\eps} \in (s_0,x_0^+)$ such that
$$
u_{\eps}(x_{\eps}) = v(s_0) + \eta.
$$
Hence, by the mean value theorem, there is $x_{0,\eps}^- \in (s_0,x_{\eps})$ such that 
$$
u'_{\eps}(x_{0,\eps}) = \frac{u_{\eps}(x_{\eps})-Tu_{\eps}(s_0)}{x_{\eps}-s_0} > \frac{v(s_0) + \eta - v(s_0)-\frac{\eta}{2}}{x_{\eps}-s_0} > 0.
$$

Thus, we have found $x_{0,\eps} \in (s_0,x_0^+)$ such that
\begin{equation}\label{eq:x0-}
|u_{\eps}(x_{0,\eps}^-) - v(s_0)| < \eta
\quad \text{ and }
\quad 
u_{\eps}'(x_{0,\eps}^-) \bigl(u_{\eps}(x_{0,\eps}^-) - v(s_0) \bigr) \geq 0,
\end{equation}
for all $\eps$ sufficiently small.
For simplicity of notation, we write $x_0^- := x_{0,\eps}^-$ and $x_{\ell+1}^- := x_{\ell+1,\eps}^-$.
From the facts that the intervals $[x_i^-,x_i^+]$ are disjoint for $i=0,\ldots, \ell+1$, and that $W$ is nonnegative, we have that
\begin{equation}\label{eq:Fsum}
F_{\eps}(u_{\eps};I)
	\geq \sum_{i=0}^{\ell+1} 
	\int_{x_i^-}^{x_i^+} \left( \eps^3 \bigl|u''_{\eps}(x)\bigr|^2 + \frac{1}{\eps} W\bigl(u_{\eps}(x)\bigr)\right) \, dx.
\end{equation}

We claim that
\begin{equation}\label{eq:F_m}
\int_{x_i^-}^{x_i^+} \left( \eps^3 \bigl|u''_{\eps}(x)\bigr|^2 + \frac{1}{\eps} W\bigl(u_{\eps}(x)\bigr)\right) \, dx
	\geq m\ell -O(\eta) - O(\eps)
\end{equation}
for all $i=1,\ldots,\ell$, that
\begin{equation}\label{eq:F_sigma0}
\int_{x_0^-}^{x_0^+} \left( \eps^3 \bigl|u''_{\eps}(x)\bigr|^2 + \frac{1}{\eps} W\bigl(u_{\eps}(x)\bigr)\right) \, dx
	\geq \sigma\bigl(Tu(s_0),v(s_0)\bigr) -O(\eta) - O(\eps),
\end{equation}
and that
\begin{equation}\label{eq:F_sigmaell}
\int_{x_{\ell+1}^-}^{x_{\ell+1}^+} \left( \eps^3 \bigl|u''_{\eps}(x)\bigr|^2 + \frac{1}{\eps} W\bigl(u_{\eps}(x)\bigr)\right) \, dx
	\geq \sigma\bigl(Tu(s_{\ell+1}),v(s_{\ell+1})\bigr) -O(\eta) - O(\eps).
\end{equation}

If \eqref{eq:F_m}, \eqref{eq:F_sigma0}, and \eqref{eq:F_sigmaell} hold, then from \eqref{eq:Fsum} we deduce that
$$
\liminf_{\eps \to 0^+} F_{\eps}(u_{\eps};I)
	\geq m\ell + \sigma\bigl(Tu(s_0),v(s_0)\bigr) + \sigma\bigl(Tu(s_{\ell+1}),v(s_{\ell+1})\bigr) - O(\eta).
$$
Letting $\eta \to 0^+$ yields
$$
\liminf_{\eps \to 0^+} F_{\eps}(u_{\eps};I)
	\geq m\ell + \int_{\partial I} \sigma\bigl(Tu(x),v(x)\bigr) \, d\Haus{0}(x).
$$

The remaining of the proof is devoted to the proof of \eqref{eq:F_m}, \eqref{eq:F_sigma0}, and \eqref{eq:F_sigmaell}.

\paragraph{Step 1. }  Proof of \eqref{eq:F_m}.

Define the functions
\begin{align}
& G(w,z) := \inf \biggl\{ \int_{0}^{1} W\bigl(g(x)\bigr) + \bigl|g''(x)\bigr|^{2} \, dt: g \in C^{2}\bigl([0,1]; \R\bigr), g(0) = w, g(1) = b, g'(0)=z, g'(1)=0  \biggr\}, \label{eq:defG}\\
& H(w,z) := \inf \biggl\{ \int_{0}^{1} W\bigl(h(x)\bigr) + \bigl|h''(x)\bigr|^{2} \, dt: h \in C^{2}\bigl([0,1]; \R\bigr), h(0) = a, h(1) = w, h'(0)=0, h'(1)=z  \biggr\}. \label{eq:defH}
\end{align}

Note that, considering third-order polynomials, one deduces that these functions satisfy
\begin{equation}\label{eq:limGH}
\lim_{(w,z) \rightarrow (b,0)} G(w,z) = 0, \qquad
\lim_{(w,z) \rightarrow (a, 0)} H(w,z) = 0.
\end{equation}

From \eqref{eq:xieps}, for $\eps$ sufficiently small, we have $G\bigl( u_{\eps}( x_{i}^{+}), \eps u_{\eps}'( x_{i}^{+}) \bigr), H\bigl( u_{\eps}( x_{i}^{-}), \eps u_{\eps}'( x_{i}^{-}) \bigr) \leq \eta$.
By \eqref{eq:defG} and \eqref{eq:defH}, we can find admissible functions $\widehat{g_{i}}$ and $\widehat{h_{i}}$ for $G\bigl( u_{\eps}( x_{i}^{+}), \eps u_{\eps}'( x_{i}^{+}) \bigr)$ and $H\bigl( u_{\eps}( x_{i}^{-}), \eps u_{\eps}'( x_{i}^{-}) \bigr)$, respectively, such that
\begin{align}
& \int_{0}^{1} \bigl|\widehat{g_{i}}''(x)\bigr|^{2} + W\bigl(\widehat{g_{i}}(x)\bigr) \, dx 
	\leq G\bigl( u_{\eps}( x_{i}^{+} ), \eps u_{\eps}'( x_{i}^{+} ) \bigr) + \eta \leq 2\eta,  \label{eq:admG} \\
& \int_{0}^{1} \bigl|\widehat{h_{i}}''(x)\bigr|^{2} + W\bigl(\widehat{h_{i}}(x)\bigr) \, dx 
	\leq H\bigl( u_{\eps}( x_{i}^{-}), \eps u_{\eps}'( x_{i}^{-} ) \bigr) + \eta \leq 2\eta. \label{eq:admH}
\end{align}

We now rescale and translate these functions, precisely,
\begin{equation*}
g_{i}(x) := \widehat{g_{i}}\left( x - \frac{x_{i}^{+}}{\eps} \right), \qquad
h_{i}(x) := \widehat{h_{i}}\left( x - \frac{x_{i}^{-}}{\eps} + 1 \right).
\end{equation*}

Define
$$
w_{\eps,i}(x) := \begin{cases}
	b 				& \text{if } x \geq \frac{x_{i}^{+}}{\eps} + 1, \\
	g_{i}(t)			& \text{if } \frac{x_{i}^{+}}{\eps} \leq x \leq \frac{x_{i}^{+}}{\eps} + 1, \\
	u_{\eps}(\eps x)		& \text{if } \frac{x_{i}^{-}}{\eps} \leq x \leq \frac{x_{i}^{+}}{\eps}, \\
	h_{i}(t)			& \text{if } \frac{x_{i}^{-}}{\eps} - 1 \leq x \leq \frac{x_{i}^{-}}{\eps}, \\
	a				& \text{if } x \leq \frac{x_{i}^{-}}{\eps} - 1.
\end{cases}
$$

By construction $w_{\eps,i} \in H^2\bigl(\frac{x_{i}^{-}}{\eps} - 1, \frac{x_{i}^{+}}{\eps}+1 \bigr)$ and $w_{\eps,i}$ is admissible for the constant $m$ given in \eqref{eq:def_m}.
Hence for all $\eps$ sufficiently small,
\begin{multline*}
\int_{x_i^-}^{x_i^+} \left( \eps^3 \bigl|u''_{\eps}(x)\bigr|^2 + \frac{1}{\eps} W\bigl(u_{\eps}(x)\bigr)\right) \, dx 
 = \int_{\frac{x_i^-}{\eps}}^{\frac{x_i^+}{\eps}} \left( \bigl|w''_{\eps,i}(y)\bigr|^2 + W\bigl(w_{\eps,i}(y)\bigr)\right) \, dy \\
	= \int_{\frac{x_i^-}{\eps}-1}^{\frac{x_i^+}{\eps}+1} \left( \bigl|w''_{\eps,i}(y)\bigr|^2 + W\bigl(w_{\eps,i}(y)\bigr)\right) \, dy
		- \int_{0}^1 \left( \bigl|\widehat{g_{i}}''(y)\bigr|^2 + W\bigl(\widehat{g_{i}}(y)\bigr)\right) \, dy \\
	- \int_{0}^1 \left( \bigl|\widehat{h_{i}}''(y)\bigr|^2 + W\bigl(\widehat{h_{i}}(y)\bigr)\right) \, dy
	 \geq m\ell-4\eta,
\end{multline*}
where we used \eqref{eq:admG} and \eqref{eq:admH}.

\paragraph{Step 2. } Proof of \eqref{eq:F_sigma0}.

Define the functions
\begin{align}
& L(w,z) := \inf \biggl\{ \int_{0}^{1} W\bigl(f(x)\bigr) + \bigl|f''(x)\bigr|^{2} \, dt: f \in C^{2}\bigl([0,1]; \R\bigr),
	f(0) = w, f(1) = a, f'(0)=z, f'(1)=0  \biggr\}, \label{eq:defL} \\
& J(w,z) := \inf \biggl\{ \int_{0}^{r} W\bigl(j(x)\bigr) + \bigl|j''(x)\bigr|^{2} \, dt: 	j \in C^{2}\bigl([0,r]; \R\bigr),
	 j(0) = v(0), j(r) = w, j'(r)=z, \text{ for some } r>0  \biggr\}. \label{eq:defJ}
\end{align}

Analogously to \eqref{eq:defG}, $\ds \lim_{(w,z) \rightarrow (a,0)} L(w,z) = 0$, and from \eqref{eq:x0+}, for all $\eps$ sufficiently small we have $L\bigl( u_{\eps}( x_{0}^{+}), \eps u_{\eps}'( x_{0}^{+}) \bigr) \leq \eta$.
Hence we can find an admissible function $\widehat{f_{0}}$ for $L\bigl( u_{\eps}( x_{0}^{+}), \eps u_{\eps}'( x_{0}^{+}) \bigr)$ such that
\begin{equation}\label{eq:f_choose}
\int_{0}^{1} W\bigl(\widehat{f_{0}}(x)\bigr) + \bigl|\widehat{f_{0}}''(x)\bigr|^{2} \, dx 
	\leq L\bigl( u_{\eps}( x_{0}^{+} ), \eps u_{\eps}'( x_{0}^{+} ) \bigr) + \eta \leq 2\eta.
\end{equation}

We now prove that
\begin{equation}\label{eq:Jto0}
\lim_{\substack{w\to v(s_0) \\ z(w-v(s_0))\geq 0}} J(w,z) = 0.
\end{equation}

Fix $\eta >0$ and let $w, z \in \R$ be such that $|w-v(0)|<\eta$ and $z(w-v(0))\geq 0$.
If $|z| \leq \sqrt{\eta}$, then take $j(x):=w+z(x-r)+\frac{(v(0)-w)+rz}{r^2}(x-r)^2$, which is admissible for $J(w,z)$, to obtain 
$$
J(w,z) 
	\leq C \left[ r + \frac{(v(0)-w+rz)^2}{r^3} \right]
	\leq C \left[ r + \frac{\eta^2}{r^3} + \frac{\eta}{r} \right].
$$

Choosing $r = \sqrt{\eta}$, we deduce that $J(w,z) = O(\sqrt{\eta})$.

If $|z| \geq \sqrt{\eta}$, then let $r := \frac{w-v(0)}{z} > 0$, which satisfies $0< r < \sqrt{\eta}$.
Then, let $j(x) := w + z (x-r)$, which is admissible for $J(w,z)$ because $j(0) = w - zr = v(0)$ and $j''(x) = 0$, so $J(w,z) = O(\sqrt{\eta})$.
This proves \eqref{eq:Jto0}.

By \eqref{eq:Jto0} and \eqref{eq:x0-}, we may find a function $j$ admissible for $J\bigl(u_{\eps}(x_{0,\eps}^{-}), \eps u_{\eps}'(x_{0,\eps}^{-})\bigr)$ such that
\begin{equation}\label{eq:j_choose}
\int_{0}^{r} W\bigl(j(t)\bigr) + \bigl|j''(t)\bigr|^{2} \, dt 
	\leq J\bigl( u_{\eps} (x_{0,\eps}^{-}), \eps u_{\eps}'( x_{0,\eps}^{-}) \bigr) + \eta \leq 2\eta,
\end{equation}
for some $r = r(\eta) > 0$.

Set $f_{0}(x) := \widehat{f_{0}}\left( x - r - \frac{x_{0}^{+}-x_{0,\eps}^{-}}{\eps} \right)$, and define
$$
w_{\eps,0}(x) := \begin{cases}
	a				& \text{if } x \geq 1+r+ \frac{x_{0}^{+}-x_{0,\eps}^{-}}{\eps}, \\
	f_{0}(x)			& \text{if } r+ \frac{x_{0}^{+}-x_{0,\eps}^{-}}{\eps} \leq x \leq 1+r+ \frac{x_{0}^{+}-x_{0,\eps}^{-}}{\eps}, \\
	u_{\eps}\left(\eps (x-r) + x_{0,\eps}^{-} \right)	
					& \text{if } r \leq x \leq r+ \frac{x_{0}^{+}-x_{0,\eps}^{-}}{\eps}, \\
	j(x)				& \text{if } 0 \leq x \leq r.
\end{cases}
$$

By construction $w_{\eps,0}$ belongs to $H^2_{\loc}(0,\infty)$ and is admissible for $\sigma\bigl(Tu(s_0),v(s_0)\bigr)$ as defined in \eqref{eq:def_sigma}.
Hence for all $\eps$ sufficiently small, we have that
\begin{multline*}
\int_{x_0^-}^{x_0^+} \left( \eps^3 \bigl|u''_{\eps}(x)\bigr|^2 + \frac{1}{\eps} W\bigl(u_{\eps}(x)\bigr)\right) \, dx 
	 = \int_{r}^{r+\frac{x_0^+-x_0^-}{\eps}} \left( \bigl|w''_{\eps,0}(y)\bigr|^2 + W\bigl(w_{\eps,0}(y)\bigr)\right) \, dy \\
	 = \int_{0}^{1+r+\frac{x_0^+-x_0^-}{\eps}} \left( \bigl|w''_{\eps,0}(y)\bigr|^2 + W\bigl(w_{\eps,0}(y)\bigr)\right) \, dy
		- \int_{0}^1 \left( \bigl|\widehat{f_{0}}''(y)\bigr|^2 +  W\bigl(\widehat{f_{0}}(y)\bigr)\right) \, dy \\
	- \int_{0}^r \left( \bigl|j''(y)\bigr|^2 + W\bigl(j(y)\bigr)\right) \, dy
	 \geq \sigma\bigl(Tu(s_0),v(s_0)\bigr)-4\eta,
\end{multline*}
where we have used \eqref{eq:f_choose} and \eqref{eq:j_choose}.
This proves \eqref{eq:F_sigma0}.
The proof of \eqref{eq:F_sigmaell} is analogous.
\end{proof}

%%%%%%%%%%%%%%%%%%%%%%%%%%%%%%%%%%%%%%%%%%
%%%%%%%%%%%%%%%%%%%%%%%%%%%%%%%%%%%%%%%%%%
%%
%%
%%					Boundary 1D - Compactness and Lower Bound
%%
%%
%%%%%%%%%%%%%%%%%%%%%%%%%%%%%%%%%%%%%%%%%%
%%%%%%%%%%%%%%%%%%%%%%%%%%%%%%%%%%%%%%%%%%

\subsection{Compactness for $G_{\eps}$}\label{ssec:bdyG1D_compc}

To prove compactness for the functional $G_{\eps}$ defined in \eqref{eq:bdyG}, we begin with an auxiliary result.

\begin{lem} \label{lem:thetaBV}
Let $\theta \in L^{1}(J;[0,1])$ and let
$$
X:= \left\{ \ds x \in J: \mint_{J \cap B(x;\delta)} \theta(s) \, ds \in (0,1) \text{ for all } 0< \delta < \delta_0 \text{, for some } \delta_0=\delta_0(x)>0\right\}
$$
be a finite set.
Then $\theta \in BV(J;\{0,1\})$ and $S(\theta) \subset X$.

\end{lem}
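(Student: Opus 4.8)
The plan is to show that $\theta$ takes only the values $0$ and $1$ a.e., and that the ``interface'' between the two level sets is contained in the finite set $X$, so that $\theta$ can only jump finitely many times and hence lies in $BV(J;\{0,1\})$ with $S(\theta)\subset X$. First I would observe that $J\setminus X$ is open (since $X$ is finite) and decompose it into its countably many connected components, which are open subintervals $I_k$ of $J$. The key claim is that on each such component $I_k$, $\theta$ is a.e. equal to a constant belonging to $\{0,1\}$. Granting this claim, since there are only finitely many points in $X$, only finitely many of the $I_k$ can have nonempty intersection with $J$ in a way that produces a change of value; more precisely, enumerating the components in order along $J$, the value of $\theta$ can change only as one crosses a point of $X$, so $\theta$ has at most $|X|$ jumps, hence $\theta\in BV(J;\{0,1\})$ and $S(\theta)\subseteq X$.

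The heart of the matter is therefore the claim that $\theta$ is a.e.\ constant and $\{0,1\}$-valued on each component $I_k$ of $J\setminus X$. Fix such a component $I_k$ and fix $x\in I_k$; then $x\notin X$, so there is $\delta_0(x)>0$ such that the average $\mmint_{J\cap B(x;\delta)}\theta\,ds$ equals $0$ or $1$ for every $0<\delta<\delta_0(x)$ (here I use that the average lies in $[0,1]$ since $\theta\in[0,1]$, and that failing to lie in $(0,1)$ forces it to equal $0$ or $1$). Since $\theta\in[0,1]$, an average equal to $0$ forces $\theta=0$ a.e.\ on $B(x;\delta)$, and an average equal to $1$ forces $\theta=1$ a.e.\ on $B(x;\delta)$. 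Moreover the value ($0$ or $1$) cannot depend on $\delta$ in $(0,\delta_0(x))$, since the balls are nested and the a.e.\ value on a larger ball restricts to the a.e.\ value on a smaller one. Thus each $x\in I_k$ has a neighborhood on which $\theta$ is a.e.\ equal to a fixed element $c(x)\in\{0,1\}$. The function $x\mapsto c(x)$ is then locally constant on $I_k$, hence constant since $I_k$ is connected; call this common value $c_k\in\{0,1\}$, and conclude $\theta\equiv c_k$ a.e.\ on $I_k$.

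With the claim established, I would assemble the conclusion. Since $X=\{\xi_1<\cdots<\xi_p\}$ is finite, $J\setminus X$ consists of at most $p+1$ subintervals $I_0,\dots,I_p$ (some possibly empty if endpoints of $X$ coincide with endpoints of $J$), on each of which $\theta$ equals a constant $c_j\in\{0,1\}$ a.e. Hence $\theta = \sum_j c_j\chi_{I_j}$ a.e., which is manifestly a function of bounded variation valued in $\{0,1\}$, with distributional derivative a finite sum of Dirac masses supported at those $\xi_i$ where $c_{i-1}\neq c_i$. In particular $S(\theta)\subseteq X$, which is exactly the assertion. I do not expect any serious obstacle here; the only point requiring a little care is the justification that nested averaging forces the a.e.\ value to be locally well-defined and independent of the radius, i.e.\ the passage from the pointwise ``averages avoid $(0,1)$'' hypothesis to the pointwise-a.e.\ statement $\theta\in\{0,1\}$ together with local constancy --- and that is handled by the elementary observation above about averages of $[0,1]$-valued functions attaining an extreme value.
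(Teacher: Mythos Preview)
The paper states this lemma without proof, using it as an auxiliary fact in the compactness argument for $G_\eps$, so there is nothing to compare your argument against. Your proof is correct and is the natural one.

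One cosmetic remark: the literal negation of ``$x\in X$'' says only that for every $\delta_0>0$ there exists \emph{some} $\delta\in(0,\delta_0)$ with $\mmint_{J\cap B(x;\delta)}\theta\,ds\notin(0,1)$, not that this holds for \emph{all} small $\delta$ as you write. However, your own nesting observation immediately upgrades the weak statement to the strong one: once a single $\delta_1>0$ gives average $0$ (resp.\ $1$), the constraint $\theta\in[0,1]$ forces $\theta=0$ (resp.\ $=1$) a.e.\ on $J\cap B(x;\delta_1)$, and hence the average equals the same value on every smaller ball. With that one-line patch the argument is complete.
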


\begin{thm}[compactness for $G_{\eps}$]\label{thm:compactness_G1D}

Assume that $V:\R \to [0,\infty)$ satisfies $(H^V_1)-(H^V_3)$.
Let $J \subset \R$ be an open, bounded interval, let $\{\eps_n\}$ be such that $\eps_n \lambda_n^{\frac23} \to L \in(0,\infty)$, and let $\{v_n\} \subset H^{\frac{3}{2}}(J)$ be such that 
\begin{equation}\label{eq:energyfiniteG}
\sup_{n} G_{\eps_n}(v_n;J) < \infty.
\end{equation}

Then there exist a subsequence (not relabeled) of $\{v_n\}$ and a function $v \in BV\bigl(J;\{\alpha,\beta\}\bigr)$ such that $v_n \to v$ in $L^2(J)$.

\end{thm}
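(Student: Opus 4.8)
The plan is to reduce the compactness statement for $G_{\eps}$ to the abstract measure-theoretic criterion in Lemma \ref{lem:thetaBV} by building, out of the sequence $\{v_n\}$, a sequence of auxiliary functions $\theta_n \in L^1(J;[0,1])$ that track ``which well $v_n$ is near'' and then extracting a limit. First I would use the energy bound \eqref{eq:energyfiniteG} together with $(H^V_2)$ to get a uniform $L^2(J)$ bound on $v_n$: since $\lambda_{\eps_n} \int_J V(v_n) \le C$ and $\lambda_{\eps_n}\to\infty$, we have $\int_J V(v_n)\to 0$, and by $(H^V_2)$ this forces $\sup_n\|v_n\|_{L^2(J)} < \infty$. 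Combined with the Gagliardo-Nirenberg-type estimate (Proposition \ref{prop:gagliardo}) and the fact that $\eps_n^3 |v_n'|_{H^{1/2}}^2 \le C$, i.e. $|v_n'|_{H^{1/2}(J)} \le C\eps_n^{-3/2}$, one obtains a bound on $\|v_n\|_{H^1(J)}$ (blowing up, but at a controlled rate), hence after passing to a subsequence $v_n \rightharpoonup v$ weakly in $L^2(J)$ and $v_n \to v$ a.e. Because $V(v_n)\to 0$ in $L^1$ and $V$ is continuous with $V^{-1}(\{0\})=\{\alpha,\beta\}$, the a.e. limit $v$ takes values in $\{\alpha,\beta\}$; set $\theta := \frac{v-\alpha}{\beta-\alpha} \in \{0,1\}$ a.e., or more precisely let $\theta_n := \frac{1}{\beta-\alpha}\bigl(\mathrm{median}\{\alpha,v_n,\beta\} - \alpha\bigr)$ truncated to $[0,1]$, so $\theta_n \to \theta$ in $L^1(J)$.

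The heart of the argument is to show that the set $X$ of ``bad points'' of $\theta$ (those around which $\theta$ has average strictly between $0$ and $1$ at every small scale) is finite, so that Lemma \ref{lem:thetaBV} applies and gives $\theta \in BV(J;\{0,1\})$, hence $v = \alpha + (\beta-\alpha)\theta \in BV(J;\{\alpha,\beta\})$, and then upgrade $L^1$ convergence to $L^2$ convergence using the uniform $L^2$ bound (equi-integrability of $|v_n|^2$ from the $L^2$ bound plus a.e. convergence, or simply $\|v_n - v\|_{L^2}^2 \le \|v_n-v\|_{L^\infty}\cdot\|v_n-v\|_{L^1}$ after noting the sequence can be taken bounded in $L^\infty$ on the relevant sets — more carefully, by Vitali since $|v_n - v|^2$ is equi-integrable). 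To bound $\#X$: each $x \in X$ must be a point where $v_n$ crosses, for infinitely many $n$, the ``forbidden band'' around the midpoint of $[\alpha,\beta]$ on arbitrarily small neighborhoods. I would fix $0 < 2\delta < \beta-\alpha$ and, as in the proof of Lemma \ref{lem:c_compact}, note that on any subinterval where $v_n$ transits from near $\alpha$ to near $\beta$ one picks up a definite amount of potential energy $C_\delta \cdot (\text{length of transit interval})$ from $\int_J V(v_n)$, forcing these transit intervals to shrink; meanwhile transiting $v_n$ across a band of fixed width on an interval of length $\ell_n \to 0$ costs $|v_n'|_{H^{1/2}}$-energy of order $\ell_n^{-1}$ after rescaling (the fractional seminorm scales like $\ell^{-1}$ in one dimension for the derivative), so $\eps_n^3 |v_n'|_{H^{1/2}}^2 \gtrsim \eps_n^3 \ell_n^{-1}$; but since $\eps_n^3 = (\eps_n\lambda_{\eps_n}^{2/3})^3 \lambda_{\eps_n}^{-2} \sim L^3 \lambda_{\eps_n}^{-2}$ and $\lambda_{\eps_n}^{-2} \sim (\text{potential-energy scale})^{?}$... one balances the two contributions to conclude each isolated transition contributes a fixed positive quantity $\ge \ul{c} L - o(1)$ (or at least a fixed positive lower bound) to $G_{\eps_n}(v_n;J)$, so the number of such transitions, and hence $\#X$, is bounded by $C^{-1}\sup_n G_{\eps_n}(v_n;J) < \infty$.

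The main obstacle I anticipate is making the lower bound ``each transition across the forbidden band costs a fixed positive amount of energy'' rigorous in the nonlocal setting: unlike the local Modica-Mortola situation, the fractional seminorm $|v_n'|_{H^{1/2}}$ does not localize cleanly — its value on a small interval $(a,b)$ is coupled to the behavior of $v_n$ far away, and there is no truncation trick available (as the introduction emphasizes). I would handle this by choosing the ``good points'' $x_i^\pm$ where $v_n$ is genuinely close to a single well \emph{and} the contribution of the tails of the double integral is controlled — concretely, picking $x_i^\pm$ in a set of positive measure where $v_n \approx \alpha$ or $v_n \approx \beta$ and where a bound on $\int_{x_i^-}^{x_i^+}\int_J \frac{|v_n'(x)-v_n'(y)|^2}{|x-y|^2}\,dy\,dx$ holds (possible on most points by a Chebyshev/averaging argument against the finite total energy), then use the scaling/rescaling argument (change variables $t \mapsto (T_n - S_n)t + S_n$ exactly as in Lemma \ref{lem:c_compact}) to see that the rescaled derivative seminorm over $(0,1)$ tends to $0$ while the function must still traverse a band of fixed width, a contradiction unless the number of transitions is finite. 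This is essentially the strategy announced in Section \ref{ssec:bdyG1D_LB} of the introduction — ``finding good points $x_i^\pm$ such that most of the transition energy is concentrated between $x_i^-$ and $x_i^+$'' — adapted here to yield only the qualitative finiteness of $X$ rather than the sharp constant $\ul{c}$. Once $\#X < \infty$ is established, the remainder (applying Lemma \ref{lem:thetaBV}, then promoting $L^1$ to $L^2$ convergence) is routine.
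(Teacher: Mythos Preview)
Your proposal has a genuine gap at the outset: you assert that after passing to a subsequence $v_n \rightharpoonup v$ weakly in $L^2(J)$ \emph{and} $v_n \to v$ a.e. The second claim does not follow from the first, and the blowing-up $H^1$ bound you extract from Proposition~\ref{prop:gagliardo} does not yield a compact embedding. Pointwise (or in-measure) convergence is exactly what the theorem asks you to produce, so defining $\theta := (v-\alpha)/(\beta-\alpha)$ from the a.e.\ limit is circular. Your alternative via $\theta_n := (\beta-\alpha)^{-1}\bigl(\mathrm{median}\{\alpha,v_n,\beta\}-\alpha\bigr)$ is closer to a workable route, but the subsequent claim ``$\theta_n \to \theta$ in $L^1(J)$'' again presupposes the strong compactness you have not yet established; at best you have $\theta_n \stackrel{\star}{\rightharpoonup} \theta$ in $L^\infty(J)$, and it is not automatic that this weak-$\star$ limit takes values in $\{0,1\}$.

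The paper avoids this circularity by passing through Young measures. From the uniform $L^2$ bound one extracts a subsequence generating a Young measure $\{\nu_x\}_{x\in J}$; since $\int_J \min\{V(v_n),1\}\to 0$, one gets $\nu_x = \theta(x)\delta_\alpha + (1-\theta(x))\delta_\beta$ for some measurable $\theta:J\to[0,1]$. The set $X$ of Lemma~\ref{lem:thetaBV} is defined in terms of \emph{this} $\theta$, and the payoff of showing $\#X<\infty$ is that $\theta\in\{0,1\}$ a.e., i.e.\ each $\nu_x$ is a Dirac mass $\delta_{v(x)}$ --- which is precisely convergence in measure of $v_n$ to $v$. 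The $L^2$ upgrade then follows from $2$-equi-integrability (via $(H^V_2)$) and Vitali. Note also that for the finiteness of $X$ the paper does not need the ``good points'' machinery of Section~\ref{ssec:bdyG1D_LB} that you import: testing the Young measure against bump functions supported near $\alpha$ and near $\beta$ directly produces points $x_{n,i}^{\pm}\in B(s_i,d)$ with $|v_n(x_{n,i}^-)-\alpha|<\eta$ and $|v_n(x_{n,i}^+)-\beta|<\eta$; the single rescaling $w_n(t):=v_n(\eps_n\lambda_n^{-1/3}t)$ makes $w_n$ admissible for the constant $c_\eta$ of \eqref{eq:cdelta}, and summing over $\ell$ disjoint intervals gives $\sup_n G_{\eps_n}(v_n;J) \ge c_\eta L\,\ell$, hence $\#X \le C/(c_\eta L)$.
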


\begin{proof}

Since $\lambda_n \to \infty$, by \eqref{eq:energyfiniteG} we have that
$$
C_1 := \sup_n \int_J V(v_n)\, dx < \infty.
$$

By condition $(H^V_3)$ and the fact that $J$ is bounded, we have that
$$
\frac{1}{C} \Leb{1}(J) + C \int_J |v_n|^2 \, dx \leq \int_J V(v_n) \, dx \leq C_1,
$$
and so $\{v_n\}$ is bounded in $L^2(J)$.
Thus by the fundamental theorem of Young measures (for a comprehensive exposition on Young measures, see \cite{Tartar_YM, Ball_YM,MullerLM,FL}), there exists a subsequence (not relabeled) generating a Young measure $\{\nu_x\}_{x\in J}$.
Letting $f(z) := \min \bigl\{ V(z),1\bigr\}$, since $\lambda_n \to \infty$, we have that
$$
0 = \lim_n \int_J f(v_n) \, dx = \int_J \int_{\R} f(z) \, d\nu_x(z) \, dx.
$$
Since $f(z)=0$ if and only if $z \in \{\alpha,\beta\}$, we have that for $\Leb{1}$-a.e. $x \in J$,
\begin{equation}\label{eq:YMnu}
\nu_x = \theta(x) \delta_{\alpha} + \bigl( 1-\theta(x)\bigr) \delta_{\beta}
\end{equation}
for some $\theta \in L^{\infty}\bigl(J;[0,1]\bigr)$.
Define
\begin{equation}\label{eq:X}
X := \left\{ x \in J: \mint_{B(x;\delta)} \theta(s)\, ds \in (0,1) \text{ for all } 0 < \delta < \delta_0 \text{, for some } \delta_0=\delta_0(x)>0 \right\}.
\end{equation}

We claim that $X$ is finite.
To establish this, let $s_1, \ldots, s_{\ell}$ be distinct points of $X$ and let $0 < d_0 < \frac{1}{2} \min \{ |s_i-s_j|:  i\neq j, i,j=1,\ldots,\ell\}$.
Since $s_i \in X$, we may find $d_i>0$ so small that $d_i \leq d_0$ and
\begin{equation}\label{eq:thetasi}
\mint_{B(s_i;d_i)} \theta(s) \, ds > 0,
\qquad 
\mint_{B(s_i;d_i)} \bigl( 1 - \theta(s)\bigr) \, ds > 0.
\end{equation}

Define $d := \min \{ d_1, \ldots, d_{\ell} \}$.
Let $0<\eta< \frac{\beta-\alpha}{2}$, let $\varphi_{\eta} \in C^{\infty}_c\bigl(\R;[0,1]\bigr)$ be such that $\supp \varphi_{\eta} \subset B(\alpha;\eta)$ and $\varphi_{\eta}(\alpha)=1$, and let $\gamma_{\eta} \in C^{\infty}_c\bigl(\R;[0,1]\bigr)$ be such that $\supp \gamma_{\eta} \subset B(\beta;\eta)$ and $\gamma_{\eta}(\alpha)=1$.

Using the fundamental theorem of Young measures with 
$$
f(x,z):= \chi_{B(s_i;d)}(x)\varphi_{\eta}(z),
$$
we obtain
\begin{equation}
\lim_{n \to \infty} \int_{B(s_i;d_i)} \varphi_{\eta}\bigl( v_n(x)\bigr) \, dx 
	 = \int_{\R} \int_{\R} f(x,z) \, d\nu_x(z) \, dx
	 = \int_{B(s_i;d_i)} \theta(x) \, dx > 0, \label{eq:theta1}.
\end{equation}

Similarly,
\begin{equation}\label{eq:theta2}
\lim_{n \to \infty} \int_{B(s_i;d_i)} \gamma_{\eta}\bigl( v_n(x)\bigr) \, dx 
	= \int_{B(s_i;d_i)} \bigl( 1 - \theta(x)\bigr) \, dx > 0.
\end{equation}

In view of \eqref{eq:theta1} and \eqref{eq:theta2}, we may find $x_{n, i}^{\pm} \in (s_i-d,s_i+d)$ such that
$$
|v_{n}(x_{n,i}^{-})-\alpha| < \eta,
\quad \text{ and } \quad |v_{n}(x_{n,i}^{+})-\beta| < \eta.
$$

Let $w_{n}(x) := v_{n}\left(\eps_n \lambda_{n}^{-\frac{1}{3}} x\right)$, which is admissible for the constant $c_{\eta}$ defined in \eqref{eq:cdelta}.
Then by \eqref{eq:energyfiniteG},
\begin{align*}
\infty > C \geq \liminf_{n} G_{\eps_n}(v_{n};J) 
	& \geq \liminf_{n} \sum_{i=1}^{\ell} G_{\eps_n}\bigl(v_{n};(x_{n,i}^-,x_{n,i}^+)\bigr) \\
	& \geq \liminf_{n} \sum_{i=1}^{\ell} \eps_n \lambda_{n}^{\frac{2}{3}} G_{1}\left(w_{n};\left({\ts \frac{x_{n,i}^-}{\eps_n\lambda_{n}^{-\frac{1}{3}}},\frac{x_{n,i}^+}{\eps_n\lambda_{n}^{-\frac{1}{3}}}}\right) \right)
	 \geq c_{\eta} L \ell .
\end{align*}

We conclude that
$$
\Haus{0}(X) \leq \frac{C}{c_{\eta} L} < \infty.
$$

By Lemma \ref{lem:thetaBV}, this implies that $\theta \in BV\bigl(J;\{0,1\}\bigr)$.
In particular, we may write $\theta = \chi_{E}$, and so $\nu_x = \delta_{v(x)}$, where 
$$
v(x) := \begin{cases}
	\alpha 	& \text{ if } x \in E, \\
	\beta 	& \text{ if } x \in J \bs E.
\end{cases}
$$

It follows that $\{v_n\}$ converges in measure to $v$.
By condition $(H^V_2)$, there are $C, T>0$ such that $V(z) \geq C |z|^2$ for all $|z| \geq T$,
and so
$$
\int_{E \cap \{|v_{n}|\geq T\}} |v_{n}(x')|^2  \, dx'
	\leq \frac{1}{C} \int_E V(v_{n}(x')) \, dx'
	\leq \frac{C_1}{C} \frac{1}{\lambda_{n}}.
$$
This implies that $\{v_{n}\}$ is $2$-equi-integrable.
Apply Vitali's convergence theorem to deduce that $v_n \to v$ in $L^2(J)$.
\end{proof}

%%%%%%%%%%%%%%%%%%%%%%%%%%%%%%%%%%%%%%%%%%
%%%%%%%%%%%%%%%%%%%%%%%%%%%%%%%%%%%%%%%%%%
%%
%%
%%					Lower Bound for Boundary - 1D
%%
%%
%%%%%%%%%%%%%%%%%%%%%%%%%%%%%%%%%%%%%%%%%%
%%%%%%%%%%%%%%%%%%%%%%%%%%%%%%%%%%%%%%%%%%

\subsection{Lower bound for $G_{\eps}$}\label{ssec:bdyG1D_LB}

In this section we prove the following theorem.

\begin{thm}[Lower bound for $G_{\eps}$]\label{thm:boundary1D}

Let $J \subset \R$ be an open and bounded interval and let $V:\R \to [0,\infty)$ satisfy $(H^V_1)-(H^V_3)$. Assume that $\eps \lambda_{\eps}^{\frac{2}{3}} \rightarrow L \in (0,\infty)$.
Let $v \in BV\bigl(J;\{\alpha,\beta\}\bigr)$ and let $\{v_{\eps}\} \subset H^{\frac{3}{2}}(J)$ be such that
\begin{equation}\label{eq:sup}
\sup_{\eps>0} G_{\eps}(v_{\eps};J) =: C < \infty
\end{equation}
and $v_{\eps} \to v$ in $L^2(J)$ as $\eps \to0^+$.
Then
$$
\liminf_{\eps \to 0^+} G_{\eps}(v_{\eps};J) 
   \geq \ul{c}L \Haus{0}(S(v)),
$$
where $\ul{c} \in (0,\infty)$ is the constant defined in \eqref{eq:def_under_c}.
\end{thm}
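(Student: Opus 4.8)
The plan is to reduce the statement, by superadditivity of $G_\eps$, to a local estimate near each jump point of $v$, rescale to the intrinsic length $\mu_\eps:=\eps\lambda_\eps^{-1/3}$, and there replace $v_\eps$ by a function admissible for $\ul c$ at a cost that disappears in the limit — modifying only $v_\eps'$, since truncation of $v_\eps$ itself is ruled out by the nonlocality of the fractional seminorm. After passing to a subsequence realising the $\liminf$ as a $\lim$, note that $\lambda_\eps\to\infty$ and \eqref{eq:sup} force $\int_JV(v_\eps)\,dx\to0$; together with $v_\eps\to v$ in $L^2(J)$ and $(H^V_3)$ this makes $v_\eps$, off a set of measure $o_\eps(1)$, within any prescribed $\eta>0$ of $\alpha$ on $\{v=\alpha\}$ and of $\beta$ on $\{v=\beta\}$. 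Write $S(v)=\{s_1,\dots,s_\ell\}$ (a finite set) and fix $d>0$ so small that the balls $B(s_i;d)$ are pairwise disjoint in $J$ and $v$ is constant on each component of $B(s_i;d)\setminus\{s_i\}$. Since the integrand of the fractional seminorm is nonnegative and $V\geq0$, $G_\eps$ is superadditive over disjoint intervals, so $G_\eps(v_\eps;J)\geq\sum_i G_\eps(v_\eps;B(s_i;d))$ and it suffices to prove $\liminf_\eps G_\eps(v_\eps;B(s_i;d))\geq L\,\ul c$ for each $i$.

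\textbf{Rescaling and good windows.} Put $w_\eps:=v_\eps(\mu_\eps\,\cdot)$. Using $\lambda_\eps\mu_\eps=\eps\lambda_\eps^{2/3}\to L$ and $\lambda_\eps^{2/3}\mu_\eps^2=\eps^2$, the change of variables $x\mapsto x/\mu_\eps$ gives $G_\eps(v_\eps;B(s_i;d))=\eps\lambda_\eps^{2/3}\,\hat G(w_\eps;B_\eps)$, with $\hat G(w;K):=\tfrac18|w'|_{H^{\frac12}(K)}^2+\int_KV(w)$ and $B_\eps$ the rescaled ball (of length $2d/\mu_\eps\to\infty$). Partition $(s_i-d,s_i-\tfrac d2)$ into $N_\eps\sim d/\mu_\eps\to\infty$ consecutive windows of length $\mu_\eps$. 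From $\int_{s_i-d}^{s_i-d/2}|v_\eps-\alpha|^2\to0$ and the superadditivity of $|\cdot|_{H^{1/2}}^2$ (so that the windowwise seminorms sum to at most $|v_\eps'|_{H^{1/2}(J)}^2\le 8C\eps^{-3}$), an averaging argument produces a window $W_\eps^-$ containing a point $x_i^-$ with $|v_\eps(x_i^-)-\alpha|<\eta$, on which $\int_{W_\eps^-}|v_\eps-\alpha|^2<\eta^2\mu_\eps$ and $|v_\eps'|_{H^{1/2}(W_\eps^-)}^2\lesssim\eps^{-3}/N_\eps$, so that $\eps^3|v_\eps'|_{H^{1/2}(W_\eps^-)}^2\to0$. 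A fractional Poincaré inequality, plus comparison of $v_\eps$ with the affine function of mean slope $\overline{v_\eps'}$ on $W_\eps^-$, then gives $\|v_\eps'\|_{L^2(W_\eps^-)}^2\lesssim d^{-1}+\eta^2\mu_\eps^{-1}$, hence $\mu_\eps\|v_\eps'\|_{L^2(W_\eps^-)}^2=O(\eta^2)+o_\eps(1)$. A mirror construction in $(s_i+\tfrac d2,s_i+d)$ gives $W_\eps^+$ and $x_i^+$ with $|v_\eps(x_i^+)-\beta|<\eta$; note $d<x_i^+-x_i^-<2d$, so $[x_i^-,x_i^+]$ straddles the transition of $v_\eps$.

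\textbf{The competitor.} Let $\phi_\eps\in C^\infty$ be $\equiv1$ on $[x_i^-,x_i^+]$ and near $x_i^\pm$, $\equiv0$ outside $[x_i^--\mu_\eps,x_i^++\mu_\eps]$, with $\|\phi_\eps'\|_\infty\lesssim\mu_\eps^{-1}$, and let $g_\eps=c_\eps^\pm\psi_\eps'$ be small corrections supported in the outer halves of $W_\eps^\pm$ chosen so that, with $\tilde v_\eps'=v_\eps'\phi_\eps+g_\eps$, the function $\tilde v_\eps$ with $\tilde v_\eps=v_\eps$ on $[x_i^-,x_i^+]$ satisfies $\tilde v_\eps\equiv\alpha$ left of $W_\eps^-$ and $\tilde v_\eps\equiv\beta$ right of $W_\eps^+$; by the previous estimates $|c_\eps^\pm|\lesssim\eta+o_\eps(1)$. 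Because $\phi_\eps\equiv1$ and $g_\eps\equiv0$ near the seams $x_i^\pm$, $\tilde v_\eps'$ agrees with $v_\eps'$ in a neighbourhood of each seam, so no fractional trace obstruction arises and $\tilde v_\eps\in H^{3/2}_{\loc}(\R)$; being eventually constant, its rescaling $\tilde w_\eps:=\tilde v_\eps(\mu_\eps\,\cdot)$ is admissible for $\ul c$ with some finite radius $R_\eps$. The claim is $\hat G(\tilde w_\eps;[-R_\eps,R_\eps])\leq\hat G(w_\eps;B_\eps)+O(\eta)+o_\eps(1)$. Indeed, on $[x_i^-,x_i^+]$ nothing changes; on $W_\eps^\pm$ the potential term is $\leq\lambda_\eps\mu_\eps\,\omega(\eta)=O(\eta)$ (where $\omega(\eta)\to0$ bounds $V$ near the wells, using that $\tilde v_\eps$ stays within $O(\eta)+o_\eps(1)$ of the well on $W_\eps^\pm$), and, by the product estimate $[fg]_{H^{1/2}(I)}^2\lesssim\|g\|_\infty^2[f]_{H^{1/2}(I)}^2+|I|\,\|g'\|_\infty^2\|f\|_{L^2(I)}^2$, the scale invariance of $|\cdot|_{H^{1/2}}^2$, and $\eps\lambda_\eps^{2/3}\mu_\eps^2=\eps^3$, the extra local fractional energy is $\lesssim\eps^3\big(|v_\eps'|_{H^{1/2}(W_\eps^\pm)}^2+\mu_\eps^{-1}\|v_\eps'\|_{L^2(W_\eps^\pm)}^2+\mu_\eps^{-2}|c_\eps^\pm|^2\big)=O(\eta^2)+o_\eps(1)$; finally the nonlocal cross-terms between $W_\eps^\pm$ and $[x_i^-,x_i^+]$, and between $W_\eps^-$ and $W_\eps^+$, are controlled by writing $|\tilde v_\eps'(x)-\tilde v_\eps'(y)|^2=|v_\eps'(x)-v_\eps'(y)|^2+\text{(error)}$, the error being supported at distance $\gtrsim\mu_\eps$, resp. $\gtrsim d$, from the complementary set, so that the kernel $|x-y|^{-2}$ is harmless there and — again after the $\eps^3\asymp\mu_\eps^2$ prefactor — the contribution is $O(\eta)+o_\eps(1)$, the original term $|v_\eps'(x)-v_\eps'(y)|^2$ being absorbed into $\hat G(w_\eps;B_\eps)$.

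\textbf{Conclusion.} Thus $\ul c\leq\hat G(\tilde w_\eps;[-R_\eps,R_\eps])\leq\hat G(w_\eps;B_\eps)+O(\eta)+o_\eps(1)$, whence $G_\eps(v_\eps;B(s_i;d))=\eps\lambda_\eps^{2/3}\hat G(w_\eps;B_\eps)\geq\eps\lambda_\eps^{2/3}\big(\ul c-O(\eta)-o_\eps(1)\big)$; letting $\eps\to0^+$ and then $\eta\to0^+$ yields $\liminf_\eps G_\eps(v_\eps;B(s_i;d))\geq L\,\ul c$, and summing over $i$ (with $\Haus{0}(S(v))=\ell$) gives the theorem. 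The main obstacle is the last step of the third paragraph: nonlocality forbids truncation, so the competitor must be engineered so that every nonlocal interaction it creates is either $O(\eta)$ or $o_\eps(1)$ — this is what forces us to cut only the derivative, with a cutoff that is locally constant at the seams (to avoid a spurious fractional jump there), to leave the transition region of $v_\eps$ untouched, and to place the two modification windows exactly at the intrinsic scale $\mu_\eps$ and at a fixed mutual distance.
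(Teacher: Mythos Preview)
Your approach follows the same overall strategy as the paper's proof: localise around each jump of $v$, rescale by the intrinsic length $\mu_\eps=\eps\lambda_\eps^{-1/3}$, modify $v_\eps$ outside the transition region to obtain a competitor admissible for $\ul c$, and show that the modification costs only $O(\eta)+o_\eps(1)$. The high-level reduction, the superadditivity step, and the conclusion are all identical.

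The tactical implementation differs. The paper (Lemma~\ref{lem:estimate}, Corollaries~\ref{cor:estimate} and~\ref{cor:estimate_infinity}, Proposition~\ref{prop:almostLB}) selects \emph{good points} $x_i^\pm$ via an averaging/Fatou argument that gives pointwise control not only on $|v_\eps(x_i^\pm)-\text{well}|$ and $\eps|v_\eps'(x_i^\pm)|$ but also on the nonlocal quantities $\eps^3 Sv_\eps(x_i^\pm)=\eps^3\int_J\frac{|v_\eps'(x_i^\pm)-v_\eps'(y)|^2}{|x_i^\pm-y|^2}\,dy$ and $\eps^3 Qv_\eps(x_i^-,x_i^+)$; it then extends $w_\eps$ by explicit cubic polynomials matching value and derivative, and computes the error in closed form. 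You instead select \emph{good windows} $W_\eps^\pm$ of length $\mu_\eps$ on which you control the $L^2$-distance to the well and the local $H^{1/2}$-seminorm of $v_\eps'$, and modify $v_\eps'$ by a smooth cutoff plus a scalar correction. Your route avoids the explicit polynomial calculus at the price of making the cross-term estimates less concrete; the paper's route makes the nonlocal bookkeeping fully explicit but requires the additional Corollary~\ref{cor:estimate_infinity} to handle the growing rescaled interval.

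Two points deserve more care in your sketch. First, the definition~\eqref{eq:def_under_c} of $\ul c$ requires the competitor to satisfy $f'\in H^{1/2}(\R)$, not merely $f\in H^{3/2}_{\loc}(\R)$ with compactly supported derivative; the paper devotes a separate argument (Corollary~\ref{cor:estimate_infinity} and \eqref{eq:Gnew_infinity}--\eqref{eq:Gnew_infinity2}) to verifying this, and there the growing factors $\log(1+(x^+-x^-)/\mu_\eps)$ and $(x^+-x^-)/\mu_\eps$ must be beaten by the good-point bounds. With your smooth cutoff the check is in fact easier (since $\tilde w_\eps'$ vanishes to infinite order at $\pm R_\eps$, the tail integrals $\int\frac{|\tilde w_\eps'(x)|^2}{R_\eps\mp x}\,dx$ are finite), but you should say so explicitly. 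Second, your estimate $\|v_\eps'\|_{L^2(W_\eps^-)}^2\lesssim d^{-1}+\eta^2\mu_\eps^{-1}$ via ``fractional Poincar\'e plus comparison with the affine function of mean slope'' is correct but compressed: the Poincar\'e step controls $v_\eps'-\overline{v_\eps'}$, while the bound on $\overline{v_\eps'}$ comes from the observation that an affine function with slope $|\overline{v_\eps'}|$ on an interval of length $\mu_\eps$ has $L^2$-oscillation $\gtrsim|\overline{v_\eps'}|\mu_\eps^{3/2}$, which must be dominated by $\|v_\eps-\alpha\|_{L^2(W_\eps^-)}+\sqrt{\mu_\eps}\|v_\eps'-\overline{v_\eps'}\|_{L^2}$. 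Spelling this out would close the argument.
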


We begin with some preliminary results.

\begin{lem}\label{lem:estimate}

Let $V:\R \to [0,\infty)$ satisfy $(H^V_1)-(H^V_3)$, and let $v \in H^{\frac{3}{2}}(c,d)$ be such that $Tv(c)=w$ and $Tv'(c)=z$, for some $c,d,z,w \in \R$, with $c<d$ and $|z|+|w-\alpha| \leq 1$.
Let 
$$
f(x) := \begin{cases}
  v(x)		& \text{ if } c\leq x \leq d, \\
  p(x)		& \text{ if } c -1 \leq x \leq c,
\end{cases}
$$
where $p$ is the polynomial given by 
$$p(x) := \alpha  + (3w-3\alpha-z)(x-c+1)^2+(z+2\alpha-2w)(x-c+1)^3.$$

Then $f \in H^{\frac{3}{2}}(c-1,d)$,
\begin{multline}\label{eq:nonlocal_alpha}
\int_{c-1}^d \int_{c-1}^d \frac{|f'(x)-f'(y)|^2}{|x-y|^2} \, dx \, dy
	- \int_c^d \int_c^d \frac{|v'(x)-v'(y)|^2}{|x-y|^2} \, dx \, dy \\
	\begin{aligned}
	& = \int_{c-1}^c \int_{c-1}^c \frac{|p'(x)-p'(y)|^2}{|x-y|^2} \, dx \, dy 
		+ 2 \int_{c-1}^c \int_c^d \frac{|v'(x)-p'(y)|^2}{|x-y|^2} \, dx \, dy\\
	& \leq C \bigl(|z|+|\alpha-w|\bigr)^2 + 2Sv(c), 
	\end{aligned}
\end{multline}
and
\begin{equation}\label{eq:V_alpha}
\int_{c-1}^d V(f(x)) \, dx -\int_c^d V(v(x)) \, dx
		= \int_{c-1}^c V(p(x)) \, dx
	\leq C \bigl(|z|+|\alpha-w|\bigr)^2,
\end{equation}
for some constant $C=C(V,\alpha)>0$, and where 
\begin{equation}\label{eq:Sv_alpha}
Sv(c) := \int_c^d \frac{|v'(x)-v'(c)|^2}{|x-c|^2} \, dx.
\end{equation}

Moreover,
\begin{align}
& \int_{c-1}^{c} \frac{|p'(x)|^2}{|x-c+1|^2} \, dx \leq C \bigl(|z|+|\alpha-w|\bigr)^2, \label{eq:secondpart1}\\
& \int_{c-1}^{c} \frac{|p'(x)|^2}{|x-d-1|^2} \, dx  \leq C \bigl(|z|+|\alpha-w|\bigr)^2. \label{eq:secondpart2}
\end{align}

\end{lem}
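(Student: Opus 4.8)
The plan is to verify everything by direct computation, exploiting that $p$ is an explicit cubic polynomial chosen so that $f$ matches $v$ to first order at $x=c$. First I would check the matching conditions: from $p(x) = \alpha + (3w-3\alpha-z)(x-c+1)^2 + (z+2\alpha-2w)(x-c+1)^3$ one computes $p(c-1)=\alpha$, $p'(c-1)=0$, and then $p(c) = \alpha + (3w-3\alpha-z) + (z+2\alpha-2w) = w$ and $p'(c) = 2(3w-3\alpha-z) + 3(z+2\alpha-2w) = z$. Hence $f$ and $f'$ are continuous across $c$, which together with $v \in H^{3/2}(c,d)$ and $p$ smooth gives $f \in H^{3/2}(c-1,d)$ (the $H^{3/2}$-seminorm decomposes into the three pieces displayed, and the cross term and the pure-$p$ term are finite because $p' $ is Lipschitz on a bounded interval).

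Next I would establish the decomposition \eqref{eq:nonlocal_alpha}. Splitting the double integral of $\int_{c-1}^d\int_{c-1}^d$ over the four sub-rectangles $(c-1,c)^2$, $(c,d)^2$, and the two symmetric mixed rectangles, the $(c,d)^2$ piece cancels with the subtracted term, leaving exactly the pure-$p$ term plus $2\int_{c-1}^c\int_c^d \frac{|v'(x)-p'(y)|^2}{|x-y|^2}\,dx\,dy$. For the bound, I would note that $p'(c-1)=0$ and $p'$ is a quadratic with coefficients that are $O(|z|+|\alpha-w|)$, so $|p'(y)| \le C(|z|+|\alpha-w|)|y-c+1|$ on $(c-1,c)$; the pure-$p$ term is then $\le C(|z|+|\alpha-w|)^2$ by a crude estimate on the bounded square. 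For the mixed term I would add and subtract $v'(c)=z=p'(c)$, writing $|v'(x)-p'(y)|^2 \le 2|v'(x)-v'(c)|^2 + 2|p'(c)-p'(y)|^2$; the first piece, after integrating in $y\in(c-1,c)$ against $|x-y|^{-2}$ (which is comparable to $|x-c|^{-2}$ since $x\ge c$ and $y\le c$), contributes $\le C\,Sv(c)$ with $Sv(c)$ as in \eqref{eq:Sv_alpha} — this is where I expect to be slightly careful, controlling $\int_{c-1}^c |x-y|^{-2}\,dy$ uniformly; the second piece reproduces a term like the pure-$p$ estimate. Collecting gives the stated bound $C(|z|+|\alpha-w|)^2 + 2Sv(c)$.

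The potential estimate \eqref{eq:V_alpha} is the easiest: again the integral splits as $\int_{c-1}^c V(p) + \int_c^d V(v)$, the second cancels, and on $(c-1,c)$ one has $|p(x)-\alpha| \le C(|z|+|\alpha-w|)$ (with this quantity $\le 2$, say, so $p$ stays in the neighborhood $(\alpha-\rho,\alpha+\rho)$ once $|z|+|w-\alpha|$ is small — here is where hypothesis $(H^V_3)$ enters, giving $V(p(x)) \le \tfrac1C\min\{|p-\alpha|,|p-\beta|\}^2 \le C|p-\alpha|^2$; for $|z|+|w-\alpha|$ not small one simply uses continuity of $V$ on the compact range of $p$). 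Integrating over the unit-length interval yields \eqref{eq:V_alpha}. Finally, \eqref{eq:secondpart1} and \eqref{eq:secondpart2} follow from the pointwise bound $|p'(x)| \le C(|z|+|\alpha-w|)|x-c+1|$: for \eqref{eq:secondpart1} the $|x-c+1|^2$ in the numerator cancels the denominator, leaving $C(|z|+|\alpha-w|)^2$ times the length of $(c-1,c)$; for \eqref{eq:secondpart2} the denominator $|x-d-1|^2 \ge 1$ is bounded below on $(c-1,c)$ since $c \le d$, so it is even simpler. The main obstacle, such as it is, is bookkeeping the mixed nonlocal term in \eqref{eq:nonlocal_alpha} and making the dependence on $Sv(c)$ sharp; everything else is elementary algebra with the explicit cubic.
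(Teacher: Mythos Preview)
Your overall strategy---decompose the square $(c-1,d)^2$ into rectangles, bound the pure-$p$ piece via a Lipschitz estimate on $p'$, and split the mixed piece by inserting $v'(c)=z=p'(c)$---is exactly what the paper does. Two minor sharpenings: the paper computes $\int_{c-1}^{c}|x-y|^{-2}\,dy=\frac{1}{(x-c)(x-c+1)}\le \frac{1}{(x-c)^2}$ (not ``comparable'', but the integral bound holds), yielding precisely $Sv(c)$; and for the second half of the mixed term the relevant bound is $|p'(y)-z|\le 12\bigl(|z|+|\alpha-w|\bigr)|y-c|$, i.e.\ vanishing at $y=c$ rather than at $y=c-1$, so that after integrating $\int_c^d|x-y|^{-2}\,dx=\frac{d-c}{(c-y)(d-y)}$ the factor $(c-y)$ is cancelled and the result is bounded independently of $d-c$.

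There is, however, a genuine error in your treatment of \eqref{eq:V_alpha}. You invoke $(H^V_3)$ to obtain $V(p(x))\le \tfrac{1}{C}|p(x)-\alpha|^2$, but $(H^V_3)$ is a \emph{lower} bound on $V$: it reads $V(z)\ge \tfrac{1}{C}\min\{|z-\alpha|,|z-\beta|\}^2$ and gives no control from above. Your fallback (continuity of $V$ on the compact range of $p$) yields only a uniform bound, not the quadratic vanishing in $|z|+|\alpha-w|$ that the inequality asserts. The paper instead (tacitly) takes $V\in C^2$ and uses Taylor's formula: since $V(\alpha)=V'(\alpha)=0$, one has $V(t)=\tfrac12 V''(t_0)(t-\alpha)^2$ for some $t_0$ between $\alpha$ and $t$; combined with $|p(x)-\alpha|\le 5\bigl(|z|+|\alpha-w|\bigr)\le 5$ on $(c-1,c)$, this gives $\int_{c-1}^{c}V(p)\,dx\le \tfrac12\bigl(\max_{[\alpha-5,\alpha+5]}|V''|\bigr)\int_{c-1}^{c}|p-\alpha|^2\,dx\le C\bigl(|z|+|\alpha-w|\bigr)^2$.
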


\begin{proof}

Since $V \in C^2(\R)$, and $V(\alpha) = V'(\alpha)=0$, by Taylor's formula, for any $t \in \R$, there exists $t_0$ between $\alpha$ and $t$ such that $V(t) = \frac{V''(t_0)}{2} (t-\alpha)^2$.

On the other hand, we have that
$$
|p(x)-\alpha| 
	\leq |3w-3\alpha-z| (x-c+1)^2+|z+2\alpha-2w| |x-c+1|^3
	\leq 5\bigl(|z| + |\alpha-w|\bigr)
$$
for all $x \in [c-1,c]$, and so
$$
\int_{c-1}^c V(p(x)) \, dx
	 \leq \frac{1}{2} \left(\max_{\xi \in [\alpha-5,\alpha+5]} |V''(\xi)| \right) \int_{c-1}^c (p(x)-\alpha)^2 \, dx 
	 \leq  C \bigl(|z|+|\alpha-w|\bigr)^2.
$$

To estimate the first integral in \eqref{eq:nonlocal_alpha}, write $p'(x)$ in the following form
$$
p'(x) = z + 2 (2z+3\alpha-3w)(x-c) + 3(z+2\alpha-2w)(x-c)^2,
$$
for all $x \in [c-1,c]$.
Then, for $x \in [c-1,c]$,
\begin{equation}\label{eq:p'_alpha}
|p'(x)-z| 	\leq 12 \bigl(|z| + |\alpha-w|\bigr) |x-c|,
\end{equation}
while for $x,y \in [c-1,c]$,
\begin{equation}\label{eq:p'2_alpha}
|p'(x)-p'(y)| \leq 18 \bigl(|z| + |\alpha-w|\bigr) |x-y|,
\end{equation}
and so
$$
\int_{c-1}^c \int_{c-1}^c \frac{|p'(x)-p'(y)|^2}{|x-y|^2} \, dx \, dy
	\leq C \bigl( |z|+|\alpha-w|\bigr)^2.
$$

To estimate the second integral in \eqref{eq:nonlocal_alpha}, we have that
$$
\int_{c-1}^c \left[ \int_{c}^d \frac{|v'(x)-p'(y)|^2}{|x-y|^2} \, dx \right] \, dy
	 \leq 2 \int_{c-1}^c \left[ \int_{c}^d \frac{|v'(x)-v'(c)|^2}{|x-y|^2} \, dx \right] \, dy
	 	+ 2\int_{c-1}^c \left[ \int_{c}^d \frac{|p'(y)-z|^2}{|x-y|^2} \, dx \right] \, dy,
$$
where we have used the fact that $v'(c)=z$.

By Fubini's theorem,
$$
\int_{c-1}^c \left[ \int_{c}^d \frac{|v'(x)-v'(c)|^2}{|x-y|^2} \, dx \right] \,dy
 	\leq  \int_{c}^d \frac{|v'(x)-v'(c)|^2}{|x-c|^2} \, dx
	=  Sv(c),
$$
while
$$
\int_{c-1}^c \left[ \int_{c}^d \frac{|p'(y)-z|^2}{|x-y|^2} \, dx \right] \, dy 
	 = (d-c) \int_{c-1}^c \frac{|p'(y)-z|^2}{(c-y)(d-y)}  \, dy
	 \leq \frac{C}{2} \bigl( |z|+|\alpha-w|\bigr)^2.
$$

This concludes the first part of the proof.
To estimate \eqref{eq:secondpart1}, we write $p'(x) = 2(3w-3\alpha-z)(x-c+1)+3 (z+2\alpha-2w)(x-c+1)^2$, so for $x \in (c-1,c)$ we have
$$
|p'(x)|^2 \leq C \bigl( |z|+|\alpha-w|\bigr)^2 (x-c+1)^2.
$$

Hence
$$
\int_{c-1}^{c} \frac{|p'(x)|^2}{|x-c+1|^2} \, dx
	\leq C \bigl(|z|+|\alpha-w|\bigr)^2,
$$
while
$$
\int_{c-1}^{c} \frac{|p'(x)|^2}{|x-d-1|^2} \, dx
	\leq C \bigl(|z|+|\alpha-w|\bigr)^2 \int_{c-1}^c \left| \frac{x-(c-1)}{x-(d+1)}\right|^2 \, dx
	\leq C \bigl(|z|+|\alpha-w|\bigr)^2.
$$
The estimate for \eqref{eq:secondpart2} is analogous.
This completes the proof.
\end{proof}

\begin{cor}\label{cor:estimate}

Let $V:\R \to [0,\infty)$ satisfy $(H^V_1)-(H^V_3)$ and let $v \in H^{\frac{3}{2}}(c,d)$ be such that $Tv(c)=w_1$ and $Tv'(c)=z_1$, $Tv(d)=w_2$ and $Tv'(d)=z_2$, for some $c,d,z_1,z_2,w_1,w_2 \in \R$, with $c<d$ and $|z_1|+|w_1-\alpha| \leq 1$ and $|z_2|+|w_2-\beta| \leq 1$.
Let 
\begin{equation}\label{eq:function_f}
f(x) := \begin{cases}
  p_2(x)		& \text{ if } d \leq x \leq d+1, \\
  v(x)		& \text{ if } c\leq x \leq d, \\
  p_1(x)		& \text{ if } c -1 \leq x \leq c,
\end{cases}
\end{equation}
where $p_1$ and $p_2$ are the polynomials given by
\begin{equation}\label{eq:polynomials}
\begin{aligned}
& p_1(x) := \alpha  + (3w_1-3\alpha-z_1)(x-c+1)^2+(z_1+2\alpha-2w_1)(x-c+1)^3, \\
& p_2(x) := \beta + (3w_2-3\beta+z_2)(d+1-x)^2+(2\alpha-2w_2-z_2)(d+1-x)^3.
\end{aligned}
\end{equation}

Then $f \in H^{\frac{3}{2}}(c-1,d+1)$,
\begin{multline}\label{eq:nonlocal}
\int_c^d \int_c^d \frac{|v'(x)-v'(y)|^2}{|x-y|^2} \, dx \, dy
	\geq \int_{c-1}^{d+1} \int_{c-1}^{d+1} \frac{|f'(x)-f'(y)|^2}{|x-y|^2} \, dx \, dy \\
		- C \bigl(|z_1|+|z_2|+|\alpha-w_1| + |\beta-w_2|\bigr)^2 - C Qv(c,d) - 2Sv(c)-2Sv(d),
\end{multline}
and
\begin{equation}\label{eq:V_ab}
\int_c^d V(v(x)) \, dx
	\geq \int_{c-1}^{d+1} V(f(x)) \, dx
		- C \bigl(|z_1|+|z_2|+|\alpha-w_1| + |\beta-w_2|\bigr)^2,
\end{equation}
where $C=C(V,\alpha,\beta)>0$,
\begin{equation}\label{eq:def_Q}
Qv(c,d) := \frac{|v'(c)-v'(d)|^2}{|c-d|^2},
\end{equation}
and $Sv(\cdot)$ is defined in \eqref{eq:Sv_alpha}.
\end{cor}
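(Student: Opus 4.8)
The plan is to prove Corollary~\ref{cor:estimate} by applying Lemma~\ref{lem:estimate} twice: once at the left endpoint $c$ (where the trace is near $\alpha$) to attach the polynomial $p_1$ on $[c-1,c]$, and once at the right endpoint $d$ (where the trace is near $\beta$) to attach $p_2$ on $[d,d+1]$. The symmetric structure of the polynomials in \eqref{eq:polynomials} — $p_2$ is the mirror image of $p_1$ under $x \mapsto (d+1)-x$ with $(\alpha,w_1,z_1)$ replaced by $(\beta,w_2,-z_2)$ — means that every estimate established for $p_1$ in Lemma~\ref{lem:estimate} transfers verbatim to $p_2$ by reflection, so no new calculation is required there. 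The regularity claim $f \in H^{\frac32}(c-1,d+1)$ follows since each piece is in $H^{\frac32}$ of its subinterval, the pieces match to first order at the junctions $x=c$ and $x=d$ by construction (this is exactly why the cubic polynomials have the stated coefficients), and $H^{\frac32}$ of an interval is characterized by $C^1$-matching plus finite $H^{\frac12}$-seminorm of the derivative.

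For the nonlocal estimate \eqref{eq:nonlocal}, first split the double integral of $|f'(x)-f'(y)|^2/|x-y|^2$ over $[c-1,d+1]^2$ into the block over $[c-1,c]^2 \cup [c,d]^2 \cup [d,d+1]^2$ (diagonal blocks) plus the cross blocks. The $[c,d]^2$ block is exactly the left-hand side of \eqref{eq:nonlocal}. For the left pair of blocks ($[c-1,c]^2$ together with the cross block $[c-1,c]\times[c,d]$), Lemma~\ref{lem:estimate} — specifically the bound in \eqref{eq:nonlocal_alpha} — gives $\leq C(|z_1|+|\alpha-w_1|)^2 + 2Sv(c)$. By reflection, the right pair of blocks ($[d,d+1]^2$ and $[d,d+1]\times[c,d]$) is bounded by $C(|z_2|+|\beta-w_2|)^2 + 2Sv(d)$. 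The one genuinely new term is the far-cross block $[c-1,c]\times[d,d+1]$, where $x$ is near the left end and $y$ near the right end, so $|x-y| \approx d-c$; here one writes $|p_1'(x)-p_2'(y)|^2 \leq 3(|p_1'(x)|^2 + |p_2'(y)|^2 + |v'(c)-v'(d)|^2)$ after inserting $p_1'(c)=z_1 = v'(c)$ and $p_2'(d)=z_2=v'(d)$, and controls $|p_1'(x)|^2$ via \eqref{eq:secondpart1}, $|p_2'(y)|^2$ via its reflected analogue of \eqref{eq:secondpart2}, and the $|v'(c)-v'(d)|^2$ contribution, after integrating $|x-y|^{-2}$ over the unit-square-sized region at distance $\approx d-c$, by $C\,Qv(c,d)$ with $Qv$ as in \eqref{eq:def_Q}. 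Rearranging these bounds and dropping the (nonnegative) contribution of the $[c,d]^2$ block that now appears on the correct side yields \eqref{eq:nonlocal}.

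The potential estimate \eqref{eq:V_ab} is the easy half: since $f = v$ on $[c,d]$, $f=p_1$ on $[c-1,c]$, $f=p_2$ on $[d,d+1]$, and $V \geq 0$, we have $\int_{c-1}^{d+1} V(f) - \int_c^d V(v) = \int_{c-1}^c V(p_1) + \int_d^{d+1} V(p_2)$, and each of these is bounded by the Taylor-expansion argument in \eqref{eq:V_alpha} and its reflected counterpart, giving the claimed $C(|z_1|+|z_2|+|\alpha-w_1|+|\beta-w_2|)^2$ after absorbing the two squared quantities into one. I expect the main obstacle to be purely bookkeeping: carefully isolating the far-cross block $[c-1,c]\times[d,d+1]$ and checking that its contribution is genuinely of order $Qv(c,d)$ (not larger), which hinges on the fact that $p_1'$ and $p_2'$ vanish at the interior junctions $c$ and $d$ respectively — hence the need for the auxiliary bounds \eqref{eq:secondpart1}--\eqref{eq:secondpart2} with the shifted denominators $|x-c+1|$ and $|x-d-1|$, which are precisely calibrated so that the kernel $|x-y|^{-2}$ on the far block can be compared to $|x-(c-1)|^{-2}$ and $|y-(d+1)|^{-2}$.
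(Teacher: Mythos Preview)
Your proof is correct and follows the paper's approach exactly: two applications of Lemma~\ref{lem:estimate} handle the adjacent blocks and the potential term, and the only new ingredient is the far-cross block $S_0=[c-1,c]\times[d,d+1]$. One small correction on $S_0$: after the telescoping you actually need $|p_1'(x)-z_1|^2$ and $|p_2'(y)-z_2|^2$ (not $|p_1'(x)|^2$, $|p_2'(y)|^2$), and the right tool is the pointwise Lipschitz bound \eqref{eq:p'_alpha}, namely $|p_1'(x)-z_1|\leq C(\cdot)|x-c|\leq C(\cdot)|x-y|$ on $S_0$, rather than \eqref{eq:secondpart1}--\eqref{eq:secondpart2}, which are reserved for the extension to all of $\R$ in Corollary~\ref{cor:estimate_infinity}.
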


\begin{proof}

The estimate \eqref{eq:V_ab} follows by applying twice \eqref{eq:V_alpha} in Lemma \ref{lem:estimate}.

To obtain \eqref{eq:nonlocal}, by Figure \ref{fig:estimate}, it suffices to estimate the double integrals over the sets $S_1$, $S_2$, and $S_0$.

\begin{figure}[!htbp]
\begin{center}
\includegraphics*[width=125pt]{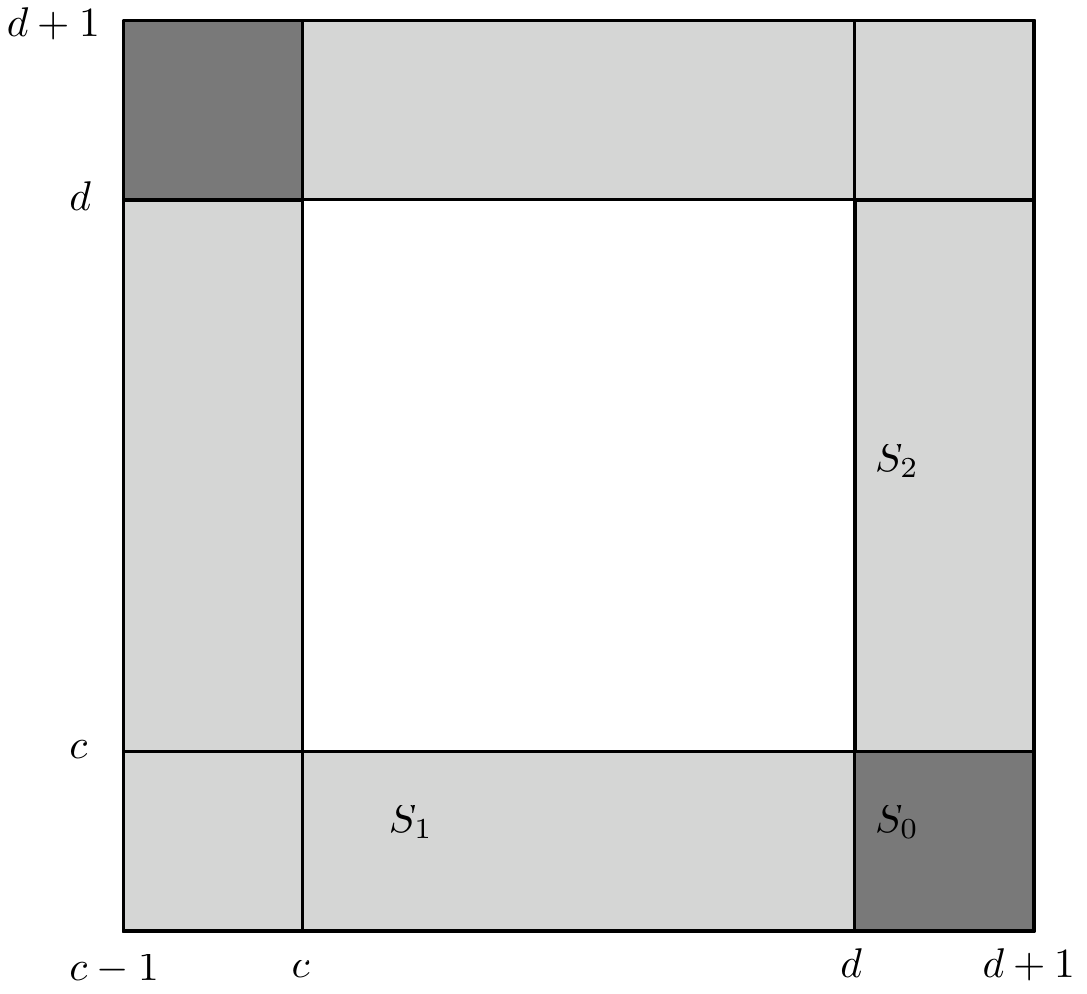}
\caption{Scheme for the estimates.}\label{fig:estimate}
\end{center}
\end{figure}

The estimates on $S_1$ and $S_2$ are a direct consequence of Lemma \ref{lem:estimate}.
To estimate the integral over $S_0$, we observe that
\begin{align*}
& p_1'(x) = z_1 + 2 (2z_1+3\alpha-3w_1)(x-c) + 3(z_1+2\alpha-2w_1)(x-c)^2, \\
& p_2'(x) = z_2 + 2 (-2z_2+3\beta-3w_2)(x-d) + 3(z_2-2\beta+2w_2)(x-d)^2,
\end{align*}
so for $x \in (c-1,c)$ and $y \in (d,d+1)$, we deduce that
$$
|p_1'(x)-p_2'(y)| 
	\leq |z_1-z_2| + C \bigl( |z_1|+|z_2|+|\alpha-w_1|+|\beta-w_2|\bigr) |x-y|.
$$
This implies that
\begin{align*}
\int_{d}^{d+1}\int_{c-1}^{c}  \frac{|p_1'(x)-p_2'(y)|^2}{|x-y|^2} \, dx \, dy
	& \leq C \frac{|z_1-z_2|^2}{|d-c|^2} + C\bigl( |z_1|+|z_2|+|\alpha-w_1|+|\beta-w_2|\bigr)^2.
\end{align*}
\end{proof}

\begin{cor}\label{cor:estimate_infinity}

Let $V:\R \to [0,\infty)$ satisfy $(H^V_1)-(H^V_3)$ and let $v \in H^{\frac{3}{2}}(c,d)$ be such that $Tv(c)=w_1$, d $Tv'(c)=z_1$, $Tv(d)=w_2$, and $Tv'(d)=z_2$, for some $c,d,z_1,z_2,w_1,w_2 \in \R$, with $c<d$, $|z_1|+|w_1-\alpha| \leq 1$, and $|z_2|+|w_2-\beta| \leq 1$.
Let 
\begin{equation}\label{eq:function_ff}
f(x) := \begin{cases}
  \beta	& \text{ if } x \geq d+1, \\
  p_2(x)		& \text{ if } d \leq x \leq d+1, \\
  v(x)		& \text{ if } c\leq x \leq d, \\
  p_1(x)		& \text{ if } c -1 \leq x \leq c, \\
  \alpha	& \text{ if } x \leq c-1,
\end{cases}
\end{equation}
where $p_1$ and $p_2$ are the polynomials defined in \eqref{eq:polynomials}.

Then $f \in H^{\frac{3}{2}}(c-1,d+1)$, $f' \in H^{\frac{1}{2}}(\R)$, and
\begin{multline}\label{eq:nonlocal_infinity}
\int_c^d \int_c^d \frac{|v'(x)-v'(y)|^2}{|x-y|^2} \, dx \, dy
	\geq \int_{-\infty}^{\infty} \int_{-\infty}^{\infty} \frac{|f'(x)-f'(y)|^2}{|x-y|^2} \, dx \, dy
		- C \bigl(|z_1|+|z_2|+|\alpha-w_1| + |\beta-w_2|\bigr)^2 \\
		- C Qv(c,d) 
	- 2( 1 + d-c) \bigl( Sv(c)- 2Sv(d)\bigr) 
	 - 2\log(1+d-c) \bigl( |z_1|^2 + |z_2|^2\bigr),
\end{multline}
where $C=C(V,\alpha,\beta)>0$, $Qv(\cdot,\cdot¥)$ is defined in \eqref{eq:def_Q}, and $Sv(\cdot)$ is defined in \eqref{eq:Sv_alpha}.
\end{cor}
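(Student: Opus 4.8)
The plan is to deduce Corollary~\ref{cor:estimate_infinity} from Corollary~\ref{cor:estimate} by controlling the two unbounded ``tails'' on which $f$ is constant. Write $\R=(-\infty,c-1)\cup[c-1,d+1]\cup(d+1,\infty)$. Since $f'\equiv0$ on $(-\infty,c-1)$ and on $(d+1,\infty)$, in the double integral $\iint_{\R^2}|f'(x)-f'(y)|^2|x-y|^{-2}\,dx\,dy$ every contribution in which both variables lie in tails vanishes, and so does the one with $x$ in one tail and $y$ in the other. Using $\int_{-\infty}^{c-1}(x-y)^{-2}\,dx=(y-c+1)^{-1}$ and $\int_{d+1}^{\infty}(x-y)^{-2}\,dx=(d+1-y)^{-1}$ for $y\in(c-1,d+1)$, one gets the exact identity
$$
\iint_{\R^2}\frac{|f'(x)-f'(y)|^2}{|x-y|^2}\,dx\,dy=\iint_{(c-1,d+1)^2}\frac{|f'(x)-f'(y)|^2}{|x-y|^2}\,dx\,dy+2\int_{c-1}^{d+1}|f'(y)|^2\Bigl(\tfrac{1}{y-c+1}+\tfrac{1}{d+1-y}\Bigr)\,dy .
$$
The first term on the right is exactly the quantity appearing in \eqref{eq:nonlocal} (the restriction of the function~\eqref{eq:function_ff} to $(c-1,d+1)$ is the function~\eqref{eq:function_f}), hence Corollary~\ref{cor:estimate} bounds it below by $\iint_{(c,d)^2}|v'(x)-v'(y)|^2|x-y|^{-2}$ minus the errors of \eqref{eq:nonlocal} (the $C(|z_1|+|z_2|+|\alpha-w_1|+|\beta-w_2|)^2$, $Qv(c,d)$ and $Sv(c),Sv(d)$ terms). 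It therefore remains to bound the two weighted tail integrals on the right.

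For that, I split the interval of integration into $(c-1,c)$, $(c,d)$, and $(d,d+1)$. On $(c-1,c)$ one has $f'=p_1'$ and on $(d,d+1)$ one has $f'=p_2'$; using the pointwise estimates for these polynomials established inside the proof of Lemma~\ref{lem:estimate} (such as $|p_1'(y)|^2\le C(|z_1|+|\alpha-w_1|)^2(y-c+1)^2$ on $(c-1,c)$ and $|p_2'(y)|\le C(|z_2|+|\beta-w_2|)$ on $(d,d+1)$), together with the facts that $(y-c+1)^{-1}\le(y-c+1)^{-2}$ on $(c-1,c)$ so that \eqref{eq:secondpart1} applies, and that $(y-c+1)^{-1},(d+1-y)^{-1}\le1$ whenever one stays away from the relevant endpoint, these four pieces contribute only $C(|z_1|+|z_2|+|\alpha-w_1|+|\beta-w_2|)^2$. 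On the middle interval one writes $v'(y)=\bigl(v'(y)-v'(c)\bigr)+z_1$ (using $Tv'(c)=z_1$) and invokes the elementary inequality $\frac{1}{y-c+1}\le\frac{1+d-c}{(y-c)^2}$, valid for every $y\in(c,d)$ because $(y-c)^2\le(d-c)(y-c)\le(1+d-c)(y-c+1)$, to obtain
$$
\int_c^d\frac{|v'(y)|^2}{y-c+1}\,dy\le 2(1+d-c)\int_c^d\frac{|v'(y)-v'(c)|^2}{(y-c)^2}\,dy+2|z_1|^2\int_c^d\frac{dy}{y-c+1}=2(1+d-c)\,Sv(c)+2|z_1|^2\log(1+d-c),
$$
and symmetrically $\int_c^d|v'(y)|^2(d+1-y)^{-1}\,dy\le2(1+d-c)\,Sv(d)+2|z_2|^2\log(1+d-c)$. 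Summing the six pieces, absorbing the polynomial terms into a single $C(|z_1|+|z_2|+|\alpha-w_1|+|\beta-w_2|)^2$ and combining with the errors from Corollary~\ref{cor:estimate} gives \eqref{eq:nonlocal_infinity}. Finally $f'\in L^2(\R)$ (it lies in $L^2(c-1,d+1)$ and is $0$ outside), so the finiteness of its Gagliardo seminorm just established yields $f'\in H^{\frac12}(\R)$; the membership $f\in H^{\frac32}(c-1,d+1)$ is part of Corollary~\ref{cor:estimate}, and matching the values and first derivatives of $f$ at $c-1,c,d,d+1$ (so that $f\in C^1(\R)$) legitimizes the gluing.

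The step that needs real care is the middle-interval estimate: the weight $\frac{1}{y-c+1}$ cannot be dominated by a fixed multiple of $\frac{1}{(y-c)^2}$ uniformly in the length $d-c$, and the sharp comparison carries a factor $1+d-c$, while the residual constant $z_1$-part produces a logarithm. This is precisely where the $(1+d-c)$ and $\log(1+d-c)$ factors in \eqref{eq:nonlocal_infinity} come from; since in the applications $(c,d)$ is a rescaled transition window whose length diverges, it is also the reason the integration in the companion constant $\ol{c}$ must be carried out over all of $\R$ rather than over a bounded window as for $\ul{c}$.
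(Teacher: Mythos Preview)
Your argument is correct and follows essentially the same route as the paper: you integrate out the tail variable to obtain the single weighted integral $2\int_{c-1}^{d+1}|f'(y)|^2\bigl((y-c+1)^{-1}+(d+1-y)^{-1}\bigr)\,dy$, then split it over $(c-1,c)\cup(c,d)\cup(d,d+1)$, which is exactly the paper's decomposition into the terms $I_4^{\pm},I_5^{\pm},I_6^{\pm}$ after performing the same tail integration. Your treatment of the middle piece via $v'(y)=(v'(y)-v'(c))+z_1$ and the comparison $\frac{1}{y-c+1}\le\frac{1+d-c}{(y-c)^2}$ is the same as the paper's estimate of $I_6^{\pm}$ (the paper writes the equivalent inequality $\frac{(y-c)^2}{y-c+1}\le d-c$, a marginally sharper constant), and your handling of the polynomial pieces via \eqref{eq:secondpart1}--\eqref{eq:secondpart2} matches the paper's bounds on $I_4^{\pm},I_5^{\pm}$.
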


\begin{proof}

By Corollary \ref{cor:estimate}, we know that 
\begin{multline*}
\int_{-c-1}^{c+1} \int_{c-1}^{d+1} \frac{|f'(x)-f'(y)|^2}{|x-y|^2} \, dx \, dy
	- \int_c^d \int_c^d \frac{|v'(x)-v'(y)|^2}{|x-y|^2} \, dx \, dy \\
	\leq C \bigl(|z_1|+|z_2|+|\alpha-w_1| + |\beta-w_2|\bigr)^2 + C Qv(c,d) + 2Sv(c)+2Sv(d),
\end{multline*}
so to prove estimate \eqref{eq:nonlocal_infinity}, it suffices to estimate
$$
\int_{-\infty}^{\infty} \int_{-\infty}^{\infty} \frac{|f'(x)-f'(y)|^2}{|x-y|^2} \, dx \, dy
	- \int_{-c-1}^{c+1} \int_{c-1}^{d+1} \frac{|f'(x)-f'(y)|^2}{|x-y|^2} \, dx \, dy 
	= 2 (I_4^{\pm} + I_5^{\pm} + I_6^{\pm}),
$$
where the $I_{i}$'s are defined by
\begin{align*}
& I_{4}^{-} := \int_{-\infty}^{c-1} \int_{c-1}^{c} 
	\frac{\left| p_1'(x)\right|^{2}}{|x-y|^{2}} \, dx \, dy,
& & I_{4}^{+} := \int_{d+1}^{\infty} \int_{d}^{d+1} 
	\frac{\left| p_2'(x)\right|^{2}}{|x-y|^{2}} \, dx \, dy, \\
& I_{5}^{-} := \int_{d+1}^{\infty} \int_{c-1}^{c} 
	\frac{\left| p_1'(x)\right|^{2}}{|x-y|^{2}} \, dx \, dy,
& & I_{5}^{+} := \int_{-\infty}^{c-1} \int_{d}^{d+1} 
	\frac{\left| p_2'(x)\right|^{2}}{|x-y|^{2}} \, dx \, dy, \\
& I_{6}^{-} := \int_{-\infty}^{c-1} \int_{c}^{d} 
	\frac{\left| v'(x)\right|^{2}}{|x-y|^{2}} \, dx \, dy,
& & I_{6}^{+} := \int_{d+1}^{\infty} \int_{c}^{d} 
	\frac{\left| v'(x)\right|^{2}}{|x-y|^{2}} \, dx \, dy.
\end{align*}

To estimate $I_4^-$, we compute
$$
I_{4}^{-} 
	= \int_{c-1}^{c} 
	\frac{\left| p_1'(x)\right|^{2}}{x-c+1} \, dx
	\leq C \bigl( |z_1| + |\alpha-w_1|\bigr)^2	
$$
by \eqref{eq:p'2_alpha} (with $p$ replaced by $p_1$), and analogously, 
$
I_4^+
	\leq C \bigl( |z_2| + |\beta-w_2|\bigr)^2.
$

For $I_5^{\pm}$, we have that 
$$
I_{5}^{-} 
	= \int_{c-1}^{c} \frac{\left| p_1'(x)\right|^{2}}{d+1-x} \, dx
	\leq \int_{c-1}^{c} \frac{\left| p_1'(x)\right|^{2}}{x-c+1} \, dx
	 = I_4^- \leq C \bigl( |z_1| + |\alpha-w_1|\bigr)^2,
$$
and analogously
$
I_5^+
	\leq C \bigl( |z_2| + |\beta-w_2|\bigr)^2.
$

To estimate $I_6^-$, we write
\begin{align}
I_6^-
	& = \int_{c}^{d} 	\frac{\left| v'(x) \pm v'(c)\right|^{2}}{x-c+1} \, dx
	\leq 2 \int_{c}^{d} \frac{\left| v'(x) - v'(c)\right|^{2}}{x-c+1} \, dx
	+ 2 \int_{c}^{d} \frac{\left| v'(c)\right|^{2}}{x-c+1} \, dx \label{eq:I6} \\
	& \leq 2\int_{c}^{d} \frac{\left| v'(x) - v'(c)\right|^{2}}{(x-c)^2} \frac{(x-c)^2}{x-c+1} \, dx + 2 |z_1|^2 \log(1+d-c) \nonumber\\
	& \leq 2(d-c)Sv(c)+ 2 |z_1|^2 \log(1+d-c). \nonumber
\end{align}

Analogously,
$
I_6^+
	\leq 2(d-c) Sv(d) + 2 |z_2|^2 \log(1+d-c).
$
This completes the proof.
\end{proof}

\begin{prop}\label{prop:almostLB}

Let $J \subset \R$ be an open and bounded interval and let $V:\R \to [0,\infty)$ satisfy $(H^V_1)-(H^V_3)$. Assume that $\eps \lambda_{\eps}^{\frac{2}{3}} \rightarrow L \in (0,\infty)$, and consider a sequence $\{v_{\eps}\} \subset H^{\frac{3}{2}}(J)$ such that $\ds \sup_{\eps>0} G_{\eps}\bigl(v_{\eps};(x^-,x^+)\bigr) < \infty$, for some $x^{\pm} \in J$, with
\begin{equation}\label{eq:points}
\begin{array}{lcl}
\begin{aligned}
& |v_{\eps}(x^{-})-\alpha| \leq \eta,\\
& |v_{\eps}(x^{+})-\beta| \leq \eta,
\end{aligned}
& \qquad &
\begin{aligned}
& \left|\eps v_{\eps}'(x^{\pm})\right| \leq C, \\
&\left|\eps^{3} Sv_{\eps}(x^{\pm})\right| \leq C, \\
&\left|\eps^{3} Qv_{\eps}(x^{-},x^{+}) \right| \leq C,
\end{aligned}
\end{array}
\end{equation}
where $C>0$, $\eta>0$, and $Sv(\cdot)$ and $Qv(\cdot,\cdot)$ are defined in \eqref{eq:Sv_alpha} and \eqref{eq:def_Q}, respectively.

Then
\begin{equation}\label{eq:almostLB}
\liminf_{\eps\to0^+} G_{\eps}\bigl(v_{\eps}; (x^-,x^+)\bigr) 
   \geq \ul{c}L,
\end{equation}
where $\ul{c} \in (0,\infty)$ is the constant defined in \eqref{eq:def_under_c}.

\end{prop}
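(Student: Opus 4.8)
The plan is to rescale at the natural transition length $\delta_\eps:=\eps\lambda_\eps^{-1/3}$: a change of variables turns $G_\eps$ into $\eps\lambda_\eps^{2/3}$ times exactly the functional that defines $\ul{c}$ in \eqref{eq:def_under_c}, now evaluated on the rescaled interval, and then one glues polynomial caps at the two endpoints (Corollary \ref{cor:estimate_infinity}) to manufacture a competitor for $\ul{c}$ out of the rescaled $v_\eps$, paying only an infinitesimal price thanks to the good-point bounds \eqref{eq:points}.

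First I would pass to a subsequence along which the $\liminf$ is a genuine limit, assuming it finite (otherwise there is nothing to prove). Set $w_\eps(s):=v_\eps(\delta_\eps s)$, $a_\eps:=x^-/\delta_\eps$, $b_\eps:=x^+/\delta_\eps$; changing variables in both terms of $G_\eps$ and using $\eps^3/\delta_\eps^2=\lambda_\eps\delta_\eps=\eps\lambda_\eps^{2/3}$ gives
\begin{equation*}
G_\eps\bigl(v_\eps;(x^-,x^+)\bigr)=\eps\lambda_\eps^{2/3}\,\mathcal{G}\bigl(w_\eps;(a_\eps,b_\eps)\bigr),\qquad \mathcal{G}(w;I):=\frac18\iint_{I\times I}\frac{|w'(s)-w'(t)|^{2}}{|s-t|^{2}}\,ds\,dt+\int_{I}V\bigl(w(s)\bigr)\,ds.
\end{equation*}
Transporting \eqref{eq:points} through the rescaling, $w_\eps(a_\eps)=v_\eps(x^-)$ lies within $\eta$ of $\alpha$, $w_\eps(b_\eps)=v_\eps(x^+)$ within $\eta$ of $\beta$, and $|w_\eps'(a_\eps)|=(\delta_\eps/\eps)\,|\eps v_\eps'(x^-)|\le C\lambda_\eps^{-1/3}\to0$ (similarly at $b_\eps$), so for $\eps$ small the hypotheses of Corollary \ref{cor:estimate_infinity} hold with $c=a_\eps$, $d=b_\eps$, $v=w_\eps$. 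That corollary produces $f_\eps\in H^{3/2}_{\loc}(\R)$ with $f_\eps'\in H^{1/2}(\R)$, equal to $\alpha$ on $(-\infty,a_\eps-1]$ and to $\beta$ on $[b_\eps+1,\infty)$, hence admissible for $\ul{c}$; since $V(f_\eps)$ is supported in $(a_\eps-1,b_\eps+1)$ and truncating the full-line double integral to $(-R,R)^2$ only lowers it, estimates \eqref{eq:nonlocal_infinity} and \eqref{eq:V_ab} give
\begin{equation*}
\mathcal{G}\bigl(w_\eps;(a_\eps,b_\eps)\bigr)\ \ge\ \ul{c}-\mathcal{R}_\eps,
\end{equation*}
where $\mathcal{R}_\eps$ collects all the error terms of Corollary \ref{cor:estimate_infinity}.

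It remains to check that $\mathcal{R}_\eps\to0$. Undoing the scaling, $Sw_\eps(a_\eps)=\delta_\eps^3\,Sv_\eps(x^-)$ and $Qw_\eps(a_\eps,b_\eps)=\delta_\eps^4\,Qv_\eps(x^-,x^+)$, so \eqref{eq:points} yields $Sw_\eps(a_\eps)\le C\lambda_\eps^{-1}\to0$, $Qw_\eps(a_\eps,b_\eps)\le C\eps\lambda_\eps^{-4/3}\to0$, and $|w_\eps'(a_\eps)|^{2}+|w_\eps'(b_\eps)|^{2}\le C\lambda_\eps^{-2/3}\to0$; the factor $\log(1+b_\eps-a_\eps)$ grows only like $|\log\delta_\eps|$ and so is killed by $\lambda_\eps^{-2/3}$. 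The only genuinely delicate terms are those carrying the diverging factor $1+(b_\eps-a_\eps)=1+(x^+-x^-)/\delta_\eps$, since it multiplies $Sw_\eps$; but $(x^+-x^-)/\delta_\eps\cdot Sw_\eps(a_\eps)\le C(x^+-x^-)/(\eps\lambda_\eps^{2/3})\to C(x^+-x^-)/L$, which is negligible precisely because the good points bracket the transition at scale $\delta_\eps$. The residual $C\bigl(|w_\eps'(a_\eps)|+|w_\eps'(b_\eps)|+|\alpha-v_\eps(x^-)|+|\beta-v_\eps(x^+)|\bigr)^{2}=O(\eta^{2})$ is likewise harmless, $\eta$ being at our disposal. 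Multiplying the previous display by $\eps\lambda_\eps^{2/3}$ and letting $\eps\to0^{+}$ then gives $\liminf_{\eps\to0^{+}}G_\eps\bigl(v_\eps;(x^-,x^+)\bigr)\ge\ul{c}L$, which is finite and positive by Lemma \ref{lem:c_compact}.

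The point at which one must be careful is exactly this estimate of $\mathcal{R}_\eps$: because the fractional seminorm is non-local, capping $w_\eps$ near $a_\eps$ and $b_\eps$ couples those caps to the whole interior $(a_\eps,b_\eps)$, whose length diverges, and the corresponding cost (the tail terms in \eqref{eq:nonlocal_infinity}) is tamed only because \eqref{eq:points} forces $Sv_\eps(x^{\pm})$ and $Qv_\eps(x^-,x^+)$ to decay at just the rate that beats $b_\eps-a_\eps$. The $O(\eta)$ losses coming from $v_\eps(x^{\pm})$ only reaching $\alpha,\beta$ up to $\eta$ are immaterial, since $\eta$ can be taken arbitrarily small.
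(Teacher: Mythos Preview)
Your overall strategy---rescale by $\delta_\eps=\eps\lambda_\eps^{-1/3}$, cap the rescaled function at both ends to manufacture an admissible competitor for $\ul{c}$, and control the gluing cost via the good-point bounds---is exactly the paper's. But there is a genuine gap in the execution, and you flag it yourself without resolving it.

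You invoke Corollary~\ref{cor:estimate_infinity} to produce $f_\eps$ \emph{and} to derive the lower bound $\mathcal G(w_\eps)\ge\ul c-\mathcal R_\eps$. The error in \eqref{eq:nonlocal_infinity} contains the term $2(1+d-c)\bigl(Sw_\eps(c)+Sw_\eps(d)\bigr)$; with $d-c=b_\eps-a_\eps=(x^+-x^-)/\delta_\eps$ and $Sw_\eps(a_\eps)=\delta_\eps^{3}Sv_\eps(x^-)\le C/\lambda_\eps$, this product is $\le C(x^+-x^-)/(\eps\lambda_\eps^{2/3})\to C(x^+-x^-)/L$, a \emph{fixed positive constant}. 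You compute this correctly and then call it ``negligible precisely because the good points bracket the transition at scale $\delta_\eps$''---but $x^\pm$ are fixed points of $J$, not $\delta_\eps$-close, so this term does \emph{not} vanish and your $\mathcal R_\eps\to0$ claim fails.

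The paper avoids this by separating the two roles of the capping construction. For the energetic lower bound it uses Corollary~\ref{cor:estimate} (the finite-window version), whose error carries only $Sw_\eps(a_\eps)+Sw_\eps(b_\eps)\le C/\lambda_\eps\to0$ \emph{without} the diverging factor $b_\eps-a_\eps$; since $\ul c$ in \eqref{eq:def_under_c} involves only the seminorm on a finite interval, this is exactly what is needed to get $\mathcal G(w_\eps)\ge\ul c-o(1)$. Corollary~\ref{cor:estimate_infinity} is then invoked \emph{only} to check the admissibility condition $f_\eps'\in H^{1/2}(\R)$, for which it suffices that the right-hand side of \eqref{eq:nonlocal_infinity} be \emph{bounded}, and the constant $C(x^+-x^-)/L$ is perfectly acceptable there. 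Splitting the argument this way closes your gap; everything else in your outline (the rescaling identity, the scaling of $Sw_\eps$, $Qw_\eps$, $|w_\eps'(a_\eps)|$, the $\log$ term) is correct.
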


\begin{proof}

Define $w_{\eps}(t) := v_{\eps}\bigl( \eps \lambda_{\eps}^{-\frac{1}{3}} t\bigr)$ for $x \in J$.
By the change of variables $x = \eps \lambda_{\eps}^{-\frac{1}{3}} t$, $y = \eps \lambda_{\eps}^{-\frac{1}{3}} s$, we have
\begin{align}
G_{\eps}\bigl(v_{\eps}; (x^-,x^+)\bigr) 
	& = \frac{\eps}{8} \int_{x^-}^{x^+} \int_{x^-}^{x^+} \frac{\bigl| v_{\eps}'(x)-v_{\eps}'(y)\bigr|^2}{|x-y|^2} \, dx\,dy + \lambda_{\eps} \int_{x^-}^{x^+} V\bigl(v_{\eps}(x)\bigr) \, dx \nonumber\\
	& = \eps \lambda_{\eps}^{\frac{2}{3}} \left[ \frac{1}{8} \int_{\frac{x^-}{\eps\lambda_{\eps}^{-
	\frac{1}{3}}}}^{\frac{x^+}{\eps\lambda_{\eps}^{-
	\frac{1}{3}}}} \int_{\frac{x^-}{\eps\lambda_{\eps}^{-
	\frac{1}{3}}}}^{\frac{x^+}{\eps\lambda_{\eps}^{-
	\frac{1}{3}}}} \frac{\bigl| w_{\eps}'(t)-w_{\eps}'(s)\bigr|^2}{|t-s|^2} \, dt\,ds + \int_{\frac{x^-}{\eps\lambda_{\eps}^{-
	\frac{1}{3}}}}^{\frac{x^+}{\eps\lambda_{\eps}^{-\frac{1}{3}}}} V\bigl(w_{\eps}(t)\bigr) \, dt \right] 
	\label{eq:Gnew}
\end{align}

Let $f_{\eps}$ be the function given in \eqref{eq:function_ff} with the choice of parameters
\begin{align*}
& v := w_{\eps}, \\
& c := \frac{x^-}{\eps \lambda_{\eps}^{-\frac{1}{3}}},
& & d := \frac{x^+}{\eps \lambda_{\eps}^{-\frac{1}{3}}}, \\
& w_1:= w_{\eps} \left( \frac{x^-}{\eps\lambda_{\eps}^{-\frac{1}{3}}} \right)= v_{\eps}(x^-),
& & w_2:= w_{\eps} \left( \frac{x^+}{\eps\lambda_{\eps}^{-\frac{1}{3}}} \right)= v_{\eps}(x^+), \\
& z_1 := w_{\eps}' \left( \frac{x^-}{\eps\lambda_{\eps}^{-\frac{1}{3}}} \right)= \eps \lambda_{\eps}^{-\frac{1}{3}} v_{\eps}'(x^-),
& & z_2 := w_{\eps}' \left( \frac{x^+}{\eps\lambda_{\eps}^{-\frac{1}{3}}} \right)= \eps \lambda_{\eps}^{-\frac{1}{3}} v_{\eps}'(x^+).
\end{align*}

By Corollary \ref{cor:estimate}, \eqref{eq:Gnew}, and the fact that $\eps \lambda_{\eps}^{\frac{2}{3}} \to L$, we have that
\begin{multline*}
G_{\eps}\bigl(v_{\eps};(x^-,x^+)\bigr) 
	\geq (L + o(1))
		\left[ \frac{1}{8} \int_{\frac{x^-}{\eps\lambda_{\eps}^{-	\frac{1}{3}}}-1}^{\frac{x^+}{\eps\lambda_{\eps}^{-	\frac{1}{3}}}+1}	 \int_{\frac{x^-}{\eps\lambda_{\eps}^{-\frac{1}{3}}}-1}^{\frac{x^+}{\eps\lambda_{\eps}^{-	\frac{1}{3}}}+1} \frac{\left| f_{\eps}'(t) - f_{\eps}'(s)  \right|^{2}}{|t-s|^{2}} \, dt \, ds 
		+ \int_{\frac{x^-}{\eps\lambda_{\eps}^{-	\frac{1}{3}}}-1}^{\frac{x^+}{\eps\lambda_{\eps}^{-	\frac{1}{3}}}+1} V(f_{\eps}(t)) \, dt \right] \\
	\quad - C \left[ \eps \lambda_{\eps}^{-\frac{1}{3}} \left( |v_{\eps}'(x^-)| + |v_{\eps}'(x^+)|\right) + |\alpha-v_{\eps}(x^-| + |\beta - v_{\eps}(x^+)| \right. \\
	\quad \left. + Qw_{\eps}\left( \frac{x^-}{\eps \lambda_{\eps}^{-\frac{1}{3}}},  \frac{x^+}{\eps \lambda_{\eps}^{-\frac{1}{3}}}\right)
	+ 2Sw_{\eps}\left(  \frac{x^-}{\eps \lambda_{\eps}^{-\frac{1}{3}}} \right) + 2Sw_{\eps}\left( \frac{x^+}{\eps \lambda_{\eps}^{-\frac{1}{3}}}\right) \right].
\end{multline*}

We claim that $f_{\eps}' \in H^{\frac{1}{2}}(\R)$.
If the claim holds, since $f_{\eps}$ is admissible for the constant $\ul{c}$ defined in \eqref{eq:def_under_c}, and by \eqref{eq:points}, we have that
\begin{equation}\label{eq:A1}
G_{\eps}\bigl(v_{\eps};(x^-,x^+)\bigr) 
	\geq (L + o(1)) \ol{c} - C (2 \lambda_{\eps}^{-\frac{1}{3}} + 2\eta )^2 \\
	- C Qw_{\eps}\left( \frac{x^-}{\eps \lambda_{\eps}^{-\frac{1}{3}}},  \frac{x^+}{\eps \lambda_{\eps}^{-\frac{1}{3}}}\right)
	- C Sw_{\eps}\left(  \frac{x^-}{\eps \lambda_{\eps}^{-\frac{1}{3}}} \right) - CSw_{\eps}\left( \frac{x^+}{\eps \lambda_{\eps}^{-\frac{1}{3}}}\right).
\end{equation}

Since $\lambda_{\eps} \to \infty$, to conclude that the first part of the proof, it remains to estimate the last three terms on the right-hand side of \eqref{eq:A1}.
By \eqref{eq:points},
\begin{equation}\label{eq:A2}
Qw_{\eps}\left( \frac{x^-}{\eps \lambda_{\eps}^{-\frac{1}{3}}},  \frac{x^+}{\eps \lambda_{\eps}^{-\frac{1}{3}}}\right)
	 = \left| \frac{w_{\eps}'\left( \frac{x^+}{\eps \lambda_{\eps}^{-\frac{1}{3}}} \right) - w_{\eps}'\left(\frac{x^-}{\eps \lambda_{\eps}^{-\frac{1}{3}}}\right)}{\frac{x^+}{\eps \lambda_{\eps}^{-\frac{1}{3}}}-\frac{x^-}{\eps \lambda_{\eps}^{-\frac{1}{3}}}} \right|^2
	\leq C \eps \lambda_{\eps}^{-\frac{4}{3}}, 
\end{equation}
while
\begin{equation} \label{eq:A3}
Sw_{\eps}\left(  \frac{x^-}{\eps \lambda_{\eps}^{-\frac{1}{3}}} \right)
	 = \int_{\frac{x^-}{\eps \lambda_{\eps}^{-\frac{1}{3}}}}^{\frac{x^+}{\eps \lambda_{\eps}^{-\frac{1}{3}}}} \frac{\left| w_{\eps}'(t) - w_{\eps}'\left(\frac{x^-}{\eps \lambda_{\eps}^{-\frac{1}{3}}}\right)\right|^2}{\left|t-\frac{x^-}{\eps \lambda_{\eps}^{-\frac{1}{3}}},\right|^2} \, dt 
	\leq C \lambda_{\eps}^{-1},
\end{equation}
and similarly 
\begin{equation}\label{eq:A4}
Sw_{\eps}\left(  \frac{x^+}{\eps \lambda_{\eps}^{-\frac{1}{3}}} \right) \leq C \lambda_{\eps}^{-1}.
\end{equation}
Thus, by \eqref{eq:A1}--\eqref{eq:A4},
$$
G_{\eps}\bigl(v_{\eps};(x^-,x^+)\bigr)
	\geq (L-o(1)) \ul{c} - C (2 \lambda_{\eps}^{-\frac{1}{3}} + 2\eta)^2 - C \eps \lambda_{\eps}^{-\frac{4}{3}} - C\lambda_{\eps}^{-1}.
$$
Letting first $\eps \to 0^+$ and then $\eta \to 0^+$ we obtain \eqref{eq:almostLB}.
To complete the proof, we show that 
$$
\sup_{\eps} |f_{\eps}'|_{H^{\frac{1}{2}}(\R)} \leq C.
$$

Starting again from \eqref{eq:Gnew}, but using Corollary \ref{cor:estimate_infinity} in place of Corollary \ref{cor:estimate}, we obtain
\begin{equation}\label{eq:Gnew_infinity}
\begin{aligned}
C & \geq G_{\eps}\bigl(v_{\eps};(x^-,x^+)\bigr) 
	 \geq (L + o(1))
		\frac{1}{8} \int_{-\infty}^{\infty}\int_{-\infty}^{\infty} \frac{\left| f_{\eps}'(t) - f_{\eps}'(s)  \right|^{2}}{|t-s|^{2}} \, dt \, ds \\
	& \quad - C \Biggl[ \eps \lambda_{\eps}^{-\frac{1}{3}} \left( |v_{\eps}'(x^-)| + |v_{\eps}'(x^+)|\right) + |\alpha-v_{\eps}(x^-| + |\beta - v_{\eps}(x^+)| \\
	& \quad + Qw_{\eps}\left( \frac{x^-}{\eps \lambda_{\eps}^{-\frac{1}{3}}},  \frac{x^+}{\eps \lambda_{\eps}^{-\frac{1}{3}}}\right)
	+ \left( 1 + \frac{x^+-x^-}{\eps \lambda_{\eps}^{-\frac{1}{3}}} \right)
	 \left( Sw_{\eps}\left(  \frac{x^-}{\eps \lambda_{\eps}^{-\frac{1}{3}}} \right) + Sw_{\eps}\left( \frac{x^+}{\eps \lambda_{\eps}^{-\frac{1}{3}}}\right)\right) \\
	& \quad  + \eps^2 \lambda_{\eps}^{-\frac{2}{3}} \log \left( 1 + \frac{x^+-x^-}{\eps \lambda_{\eps}^{-\frac{1}{3}}} \right) \left( |v_{\eps}'(x^-)|^2 + |v_{\eps}'(x^+)|^2\right) \Biggr].
\end{aligned}
\end{equation}

By \eqref{eq:points}, and \eqref{eq:A1}--\eqref{eq:A4}, we have
\begin{multline}\label{eq:Gnew_infinity2}
C 
	\geq G_{\eps}\bigl(v_{\eps};(x^-,x^+)\bigr) 
	 \geq (L + o(1)) |f_{\eps}'|_{H^{\frac{1}{2}}(\R)}^2
	- C (2 \lambda_{\eps}^{-\frac{1}{3}} + 2\eta)^2 - C \eps \lambda_{\eps}^{-\frac{4}{3}} \\ 
	- \frac{C}{\lambda_{\eps}}\left( 1 + \frac{x^+-x^-}{\eps\lambda_{\eps}^{-\frac{1}{3}}} \right) - C \lambda_{\eps}^{-\frac{2}{3}} \log \left( 1 + \frac{x^+-x^-}{\eps \lambda_{\eps}^{-\frac{1}{3}}} \right).
\end{multline}

Since $\eps \lambda_{\eps}^{\frac{2}{3}} \to L$, it follows that for all $\eps>0$ sufficiently small, we have
$$
(L-o(1)) |f_{\eps}'|_{H^{\frac{1}{2}}(\R)}^2
	\leq C(1+L) + C\eta,
$$
where $C$ depends also on $x^+-x^-$.
This proves that $f_{\eps}' \in H^{\frac{1}{2}}(\R)$, which completes the proof.
\end{proof}

\begin{proof}[Proof of Theorem \ref{thm:boundary1D}]

Passing to a subsequence (not relabeled), we can assume that
$$
\liminf_{\eps \to 0^+} G_{\eps}(v_{\eps};J) 
	= \lim_{\eps\to 0^+} G_{\eps}(v_{\eps};J).
$$

This will allow us to take further subsequences (not relabeled).
By Proposition \ref{prop:gagliardo}, \eqref{eq:sup}, and the growth condition $(H^V_2)$, we know that 
$
\|v_{\eps}\|_{H^1(J)} 
	\leq C \eps^{-1}$.
	
Since $v \in BV\bigl(J;\{\alpha,\beta\}\bigr)$, its jump set $S(v)$ is finite, and we write
$$
S(v) = \{s_1,\ldots,s_{\ell}\},
$$
where $s_1 < \cdots < s_{\ell}$.
Let $0 < d < \frac{1}{2} \min \left\{ s_i - s_{i-1}: i=2,\ldots,\ell\right\}$, and assume that $v= \alpha$ in $(s_{2j},s_{2j+1})$ for $j=0,\ldots$, where $s_0, s_{\ell+1}$ are the endpoints of $J$.
Then
$$
\lim_{k \to \infty} \liminf_{\eps \to 0^+} \int_{s_1-d}^{s_1} \left[ k |v_{\eps}(x)-\alpha| + \frac{1}{k} \eps |v_{\eps}'(x)| + \frac{\eps^3}{k} \int_{J} \frac{\bigr|v_{\eps}'(x) - v_{\eps}'(y)\bigr|^2}{|x-y|^2} \, dy \right] \, dx = 0.
$$

Hence, we may find $k_0 \in \N$ such that for all $k \geq k_0$,
$$
\liminf_{\eps \to 0^+} \int_{s_1-d}^{s_1} \left[ k |v_{\eps}(x)-\alpha| + \frac{1}{k} \eps |v_{\eps}'(x)| + \frac{\eps^3}{k} \int_{J} \frac{\bigr|v_{\eps}'(x) - v_{\eps}'(y)\bigr|^2}{|x-y|^2} \, dy \right] \, dx \leq d.
$$

By Fatou's lemma, we have that for $k \geq k_0$,
$$
\frac{1}{d}\int_{s_1-d}^{s_1} \liminf_{\eps \to 0^+} \left[ k |v_{\eps}(x)-\alpha| + \frac{1}{k} \eps |v_{\eps}'(x)| + \frac{\eps^3}{k} \int_{J} \frac{\bigr|v_{\eps}'(x) - v_{\eps}'(y)\bigr|^2}{|x-y|^2} \, dy \right] \, dx \leq 1
$$

Fix $k_1 > \max \bigl\{ k_0,\frac{1}{\eta} \bigr\}$.
By the mean value theorem, there exists $x_1^- \in (s_1-d,s_1)$ such that
$$
\liminf_{\eps \to 0^+} \left[ |v_{\eps}(x_1^-)-\alpha| + \frac{1}{k_1^2} \eps |v_{\eps}'(x_1^-)| + \frac{\eps^3}{k_1^2} \int_{J} \frac{\bigr|v_{\eps}'(x_1^-) - v_{\eps}'(y)\bigr|^2}{|x_1^--y|^2} \, dy \right]  < \eta.
$$

So, up to a subsequence (not relabeled),
\begin{equation}\label{eq:xis-}
|v_{\eps}(x_1^-)-\alpha| < \eta,
\qquad 
\eps |v_{\eps}'(x_1^-)| < \eta k_1^2,
\quad  \text{ and }\quad
\eps^3\int_{J} \frac{\bigr|v_{\eps}'(x_1^-) - v_{\eps}'(y)\bigr|^2}{|x_1^--y|^2} \, dy  < \eta k_1^2.
\end{equation}

Analogously, considering
$$
\lim_{k \to \infty} \liminf_{\eps \to 0^+} \int_{s_1}^{s_1+d} \left[ k |v_{\eps}(x)-\alpha| + \frac{1}{k} \eps |v_{\eps}'(x)| + \frac{\eps^3}{k} \int_{J} \frac{\bigr|v_{\eps}'(x) - v_{\eps}'(y)\bigr|^2}{|x-y|^2} \, dy  + \frac{\eps^3}{k} \frac{\bigr|v_{\eps}'(x) - v_{\eps}'(x_1^-)\bigr|^2}{|x-x_1^-|^2}
\right] \, dx = 0,
$$
we may find $x_1^+ \in (s_1,s_1+d)$ such that (up to a further subsequence)
\begin{equation}\label{eq:xis+}
|v_{\eps}(x_1^+)-\beta| < \eta,
\qquad 
\eps |v_{\eps}'(x_1^+)| < \eta k_2^2,
\quad  \text{ and }\quad
\eps^3\int_{J} \frac{\bigr|v_{\eps}'(x_1^+) - v_{\eps}'(y)\bigr|^2}{|x_1^+-y|^2} \, dy  < \eta k_2^2,
\end{equation}
and
\begin{equation}\label{eq:xis+-}
\eps^3 \frac{\bigr|v_{\eps}'(x_1^+) - v_{\eps}'(x_1^-)\bigr|^2}{|x_1^+-x_1^-|^2} < \eta k_2^2.
\end{equation}

We now repeat the process to find points $x_i^{\pm}$ in $(s_i-d,s_i+d)$ with the properties \eqref{eq:xis-}--\eqref{eq:xis+-}.

By Proposition \ref{prop:almostLB}, we deduce that
$$
\liminf_{\eps \to 0^+} G_{\eps}(v_{\eps};J)  
	\geq \sum_{i=1}^{\ell} \liminf_{\eps \to 0^+} G_{\eps}\bigl(v_{\eps};(x_i^-,x_i^+)\bigr)
	 \geq \ell L \ul{c} = \ul{c} L \Haus{0}(S(v)).
$$
\end{proof}

\section{The $N$-dimensional case}\label{sec:critical}

In this section we prove Theorems \ref{thm:compactness} and \ref{thm:critical}.

%%%%%%%%%%%%%%%%%%%%%%%%%%%%%%%%%%%%
%
%			Compactness - Part 1
%
%%%%%%%%%%%%%%%%%%%%%%%%%%%%%%%%%%%%

\subsection{Compactness}\label{ssec:Compactness}

In this subsection we prove Theorem \ref{thm:compactness}.
We follow the argument of \cite{FM}, which we reproduce for the convenience of the reader.

\begin{thm}[Compactness in the interior]
\label{thm:compactness_interior}

Let $\Omega$, $W$, and $V$ satisfy the hypotheses of Theorem \ref{thm:compactness}, and let $\eps \lambda_{\eps}^{\frac{2}{3}} \to L \in (0,\infty)$.
Consider a sequence $\{u_{\eps}\} \subset H^2(\Omega)$ such that
$$
C_1 := \sup_{\eps} \mathcal{F}_{\eps}(u_{\eps}) < \infty,
$$
where $\mathcal{F}_{\eps}$ is the functional defined in \eqref{eq:F}.
Then there exist a subsequence of $\{u_{\eps}\}$ (not relabeled) and a function $u \in BV(\Omega;\{a,b\})$ such that $u_{\eps} \to u$ in $L^2(\Omega)$.
\end{thm}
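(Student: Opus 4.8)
The plan is to reduce the $N$-dimensional compactness in the interior to the one-dimensional result of Theorem \ref{thm:compactness_F1D} via the slicing machinery of Section \ref{ssec:slicing}, following the scheme of \cite{FM}. First I would record the basic a priori bounds that come from $C_1 := \sup_\eps \mathcal{F}_\eps(u_\eps) < \infty$: from $(H^W_2)$ and $\frac1\eps\int_\Omega W(u_\eps) \leq C_1$ we get $\|u_\eps\|_{L^2(\Omega)} \leq C$ uniformly; from $\eps^3\int_\Omega |D^2 u_\eps|^2 \leq C_1$ together with Proposition \ref{prop:gagliardo-nirenberg} we get $\|\nabla u_\eps\|_{L^2(\Omega)} \leq C\eps^{-3/4}$, hence $\eps u_\eps' \to 0$ in an $L^2$-averaged sense; and, crucially, equi-integrability of $\{u_\eps^2\}$ (or of $\{u_\eps\}$) follows from the superquadratic lower bound on $W$ away from $\{a,b\}$ combined with the $W$-term bound, so that $L^2$-precompactness will follow from precompactness in measure, i.e.\ in $L^1$.

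Next I would set up the slicing. Fix $N-1$ (indeed $N$) linearly independent unit directions $e_i$. For a.e.\ $y$ in the projection $\Omega_{e_i}$, the slice $(u_\eps)^y_{e_i}$ lies in $H^2(\Omega^y_{e_i})$, and by Fubini the one-dimensional bulk energies integrate up: $\int_{\Omega_{e_i}} F_{\eps}\bigl((u_\eps)^y_{e_i}; \Omega^y_{e_i}\bigr)\, dy \leq \eps^3\int_\Omega |D^2 u_\eps|^2 + \frac1\eps\int_\Omega W(u_\eps) \leq C_1$, since $|\partial^2_{e_i} u_\eps|^2 \leq |D^2 u_\eps|^2$. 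Because $\Omega^y_{e_i}$ need not be a single interval, I would apply this on each connected component (an open interval), and for each fixed $i$ and each $\delta>0$ discard a set of slices $y$ of small measure on which the sliced energy is too large, so that on the complement $\sup_\eps F_{\eps}((u_\eps)^y_{e_i};\cdot)$ is finite slice-by-slice; on those good slices Theorem \ref{thm:compactness_F1D} gives $L^2$-precompactness of $\{(u_\eps)^y_{e_i}\}$ with limits in $BV(\cdot;\{a,b\})$. Modifying $u_\eps$ to vanish (or equal $a$) on the bad slices produces a sequence $\{v_\eps\}$ that is $\delta$-close to $\{u_\eps\}$ in $L^1(\Omega)$ and has all slices in direction $e_i$ precompact. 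Proposition \ref{prop:slicing}, applied with the equi-integrable family $\{u_\eps\}$, then yields precompactness of $\{u_\eps\}$ in $L^1(\Omega)$; extracting a convergent subsequence and upgrading to $L^2$ using equi-integrability gives $u_\eps \to u$ in $L^2(\Omega)$.

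Finally I would identify the limit: passing to a further subsequence so that $u_\eps \to u$ a.e., Fatou's lemma with $\frac1\eps\int_\Omega W(u_\eps)\to 0$ forces $\int_\Omega W(u)=0$, hence $u(x) \in \{a,b\}$ for a.e.\ $x$, i.e.\ $u \in L^1(\Omega;\{a,b\})$. To see $u \in BV$, I would use Theorem \ref{thm:slicing_BV}: for a.e.\ slice $y$ in each direction $e_i$ the limit $u^y_{e_i}$ is in $BV(\Omega^y_{e_i};\{a,b\})$ (as the $L^2$-limit of the sliced $u_\eps$, by the 1D theorem), and the total variations $\int |Du^y_{e_i}|$ are controlled by $\liminf$ of the sliced energies divided by $m>0$ (each jump costs at least $m$), which by Fatou is bounded by $C_1/m$; hence $\int_{\Omega_{e_i}} |Du^y_{e_i}|(\Omega^y_{e_i})\, dy < \infty$ for each $i$, and Theorem \ref{thm:slicing_BV} gives $u \in BV(\Omega;\{a,b\})$.

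The main obstacle is the bookkeeping in the slicing step: handling slices $\Omega^y_{e_i}$ that are finite unions of intervals rather than a single interval, and in particular choosing the ``bad'' set of slices for each $(i,\delta)$ so that the modified sequence $\{v_\eps\}$ is simultaneously $\delta$-close to $\{u_\eps\}$ \emph{and} has precompact slices — this requires that the exceptional set be chosen uniformly in $\eps$, which is where the uniform bound $\int_{\Omega_{e_i}} F_\eps((u_\eps)^y_{e_i};\cdot)\,dy \leq C_1$ and a Chebyshev argument come in. The equi-integrability needed to invoke Proposition \ref{prop:slicing} and to pass from $L^1$ to $L^2$ convergence is a secondary technical point, but it follows cleanly from $(H^W_2)$.
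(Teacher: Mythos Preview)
Your proposal is correct and follows essentially the same route as the paper: slice along coordinate directions, use Fubini and Chebyshev to discard bad slices and build $\delta$-close sequences whose one-dimensional slices are precompact by Theorem~\ref{thm:compactness_F1D}, invoke Proposition~\ref{prop:slicing}, upgrade $L^1$ to $L^2$ convergence via the equi-integrability coming from $(H^W_2)$, and identify the limit in $BV(\Omega;\{a,b\})$ through Theorem~\ref{thm:slicing_BV}. The only cosmetic difference is that the paper first reduces to rectangles $I\times J$ (so each slice is automatically a single interval) and then covers $\Omega$ by countably many such rectangles, thereby sidestepping the union-of-intervals bookkeeping you flag as the main obstacle; your gradient bound $\|\nabla u_\eps\|_{L^2}\leq C\eps^{-3/4}$ is not needed here.
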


\begin{proof}

For simplicity of notation, we suppose $N=2$. The higher dimensional case is treated analogously.

\paragraph{Step 1.} Assume that $\Omega = I \times J$, where $I,J \subset \R$ are open bounded intervals.

For $x \in \Omega$, we write $x=(y,z)$, with $y\in I, z\in J$. For every function $u$ defined on $\Omega$ and every $y \in I$ we denote by $u^y$ the function on $J$ defined by $u^y(z) := u(y,z)$, and for every $z \in J$ we denote by $u^z$ the function on $I$ defined by $u^z(y) := u(y,z)$. The functions $u^y$ and $u^z$ are called one-dimensional slices of $u$.

We recall that by slicing, if $u\in H^2(\Omega)$, then $u^y \in H^2(J)$ for $\Leb{1}$-a.e. $y \in I$, $u^z \in H^2(I)$ for $\Leb{1}$-a.e. $z \in J$, and 
$$
\fpar{^2 u}{z^2}(y,z) = \frac{d^2u^y}{dz^2}(z),
\qquad
\fpar{^2 u}{y^2}(y,z) = \frac{d^2u^z}{dy^2}(y),
\quad
\text{ for $\Leb{1}$-a.e. } y \in I \text{ and for $\Leb{1}$-a.e. } z \in J.
$$

Since $|\grad^2 u|^2 \geq \max \left\{ \left| \fpar{^2 u}{z^2}\right|, \left|\fpar{^2 u}{y^2}\right| \right\}$, we immediately obtain that
\begin{equation}\label{eq:fm_slicing}
C_1 \geq \mathcal{F}_{\eps}(u) \geq \int_I F_{\eps}(u^y;J) \, dy,
\qquad 
C_1 \geq \mathcal{F}_{\eps}(u) \geq \int_J F_{\eps}(u^z;I) \, dz,
\end{equation}
where $F_{\eps}$ is the functional defined in \eqref{eq:bulkF}.

Consider a family $\{u_{\eps}\} \subset H^2(\Omega)$ such that $\mathcal{F}_{\eps}(u_{\eps}) \leq C_1 < \infty$.
Then we have that $W(u_{\eps}) \to 0$ in $L^1(\Omega)$. 
From condition $(H^W_2)$, we have the existence of $C,T>0$ such that for all $|z|\geq T$, $W(z) \geq C |z|^2$.
This implies that $\{u_{\eps}\}$ is  2-equi-integrable and, in particular, it is equi-integrable.
Therefore, fix $\delta>0$ and let $\eta>0$ be such that for any measurable set $E \subset \R$, with $\Leb{2}(E) \leq \eta$,
\begin{equation}\label{eq:equiintegr}
\sup_{\eps>0} \int_E \bigl( |u_{\eps}(x)| + |b| \bigr) \, dx \leq \delta.
\end{equation}
For $\eps>0$ we define $v_{\eps}:\Omega \to \R$ by
$$
v{\eps}(y,z) := \begin{cases}
u^y_{\eps}(z)
	& \text{ if } y \in I, z \in J, \text{ and }F_{\eps}(u^y_{\eps};J) \leq \frac{C\Leb{1}(J)}{\eta}, \\
b	& \text{ otherwise.}
\end{cases}
$$

We claim that $\{v_{\eps}\}$ and $\{u_{\eps}\}$ are $\delta$-close, i.e., $\|u_{\eps}-v_{\eps}\|_{L^1(\Omega)} < \delta$.

Indeed, let $Z_{\eps} := \{ y \in I: u^y_{\eps} \neq v^y_{\eps} \}$.
By \eqref{eq:fm_slicing}, we have
$$
C_1 \geq \int_I F_{\eps}(u^y;J) \, dy,
$$
and so
$$
\Leb{1}(Z_{\eps}) 
	\leq \Leb{1}\left( \bigl\{ y \in I: F_{\eps}(u^y_{\eps};J) > {\ts \frac{C_1\Leb{1}(J)}{\eta}}\bigr\}\right)
	\leq \frac{\eta}{C_1\Leb{1}(J)} \int_I F_{\eps}(u^y;J) \, dy \leq \frac{\eta}{\Leb{1}(J)}.
$$
It follows that $\Leb{2}(Z_{\eps} \times J) \leq \eta$.
Thus, by \eqref{eq:equiintegr},
$$
\| u_{\eps} - v_{\eps}\|_{L^1(\Omega)} 
	\leq \int_{Z_{\eps} \times J} |u_{\eps}(x) - b| \, dx
	\leq \int_{Z_{\eps} \times J} \bigl(|u_{\eps}(x)| + |b| \bigr) \, dx
	\leq \delta.
$$

Moreover, for every $y\in I$ we have $F_{\eps}(v^y_{\eps};J) \leq \frac{C_1\Leb{1}(J)}{\eta}$, where we have used the face that $F_{\eps}(b;J)=0$, and therefore Theorem \ref{thm:compactness_F1D}, yields $L^2(J)$ precompactness of $\{v^y_{\eps}\}$.
Similarly, we can construct a  sequence $\{w_{\eps}\}$ $\delta$-close to $\{u_{\eps}\}$ so that $\{w^z_{\eps}\}$ is precompact in $L^2(I)$ for every $z \in J$.

Using Proposition \ref{prop:slicing} we conclude that the sequence $\{u_{\eps}\}$ is precompact in $L^2(\Omega)$.

\paragraph{Step 2.} General case.

This case can be proved by decomposing $\Omega$ into a countable union of closed rectangles with disjoint interiors.
The fact that the limit $u$ belongs to $BV(\Omega;\{a,b\}))$ is a direct consequence of Theorem \ref{thm:slicing_BV}.
\end{proof}

\begin{thm}[compactness at the boundary]
\label{thm:compactness_boundary}

Let $\Omega$, $W$, and $V$ satisfy the hypotheses of Theorem \ref{thm:compactness}, and let $\eps \lambda_{\eps}^{\frac{2}{3}} \to L \in (0,\infty)$.
Consider a sequence $\{u_{\eps}\} \subset H^2(\Omega)$ such that
$$
C := \sup_{\eps} \mathcal{F}_{\eps}(u_{\eps}) < \infty,
$$
where $\mathcal{F}_{\eps}$ is the functional defined in \eqref{eq:F}.
Then there exist a subsequence of $\{u_{\eps}\}$ (not relabeled) and a function $v \in BV(\partial\Omega;\{\alpha,\beta\})$ such that $Tu_{\eps} \to v$ in $L^2(\partial\Omega)$.
\end{thm}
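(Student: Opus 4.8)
The plan is to reduce the $(N-1)$-dimensional compactness problem on $\partial\Omega$ to the one-dimensional problem for $G_\eps$ studied in Section \ref{ssec:bdyG1D_compc}, via the slicing criterion of Proposition \ref{prop:slicing}. As before, it suffices to treat $N=2$, where $\partial\Omega$ is locally a curve; after localizing and flattening the boundary with a $C^2$ diffeomorphism (which distorts the fractional seminorm and the potential terms only by bounded factors, since $\Omega$ is of class $C^2$), we may assume $\partial\Omega$ contains a flat piece $E\subset\R$ and that the boundary term of $\mathcal F_\eps$ restricted to that piece controls, up to a multiplicative constant, the functional $G_\eps(Tu_\eps;E)$ as defined in \eqref{eq:bdyG}. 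The first step is therefore to set up this localization and to record that $\|Tu_\eps\|_{L^1(\partial\Omega)}$ is bounded and, in fact, $2$-equi-integrable: from $\sup_\eps\mathcal F_\eps(u_\eps)\le C$ we get $\lambda_\eps\int_{\partial\Omega}V(Tu_\eps)\le C$, hence $\int_{\partial\Omega}V(Tu_\eps)\to 0$, and by the growth hypothesis $(H^V_2)$ there are $C,T>0$ with $V(z)\ge C|z|^2$ for $|z|\ge T$, so $\int_{\{|Tu_\eps|\ge T\}}|Tu_\eps|^2\le C/(C\lambda_\eps)\to 0$; equi-integrability of $\{Tu_\eps\}$ (in $L^1$ and even $L^2$) follows.

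The second step is the slicing bound. Since $|\grad^2 u|^2$ controls the second tangential derivative of $Tu$ along the boundary via a trace/lifting estimate — this is exactly the content of Proposition \ref{prop:lifting}, whose lower bound gives $\tfrac18|g'|_{H^{1/2}}^2\le \iint|D^2u|^2$ for the half-space lifting — we can bound, for the flattened slice,
$$
\mathcal F_\eps(u_\eps) \ \geq\ c\,\eps^3\!\int_E\!\int_E \frac{|(Tu_\eps)'(x)-(Tu_\eps)'(y)|^2}{|x-y|^2}\,dx\,dy \ +\ \lambda_\eps\!\int_E V(Tu_\eps)\,dx\ =\ c\,G_\eps(Tu_\eps;E),
$$
for a fixed constant $c>0$ (after the diffeomorphism, a constant depending only on $\Omega$). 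Here one must be careful that the nonlocal seminorm of $Tu_\eps$ over $E$ is genuinely dominated by $\iint_{\Omega}|\grad^2 u_\eps|^2$: this requires covering $E$ by a half-tube inside $\Omega$ on which Proposition \ref{prop:lifting} (or its half-space version) applies, together with the trivial observation that the seminorm over a sub-interval is smaller than over the whole. Given this, Theorem \ref{thm:compactness_G1D} applies to $\{Tu_\eps\}$ restricted to each such slice $E$, yielding a subsequence and a limit $v\in BV(E;\{\alpha,\beta\})$ with $Tu_\eps\to v$ in $L^2(E)$; in particular $\{Tu_\eps|_E\}$ is precompact in $L^1(E)$.

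The third step assembles these one-dimensional results via Proposition \ref{prop:slicing}: in the flat chart we have two (indeed one, since $N-2=0$, but in general $N-1$) linearly independent directions, and the equi-integrability established in Step 1 lets us take $\{v_\eps\}=\{Tu_\eps\}$ itself as the $\delta$-close sequence (no modification on a small set is needed because precompactness holds for the full sequence, not just a truncation). Hence $\{Tu_\eps\}$ is precompact in $L^1$ of each chart, and by a countable covering of $\partial\Omega$ and a diagonal argument we extract a subsequence with $Tu_\eps\to v$ in $L^1(\partial\Omega)$; upgrading to $L^2(\partial\Omega)$ uses the $2$-equi-integrability from Step 1 together with Vitali's convergence theorem, exactly as at the end of the proof of Theorem \ref{thm:compactness_G1D}. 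Finally, $v\in BV(\partial\Omega;\{\alpha,\beta\})$ follows from the slicing characterization of $BV$ functions on the manifold $\partial\Omega$ (Section \ref{ssec:manifold} together with Proposition \ref{prop:Bv_bdy}), since each slice $v$ is $\{\alpha,\beta\}$-valued $BV$ with uniformly bounded variation.

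I expect the main obstacle to be Step 2: rigorously dominating the nonlocal $H^{1/2}$-seminorm of the trace $Tu_\eps$ over a boundary slice by the interior Hessian energy $\eps^3\iint_\Omega|\grad^2 u_\eps|^2$, while correctly tracking how the $C^2$ change of variables that flattens $\partial\Omega$ affects the fractional seminorm (a nonlocal quantity, so one cannot simply pull back pointwise). One has to localize to a half-tube over the flat piece, apply the lifting lower bound of Proposition \ref{prop:lifting}, and absorb lower-order contributions; the curvature terms coming from the flattening are controlled because $\Omega$ is of class $C^2$, but making the constant $c$ uniform across a finite cover requires some care.
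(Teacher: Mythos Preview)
Your overall strategy---localize, flatten the boundary, use the lifting lower bound of Proposition~\ref{prop:lifting} to dominate $G_\eps(Tu_\eps;\cdot)$ by the interior Hessian energy, then appeal to the one-dimensional compactness Theorem~\ref{thm:compactness_G1D} and reassemble via Proposition~\ref{prop:slicing}---matches the paper's approach. However, there is a genuine gap in Step~3.

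The claim that ``no modification on a small set is needed'' is incorrect for $N\geq 3$. After flattening and slicing in a direction $e_i$, the bound you obtain in Step~2 is an \emph{integral} estimate,
\[
\int_{E_{e_i}} G_\eps\bigl(Tu_\eps^y; E^y\bigr)\, dy \ \leq\ C,
\]
not a uniform bound on each slice. Hence for an individual $y$ there is no reason that $\sup_\eps G_\eps(Tu_\eps^y; E^y)<\infty$, and Theorem~\ref{thm:compactness_G1D} does not apply on that slice. The paper's Proposition~\ref{prop:compactness_bdy_flat} deals with this exactly as in the interior case: fix $\delta>0$, pick $\eta>0$, and set $v_\eps^y := Tu_\eps^y$ on slices with $G_\eps(Tu_\eps^y;E^y)\leq C_1/\eta$ and $v_\eps^y\equiv\alpha$ otherwise (see~\eqref{eq:def_v_slicing}). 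A Chebyshev argument shows the bad-slice set has small measure, equi-integrability then makes $\{v_\eps\}$ $\delta$-close to $\{Tu_\eps\}$, and now every slice of $v_\eps$ has bounded $G_\eps$-energy, so the one-dimensional compactness applies slice by slice. This truncation is not optional.

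A smaller point: the reduction to $N=2$ is misleading here. For the interior (Theorem~\ref{thm:compactness_interior}) the $N=2$ case is representative because the slicing is dimensionally uniform. For the boundary, $N=2$ makes $\partial\Omega$ one-dimensional, so there is no slicing of the trace at all and the $\delta$-close issue never arises; the substantive case is $N\geq 3$. Finally, regarding the flattening step you flag as the main obstacle: the paper handles it via the isometry-defect machinery of Propositions~\ref{prop:flatbdy} and~\ref{prop:flatbdy2}. Because the energy involves $|D^2u|^2$, the change of variables $u\mapsto u\circ\psi$ produces an extra term of the form $\delta(\psi)\,\eps^3\!\int\bigl(|D^2u||Du|+\delta(\psi)|Du|^2\bigr)$, which is controlled using the Gagliardo--Nirenberg inequality and smallness of $\delta(\psi)$; your ``bounded factors'' intuition is correct but this is where the care is actually spent.
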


To prove this theorem we introduce the localization of the functionals $\mathcal{F}_{\eps}$: for every open set $A \subset \Omega$ with boundary of class $C^2$, for every Borel set $E \subset \partial A$, and for every $u \in H^2(A)$, we set
$$
\mathcal{F}_{\eps}(u;A,E)
	:= \int_A \left( \eps^2 \left| \grad^2 u\right|^2 + \frac{1}{\eps} W(u) \right) \, dx + \lambda_{\eps} \int_E V(Tu) \, d\Haus{N-1}.
$$

Note that for $u \in H^2(\Omega)$,
$
\mathcal{F}_{\eps}(u) = \mathcal{F}_{\eps}(u;\Omega,\partial\Omega).
$

We begin by proving compactness on the boundary in the special case in which $A = \Omega \cap B$, where $B$ is a ball centered on $\partial \Omega$ and $E = B \cap \partial \Omega$ is a flat disk.
Later on we will show that this flatness assumption can be dropped when $B$ is sufficiently small.

\begin{prop}\label{prop:compactness_bdy_flat}

For every $r>0$, let $D_r$ be the open half-ball
$$
D_r := \{ x = (x',x_N) \in \R^N: |x| < r, x_N>0 \}
$$
and let 
$$
E_r := \{ x = (x', 0) \in \R^N: |x| < r\}.
$$
Let $W$ and $V$ satisfy the hypotheses of Theorem \eqref{thm:compactness}, and let $\eps\lambda_{\eps}^{\frac{2}{3}} \to L \in (0,\infty)$.
Consider a sequence $\{u_{\eps}\} \subset H^2(D_r)$ such that
$$
C_1 := \sup_{\eps>0} \mathcal{F}_{\eps}(u_{\eps};D_r,E_r) < \infty.
$$
Then there exist a subsequence of $\{u_{\eps}\}$ (nor relabeled) and a function $v \in BV(E_r;\{\alpha,\beta\})$ such that $Tv_{\eps} \to v$ in $L^2(E_r)$.

\end{prop}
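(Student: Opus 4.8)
The plan is to reduce the claim to the one-dimensional compactness statement for $G_\eps$ (Theorem~\ref{thm:compactness_G1D}) by slicing the flat disk $E_r$ along the $N-1$ coordinate directions tangent to $\{x_N=0\}$, transferring control of the second-order bulk energy into control of the fractional seminorm of the tangential derivative of the trace on $1$-dimensional slices via the lifting inequality of Proposition~\ref{prop:lifting}. First, from $(H^V_2)$ and $\lambda_\eps\to\infty$ we get $\int_{E_r}V(Tu_\eps)\,d\Haus{N-1}\leq C_1/\lambda_\eps\to 0$; the quadratic growth of $V$ then gives a uniform $L^2(E_r)$ bound on $\{Tu_\eps\}$ together with $2$-equi-integrability (the portion of $E_r$ on which $|Tu_\eps|\geq T$ carries arbitrarily small $L^2$-mass). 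By Vitali's theorem it therefore suffices to prove that $\{Tu_\eps\}$ is precompact in $L^1(E_r)$.

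Fix $i\in\{1,\dots,N-1\}$ and write $x'=(y,x_i)$ with $y\in\R^{N-2}$, $|y|<r$. For a.e.\ such $y$ the planar section $\tilde u_\eps^{\,y}(x_i,x_N):=u_\eps(y,x_i,x_N)$ belongs to $H^2$ of the half-disk $D^y:=\{(x_i,x_N):x_i^2+x_N^2<\rho_y^2,\ x_N>0\}$, where $\rho_y:=\sqrt{r^2-|y|^2}$, and its trace on $\{x_N=0\}$ coincides with the slice $(Tu_\eps)^y_{e_i}$. For $\delta>0$ the translated triangle $T_{2\rho_y-2\delta}^+$ with base $(-\rho_y+\delta,\rho_y-\delta)$ fits inside $D^y$, so Proposition~\ref{prop:lifting} (in the form $\zeta_{R,g}\geq\tfrac18$ from \eqref{eq:lifting}) and $V\geq0$ yield
$$
\eps^3\iint_{D^y}\bigl|D^2\tilde u_\eps^{\,y}\bigr|^2\,dx_i\,dx_N+\lambda_\eps\!\int_{(E_r)^y_{e_i}}\!\!V\bigl((Tu_\eps)^y_{e_i}\bigr)\,dx_i\;\geq\;G_\eps\bigl((Tu_\eps)^y_{e_i};(-\rho_y+\delta,\rho_y-\delta)\bigr),
$$
with $G_\eps$ as in \eqref{eq:bdyG} and $D^2\tilde u_\eps^{\,y}$ the $2\times2$ Hessian in $(x_i,x_N)$, a sub-block of $D^2u_\eps$. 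Integrating in $y$ and using $|D^2u_\eps|^2\geq|D^2\tilde u_\eps^{\,y}|^2$, $W\geq0$, gives $\int_{|y|<r}G_\eps\bigl((Tu_\eps)^y_{e_i};(-\rho_y+\delta,\rho_y-\delta)\bigr)\,dy\leq C_1$. Hence, exactly as in Step~1 of the proof of Theorem~\ref{thm:compactness_interior}, for every $\delta_0>0$ one constructs a sequence $\{v_\eps^{(i)}\}$ that is $\delta_0$-close to $\{Tu_\eps\}$ (replacing by the constant $\beta$ the slices over a $y$-set of small measure, on which the $G_\eps$-energy is large), whose slices in direction $e_i$ have uniformly bounded $G_\eps$-energy on $(-\rho_y+\delta,\rho_y-\delta)$ and are therefore precompact in $L^1$ of that interval by Theorem~\ref{thm:compactness_G1D}. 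Choosing $\delta=\tfrac12(r-r')$ ensures $(E_{r'})^y_{e_i}\subset(-\rho_y+\delta,\rho_y-\delta)$ for all $|y|<r'$, so Proposition~\ref{prop:slicing} applied on $E_{r'}$ gives precompactness of $\{Tu_\eps\}$ in $L^1(E_{r'})$. A diagonal argument over $r'=r-\tfrac1k$, combined with the $2$-equi-integrability (which makes $\int_{E_r\setminus E_{r'}}|Tu_\eps|$ uniformly small as $r'\uparrow r$), upgrades this to precompactness in $L^1(E_r)$, hence in $L^2(E_r)$; let $v$ be the limit along the resulting subsequence.

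It remains to identify $v$. Since $\int_{E_r}V(Tu_\eps)\to0$ and $V$ vanishes only on $\{\alpha,\beta\}$, we have $v\in\{\alpha,\beta\}$ a.e.\ For $v\in BV(E_r;\{\alpha,\beta\})$, note that $L^2(E_r)$-convergence gives $(Tu_\eps)^y_{e_i}\to v^y_{e_i}$ in $L^2$ of the slice for a.e.\ $y$ (along a further subsequence), while Fatou and $\int_{|y|<r}G_\eps(\cdots)\,dy\leq C_1$ give $\liminf_\eps G_\eps\bigl((Tu_\eps)^y_{e_i};(-\rho_y+\delta,\rho_y-\delta)\bigr)<\infty$ for a.e.\ $y$. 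For such $y$, Theorem~\ref{thm:compactness_G1D} yields $v^y_{e_i}\in BV\bigl((-\rho_y+\delta,\rho_y-\delta);\{\alpha,\beta\}\bigr)$ and the one-dimensional lower bound Theorem~\ref{thm:boundary1D} then yields $\ul c L\,\Haus{0}(S(v^y_{e_i}))\leq\liminf_\eps G_\eps(\cdots)$, whence $\int_{|y|<r}\Haus{0}(S(v^y_{e_i}))\,dy\leq C_1/(\ul c L)<\infty$ for each $i=1,\dots,N-1$. As $E_r$ is flat, identified with an open bounded subset of $\R^{N-1}$, Theorem~\ref{thm:slicing_BV} gives $v\in BV(E_r)$, and combined with $v\in\{\alpha,\beta\}$ a.e.\ we conclude $v\in BV(E_r;\{\alpha,\beta\})$, with $Tu_\eps\to v$ in $L^2(E_r)$ along the subsequence.

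The main obstacle is the slicing estimate in the second paragraph: extracting a genuine one-dimensional $G_\eps$-functional of the sliced trace out of the $N$-dimensional Hessian energy. This is precisely where Proposition~\ref{prop:lifting} is essential, being the bridge between the second-order bulk term and the fractional boundary seminorm, and it forces some care with the geometry of the half-ball (choosing the triangle so that its base exhausts the slice up to a controlled margin $\delta$, and matching $\delta$ to the radius $r'$ used in Proposition~\ref{prop:slicing}). The nonlocality of the boundary energy is also what prevents a naive localization and makes the $\delta$-shrinking of slices and the tail estimate near $\partial E_r$ unavoidable; for the same reason one cannot simply replace the trace on a small set by a constant without first passing through the $\delta_0$-close construction of Theorem~\ref{thm:compactness_interior}.
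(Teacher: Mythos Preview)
Your proof is correct and follows essentially the same route as the paper: slice $E_r$ in the $N-1$ tangential directions, use the lifting inequality of Proposition~\ref{prop:lifting} to convert the two-dimensional Hessian energy on each planar section into the fractional seminorm of the sliced trace, thereby controlling $G_\eps$ on one-dimensional slices; then build $\delta$-close sequences by replacing high-energy slices with a constant, invoke Theorem~\ref{thm:compactness_G1D} and Proposition~\ref{prop:slicing} for $L^1$-precompactness, upgrade via Vitali, and finally identify the limit in $BV(E_r;\{\alpha,\beta\})$ using Fatou and the one-dimensional lower bound.

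The only substantive difference is your handling of the geometry. You shrink the base of the triangle by a margin $\delta$, apply Proposition~\ref{prop:slicing} on $E_{r'}$ with $r'<r$, and then exhaust $E_r$ by a diagonal argument combined with a tail estimate. The paper skips this: the open triangle with base exactly $E^y=(-\rho_y,\rho_y)$ and apex $(0,\rho_y)$ is already contained in the open half-disk $D^y$ (its slanted sides lie strictly inside, the apex and base corners being boundary points of measure zero), so Proposition~\ref{prop:lifting} yields directly $\int_{D^y}|D^2\tilde u_\eps^{\,y}|^2\geq\tfrac18\,|Tu_\eps^{\,y}{}'|_{H^{1/2}(E^y)}^2$ on the full slice, and one obtains $\int_{E_{e_i}}G_\eps(Tu_\eps^y;E^y)\,dy\leq C_1$ without any margin. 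Your extra step is harmless but unnecessary. A minor point: the paper replaces bad slices by $\alpha$ rather than $\beta$, and cites Proposition~\ref{prop:Bv_bdy} rather than Theorem~\ref{thm:slicing_BV} for the final $BV$ identification; both choices are immaterial.
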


\begin{proof}

To simplify the notation, we write $D$ and $E$ in place of $D_r$ and $E_r$.

The idea of the proof is to reduce to the statement of Theorem \ref{thm:compactness_G1D} via a suitable slicing argument.

Fix $i=1,\ldots, N-1$ and let $E_{e_i} := \{ y \in \R^{N-2}: (y,x_i,0) \in E \text{ for some } x_i \in\R\}$.
For every $y \in E_{e_i}$, define the sets
\begin{align*}
& D^y := \{ (x_i,x_N) \in \R^2: (y,x_i,x_N) \in D\}, \\
& E^y := \{ x_i \in \R: (y,x_i,0) \in E\}.
\end{align*}

For every $ y \in E_{e_i}$ and every function $u:D\to \R$, let $u^y:D^y \to \R$ be the function defined by 
$$
u^y(x_i,x_N) := u(y,x_i,x_N),
\qquad 
(x_i,x_N) \in D^y,
$$
and for every function $v:E\to \R$, let $v^y:E^y \to \R$ be defined by
$$
v^y(x_i) := v(y,x_i),
\qquad 
x_i \in E^y.
$$

If $u \in H^2(D)$, then by the slicing theorem in \cite{Ziemer} for $\Leb{N-2}$-a.e. $y \in E_{e_i}$, the function $u^y$ belongs to $H^2(D^y)$, for $\Leb{2}$-a.e. $(x_i,x_N) \in D$,
$$
\fpar{u}{x_k}(y,x_i,x_N) = \fpar{u^y}{x_k}(x_i,x_N),
\qquad \text{ for } k=i,N,
$$
and
$$
\fpar{^2u}{x_k\partial x_j}(y,x_i,x_N) = \fpar{^2u^y}{x_k\partial x_j}(x_i,x_N),
\qquad \text{ for } k,j=i,N,
$$
and the trace of $u^y$ on $E^y$ agrees $\Leb{1}$-a.e. in $E^y$ with $(Tu)^y$.
Taking into account these facts and Fubini's theorem, for every $\eps>0$ we get
\begin{multline*}
\mathcal{F}_{\eps}(u;D,E)
	 \geq \eps^3 \int_D |D^2u(x)|^2 \, dx + \lambda_{\eps} \int_E V(Tu(x',0)) \, dx' \\ 
	 \geq \int_{E_{e_i}} \left[ \eps^3 \int_{D^y} |D^2_{x_i,x_N} u^y(x_i,x_N)|^2 \, dx_i\,dx_N + \lambda_{\eps} \int_{E^y} V(Tu^y(x_i,0)) \, dx_i \right] \, dy.
\end{multline*}

We apply the trace inequality \eqref{eq:lifting} to each function $u^y$ to obtain
\begin{equation}\label{eq:Fcal_geq_G}
\mathcal{F}_{\eps}(u;D,E)
	\geq \int_{E_{e_i}} G_{\eps}(Tu^y;E^y) \, dy,
\end{equation}
where $G_{\eps}$ is the functional defined in \eqref{eq:bdyG}
To prove that the sequence $\{Tu_{\eps}\}$ is precompact in $L^2(E)$, it is enough to show that it satisfies the conditions of Proposition \ref{prop:slicing}.
Since
\begin{equation}\label{eq:C1}
C_1 = \sup_{\eps>0} \mathcal{F}_{\eps}(u_{\eps};D,E) < \infty,
\end{equation}
we have that
$$
\sup_{\eps>0} \lambda_{\eps} \int_E V\bigl(Tu_{\eps}(x',0)\bigr) \, dx' \leq C_1.
$$

From condition $(H^V_2)$, we may find $C,T>0$ such that for all $|z| \geq T$, $V(z)  \geq C |z|^2$,
and so
$$
\int_{E \cap \{|Tu_{\eps}|\geq t\}} \bigl|Tu_{\eps}(x',0)\bigr|^2 \, dx'
	\leq \frac{2}{C} \int_E V\bigl(Tu_{\eps}(x',0)\bigr) \, dx'
	\leq \frac{2C_1}{C} \frac{1}{\lambda_{\eps}}.
$$
This implies that $\{Tu_{\eps}\}$ is 2-equi-integrable. In particular, it is equi-integrable.
Thus to apply Proposition \ref{prop:slicing}, it remains to show that for every $\delta >0$ there is a sequence $\{v_{\eps}\} \subset L^1(E)$ that is $\delta$-close to $Tu_{\eps}$, in the sense of Definition \ref{def:deltaclose}, and such that $\{v^y_{\eps}\}$ is precompact in $L^1(E^y)$ for $\Leb{N-2}$-a.e. $y\in E_{e_i}$.

Fix $\delta>0$, let $\eta>0$ be a constant that will be fixed later, and let
\begin{equation}\label{eq:def_v_slicing}
v_{\eps}(y,x_i):=\begin{cases}
	Tu_{\eps}^y(x_i)
		& \text{ if } y \in E_{e_i}, x \in E^y, \text{ and } G_{\eps}(Tu^y_{\eps};E^y) \leq \frac{C_1}{\eta}, \\
	\alpha
		& \text{ otherwise.}
\end{cases}
\end{equation}

Note that although $v_{\eps}$ is no longer in $H^{\frac{3}{2}}(E)$, for every $y \in E_{e_i}$, either $v^y_{\eps} = Tu^y_{\eps} \in H^{\frac{3}{2}}(E^y)$, or $v^y_{\eps} \equiv \alpha$, and so $v^y_{\eps}$ always belongs to $H^{\frac{3}{2}}(E^y)$.
We claim that $\{v_{\eps}\}$ is $\delta$-close to $\{Tu_{\eps}\}$.
Indeed, by Fubini's theorem,
$$
\|Tu_{\eps} - v_{\eps} \|_{L^1(E)}
	\leq \int_{Z_{e_i}} \int_{E^y} |Tu^y_{\eps}(x_i)-\alpha| \, dx_i \, dy
	\leq \int_{Z_{e_i}} \int_{E^y} \left( |Tu^y_{\eps}(x_i)| + |\alpha| \right) \, dx_i \, dy,
$$
where $Z_{e_i} 
	:= \{y \in E_{e_i}: Tu^y_{\eps} \neq v^y_{\eps}\} 
	=  \bigl\{y \in E_{e_i}: G_{\eps}(Tu^y_{\eps};E^y) > {\ts \frac{C_1}{\eta}} \bigr\}$.
Since $\{Tu_{\eps}\}$ is equi-integrable, to prove that the right-hand side of the previous inequality is less than $\delta$, it suffices to show that the $\Leb{N-1}$ measure of the set $H:=\{(y,x_i): y \in Z_{e_i}, x_i \in E^y\}$ can be made arbitrarily small.
Again by Fubini's theorem and the definition of $Z_{e_i}$,
$$
\Leb{N-1}(H)
	= \int_{Z_{e_i}} \Leb{1}(E^y) \, dy
	\leq 2r \Leb{N-2}(Z_{e_i})
	\leq \frac{\eta}{C_1} \int_{Z_{e_i}} G_{\eps}(Tu^y_{\eps};E^y) \, dy
	\leq \eta,
$$
where we have used \eqref{eq:C1} and the fact that $\Leb{1}(E^y) \leq 2r \leq 1$ for $r \leq \frac{1}{2}$.
Thus if $\eta$ is chosen sufficiently small, we have that $\{v_{\eps}\}$ is $\delta$-close to $\{Tu_{\eps}\}$.

To prove that $\{v^y_{\eps}\}$ if precompact for $\Leb{N-2}$-a.e. $y \in E_{e_i}$, it suffices to consider only those $y \in E_{e_i}$ such that $G_{\eps}(Tu^y_{\eps};E^y) \leq \frac{C_1}{\eta}$ (since otherwise $v^y_{\eps}(x_i) \equiv \alpha$ and there is nothing to prove).
For these $y \in E_{e_i}$, the precompactness follows from Theorem \ref{thm:compactness_G1D}.

Hence we are in a position to apply Proposition \ref{prop:slicing} to conclude that $\{Tu_{\eps}\}$ is precompact in $L^1(E)$.
Thus, up to a subsequence (not relabeled), we may assume that there exists a function $v \in L^1(E)$ such that $Tu_{\eps} \to v$ in $L^1(E)$.
Note that since $\{Tu_{\eps}\}$ is 2-equi-integrable, it follows by Vitali's convergence theorem that $Tu_{\eps} \to v$ in $L^2(E)$. 

It remains to show that $v \in BV(E;\{\alpha,\beta\})$.
%%%%%%%%%%%%%%%%%%%%%%%%%%%%%%%%%%
%%%%%%%%%%%%%%%%%%%%%%%%%%%%%%%%%%
Indeed, replacing $u$ by $u_{\eps}$ in \eqref{eq:Fcal_geq_G}, and passing to the limit as $\eps \to 0^+$, by Fatou's lemma we deduce that
$$
\infty> \liminf_{\eps \to 0^+} \mathcal{F}_{\eps}(u_{\eps};D,E)
	\geq \int_{E_{e_i}} \liminf_{\eps \to 0^+} G_{\eps}(Tu^y_{\eps};E^y) \, dy,
$$
which implies that $\ds \liminf_{\eps \to 0^+} G_{\eps}(Tu^y_{\eps};E^y)$ is finite for $\Leb{N-2}$-a.e. $y \in E_{e_i}$.
Since $Tu_{\eps} \to v$ in $L^2(E)$, up to a subsequence (not relabeled), we have that $Tu^y_{\eps} \to v^y$ in $L^2(E)$ for $\Leb{N-2}$-a.e. $y \in E_{e_i}$.
Then Proposition \ref{prop:Bv_bdy} yields $v^y \in BV(E^y;\{\alpha,\beta\})$ and
\begin{equation}\label{eq:jumpv_bdd}
\liminf_{\eps \to 0^+} \mathcal{F}_{\eps}(u_{\eps};D,E)
	\geq \int_{E_{e_i}} \ul{c} L \Haus{0}(Sv^y) \, dy.
\end{equation}

The right-hand side of \eqref{eq:jumpv_bdd} is finite, so Proposition \ref{prop:Bv_bdy} implies that $v \in BV(E;\{\alpha,\beta\})$, and that $Sv^y$ agrees with $Sv \cap E^y$ for a.e. $y \in E_{e_i}$.
\end{proof}

To prove compactness in the general case, i.e., where $\Omega$ is not flat, we introduce the notion of isometry defect following \cite{ABS}.

%%%%%%%%%%%%%%%%%%%%%%%%%%%%%%%%%%%%%%%%%%
%%%%%%%%%%%%%%%%%%%%%%%%%%%%%%%%%%%%%%%%%%
%%
%%
%%					Reduction to flat boundary
%%
%%
%%%%%%%%%%%%%%%%%%%%%%%%%%%%%%%%%%%%%%%%%%
%%%%%%%%%%%%%%%%%%%%%%%%%%%%%%%%%%%%%%%%%%

\begin{defi}[isometry defect]\label{def:isom_def}

Given $A_1,A_2 \subset \R^N$ open sets and a bi-Lipschitz homeomorphism $\psi: \ol{A_1} \rightarrow \ol{A_2}$ of class $C^2(\ol{A}_i;\R^N)$, the isometry defect $\delta(\psi)$ of $\psi$ is the smallest constant $\delta$ such that
$$
\esssup_{x \in A_1} \left\{\dist \bigl(D\psi(x),O(N)\bigr) + \dist \bigl(D^2\psi(x),0\bigr) \right\} \leq \delta,
$$
where $O(N) := \left\{ A : \R^N \to \R^N \text{ linear mappings}, A A^T = \I_N \right\}$. 

\end{defi}

\begin{prop}\label{prop:flatbdy}

Let $\Omega$, $W$, and $V$ satisfy the hypotheses of Theorem \ref{thm:compactness}.
Given $A_1,A_2 \subset \R^N$ open sets and a bi-Lipschitz homeomorphism $\psi: \ol{A_1} \rightarrow \ol{A_2}$ of class $C^2(\ol{A}_i;\R^N)$ such that $\psi$ has finite isometry defect and maps a set $A_1' \subset \partial A_1$ onto $A_2' \subset \partial A_2$.
Then for every $u \in H^2(A_2)$ there holds
\begin{equation}\label{eq:flatbdy}
\mathcal{F}_{\eps}(u;A_2,A_2')
       \geq \bigl(1-\delta(\psi)\bigr)^{N+4} {\cal F}_{\eps}(u \circ \psi;A_1,A_1')
       - \delta(\psi) \bigl(1-\delta(\psi)\bigr)^{2} \eps^3 \int_{A_2} \left( (\bigl| D^2u\bigr|\bigl|Du\bigr| + \delta(\psi) \bigl|Du\bigr|^2\right) dx.
\end{equation}
     
\end{prop}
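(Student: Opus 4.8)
The plan is to change variables through $\psi$ and estimate the three terms of $\mathcal{F}_{\eps}$ separately, the bulk Hessian term being the only delicate one. Write $\delta := \delta(\psi)$ and $v := u\circ\psi$; since $\psi$ is a $C^2$ bi-Lipschitz homeomorphism, standard composition results give $v\in H^2(A_1)$, and $Tv = (Tu)\circ\psi$ on $A_1'$. From Definition \ref{def:isom_def} I first record the elementary consequences of finite isometry defect: a matrix within distance $\delta$ of $O(N)$ has all singular values in $[1-\delta,1+\delta]$, so that $|D\psi(x)| \leq 1+\delta$ and $|(D\psi(x))^{-1}| \leq (1-\delta)^{-1}$ (operator norm), $(1-\delta)^N \leq |\det D\psi(x)| \leq (1+\delta)^N$ pointwise, the tangential $(N-1)$-dimensional Jacobian of $\psi$ along $\partial A_1$ is at least $(1-\delta)^{N-1}$, and $|D^2\psi(x)| \leq \delta$. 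I will also use repeatedly that $(1+\delta)(1-\delta) \leq 1$, hence $1+\delta \leq (1-\delta)^{-1}$ and $(1-\delta)^{N-1}, (1-\delta)^N \geq (1-\delta)^{N+4}$.

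The potential and boundary terms are immediate. Since $W\geq0$, the change of variables $x=\psi(y)$ gives $\tfrac{1}{\eps}\int_{A_2} W(u)\,dx = \tfrac{1}{\eps}\int_{A_1} W(v)\,|\det D\psi|\,dy \geq \tfrac{(1-\delta)^{N+4}}{\eps}\int_{A_1} W(v)\,dy$. Likewise, since $\psi$ maps $A_1'$ onto $A_2'$ with $Tv=(Tu)\circ\psi$ and $V\geq0$, the area formula on the manifold yields $\lambda_{\eps}\int_{A_2'} V(Tu)\,d\Haus{N-1} \geq (1-\delta)^{N-1}\lambda_{\eps}\int_{A_1'} V(Tv)\,d\Haus{N-1} \geq (1-\delta)^{N+4}\lambda_{\eps}\int_{A_1'} V(Tv)\,d\Haus{N-1}$.

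The heart of the matter is the Hessian term. Differentiating $v=u\circ\psi$ twice gives, a.e.\ in $A_1$, $D^2 v = (D\psi)^T (D^2u\circ\psi)(D\psi) + (Du\circ\psi)\cdot D^2\psi =: A + B$, with Frobenius-norm bounds $|A| \leq (1+\delta)^2\,|D^2u\circ\psi|$ and $|B| \leq \delta\,|Du\circ\psi|$. Hence $\int_{A_1}|D^2v|^2 \leq \int_{A_1}(|A|+|B|)^2 = \int_{A_1}|A|^2 + 2\int_{A_1}|A||B| + \int_{A_1}|B|^2$. Bounding each integrand by the displayed estimates and changing variables back to $A_2$ — which costs a factor $|\det D\psi|^{-1}\leq(1-\delta)^{-N}$ — together with $1+\delta \leq (1-\delta)^{-1}$, one finds $(1-\delta)^{N+4}\int_{A_1}|A|^2 \leq (1-\delta^2)^4\int_{A_2}|D^2u|^2 \leq \int_{A_2}|D^2u|^2$, while the remaining contributions are controlled by $(1-\delta)^{N+4}\int_{A_1}|A||B| \leq \delta(1-\delta)^2\int_{A_2}|D^2u||Du|$ and $(1-\delta)^{N+4}\int_{A_1}|B|^2 \leq \delta^2(1-\delta)^2\int_{A_2}|Du|^2$. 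Rearranging the displayed inequality, multiplying by $\eps^3$, and collecting the numerical factors (all $\leq 1$) produces $\eps^3\int_{A_2}|D^2u|^2 \geq (1-\delta)^{N+4}\eps^3\int_{A_1}|D^2v|^2 - \delta(1-\delta)^2\eps^3\int_{A_2}\bigl(|D^2u||Du| + \delta|Du|^2\bigr)\,dx$. Adding this to the two lower bounds of the previous paragraph gives \eqref{eq:flatbdy}.

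The step I expect to require the most care is the Hessian estimate: one must keep track of the domain over which each integral lives through the two successive changes of variables (the leading term must emerge as an integral over $A_1$ and the error term as an integral over $A_2$, exactly as in \eqref{eq:flatbdy}), and one must justify the chain-rule identity for $D^2v$ a.e.\ together with $Tv=(Tu)\circ\psi$ on $A_1'$, both of which rely on $\psi$ being $C^2$ and bi-Lipschitz up to the boundary. The arguments for the potential and boundary terms, and the extraction of the elementary consequences of Definition \ref{def:isom_def}, are routine.
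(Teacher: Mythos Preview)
The paper does not give its own proof of this proposition; it simply refers to \cite{ABS}, whose argument (for the first-order functional) you have correctly adapted to the second-order setting by adding the chain-rule term $(Du\circ\psi)\cdot D^2\psi$ and carrying the resulting cross terms as an error on the right-hand side. Your strategy --- change variables, use the singular-value bounds from Definition~\ref{def:isom_def}, and expand $|A+B|^2$ --- is exactly the natural one, and the treatment of the potential and boundary integrals is correct.

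One small caveat: your final ``collecting the numerical factors (all $\leq 1$)'' hides a factor of $2$ coming from the cross term $2|A||B|$. Your bounds give $2(1-\delta^2)^2(1-\delta)^2\delta$ in front of $\int_{A_2}|D^2u||Du|$, and $2(1-\delta^2)^2$ is \emph{not} $\leq 1$ for small $\delta$, so you obtain \eqref{eq:flatbdy} with the error constant $2\delta(1-\delta)^2$ rather than $\delta(1-\delta)^2$. This is harmless for every use of the proposition in the paper (only the vanishing of the error as $\delta\to 0$ matters, cf.\ the proofs of Theorem~\ref{thm:compactness_boundary} and Theorem~\ref{thm:critical}(i)), but you should either adjust the constant in the statement or note explicitly that the exact coefficient is immaterial.
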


\begin{prop}\label{prop:flatbdy2}

Let $\Omega \subset \R^N$ be an open and bounded set of class $C^2$ and let $D_r := \{ x \in \R^N: |x|<r, x_N>0\}$. 
Then for every $x \in \partial \Omega$, there exists $r_x>0$ such that for every $0<r<r_x$, there exists a bi-Lipschitz homeomorphism $\psi_r: \ol{D_r} \rightarrow \ol{\Omega\cap B(x;r)}$ such that
\begin{enumerate}
\item[(i)] $\psi_r$ maps $D_r$ onto $\Omega \cap B(x;r)$ and $E_r:= B_r \cap \{x_N=0\}$ onto $\partial \Omega \cap B(x;r)$;

\item[(ii)] $\psi_r$ is of class $C^2$ in $D_r$ and $\|D\psi_r - \I_N\|_{\infty}+\|D^2\psi_r\|_{\infty} \leq \delta_r$, where $\delta_r \xrightarrow{r\rightarrow 0^+} 0^+$.
\end{enumerate}
\end{prop}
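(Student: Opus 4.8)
The plan is to use a rigid motion to reduce the statement to the flattening of the graph of a $C^{2}$ boundary, and then to recover the smallness of the defect by performing the straightening at unit scale and scaling the map back to $D_{r}$ afterwards. The point is that the first derivatives of a flattening map near $x$ are automatically close to those of the identity on a small neighbourhood, while the second derivatives are not — unless one first rescales so that the boundary profile itself becomes $C^{2}$-small.

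First I would exploit that, since $\partial\Omega$ is of class $C^{2}$, after composing with a rigid motion of $\R^{N}$ we may assume $x=0$, that $-e_{N}$ is the outer unit normal to $\Omega$ at $0$, and that there is $r_{0}>0$ with $\Omega\cap B(0;2r_{0})=\{(y',y_{N})\in\R^{N-1}\times\R: y_{N}>\phi(y')\}$ for some $\phi\in C^{2}(\overline{B'_{2r_{0}}})$ satisfying $\phi(0)=0$ and $\nabla\phi(0)=0$, where $B'_{\rho}$ denotes the ball of radius $\rho$ in $\R^{N-1}$. Setting $M:=\|D^{2}\phi\|_{L^{\infty}(B'_{2r_{0}})}$, Taylor's formula gives $|\nabla\phi(y')|\leq M|y'|$ on $B'_{2r_{0}}$. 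For $0<r<r_{0}$ I would then introduce the rescaled profile $\phi_{r}(y'):=\tfrac1r\,\phi(ry')$ on $B'_{2}$ and record the elementary identities $\nabla\phi_{r}(y')=\nabla\phi(ry')$ and $D^{2}\phi_{r}(y')=r\,D^{2}\phi(ry')$, which yield $\phi_{r}(0)=0$, $\nabla\phi_{r}(0)=0$ and
$$
\|\nabla\phi_{r}\|_{L^{\infty}(B'_{2})}+\|D^{2}\phi_{r}\|_{L^{\infty}(B'_{2})}\leq C(M)\,r.
$$
Thus, after rescaling, the boundary profile is a perturbation of the flat profile that is small in $C^{2}$ uniformly as $r\to0^{+}$; equivalently, the rescaled open set $\tfrac1r(\Omega\cap B(0;r))=\{y\in B_{1}:y_{N}>\phi_{r}(y')\}$ is a $C^{2}$-small perturbation of the half-ball $D_{1}$, its flat part corresponding to the graph of $\phi_{r}$ over (a set close to) $B'_{1}$.

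Next I would straighten inside $D_{1}$: using a fixed cut-off $\chi\in C^{2}(\R;[0,1])$ equal to $1$ near $0$ and vanishing outside a small interval, together with a standard correction near the spherical part of $\partial D_{1}$ that is the identity in a neighbourhood of the graph, one builds a bi-Lipschitz homeomorphism $\Psi_{r}:\overline{D_{1}}\to\overline{\tfrac1r(\Omega\cap B(0;r))}$ of class $C^{2}$ that maps $D_{1}$ onto $\tfrac1r(\Omega\cap B(0;r))$ and $E_{1}$ onto $\tfrac1r(\partial\Omega\cap B(0;r))$ and satisfies, by the previous step,
$$
\|D\Psi_{r}-\I_{N}\|_{L^{\infty}(D_{1})}+\|D^{2}\Psi_{r}\|_{L^{\infty}(D_{1})}\leq C(M,N)\,r,
$$
so that $\Psi_{r}$ is invertible and bi-Lipschitz once $r$ is small enough. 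Finally I would set $\psi_{r}(z):=r\,\Psi_{r}(z/r)$ for $z\in\overline{D_{r}}$: this is a $C^{2}$ bi-Lipschitz homeomorphism of $\overline{D_{r}}$ onto $\overline{\Omega\cap B(0;r)}$ taking $D_{r}$ onto $\Omega\cap B(0;r)$ and $E_{r}$ onto $\partial\Omega\cap B(0;r)$, which is (i); the chain rule gives $D\psi_{r}(z)=D\Psi_{r}(z/r)$ and $D^{2}\psi_{r}(z)=\tfrac1r D^{2}\Psi_{r}(z/r)$, whence (ii) follows with $\delta_{r}\to0^{+}$ as $r\to0^{+}$, and undoing the rigid motion of the first step changes none of these norms.

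The main obstacle is precisely the control of the second derivatives $D^{2}\psi_{r}$: one must produce $\Psi_{r}$ so that it is simultaneously a genuine homeomorphism onto the \emph{exact} rescaled region $\tfrac1r(\Omega\cap B(0;r))$ — fitting together the graph piece and the spherical piece of its boundary while remaining the identity near the graph — of class $C^{2}$, and $C^{2}$-close to the identity. This is where the rescaling step is indispensable: it is the passage from $\phi$ to $\phi_{r}$ that brings the $C^{2}$-size of the boundary profile down to order $r$, and this gain, absent in the first-order setting of \cite{ABS} where only $D\psi$ is controlled, is what drives the second-order term of the isometry defect of Definition \ref{def:isom_def} to zero.
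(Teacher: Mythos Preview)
The paper does not prove this proposition itself; it simply refers to \cite{ABS}. (In \cite{ABS} the isometry defect involves only $D\psi$, so the second-order control asserted here is an addition whose proof is not actually supplied by the reference.)

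There is a genuine gap in your second-order step. Your chain-rule identities are correct: $D\psi_{r}(z)=D\Psi_{r}(z/r)$ and $D^{2}\psi_{r}(z)=\tfrac{1}{r}\,D^{2}\Psi_{r}(z/r)$. From $\|D\Psi_{r}-\I_{N}\|_{\infty}\leq Cr$ you indeed obtain $\|D\psi_{r}-\I_{N}\|_{\infty}\to0$. But from $\|D^{2}\Psi_{r}\|_{\infty}\leq Cr$ you get only $\|D^{2}\psi_{r}\|_{\infty}\leq C$, \emph{not} $\|D^{2}\psi_{r}\|_{\infty}\to0$. The rescaling is exactly neutral for the Hessian: the factor of $r$ gained at unit scale from $D^{2}\phi_{r}(y')=r\,D^{2}\phi(ry')$ is cancelled by the $\tfrac{1}{r}$ in the chain rule when you scale back. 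So the mechanism you single out as ``indispensable'' in fact buys nothing at second order; the outcome is identical to that of the direct flattening $(x',x_{N})\mapsto(x',x_{N}+\phi(x'))$, whose Hessian has size $|D^{2}\phi|$, bounded but not small.

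Moreover, the conclusion $\|D^{2}\psi_{r}\|_{\infty}\to0$ cannot hold at a boundary point with nonvanishing second fundamental form, regardless of the construction. Since $\psi_{r}$ carries the flat disk $E_{r}$ onto $\partial\Omega\cap B(x;r)$ with $D\psi_{r}$ close to the identity, the restriction $\psi_{r}|_{E_{r}}$ parametrises $\partial\Omega$ near $x$ with first derivative close to an isometry; its second derivatives along $E_{r}$ must therefore reproduce, in the limit $r\to0^{+}$, the second fundamental form of $\partial\Omega$ at $x$. Hence $\liminf_{r\to0^{+}}\|D^{2}\psi_{r}\|_{\infty}$ is bounded below by $|D^{2}\phi(0)|$. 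What your construction does establish is the weaker statement $\|D\psi_{r}-\I_{N}\|_{\infty}\to0$ together with $\sup_{r}\|D^{2}\psi_{r}\|_{\infty}<\infty$, and that is the sharp conclusion available for a $C^{2}$ boundary.
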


For a proof of Propositions \ref{prop:flatbdy} and \ref{prop:flatbdy2} we refer to \cite{ABS}.
We now turn to the proof of Theorem \ref{thm:compactness_boundary}.

\begin{proof}[Proof of Theorem \ref{thm:compactness_boundary}]

In view of Proposition \ref{prop:flatbdy2} and a simple compactness argument we can cover $\partial \Omega$ with finitely many balls $B^i$ centered on $\partial \Omega$ so that $\Omega \cap B^i$ is the image of a half-ball under a map $\psi^i$ with isometry defect smaller than $1$.
Hence it suffices to show that the sequence $\{Tu_{\eps}\}$ is precompact in $L^2(\partial\Omega \cap B^i)$ for every $i$.

Fix $i$ and let $\til{u}_{\eps} := u_{\eps} \circ \psi^i$.
Since the isometry defect of $\psi^i$ is smaller than $1$, Proposition \ref{prop:flatbdy} implies that $\sup_{\eps} \mathcal{F}_{\eps}(\til{u}_{\eps};D_r,E_r) < \infty$.
Hence the precompactness of the traces $Tu_{\eps}$ in $L^2(\partial\Omega \cap B^i)$ is a consequence of the precompactness of the traces $T\til{u}_{\eps}$ in $L^2(E_r)$, which follows from Proposition \ref{prop:compactness_bdy_flat}.
This completes the proof.
\end{proof}

We are finally ready to prove Theorem \ref{thm:compactness}.

\begin{proof}[Proof of Theorem \ref{thm:compactness}]

Let $\{u_{\eps}\} \subset H^2(\Omega)$ be a sequence such that $C := \sup_{\eps} \mathcal{F}_{\eps}(u_{\eps}) < \infty$.
Then, by Theorem \ref{thm:compactness_interior}, we may find a subsequence $u_{\eps_n} \in H^2(\Omega)$ and a function $u \in BV(\Omega;\{a,b\})$ such that $u_{\eps_n} \to u \text{ in } L^2(\Omega)$.

On the other hand, by applying Theorem \ref{thm:compactness_boundary} to the sequences $\{\eps_{n}\}$ and $\{u_{\eps_n}\}$, which still satisfy $C = \sup_{n} \mathcal{F}_{\eps_{n}}(u_{\eps_n}) < \infty$,
we may find a further subsequence $\{u_{\eps_{n_k}}\}$ of $\{u_{\eps_n}\}$ and a function $v \in BV(\partial \Omega;\{\alpha,\beta\})$ such that $Tu_{\eps_{n_k}} \to v \text{ in } L^2(\partial\Omega)$.
Note that we still have $u_{\eps_{n_k}} \to u \text{ in } L^2(\Omega)$.
This completes the proof.
\end{proof}

%%%%%%%%%%%%%%%%%%%%%%%%%%%%%%%%%%%%%
%%
%%			Lower Bound in N-D
%%
%%%%%%%%%%%%%%%%%%%%%%%%%%%%%%%%%%%%%

\subsection{Lower bound in $\R^N$}\label{ssec:LB_ND}

Before proving the lower bound estimate in the general $N$-dimensional case, we state an auxiliary result.

\begin{lem}\label{lem:measures}

Let $\mu, \mu^1$, and $\mu^2$ be nonnegative finite Radon measures on $\R^N$, such that $\mu^1$ and $\mu^2$ are mutually singular, and $\mu(B) \geq \mu^i(B)$ for $i=1,2$, and for any open ball $B$ such that $\mu(\partial B)=0$.

Then for any Borel set $E$, $\mu(E) \geq \mu^1(E) + \mu^2(E)$.
\end{lem}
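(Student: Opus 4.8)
The plan is to deduce the statement from two facts: that the hypothesis, although phrased only for balls with $\mu$-null boundary, forces the pointwise inequality $\mu \ge \mu^i$ between measures, and that mutual singularity lets us split an arbitrary Borel set into two pieces on which, respectively, only $\mu^1$ or only $\mu^2$ charges mass.

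First we would show that $\mu(E) \ge \mu^i(E)$ for every Borel set $E$ and $i = 1,2$. Fix $i$ and an open set $U$. For each $x \in U$ we have $\bar B(x,r) \subset U$ for all small $r>0$, and since the spheres $\partial B(x,r)$, $r>0$, are pairwise disjoint and $\mu,\mu^i$ are finite, only countably many radii $r$ satisfy $\mu(\partial B(x,r))>0$ or $\mu^i(\partial B(x,r))>0$; hence
$$
\mathcal{G} := \bigl\{ \bar B(x,r) : x \in U,\ \bar B(x,r) \subset U,\ \mu(\partial B(x,r)) = \mu^i(\partial B(x,r)) = 0 \bigr\}
$$
is a fine cover of $U$. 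By the Besicovitch covering theorem applied to the Radon measure $\mu^i$ (see \cite{AFP}), there is a countable pairwise disjoint subfamily $\{\bar B_j\} \subset \mathcal{G}$ with $\mu^i\bigl(U \setminus \bigcup_j \bar B_j\bigr) = 0$. Denoting by $B_j$ the corresponding open balls and using $\mu(\partial B_j) = \mu^i(\partial B_j) = 0$ together with the hypothesis,
$$
\mu^i(U) = \sum_j \mu^i(\bar B_j) = \sum_j \mu^i(B_j) \le \sum_j \mu(B_j) = \sum_j \mu(\bar B_j) = \mu\Bigl( \bigcup_j \bar B_j \Bigr) \le \mu(U).
$$
Since this holds for every open $U$, outer regularity of $\mu$ and $\mu^i$ gives $\mu^i(E) = \inf\{\mu^i(U) : U \supset E \text{ open}\} \le \inf\{\mu(U) : U \supset E \text{ open}\} = \mu(E)$ for every Borel set $E$.

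Then we would use mutual singularity: pick a Borel set $A \subset \R^N$ with $\mu^1(\R^N \setminus A) = 0$ and $\mu^2(A) = 0$, so that $\mu^1(E) = \mu^1(E \cap A)$ and $\mu^2(E) = \mu^2(E \setminus A)$ for every Borel $E$. Applying the previous step to the two Borel sets $E \cap A$ and $E \setminus A$ and using the additivity of $\mu$,
$$
\mu(E) = \mu(E \cap A) + \mu(E \setminus A) \ge \mu^1(E \cap A) + \mu^2(E \setminus A) = \mu^1(E) + \mu^2(E),
$$
which is the desired inequality.

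The only non-routine point is the first step: the comparison between $\mu$ and $\mu^i$ is assumed only on balls with $\mu$-negligible boundary, so one cannot simply take suprema of $\mu^i(B)$ over balls; what is needed is a disjointification of a fine cover of an open set that preserves the $\mu^i$-mass, and this is exactly what the Besicovitch covering theorem supplies. Once $\mu \ge \mu^i$ is established, the additive improvement is forced purely by the fact that $\mu^1$ and $\mu^2$ are carried by complementary sets, so the second step is essentially formal.
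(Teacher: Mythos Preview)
The paper does not supply a proof of this lemma; it is stated as an auxiliary fact and then invoked directly in the proof of Theorem~\ref{thm:critical}(i). Your argument is correct and is essentially the standard one: first upgrade the comparison $\mu(B)\ge\mu^i(B)$ from balls with $\mu$-null boundary to all open sets via a Vitali--Besicovitch disjointification (the key point, which you identify, is that for each center all but countably many radii give balls with null boundary for both $\mu$ and $\mu^i$, so such balls still form a fine cover), then pass to Borel sets by outer regularity, and finally use mutual singularity to split $E$ into pieces on which only one of the $\mu^i$ charges mass. There is nothing to add.
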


%%%%%%%%%%%%%%%%%%%%%%%%%%%%%%%%%%%%%%%%%%%%%
%%%%%%%%%%%%%%%%%%%%%%%%%%%%%%%%%%%%%%%%%%%%%
%%
%%
%%
%%
%%%%%%%%%%%%%%%%%%%%%%%%%%%%%%%%%%%%%%%%%%%%%
%%%%%%%%%%%%%%%%%%%%%%%%%%%%%%%%%%%%%%%%%%%%%

\begin{proof}[Proof of Theorem \ref{thm:critical}(i)]

We now have all the necessary auxiliary results to prove the lower bound estimate for the critical regime.

Consider a sequence $\{u_{\eps}\} \subset H^2(\Omega)$ and two functions $u \in BV(\Omega;\{a,b\})$ and $v \in BV(\partial \Omega;\{\alpha,\beta\})$ such that $u_{\eps} \to u$ in $L^2(\Omega)$ and $Tu_{\eps} \to v$ in $L^2(\partial\Omega)$.

We claim that
\begin{equation}\label{eq:LB}
\liminf_{\eps \to 0^+} {\cal F}_{\eps}(u_{\eps};\Omega) 
    \geq m \Per_{\Omega}(E_a) 
    + \sum_{z=a,b} \sum_{\xi=\alpha,\beta} \sigma(z,\xi) \Haus{N-1}\bigl( \{Tu=z\} \cap \{v=\xi\}\bigr)
    + \ul{c} L \Per_{\partial \Omega}(F_{\alpha}).
\end{equation}

Without loss of generality, we may assume that
\begin{equation}\label{eq:finiteF}
\infty > \liminf_{\eps\to 0^+} \mathcal{F}_{\eps}(u_{\eps};\Omega)
	= \lim_{\eps\to 0^+} \mathcal{F}_{\eps}(u_{\eps};\Omega).
\end{equation}

For every $\eps>0$ we define a measure $\mu_{\eps}$ for all Borel sets $E \subset \R^N$ by
$$
\mu_{\eps}(E) := 
     \eps^{3} \int_{\Omega\cap E} |D^{2}u_{\eps}|^{2} \, dx 
     + \frac{1}{\eps} \int_{\Omega\cap E} W (u_{\eps}) \, dx
     + \lambda_{\eps} \int_{\partial \Omega \cap E} V(Tu_{\eps}) \, d\Haus{N-1}.
$$

Since $\mu_{\eps} = {\cal F}_{\eps}(u_{\eps})$, it follows by \eqref{eq:finiteF} that by taking a subsequence (not relabeled), we obtain a finite measure $\mu$ such that $\mu_{\eps} \stackrel{\star}{\rightharpoonup} \mu$ in the sense of measures.

For every Borel set $E \subset \R^N$ define the measures:
\begin{align*}
& \mu^1(E) := m \Per_{\Omega \cap E}(E_a);\\
& \mu^2(E) := \sum_{z=a,b} \sum_{\xi=\alpha,\beta} \sigma(z,\xi) \Haus{N-1}\bigl( \{Tu=z\} \cap \{v=\xi\} \cap E\bigr); \\
&\mu^3(E) := \ul{c} L \Per_{\partial \Omega\cap E}(F_{\alpha}). 
\end{align*}

These three measures are mutually singular and so, by Lemma \ref{lem:measures}, \eqref{eq:LB} is a consequence of $\mu(B) \geq \mu^i(B)$ for $i=1,2,3$ for any ball $B$ with $\mu(\partial B)=0$, which we prove next.

%%%%%%%%%%%%%%%%%%%%%%%%%%%%%%%%%%%%%%%%%%%%%%%%%
%%%%%%%%%%%%%%%%%%%%%%%%%%%%%%%%%%%%%%%%%%%%%%%%%
%%%%%%%%%%%%%%%%%%%%%%%%%%%%%%%%%%%%%%%%%%%%%%%%%
%%%%%%%%%%%%%%%%%%%%%%%%%%%%%%%%%%%%%%%%%%%%%%%%%
%%%%%%%%%%%%%%%%%%%%%%%%%%%%%%%%%%%%%%%%%%%%%%%%%
%%%%%%%%%%%%%%%%%%%%%%%%%%%%%%%%%%%%%%%%%%%%%%%%%
%%%%%%%%%%%%%%%%%%%%%%%%%%%%%%%%%%%%%%%%%%%%%%%%%
%%%%%%%%%%%%%%%%%%%%%%%%%%%%%%%%%%%%%%%%%%%%%%%%%

Take $B$ an open ball such that $\mu(\partial B)=0$.

Using a slicing argument as in Theorem \ref{thm:compactness_interior} (see \eqref{eq:fm_slicing} for $N=2$) and Fatou's lemma, we have
\begin{align*}
\mu(B) 
  & = \lim_{\eps\to 0^+} \mu_{\eps}(B) 
  \geq \int_{\Omega_e \cap B} \liminf_{\eps\to 0^+} F_{\eps}(u_{\eps}^y;B^y) \, d\Haus{N-1}(y) \\
  & \geq \int_{\Omega_e \cap B} \left[ m \Haus{0}(Su_{\eps}^y \cap B^y) + \int_{\partial B^y}\sigma(Tu^y(s),v^y(s)) d\Haus{0}(s) \right] \, d\Haus{N-1}(y) \\
  & \geq m \Per_{\Omega \cap B}(E_a) + \int_{\partial\Omega \cap B} \sigma(Tu(s),v(s)) d\Haus{N-1}(s)
   = \mu^1(B) + \mu^2(B),
\end{align*}
where we have used Theorem \ref{thm:interior1D} and Proposition \ref{prop:Bv_bdy}.

By Section \ref{ssec:manifold}, the jump set of $v$, $Sv$, is $(N-2)$-rectifiable.
Hence by the Lebesgue decomposition theorem, the Radon-Nikodym theorem, and the Besicovitch derivation theorem, for $\Haus{N-2}$-a.e. $x\in Sv$,
\begin{equation}\label{eq:dermuHaus}
\frac{d\mu}{d\Haus{N-2}\restr{Sv}}(x) = \lim_{r \to 0^+} \frac{\mu\bigl(B(x;r)\bigr)}{\Haus{N-2}\bigl(B(x;r)\cap Sv\bigr)} \in \R.
\end{equation}

Fix a point $x\in Sv$ for which \eqref{eq:dermuHaus} holds and that has density $1$ for $Sv$ with respect to the $\Haus{N-2}$ measure.
Take $r>0$ such that $\mu\bigl(\partial B(x;r)\bigr) = 0$.
Find $\psi_r$ as in Proposition \ref{prop:flatbdy2} and set $\ol{u_{\eps}} := u_{\eps} \circ \psi_r$ and $\ol{v} := v \circ \psi_r$.
Then $\ol{v} \in BV(E_r;\{\alpha,\beta\})$ and $Tu_{\eps} \to \ol{v}$ in $L^2(E_r)$, where $E_r$ is defined in Proposition \ref{prop:flatbdy2}.
Since $\mu\bigl(\partial B(x;r)\bigr)=0$, we have
\begin{align*}
\mu\bigl(B(x;r)\bigr) 
   & = \lim_{\eps} \mu_{\eps}\bigl(B(x;r)\bigr)
   = \lim_{\eps} {\cal F}_{\eps}\bigl(u_{\eps};\Omega \cap B(x;r), \partial \Omega \cap B(x;r) \bigr) \\
   & \geq \left(1-\delta(\psi_r)\right)^{N+4} \liminf_{\eps} \int_{(E_r)_e} G_{\eps}(Tu_{\eps}^y;E^y_r) \, d\Haus{N-2}(y) \\
   & \geq \ul{c} L \left(1-\delta(\psi_r)\right)^{N+4} \int_{(E_r)_e} \Haus{0}(Sv \cap E^y_r) \, d\Haus{N-2}(y).
\end{align*}

Hence,
\begin{align*}
\frac{d\mu}{d\Haus{N-2}\restr{Sv}}(x) 
   		\geq \lim_{r\rightarrow 0^+} \frac{\mu\bigl(B(x;r)\bigr)}{\alpha_{N-2}r^{N-2}}
   		 \geq \ul{c} L \lim_{r\rightarrow 0^+} \mint_{(E_r)_e} \Haus{0}(Sv \cap E^y_r) \, d\Haus{N-2}(y)
   		= \ul{c} L,
\end{align*}
and so
$$
\mu(B)
   \geq \int_{Sv \cap B} \frac{d\mu}{d\Haus{N-2}\restr{Sv}}(x) \, d\Haus{N-2}(x)  
   \geq \ul{c} L \Per_{\partial \Omega \cap B}(F_{\alpha}) 
    = \mu^3(B).
$$

This concludes the proof of the theorem.
\end{proof}

%%%%%%%%%%%%%%%%%%%%%%%%%%%%%%%%%%%%
%
%			Upper Bound
%
%%%%%%%%%%%%%%%%%%%%%%%%%%%%%%%%%%%%

\subsection{Upper bound}\label{ssec:UB}

In this subsection we will obtain an estimate for the upper bound.

First we prove the result on a smooth setting, i.e., assuming that both $Su$ and $Sv$ are of class $C^2$.
We define a recovery sequence separately in the different regions of Figure \ref{fig:partition}. In Proposition \ref{prop:interior}, we define it on $A_2$, then we construct the recovery sequence on $A_1$ in Proposition \ref{prop:interior_boundary} and in Corollary \ref{cor:interior_boundary} we glue the last two sequences together to make $\{\ol{u}\}_n$.
Then in Proposition \ref{prop:boundary_flat}, on the setting of a flat domain where $Sv$ has also been flattened, we first construct the recovery sequence on $T_1$ and then glue it to the previously constructed sequence $\{\ol{u}_n\}$ on $T_2$.
In Proposition \ref{prop:recoveryC2} we adapt the sequence of Proposition \ref{prop:boundary_flat} to a general domain, but still under smooth assumptions.

Finally, using a diagonalization argument, we prove the upper bound result without regularity conditions.

\begin{figure}[!htbp]
\begin{center}
\includegraphics*[width=200pt]{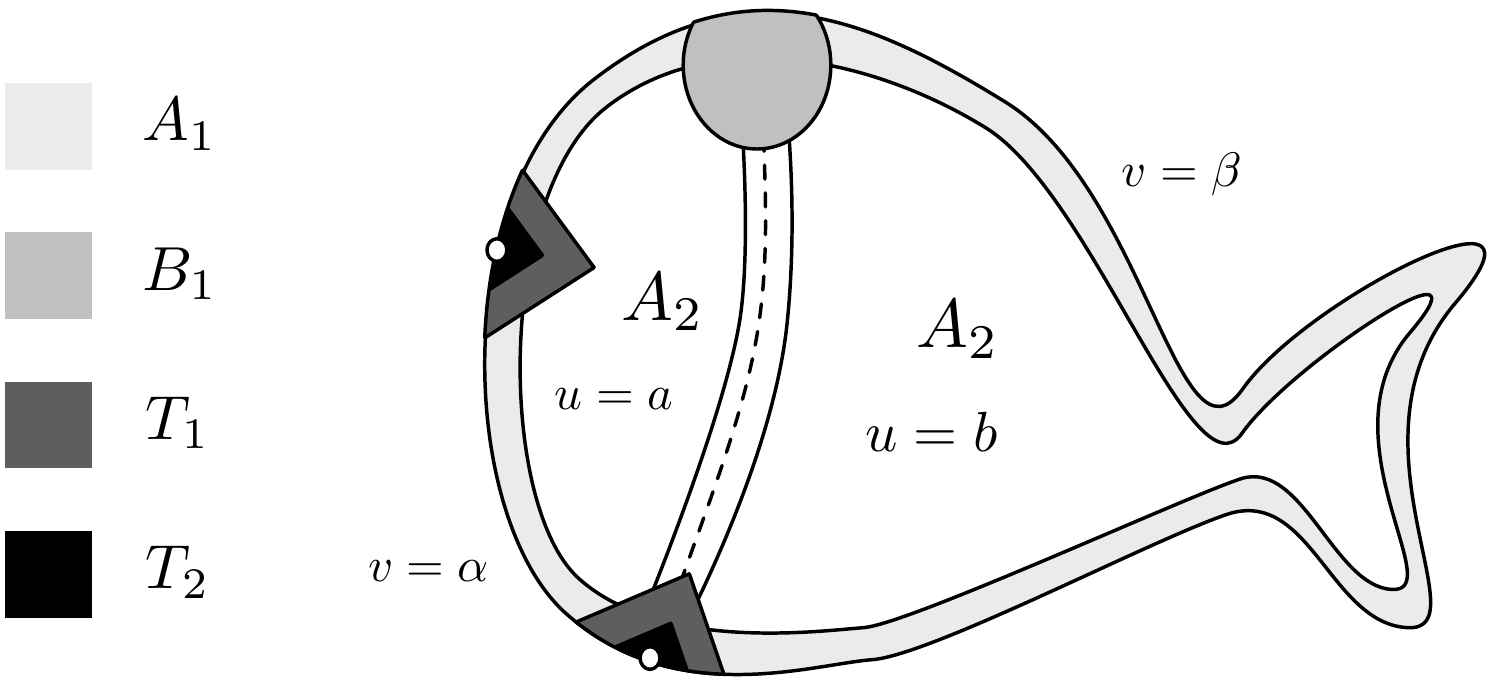}
\caption{Partition of $\Omega$ for the construction of the recovery sequence.}\label{fig:partition}
\end{center}
\end{figure}

In what follows, given a set $E \subset \R^N$ and $\rho>0$ we denote by $E_{\rho}$ the set $E_{\rho} := \{x \in \R^N : \dist(x,E)<\rho\}$.

\begin{prop}\label{prop:interior}

Let $W:\R \to [0,\infty)$ satisfy $(H^W_1)-(H^W_2)$, let $\eps_n \to 0^+$, let $\eta>0$, and let $u \in BV(\Omega;\{a,b\})$ be such that $Su$ is an $N-1$ dimensional manifold of class $C^2$.
Then there exists a sequence  $\{z_{n}\} \subset  H^2(\Omega)$ such that $z_{n} \to u$  in $L^2(\Omega)$,
\begin{align}
& z_n = u \text{ in } \Omega \bs (Su)_{C\eps_n},\label{eq:zn_eq_u} \\
& \|z_n\|_{\infty} \leq C, \qquad
\|\grad z_n\|_{\infty} \leq \frac{C}{\eps_n}, \qquad
\|\grad^2 z_n\|_{\infty} \leq \frac{C}{\eps_n^2}, \label{eq:zn_bounds} \\
\intertext{and}
& \mathcal{F}_{\eps_n}(z_{n};\Omega,\emptyset) \leq (m+\eta) \Haus{N-1}(Su) + o(1),\label{eq:energy_mz}
\end{align}
where $m$ is the constant defined in \eqref{eq:def_m} and $C>0$.
\end{prop}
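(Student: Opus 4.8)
The plan is to construct $z_n$ by modifying $u$ only in a tube of width $O(\eps_n)$ around $Su$, using the one-dimensional optimal profile defining $m$ together with the tubular-neighborhood coordinates furnished by the $C^2$ regularity of $Su$. First I would fix an almost-optimal profile: by the definition of $m$ in \eqref{eq:def_m}, for the given $\eta>0$ choose $R>0$ and $f\in H^2_{\loc}(\R)$ with $f(-t)=a$, $f(t)=b$ for all $t\geq R$, and $\int_{-R}^R\bigl(W(f)+|f''|^2\bigr)\,dt\leq m+\eta$. Since $f$ is eventually constant, $f'$ and $f''$ are bounded, which will give the $L^\infty$ bounds in \eqref{eq:zn_bounds} after rescaling by $\eps_n$. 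Because $Su$ is a compact $C^2$ manifold of dimension $N-1$, there is $\rho_0>0$ so that the signed-distance function $d(x):=\mathrm{dist}(x,Su)\,\mathrm{sign}$ (with sign determined by which phase $x$ lies in, say $d<0$ on $E_a$ near $Su$) is $C^2$ on the tubular neighborhood $(Su)_{\rho_0}$, with $|\nabla d|=1$ there and $\|\nabla^2 d\|_\infty\leq C$.

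Next I would define, for $n$ large enough that $R\eps_n<\rho_0$,
$$
z_n(x) := \begin{cases}
 f\!\left(\dfrac{d(x)}{\eps_n}\right) & \text{if } x\in(Su)_{R\eps_n},\\[2mm]
 u(x) & \text{otherwise,}
\end{cases}
$$
noting that on $\partial (Su)_{R\eps_n}$ one has $d(x)=\pm R\eps_n$, hence $f(d(x)/\eps_n)\in\{a,b\}$ matches $u$ together with zero normal derivative and zero second derivatives, so $z_n\in H^2(\Omega)$ and \eqref{eq:zn_eq_u} holds with $C=R$. The $L^2$ convergence $z_n\to u$ is immediate since $z_n=u$ off a set of measure $O(\eps_n)$ and $z_n$ is uniformly bounded. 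The bounds \eqref{eq:zn_bounds} follow from the chain rule: $\nabla z_n=\eps_n^{-1}f'(d/\eps_n)\nabla d$ and $\nabla^2 z_n=\eps_n^{-2}f''(d/\eps_n)\nabla d\otimes\nabla d+\eps_n^{-1}f'(d/\eps_n)\nabla^2 d$, using $\|f'\|_\infty,\|f''\|_\infty<\infty$, $|\nabla d|=1$, and $\|\nabla^2 d\|_\infty\leq C$.

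The main computation is the energy estimate \eqref{eq:energy_mz}. Using the coarea formula with the function $d$ on the tube $(Su)_{R\eps_n}$,
$$
\mathcal{F}_{\eps_n}(z_n;\Omega,\emptyset)
 = \int_{-R\eps_n}^{R\eps_n}\!\!\int_{\{d=s\}}\Bigl(\eps_n^3|\nabla^2 z_n|^2+\tfrac1{\eps_n}W(z_n)\Bigr)\,d\Haus{N-1}\,ds,
$$
and on $\{d=s\}$ one has $|\nabla^2 z_n|^2=\eps_n^{-4}|f''(s/\eps_n)|^2+O(\eps_n^{-3})$ (the cross and lower-order terms from $\nabla^2 d$ are bounded by $\eps_n^{-3}$ times bounded quantities) and $W(z_n)=W(f(s/\eps_n))$. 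Substituting $t=s/\eps_n$ gives the leading term $\int_{-R}^R\bigl(|f''(t)|^2+W(f(t))\bigr)\,dt$ times $\Haus{N-1}(\{d=\eps_n t\})$, and since $\Haus{N-1}(\{d=\eps_n t\})\to\Haus{N-1}(Su)$ uniformly in $t\in[-R,R]$ by the $C^2$ regularity of $Su$, this yields $(m+\eta)\Haus{N-1}(Su)+o(1)$; the error terms contribute $\eps_n^3\cdot\eps_n^{-3}\cdot\eps_n\cdot O(1)=O(\eps_n)=o(1)$ after the $ds$-integration over an interval of length $2R\eps_n$. The one genuine obstacle is controlling the cross term $\eps_n^3\cdot 2\eps_n^{-3}f''f'(\nabla d\otimes\nabla d):\nabla^2 d$ and the squared term $\eps_n^3\eps_n^{-2}|f'|^2|\nabla^2 d|^2$: both are bounded pointwise by a constant on the tube, and integrating a bounded function over a set of measure $O(\eps_n)$ gives $o(1)$, so in fact these cause no difficulty beyond bookkeeping. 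Thus the delicate point is merely the uniform convergence $\Haus{N-1}(\{d=\eps_n t\})\to\Haus{N-1}(Su)$, which is standard for $C^2$ hypersurfaces (the level sets are $C^1$ graphs over $Su$ via the normal map, with Jacobian $1+O(\eps_n)$).
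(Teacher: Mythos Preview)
Your proposal is correct and matches the paper's approach: define $z_n=f(d/\eps_n)$ in a tube around $Su$ using the signed distance $d$, then separate the leading term $\int(|f''|^2+W(f))$ from the lower-order corrections involving $\nabla^2 d$; the paper uses the explicit tubular diffeomorphism $(y,t)\mapsto y+t\nu_u(y)$ with its Jacobian $J_u(y,t)\to 1$ rather than coarea, but this is cosmetic. One minor correction: from $f\in H^2_{\loc}(\R)$ eventually constant you get $f'\in L^\infty$ (Sobolev embedding in 1D) but only $f''\in L^2$, not $f''\in L^\infty$ as you claim; for the energy estimate this is harmless (bound the cross term $\int |f''||f'|$ by Cauchy--Schwarz in the normal variable, as the paper does), but to obtain the pointwise bound $\|\nabla^2 z_n\|_\infty\le C\eps_n^{-2}$ in \eqref{eq:zn_bounds} you should first replace $f$ by a mollified near-minimizer, a standard density step costing an additional~$\eta$.
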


\begin{proof}

By the definition of $m$, we may find $R>0$ and a function $f \in H_{\loc}^{2}(\R)$ such that $f(-t)=a$ and $f(t) = b$ for all $t \geq R$, and
\begin{equation}\label{eq:func_m}
\int_{-R}^{R} |f''(t)|^{2} + W\bigl(f(t)\bigr) \, dt \leq m + \eta.
\end{equation}

Since $Su$ is a manifold of class $C^2$ in $\R^N$, there exists $\delta_0>0$ such that for all $0<\delta\leq\delta_0$ the points in the tubular neighborhood $U_{\delta}:= \{ x \in \R^N: \dist(x,Su)<\delta\}$ of the manifold $Su$ admit a unique smooth projection onto $Su$.
Define the function $z_n: \Omega \to \R$ by
$$
z_n(x) := \begin{cases}
f\left( \frac{d_u(x)}{\eps_n}\right)
	& \text{ if } x \in U_{R\eps_n} \cap \Omega, \\
a	& \text{ if } x \in E_a \bs U_{R\eps_n}, \\
b	& \text{ if } x \in \Omega \bs (E_a \cup U_{R\eps_n}),
\end{cases}
$$
where $d_u: \R^N \to \R$ is the signed distance to $Su$, negative in $E_a$ and positive outside $E_a$ and where we recall that $E_a := \{ x \in \Omega: u(x) = a\}$.

We then have
$$
\mathcal{F}_{\eps_n}(z_n; \Omega, \emptyset)
	 = \int_{\Omega} \left[ \eps_n^3 \left| \frac{1}{\eps_n^2} f''\bigl( {\ts \frac{d_u(x)}{\eps_n}}\bigr) \grad d_u (x)\times \grad d_u(x) + \frac{1}{\eps_n} f' \bigl( {\ts \frac{d_u(x)}{\eps_n}}\bigr)H_u(x) \right|^2 
	 	+ \frac{1}{\eps_n} W\left(f\bigl( {\ts \frac{d_u(x)}{\eps_n}}\bigr) \right)\right] \, dx,
$$
where $H_u$ is the Hessian matrix of $d_u$.
Change variable via the diffeomorphism $x := \psi_1(y,t)$, where $\psi_1: Su \times (-\delta_0,\delta_0) \to U_{\delta_0}$ is defined by $\psi_1(y,t) := y + t \nu_u(y)$, with $\nu_u(y)$ the normal vector to $Su$ at $y$ pointing away from $E_a$.
Let $J_u(y,t)$ denote the Jacobian of this map.
Then
\begin{multline*}
\mathcal{F}_{\eps_n}(z_n; \Omega, \emptyset)
	 \leq \frac{1}{\eps_n} \int_{Su} \int_{-R\eps_n}^{R\eps_n} \left[ \left|f''\bigl( {\ts \frac{t}{\eps_n}}\bigr)\right|^2 |\grad d_u(\psi_1(y,t)) |^2  + W\left(f\bigl( {\ts \frac{t}{\eps_n}}\bigr) \right) \right] J_u(y,t) \, dt \, d\Haus{N-1}(y) \\
	 \qquad 	+  \eps_n \int_{Su} \int_{-R\eps_n}^{R\eps_n} \left|f'\bigl( {\ts \frac{t}{\eps_n}}\bigr) \right|^2 |H_u(\psi_1(y,t))|^2 J_u(y,t) \, dt \,d\Haus{N-1}(y) \\
	 \qquad + C \int_{Su} \int_{-R\eps_n}^{R\eps_n} \left|f''\bigl( {\ts \frac{t}{\eps_n}}\bigr)\right| \left|f'\bigl( {\ts \frac{t}{\eps_n}}\bigr) \right| |\grad d_u(\psi_1(y,t))|^2 |H_u(\psi_1(y,t))| J_u(y,t) \, dt \, d\Haus{N-1}(y),
\end{multline*}
which reduces to
\begin{multline*}
\mathcal{F}_{\eps_n}(z_n; \Omega, \emptyset)
	 \leq \frac{1}{\eps_n} \int_{Su} \int_{-R\eps_n}^{R\eps_n} \left[ \left|f''\bigl( {\ts \frac{t}{\eps_n}}\bigr)\right|^2  + W\left(f\bigl( {\ts \frac{t}{\eps_n}}\bigr) \right) \right] J_u(y,t) \, dt \, d\Haus{N-1}(y) \\
	 \qquad 	+  C  \int_{Su} \int_{-R\eps_n}^{R\eps_n} \left[ \eps_n \left|f'\bigl( {\ts \frac{t}{\eps_n}}\bigr) \right|^2  + \left|f''\bigl( {\ts \frac{t}{\eps_n}}\bigr)\right|^2 \left|f'\bigl( {\ts \frac{t}{\eps_n}}\bigr) \right|^2 \right] \, dt \, d\Haus{N-1}(y)
	=: I_1 + I_2,
\end{multline*}
where we took into account the facts that the gradient of the distance is $1$, and the Jacobian $J_u$ and the Hessian $H_u$ of the distance are uniformly bounded.
We have
\begin{align*}
I_1
	& \leq \Biggl(\sup_{\substack{y \in Su, \\t \in (-R\eps_n,R\eps_n)}} J_u(y,t) \Biggr) \frac{1}{\eps_n} \int_{Su} \int_{-R\eps_n}^{R\eps_n} \left[ \left|f''\bigl( {\ts \frac{t}{\eps_n}}\bigr)\right|^2  + W\left(f\bigl( {\ts \frac{t}{\eps_n}}\bigr) \right) \right] \, dt \, d\Haus{N-1}(y) \\
	& = \bigl(1+o(1)\bigr) \int_{Su} \int_{-R}^{R} \left[ \left|f''(s)\right|^2  + W\left(f(s) \right) \right] \, ds \, d\Haus{N-1}(y), \\
	& \leq \bigl(1+o(1)\bigr) (m+\eta) \Haus{N-1}(Su),
\end{align*}
where we used \eqref{eq:func_m} and the fact that since $Su$ is a compact manifold, $J_u(y,t)$ converges to $1$ uniformly as $t \to 0$.

On the other hand, by \eqref{eq:func_m},
$$
I_2
	\leq C  \eps_n \int_{-R}^{R} \left[ \eps_n \left|f'(s) \right|^2 
	+ \left|f''(s)\right| \left|f'(s) \right| \right] \, ds \leq C \eps_n.
$$

We conclude that $\mathcal{F}_{\eps_n}(z_n; \Omega, \emptyset) \leq (m+\eta) \Haus{N-1}(Su) + o(1)$.
This completes the proof.
\end{proof}

\begin{prop}\label{prop:interior_boundary}

Let $W:\R \to [0,\infty)$ satisfy $(H^W_1)-(H^W_2)$, let $V:\R \to [0,\infty)$ satisfy $(H^V_1)-(H^V_3)$ Let $\eps_n \to 0^+$ be such that $\eps_n\lambda_n^{\frac23} \to L \in (0,\infty)$, let $\eta>0$, let $\Omega_{\delta} := \{ x \in \Omega: \dist(x,\partial\Omega)<\delta\}$ for $\delta>0$, and let $u \in BV(\Omega;\{a,b\})$ and $v \in BV(\partial\Omega;\{\alpha,\beta\})$, with $Su$ an $N-1$ manifold of class $C^2$ such that $\Haus{N-1}(\partial\Omega \cap \ol{Su}) = 0$ and $Sv$ an $N-2$ manifold of class $C^2$ .
Then there exist $R=R(\eta)>0$ and a sequence $\{v_{n}\} \subset H^2(\Omega_{R\eps_n})$ such that $Tv_n \to v$ in $L^2(\partial\Omega)$,
\begin{align}
& \Leb{N}\left( \left\{ x \in \Omega_{R\eps_n} \bs \ol{\Omega}_{\frac{R\eps_n}{2}}: v_n(x) \neq u(x) \right\} \right) \leq C \eps_n^2, \label{eq:small_meas_inters} \\
& \|v_n\|_{\infty} \leq C, \qquad
\|\grad v_n\|_{\infty} \leq \frac{C}{\eps_n}, \qquad
\|\grad^2 v_n\|_{\infty} \leq \frac{C}{\eps_n^2}, \label{eq:vn_bounds}
\end{align}
and
\begin{equation}\label{eq:F_sigma1}
\mathcal{F}_{\eps_n}(v_{n};\Omega_{R\eps_n},\emptyset) 
	\leq \sum_{z=a,b} \sum_{\xi=\alpha,\beta} (\sigma(z,\xi) + \eta) \Haus{N-1}(\{ x \in \partial\Omega: Tu(x)=z, v(x)=\xi\}) + o(1),
\end{equation}
where $\sigma(z,\xi)$ is the constant defined in \eqref{eq:def_sigma}.
\end{prop}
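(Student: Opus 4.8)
The plan is to use the standard ``one-dimensional profile in the normal direction'' ansatz, adapted to the fact that $\partial\Omega$ is split, up to $\Haus{N-1}$-null sets, into (at most) four pieces by the conditions $Tu=z$, $v=\xi$, on each of which the relevant one-dimensional energy is $\sigma(z,\xi)$. Fix $\eta>0$. By \eqref{eq:def_sigma} I would first choose, for every pair $(z,\xi)\in\{a,b\}\times\{\alpha,\beta\}$, a \emph{common} half-width $R_0=R_0(\eta)>0$ and a bounded profile $g^{z,\xi}\in H^2_{\loc}((0,\infty))$ with $g^{z,\xi}(0)=\xi$, $g^{z,\xi}(s)=z$ for $s\geq R_0$, and $\int_0^{R_0}\bigl(W(g^{z,\xi})+|(g^{z,\xi})''|^2\bigr)\,ds\leq\sigma(z,\xi)+\eta$, and set $R:=2R_0$. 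Since $\partial\Omega\in C^2$, for $t$ small the map $\psi(y,t):=y+t\nu(y)$ ($y\in\partial\Omega$, $\nu$ the inner normal) is a $C^1$ diffeomorphism of $\partial\Omega\times(0,t_0)$ onto $\Omega_{t_0}$ with Jacobian $J(y,t)=1+O(t)$ uniformly; because $\Haus{N-1}(\partial\Omega\cap\overline{Su})=0$ the trace $Tu$ equals a well-defined $z(y)\in\{a,b\}$ for $\Haus{N-1}$-a.e.\ $y$, and I write $\xi(y):=v(y)$. Away from an $\eps_n$-tube around $Sv$ and around $\overline{Su}\cap\partial\Omega$ I set $v_n(\psi(y,t)):=g^{z(y),\xi(y)}(t/\eps_n)$; locally this depends only on $t$, so it is $H^2$, satisfies $\|\grad^k v_n\|_\infty\leq C\eps_n^{-k}$ for $k\leq2$, has $Tv_n=\xi(y)=v(y)$, and equals $z(y)=u$ once $t/\eps_n\geq R_0$, i.e.\ on $\Omega_{R\eps_n}\setminus\overline{\Omega}_{R\eps_n/2}$ off the tubes. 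A change of variables in $(y,t)$, exactly as in the proof of Proposition \ref{prop:interior} (the leading part of $|\grad^2v_n|^2$ is $\eps_n^{-4}|(g^{z,\xi})''(t/\eps_n)|^2$, the curvature corrections being $O(\eps_n^{-3})$, hence $O(\eps_n)$ after integration), together with $J\to1$ and dominated convergence, gives on each piece a bulk energy at most $\int_{\{Tu=z\}\cap\{v=\xi\}}(\sigma(z,\xi)+\eta)\,d\Haus{N-1}+o(1)$.

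It then remains to define $v_n$ on the two tubes and to check that the energy stored there is $o(1)$. Near $Sv$, a $C^2$ $(N-2)$-manifold, I would interpolate linearly between the two normal profiles, $v_n(\psi(y,t)):=(1-\rho_n(s(y)))\,g^{z(y),\alpha}(t/\eps_n)+\rho_n(s(y))\,g^{z(y),\beta}(t/\eps_n)$, where $s(\cdot)$ is the signed $\partial\Omega$-distance to $Sv$ and $\rho_n$ a fixed smooth cutoff on scale $\eps_n$; this keeps the bounds \eqref{eq:vn_bounds}, changes the trace only on a set of $\Haus{N-1}$-measure $O(\eps_n)$, and stores energy at most $C\eps_n^{-1}$ per unit volume over a tube of volume $O(\eps_n^2)$, i.e.\ $O(\eps_n)$. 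Near $\overline{Su}\cap\partial\Omega$ one argues similarly, interpolating between the profiles attached to the two bulk phases (so that $v_n$ also performs the $a\!\to\!b$ transition inside the layer, across the trace of $Su$, to remain consistent with $u$): since this set is $\Haus{N-1}$-negligible its $\eps_n$-tube has $\Haus{N-1}$-measure $o(1)$, the layer above it has volume $o(\eps_n)$ and energy $o(1)$, and $\{v_n\neq u\}\cap(\Omega_{R\eps_n}\setminus\overline{\Omega}_{R\eps_n/2})$ is contained in the two tubes together with $(Su)_{C\eps_n}$ there, of total measure $\leq C\eps_n^2$ by the $C^2$ regularity of $Su$ and the assumption $\Haus{N-1}(\partial\Omega\cap\overline{Su})=0$ — which is \eqref{eq:small_meas_inters}. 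Summing over the finitely many pieces and the tubes yields \eqref{eq:F_sigma1}; the bounds \eqref{eq:vn_bounds} hold by construction; and $Tv_n\to v$ in $L^2(\partial\Omega)$ follows from $Tv_n=v$ off a set whose $\Haus{N-1}$-measure tends to $0$, together with the uniform $L^\infty$ bound (Vitali).

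The main obstacle is the patching near the two interface sets, where three requirements compete: the $H^2$ regularity together with the pointwise bound $\|\grad^2v_n\|_\infty\leq C\eps_n^{-2}$ forbid a patching scale finer than $\eps_n$; the energy stored there must nonetheless be $o(1)$, which needs the $\eps_n$-tubes to have volume $o(\eps_n)$ — available because $Sv$ is a $C^2$ submanifold and $\overline{Su}\cap\partial\Omega$ is $\Haus{N-1}$-negligible; and on $\Omega_{R\eps_n}\setminus\overline{\Omega}_{R\eps_n/2}$ the new function must coincide with $u$ up to a set of measure $C\eps_n^2$, so that the later gluing with the bulk sequence of Proposition \ref{prop:interior} (carried out in Corollary \ref{cor:interior_boundary}) goes through. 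The curvature terms coming from the non-flat boundary are routine, being estimated exactly as in Proposition \ref{prop:interior}.
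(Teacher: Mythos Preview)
Your approach is essentially the paper's: near-optimal one-dimensional profiles $g^{z,\xi}$ plugged into the normal coordinate, followed by a patching on $\eps_n$-tubes around $Sv$ and around (the trace of) $Su$, with the co-area/curvature bookkeeping handled exactly as in Proposition~\ref{prop:interior}.

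The one place where your sketch and the paper diverge is the patching mechanism. You interpolate between neighbouring profiles using a cutoff in the \emph{signed} tangential distance; the paper instead multiplies $g_n$ by the product of two \emph{unsigned} cutoffs, $\varphi(d_u(x)/R\eps_n)\,\varphi(d_v(x)/R\eps_n)$, where $d_u,d_v$ are the distances to the $C^2$ manifolds $Su$ and $\pi^{-1}(Sv)$, so that $v_n\equiv 0$ on the tubes. Near $Sv$ your interpolation is fine, since $Sv$ is a $C^2$ $(N-2)$-submanifold of $\partial\Omega$ and the signed distance $s(y)$ is $C^2$. But your ``argue similarly'' near $\overline{Su}\cap\partial\Omega$ is not quite parallel: the hypothesis only says $\Haus{N-1}(\partial\Omega\cap\overline{Su})=0$, so this trace need not be a manifold and no tangential signed distance is available to build $\rho_n$. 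The fix is exactly what the paper does --- work with the (signed or unsigned) distance to $Su$ itself, which \emph{is} $C^2$ in a tubular neighbourhood, and either interpolate across $Su$ or simply cut off to zero there. With that adjustment, your volume and energy counts on the tubes ($\Leb{N}\leq C\eps_n^2$, energy density $\leq C\eps_n^{-1}$, hence $o(1)$) go through, and the rest of your argument matches the paper's.
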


\begin{proof}

By the definition of $\sigma(\cdot,\cdot)$, for every $z \in \{a,b\}$ and $\xi \in \{\alpha,\beta\}$ there exist $R_{z\xi}>0$ and $g_{z\xi} \in H^2_{\loc}(\R)$ such that $g_{z\xi}(0)=z$, $g_{z\xi}(x) = \xi$ for all $x \geq R_{z\xi}$, and
\begin{equation}\label{eq:sigma_gs}
\int_0^{R_{z\xi}} \left[ |g_{z\xi}''(x)|^2 + W(g_{z\xi}(x)) \right] \, dx \leq \sigma(z,\xi) + \eta.
\end{equation}

Define $R := \max \{ \ol{R}, R_{a\alpha}, R_{b\alpha}, R_{a\beta}, R_{b\beta}\}$, where $\ol{R}$ is the number $R$ given in the previous proposition.
Since $\partial\Omega$ is an $N-1$ manifold of class $C^2$, there exists $\delta_0>0$ such that every point $x \in \Omega_{\delta_0}$ admits a unique projection $\pi(x)$ onto $\partial \Omega$ and the map $x \in \Omega_{\delta_0} \mapsto \pi(x)$ is of class $C^2$.
Hence we may partition $\Omega_{\delta_0}$ as follows
$$
\Omega_{\delta_0} = \left( \bigcup_{z=a,b} \bigcup_{\xi=\alpha,\beta} A_{z\xi} \right) \cup Su \cup \pi^{-1}(Sv),
$$
where $A_{z\xi} := \left\{ x \in \Omega_{\delta_0} \bs \left( Su \cup \pi^{-1}(Sv) \right): (Tu)(\pi(x))=z, v(\pi(x)) = \xi \right\}$.
Let $n$ be so large that $R\eps_n \leq \delta_0$ and define $g_n:\Omega_{R\eps_n}\to \R$ as follows
$$
g_n(x) := \begin{cases}
g_{z\xi}\left( \frac{d(x)}{\eps_n}\right)
	& \text{ if } x \in A_{z\xi} \cap \Omega_{R\eps_n} \text{ for some } z \in \{a,b\} \text{ and } \xi \in \{\alpha,\beta\}, \\
0	& \text{ if } x \in \left( Su \cup \pi^{-1}(Sv) \right) \cap \Omega_{R\eps_n},
\end{cases}
$$
where, as before, $d: \ol{\Omega} \to [0,\infty)$ is the distance to $\partial\Omega$.

Note that the functions $g_n$ are discontinuous across $ \left( Su \cup \pi^{-1}(Sv) \right) \cap \Omega_{R\eps_n}$, and so they are not admissible for $\mathcal{F}_{\eps_n}$.
To solve this problem, let $\varphi \in C^{\infty}\left( (0,\infty);[0,1]\right)$ be such that $\varphi \equiv 0$ in $\bigl(0,\frac13\bigr)$ and $\varphi \equiv 1$ in $\bigl(\frac12,\infty\bigr)$, and let $d_u:\ol{\Omega}_{\delta_0} \to [0,\infty)$ and $d_v:\ol{\Omega}_{\delta_0} \to [0,\infty)$ denote the distance to $Su$ and to $\pi^{-1}(Sv)$, respectively.
Since $Su$ is an $N-1$ manifold of class $C^2$, it follows that $d_u$ is of class $C^2$ in a neighborhood $P_1 := \{x \in \Omega_{\delta_0}: d_u(x) < \delta_1\}$ of $Su$.
Similarly, since $Sv$ is an $N-2$ manifold of class $C^2$ by taking $\delta_0$ smaller, if necessary, we may assume that $\pi^{-1}(Sv)$ is an $N-1$ dimensional manifold of class $C^2$ and thus $d_v$ is of class $C^2$ in a neighborhood $P_2:=\{x \in \Omega_{\delta_0}: d_v(x) < \delta_2\}$ of $\pi^{-1}(Sv)$.
Let $n$ be so large that $R\eps_n < \frac 13 \min \{\delta_1,\delta_2\}$ and for $x \in \Omega_{R\eps_n}$ define
$$
v_n(x) := \varphi\left( \frac{d_u(x)}{R\eps_n}\right)\varphi\left( \frac{d_v(x)}{R\eps_n}\right) g_n(x).
$$
Since $\varphi \equiv 0$ in $\bigl(0,\frac13\bigr)$, it follows that $v_n(x) = 0$ for all $x \in \Omega_{R\eps_n}$ such that $d_u(x) < \frac13R\eps_n$ or $d_v(x) < \frac13R\eps_n$. As $g_n$ is regular away from $Su \cup \pi^{-1}(Sv)$, it follows that $v_n \in H^2(\Omega_{R\eps_n})$.

We claim that $Tv_n \to v$ in $L^2(\partial\Omega)$.
Indeed, since $\Haus{N-1}(\partial\Omega \cap \ol{Su}) = 0$, we know that 
\begin{equation}\label{eq:meas_small}
\Haus{N-1}\left( \left\{ x \in \partial \Omega: d_u(x) < \frac12 R \eps_n\right\}\right) \leq C\eps_n,
\end{equation}
and similarly, since $Sv$ is an $N-2$ manifold contained in $\partial\Omega$, 
\begin{equation}\label{eq:meas_small2}
\Haus{N-1}\left( \left\{ x \in \partial \Omega: d_v(x) < \frac12 R \eps_n\right\}\right) \leq C\eps_n.
\end{equation}

On the other hand, if $x \in \partial\Omega$ is such that $d_u(x) \geq \frac12 R\eps_n$ and $d_v(x) \geq \frac12 R\eps_n$, then $v_n = g_n$ in a neighborhood of $x$, and so by the definition of the sets $A_{z\xi}$ and the fact that $g_{z\xi}(0)=z$, it follows that $v_n(x)=v(x)$.
Hence by \eqref{eq:meas_small} and \eqref{eq:meas_small2}, $\|v_n-v\|_{L^2(\partial\Omega)} \to 0$,
which proves the claim.

It remains to prove \eqref{eq:F_sigma1}.
Let 
\begin{align*}
& L_n := \left\{ x \in \Omega_{R\eps_n}: d_u(x) < {\ts \frac12}R\eps_n \right\},
& M_n := \left\{ x \in \Omega_{R\eps_n}: d_v(x) < {\ts \frac12}R\eps_n \right\}.
\end{align*}

\paragraph{Step 1. }

We begin by estimating $\mathcal{F}_{\eps}$ in the set $\Omega_{R\eps_n} \bs (L_n \cup M_n)$.
Since in this set $v_n=g_n$, we have that
$$
\mathcal{F}_{\eps_n}(v_n;\Omega \bs (L_n \cup M_n), \emptyset)
	\leq \sum_{z=a,b} \sum_{\xi=\alpha,\beta} \mathcal{F}_{\eps_n}\left(g_n; A_{z\xi} \cap \Omega_{R\eps_n} ,\emptyset\right).
$$
Thus it suffices to estimate $\mathcal{F}_{\eps_n}(g_n;A_{z\xi} \cap \Omega_{R\eps_n})$.

Let $A'_{z\xi} := \ol{A}_{z\xi} \cap \partial\Omega$, which satisfies $A_{z\xi}' = \{ x \in \partial\Omega: Tu(x) =z, v(x) =\xi\}$.
We have
$$
\mathcal{F}_{\eps_n}(g_n; A_{z\xi}, \emptyset)
	 = \int_{A_{z\xi}} \left[ \eps_n^3 \left| \frac{1}{\eps_n^2} g_{z\xi}''\bigl( {\ts \frac{d(x)}{\eps_n}}\bigr) \grad d(x) \times \grad d(x) + \frac{1}{\eps_n} g_{z\xi}'\bigl( {\ts \frac{d(x)}{\eps_n}}\bigr) H(x) \right|^2
	  + \frac{1}{\eps_n} W\left(g_{z\xi}\bigl( {\ts \frac{d(x)}{\eps_n}}\bigr) \right)\right] \, dx,
$$
where $H$ is the Hessian matrix of $d$.
Change variable via the diffeomorphism $x := \psi_2(y,t)$, where $\psi_2: \partial\Omega \times \bigl(0,\delta_1\bigr) \to \Omega_{\delta_1}$, defined by $\psi_2(y,t) := y + t \nu(y)$, with $\nu(y)$ the normal vector to $\partial\Omega$ at $y$ pointing to the inside of $\Omega$.
We write $J(y,t)$ the Jacobian of this map.
Then
\begin{multline*}
\mathcal{F}_{\eps_n}(g_n; A_{z\xi}, \emptyset)
	\leq \int_{A'_{z\xi}} \int_{0}^{R\eps_n} \biggl[ \frac{1}{\eps_n}  \left|g_{z\xi}''\bigl( {\ts \frac{t}{\eps_n}}\bigr)\right|^2 |\grad d(\psi_2(y,t)) |^2  + \frac{1}{\eps_n}  W\left(g_{z\xi}'\bigl( {\ts \frac{t}{\eps_n}}\bigr) \right)
	+  \eps_n \left|g_{z\xi}''\bigl( {\ts \frac{t}{\eps_n}}\bigr) \right|^2 |H(\psi_2(y,t))|^2 \\
	+  C \left|g_{z\xi}''\bigl( {\ts \frac{t}{\eps_n}}\bigr)\right|\left|g_{z\xi}'\bigl( {\ts \frac{t}{\eps_n}}\bigr) \right| |\grad d(\psi_2(y,t))|^2 |H(\psi(y,t))|  \biggr] J(y,t) \, dt \, d\Haus{N-1}(y),
\end{multline*}
which reduces to
\begin{multline*}
\mathcal{F}_{\eps_n}(g_n; A_{z\xi}, \emptyset)
	\leq \biggl\{ \frac{1}{\eps_n} \int_{A_{z\xi}'} \int_{0}^{R\eps_n} \left[ \left|g_{z\xi}''\bigl( {\ts \frac{t}{\eps_n}}\bigr)\right|^2  + W\left(g_{z\xi}\bigl( {\ts \frac{t}{\eps_n}}\bigr) \right) \right] J(y,t) \, dt \, d\Haus{N-1}(y) \\
	+  C  \int_{A_{z\xi}'} \int_{0}^{R\eps_n} \left[ \eps_n \left|g_{z\xi}'\bigl( {\ts \frac{t}{\eps_n}}\bigr) \right|^2  + \left|g_{z\xi}'\bigl( {\ts \frac{t}{\eps_n}}\bigr)\right| \left|g_{z\xi}'\bigl( {\ts \frac{t}{\eps_n}}\bigr) \right| \right] \, dt \, d\Haus{N-1}(y) \biggr\}
	 =: I_1 + I_2,
\end{multline*}
where we took into account the facts that the gradient of the distance is $1$, and the Jacobian $J$ and the Hessian $H$ of the distance are uniformly bounded.
We have
\begin{align*}
I_1
	& \leq \Biggl(\sup_{\substack{y \in A'_{z\xi}, \\t \in (0,R\eps_n)}} J(y,t) \Biggr) \frac{1}{\eps_n} \int_{A'_{z\xi}} \int_{0}^{R\eps_n} \left[ \left|g_{z\xi}''\bigl( {\ts \frac{t}{\eps_n}}\bigr)\right|^2  + W\left(g_{z\xi}\bigl( {\ts \frac{t}{\eps_n}}\bigr) \right) \right] \, dt \, d\Haus{N-1}(y) \\
	& \leq \bigl( 1 + o(1) \bigr) (\sigma(z,\xi)+\eta) \Haus{N-1}(\{Tu=z,v=\xi\}),
\end{align*}
where we used the fact that since $\partial\Omega$ is a compact manifold, $J(y,t)$ converges to $1$ uniformly as $t \to 0$.
On the other hand
$$
I_2
	\leq C  \eps_n \int_{0}^{R} \left[ \eps_n \left|g_{z\xi}'(s) \right|^2 
	+ \left|g_{z\xi}''(s)\right|^2 \left|g_{z\xi}'(s) \right|^2 \right] \, ds \leq C \eps_n.
$$

We conclude that $\mathcal{F}_{\eps_n}(g_n; A_{z\xi}, \emptyset)\leq (\sigma(z,\xi)+\eta) \Haus{N-1}(\{Tu=z,v=\xi\}) + o(1)$.

\paragraph{Step 2. } We estimate the energy in $L_n \cup M_n$.

We have
\begin{multline*}
\mathcal{F}_{\eps_n}(v_n; L_n \bs M_n, \emptyset)
	=  \int_{L_n \bs M_n} \biggl[ \eps_n^3 \left| \varphi\bigl( {\ts \frac{d_u(x)}{R\eps_n}}\bigr)g_n''(x) + \frac{2}{R\eps_n}g_n'(x) \varphi'\bigl( {\ts \frac{d_u(x)}{R\eps_n}}\bigr) \grad d_u \times \grad d_u \right. \\
	  \left. + \frac{1}{R^2\eps_n^2} g_n(x) \varphi''\bigl( {\ts \frac{d_u(x)}{R\eps_n}}\bigr) H_u \right|^2 + \frac{1}{\eps_n} W\left( \varphi\bigl({\ts \frac{d_u(x)}{R\eps_n}}\bigr) g_n(x)\right)\biggr] \, dx,
\end{multline*}
where $H_u$ is the Hessian matrix of $d_u$.
Then
\begin{multline*}
 \mathcal{F}_{\eps_n}(v_n; L_n \bs M_n, \emptyset)
	\leq  C\int_{L_n \bs M_n} \biggl[ \eps_n^3 |g_n''(x)|^2 + \eps_n |g_n'(x)|^2 + \frac{1}{R^4\eps_n} |\ol{v}_n(x)|^2 \\
	 + \limsup_n \frac{1}{\eps_n} W\left( \varphi\bigl({\ts \frac{d_u(x)}{R\eps_n}}\bigr) g_n(x)\right)\biggr] \, dx
	 \leq C \frac{1}{\eps_n} |L_n| \leq C \eps_n,
\end{multline*}
where we took into account the facts that the Hessian $H_u$ is uniformly bounded, and that $\ol{v}_n$ is uniformly bounded, $g_n'$ is bounded by $\frac{C}{\eps_n}$, and $g_n''$ is bounded by $\frac{C}{\eps_n^2}$.

We conclude that $\mathcal{F}_{\eps_n}(v_n; L_n \bs M_n, \emptyset) = o(1)$.
Similarly, we may prove that $\mathcal{F}_{\eps_n}(v_n; M_n, \emptyset) = o(1)$.
This concludes the proof.
\end{proof}

\begin{cor}\label{cor:interior_boundary}

Let $W:\R \to [0,\infty)$ satisfy $(H^W_1)-(H^W_2)$, let $V:\R \to [0,\infty)$ satisfy $(H^V_1)-(H^V_3)$. Let $\eps_n \to 0^+$ be such that $\eps_n \lambda_n^{\frac23} \to L \in (0,\infty)$, let $\eta>0$, and let $u \in BV(\Omega;\{a,b\})$ and $v \in BV(\partial\Omega;\{\alpha,\beta\})$, with $Su$ an $N-1$ manifold of class $C^2$ such that $\Haus{N-1}(\partial\Omega \cap \ol{Su}) = 0$ and $Sv$ an $N-2$ manifold of class $C^2$ .
Then there exists a sequence $\{\ol{u}_{n}\} \subset H^2(\Omega)$ such that $\ol{u}_{n} \to u$ in $ L^2(\Omega)$, $T\ol{u}_{n} \to v$ in $L^2(\partial\Omega)$,
\begin{equation}\label{eq:un_bar_bounds}
\|\ol{u}_n\|_{\infty} \leq C, \qquad
\|\grad \ol{u}_n\|_{\infty} \leq \frac{C}{\eps_n}, \qquad
\|\grad^2 \ol{u}_n\|_{\infty} \leq \frac{C}{\eps_n^2}, 
\end{equation}
and
\begin{equation}\label{eq:F_sigma2}
\mathcal{F}_{\eps_n}(\ol{u}_{n};\Omega,\emptyset) \leq (m+\eta) \Haus{N-1}(Su)
+  \sum_{z=a,b} \sum_{\xi=\alpha,\beta} (\sigma(z,\xi) + \eta) \Haus{N-1}(\{ x \in \partial\Omega: Tu(x)=z, v(x)=\xi\}) + o(1)
\end{equation} 
where $m$ and $\sigma(z,\xi)$ are the constant defined, respectively, in \eqref{eq:def_m} and \eqref{eq:def_sigma}.
\end{cor}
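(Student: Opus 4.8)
The plan is to glue the interior recovery sequence $\{z_n\}$ produced by Proposition~\ref{prop:interior} to the boundary recovery sequence $\{v_n\}$ produced by Proposition~\ref{prop:interior_boundary}, by means of a cut-off in the distance $d:=\dist(\cdot,\partial\Omega)$. Fix $\varphi\in C^\infty\bigl((0,\infty);[0,1]\bigr)$ with $\varphi\equiv 0$ on $(0,\tfrac12]$ and $\varphi\equiv 1$ on $[\tfrac34,\infty)$, let $R=R(\eta)$ be the radius given by Proposition~\ref{prop:interior_boundary}, and define $\ol{u}_n:=z_n$ on $\Omega\bs\Omega_{R\eps_n}$ and
$$
\ol{u}_n(x):=\varphi\!\left(\frac{d(x)}{R\eps_n}\right)z_n(x)+\left(1-\varphi\!\left(\frac{d(x)}{R\eps_n}\right)\right)v_n(x)\qquad\text{for }x\in\Omega_{R\eps_n}.
$$
The two definitions coincide (both equal $z_n$) on $\{\tfrac34 R\eps_n\le d<R\eps_n\}$, and since $z_n\in H^2(\Omega)$ and $v_n\in H^2(\Omega_{R\eps_n})$, this yields $\ol{u}_n\in H^2(\Omega)$; the bounds \eqref{eq:un_bar_bounds} then follow from \eqref{eq:zn_bounds}, \eqref{eq:vn_bounds}, the chain rule, and the fact that $d$ is of class $C^2$ with $\|\grad d\|_\infty=1$ near $\partial\Omega$. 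The convergences are immediate: $\ol{u}_n=z_n\to u$ in $L^2$ off the shrinking layer $\Omega_{R\eps_n}$ while $\|\ol{u}_n\|_\infty\le C$ there, so $\ol{u}_n\to u$ in $L^2(\Omega)$; and since $\ol{u}_n=v_n$ near $\partial\Omega$, $T\ol{u}_n=Tv_n\to v$ in $L^2(\partial\Omega)$.

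For the energy estimate \eqref{eq:F_sigma2} I would split $\Omega$ (up to null sets) into $\Omega_{R\eps_n/2}$, the annulus $N_n:=\{\tfrac12 R\eps_n<d<\tfrac34 R\eps_n\}$, and $\Omega\bs\ol{\Omega}_{\frac34 R\eps_n}$. On $\Omega_{R\eps_n/2}$ one has $\ol{u}_n=v_n$, so monotonicity of the (nonnegative) localized energy together with \eqref{eq:F_sigma1} bounds that contribution by $\sum_{z,\xi}(\sigma(z,\xi)+\eta)\Haus{N-1}(\{Tu=z,v=\xi\})+o(1)$; on $\Omega\bs\ol{\Omega}_{\frac34 R\eps_n}$ one has $\ol{u}_n=z_n$, so \eqref{eq:energy_mz} bounds that contribution by $(m+\eta)\Haus{N-1}(Su)+o(1)$. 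The remaining term $\mathcal{F}_{\eps_n}(\ol{u}_n;N_n,\emptyset)$ must be shown to be $o(1)$, and this is the crux of the argument.

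On $N_n\subset\Omega_{R\eps_n}\bs\ol{\Omega}_{R\eps_n/2}$ one has $z_n=u$ off $(Su)_{C\eps_n}$ by \eqref{eq:zn_eq_u}, and $v_n=u$ off a set of measure $\le C\eps_n^2$ by \eqref{eq:small_meas_inters}; let $B_n$ be the union of $(Su)_{C\eps_n}\cap N_n$ and $\{v_n\ne u\}\cap N_n$. On $N_n\bs B_n$ one has $\ol{u}_n=u$, which is locally constant with value in $\{a,b\}\subset W^{-1}(\{0\})$, so $W(\ol{u}_n)$ and $\grad^2\ol{u}_n$ both vanish there, and the energy on $N_n$ is concentrated on $B_n$. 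Using \eqref{eq:zn_bounds}, \eqref{eq:vn_bounds} and continuity of $W$ on bounded sets, $|\grad^2\ol{u}_n|\le C\eps_n^{-2}$ and $W(\ol{u}_n)\le C$ on $B_n$, so
$$
\mathcal{F}_{\eps_n}(\ol{u}_n;N_n,\emptyset)\ \le\ \eps_n^3\,\frac{C}{\eps_n^4}\,|B_n|+\frac{1}{\eps_n}\,C\,|B_n|\ =\ \frac{C}{\eps_n}\,|B_n|.
$$
It remains to check $|B_n|=o(\eps_n)$: one has $|\{v_n\ne u\}\cap N_n|\le C\eps_n^2$, while $(Su)_{C\eps_n}\cap N_n$ is contained in a tube of radius $C\eps_n$ about $Su\cap\Omega_{C\eps_n}$, so $|(Su)_{C\eps_n}\cap N_n|\le C\eps_n\,\Haus{N-1}(Su\cap\Omega_{C\eps_n})$, which is $o(\eps_n)$ because $\Haus{N-1}\restr{Su}$ is finite and $\Haus{N-1}(\partial\Omega\cap\ol{Su})=0$ forces $\Haus{N-1}(Su\cap\Omega_{C\eps_n})\to0$. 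Adding the three contributions gives \eqref{eq:F_sigma2}. The hard part, as this makes clear, is precisely the control of the gluing annulus: one needs $z_n$ and $v_n$ to disagree only on a set of measure $o(\eps_n)$, which relies simultaneously on the quantitative overlap bound \eqref{eq:small_meas_inters} and on the compatibility hypothesis $\Haus{N-1}(\partial\Omega\cap\ol{Su})=0$.
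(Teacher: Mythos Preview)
Your proof is correct and follows essentially the same route as the paper: define $\ol{u}_n$ as a $d$-cutoff interpolation between the interior recovery $z_n$ and the boundary-layer recovery $v_n$, split $\Omega$ into the inner region, the boundary layer, and the gluing annulus, and show the annular contribution is $o(1)$ by combining the $L^\infty$ bounds with the smallness of $\{\ol u_n\neq u\}$ there. Your handling of the annulus is in fact slightly cleaner than the paper's (you choose the cutoff so the three regions are disjoint, and you justify $|(Su)_{C\eps_n}\cap N_n|=o(\eps_n)$ rather than asserting $O(\eps_n^2)$), but the argument is the same.
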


\begin{proof}

Let $\varphi \in C^{\infty}((0,\infty);[0,1])$ be such that $\varphi \equiv 0$ in $\bigl(0,\frac12\bigr)$ and $\varphi \equiv 1$ in $(1,\infty)$ and let
$$
\ol{u}_n(x) := \varphi\bigl( {\ts \frac{d(x)}{R\eps_n}}\bigr) z_n(x)  + \left( 1 - \varphi\bigl( {\ts \frac{d(x)}{R\eps_n}}\bigr) \right) v_n(x),
$$
for $x \in \Omega$, where the functions $z_n$ and $v_n$ are defined, respectively, in Propositions \ref{prop:interior} and \ref{prop:interior_boundary}, $R$ is the number given in the previous proposition, and $d$ is the distance to the boundary.

\begin{figure}[!htbp]
\begin{center}
\includegraphics*[width=125pt]{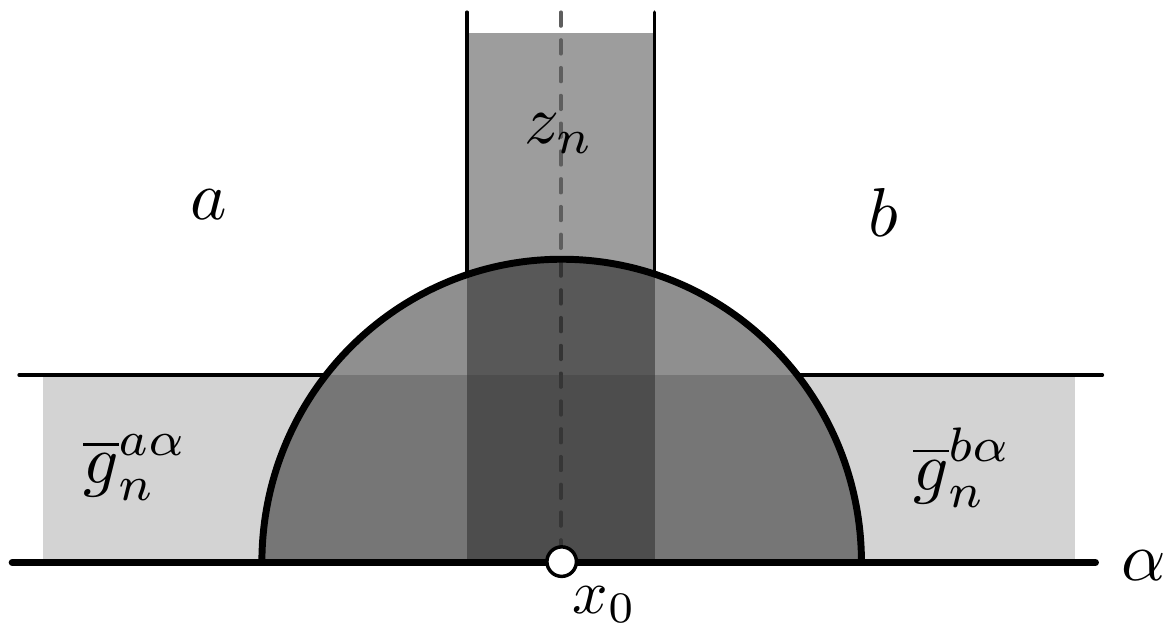}
\caption{Scheme for the gluing of the discontinuity set of $u$ to the boundary $\partial \Omega$ when there is no discontinuity in $v$.}\label{fig:bdy_extension_B}
\end{center}
\end{figure}

Since $T\ol{u}_n = Tv_n$, it follows that $T\ol{u}_n \to v$ in $L^2(\partial\Omega)$.

On the other hand, since $\|v_n\|_{\infty} \leq C$, $\Leb{N}\left( \{ x \in \Omega: d(x) \leq R\eps_n\}\right) \to 0$, and $z_n \to u$ in $L^2(\Omega)$, we have that $\ol{u}_n \to u$ in $L^2(\Omega)$.
Moreover, by \eqref{eq:energy_mz} and \eqref{eq:F_sigma1},
\begin{align*}
\mathcal{F}_{\eps_n}(\ol{u}_n;\Omega;\emptyset)
	& \leq \mathcal{F}_{\eps_n}(z_n;\Omega \bs \Omega_{2R\eps_n};\emptyset)
	+ \mathcal{F}_{\eps_n}(v_n;\Omega_{R\eps_n};\emptyset)
	+ \mathcal{F}_{\eps_n}(\ol{u}_n;P_n;\emptyset) \\
	& \leq (m+\eta) \Haus{N-1}(Su)
      + \sum_{z=a,b} \sum_{\xi=\alpha,\beta} (\sigma(z,\xi) + \eta) \Haus{N-1}(\{ x \in \partial\Omega: Tu(x)=z, v(x)=\xi\}) \\
	& \qquad + \limsup_n \mathcal{F}_{\eps_n}(\ol{u}_n;P_n;\emptyset) + o(1),
\end{align*}
where $P_n := \left\{ x \in \Omega: \frac12 R \eps_n < d(x) < 2 R\eps_n\right\}$.

To estimate the last term, note that by \eqref{eq:energy_mz} and \eqref{eq:small_meas_inters}, 
$
\Leb{N}\bigl( \{ x \in P_n: \ol{u}_n(x) \neq u(x) \} \bigr) \leq C \eps_n^2,
$
and so by the continuity of $W$,
$$
\frac{1}{\eps_n} \int_{P_n} W(\ol{u}_n) \, dx
	= \frac{1}{\eps_n} \left( \max_{B(0;L)} W \right) \Leb{N}\bigl( \{ x \in P_n: \ol{u}_n(x) \neq u(x) \}\bigr)
	\leq C \eps_n 
	\to 0,
$$
where $L:= \sup_n \|\ol{u}_n\|_{\infty}$.

On the other hand, we have that $\grad \ol{u}_n(x) = 0$ and $\grad^2 \ol{u}_n(x) = 0$ for $\Leb{N}$-a.e. $x \in E_n := \{ x \in P_n : \ol{u}_n(x) = u(x) \}$, while for $x \in P_n \bs E_n$,
$$
|\grad^2 \ol{u}_n(x)|^2 
	\leq C \biggl[ \frac{1}{\eps_n^4} | \bigl( |z_n(x)|^2 + |v_n(x)|^2 \bigr)
		+ \frac{1}{\eps_n^2} | \bigl( |\grad z_n(x)|^2 + |\grad v_n(x)|^2 \bigr)
		+ | \bigl( |\grad^2 z_n(x)|^2 + |\grad^2 v_n(x)|^2  \biggr]
	\leq \frac{C}{\eps_n^4},
$$
where we used the bounds on $z_n$ and $v_n$ given in \eqref{eq:zn_bounds} and \eqref{eq:vn_bounds}.
Hence
$$
\eps_n^3 \int_{P_n} |\grad^2 \ol{u}_n|^2 \, dx
	= \eps_n^3 \int_{P_n \bs E_n} |\grad \ol{u}_n|^2 \, dx
	\leq \frac{C}{\eps_n} \Leb{N}(P_n \bs E_n)
	\leq C\eps_n,
$$
which completes the proof.
\end{proof}

\begin{prop}\label{prop:boundary_flat}

Let $W:\R \to [0,\infty)$ satisfy $(H^W_1)-(H^W_2)$, let $V:\R \to [0,\infty)$ satisfy $(H^V_1)-(H^V_3)$.
Let $\eps_n \to 0^+$ be such that $\eps_n \lambda_n^{\frac23} \to L \in (0,\infty)$, let $\eta>0$, let $D_r := \{ x \in \R^N: |x|<r, x_N>0\}$, and let $E_r := \{ (x',0) \in \R^{N-1}\times \R: |x|<r\}$.
Also let $u \in BV(D_r;\{a,b\})$ and $v \in BV(E_r;\{\alpha,\beta\})$, with $Su$ an $N-1$ manifold of class $C^2$ such that $\Haus{N-1}(E_r \cap \ol{Su}) = 0$ and $Sv = \{ x \in E_r: x_{N-1}=0\}$.
Then there exists $\{u_{n}\} \subset  H^2(D_r)$ such that $u_{n} \to u$ in $L^2(D_r)$, $Tu_{n} \to v$ in $L^2(E_r)$, and
$$
\limsup_n \mathcal{F}_{\eps_n}(u_{n};D_r, E_r)
	\leq (m + \eta) \Haus{N-1}(Su)
	+ \sum_{z=a,b} \sum_{\xi=\alpha,\beta} (\sigma(z,\xi)+ \eta) \Haus{N-1}(\{Tu=z,v=\xi\}) + (\ol{c} + \eta) L \Haus{N-2}(Sv),
$$
where $m$, $\sigma$, and $\ol{c}$ are the constants defined in \eqref{eq:def_m}, \eqref{eq:def_sigma}, and \eqref{eq:def_over_c}, respectively.
\end{prop}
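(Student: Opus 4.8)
\textbf{The plan} is to take the recovery sequence $\{\ol u_n\}\subset H^2(D_r)$ furnished by Corollary \ref{cor:interior_boundary} — which already produces the first two terms, $(m+\eta)\Haus{N-1}(Su)$ and $\sum_{z,\xi}(\sigma(z,\xi)+\eta)\Haus{N-1}(\{Tu=z,v=\xi\})$, with a bulk error $o(1)$ — and to graft onto it, in a shrinking neighbourhood of the $(N-1)$-dimensional half-plane $\pi^{-1}(Sv)=\{x_{N-1}=0,\ x_N>0\}$, a further layer that carries the jump of $v$ across $Sv=\{x_{N-1}=0\}\cap E_r$ and yields exactly $(\ol c+\eta)L\Haus{N-2}(Sv)$.

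\textbf{The one-dimensional profile and its lift.} Fix $\eta>0$ and, from the definition \eqref{eq:def_over_c}, choose $R>0$ and $g\in H^{\frac32}_{\loc}(\R)$ with $g'\in H^{\frac12}(\R)$, $g(-t)=\alpha$, $g(t)=\beta$ for $t\ge R$, and $\tfrac7{16}|g'|_{H^{\frac12}(\R)}^2+\int_\R V(g)\le\ol c+\eta$; by mollification we may take $g\in C^\infty$. Set $\delta_n:=\eps_n\lambda_n^{-\frac13}$, so that $\delta_n=o(\eps_n)$ while
$$
\frac{\eps_n^3}{\delta_n^2}=\lambda_n\delta_n=\eps_n\lambda_n^{\frac23}\to L.
$$
Writing $x=(x'',x_{N-1},x_N)$ with $x''\in\R^{N-2}$, let $\phi_n(s):=g(s/\delta_n)$ and lift $\phi_n$ into $\{x_N>0\}$ by the averaging operator of Proposition \ref{prop:lifting},
$$
h_n(x_{N-1},x_N):=\frac1{2x_N}\int_{x_{N-1}-x_N}^{x_{N-1}+x_N}\phi_n(t)\,dt .
$$
Then $h_n(\cdot,0^+)=\phi_n$, $h_n$ equals $\alpha$ (resp. $\beta$) where $x_{N-1}+x_N\le-R\delta_n$ (resp. $x_{N-1}-x_N\ge R\delta_n$), $\|h_n\|_\infty\le C$, and, by the estimates in the proof of Proposition \ref{prop:lifting} applied over the half-plane together with the rescaling $s=\delta_n t$ in the seminorm,
$$
\iint_{\{x_N>0\}}\bigl|D^2h_n\bigr|^2\,dx_{N-1}\,dx_N\le\frac7{16}\,|\phi_n'|_{H^{\frac12}(\R)}^2=\frac7{16}\,\delta_n^{-2}\,|g'|_{H^{\frac12}(\R)}^2 .
$$

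\textbf{Patching and the energy estimate.} I would set $u_n:=\ol u_n$ away from an $\eps_n$-slab around $\pi^{-1}(Sv)$, and inside that slab glue together: the lift $h_n$ on the $\delta_n$-scale cone issuing from $Sv$ (which pins $Tu_n=\phi_n$ on $E_r$); the $\sigma$-profiles $g_{z\xi}(x_N/\eps_n)$ of Proposition \ref{prop:interior_boundary} near the lateral faces, so that $u_n$ and $\ol u_n$ match (values and first derivatives) along the slab boundary; the constant bulk value $z$ (the same on both sides of $Sv$, since $\Haus{N-1}(E_r\cap\ol{Su})=0$) below depth $\sim\eps_n$; and cut-off interpolations joining these on one or two intermediate layers of thickness $\kappa_n$ with $\delta_n\ll\kappa_n\ll\eps_n$, all with $\|\grad^k(\cdot)\|_\infty\le C\eps_n^{-k}$, so that $u_n\in H^2(D_r)$. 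Then $u_n=\ol u_n$ off a set of volume $O(\eps_n^2)$, and $Tu_n=v$ off a set of $\Haus{N-1}$-measure $O(\delta_n)$ (the $\delta_n$-tube, where $Tu_n=\phi_n$), whence $u_n\to u$ in $L^2(D_r)$ and, since $\phi_n\to v$, $Tu_n\to v$ in $L^2(E_r)$. Splitting $\mathcal F_{\eps_n}(u_n;D_r,E_r)$ into the slab and its complement: the complement carries $u_n=\ol u_n$ with $V(Tu_n)=V(v)=0$, hence at most $(m+\eta)\Haus{N-1}(Su)+\sum_{z,\xi}(\sigma(z,\xi)+\eta)\Haus{N-1}(\{Tu=z,v=\xi\})+o(1)$ by Corollary \ref{cor:interior_boundary}; in the slab, by Fubini (as $h_n$ does not depend on $x''$) and $\eps_n^3/\delta_n^2\to L$ the lift contributes at most $(\tfrac7{16}|g'|_{H^{\frac12}(\R)}^2+o(1))L\Haus{N-2}(Sv)$ to $\eps_n^3\int|D^2u_n|^2$, the boundary term is $\lambda_n\delta_n\Haus{N-2}(Sv)\int_\R V(g)\to L\Haus{N-2}(Sv)\int_\R V(g)$, and the potential term of the lift, the energies of the $\sigma$-pieces and of the cut-off layers — all supported on sets of the relevant measure $O(\kappa_n)$ or $O(\eps_n)$ — are $o(1)$, handled as the term $P_n$ in Corollary \ref{cor:interior_boundary}. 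Using $\tfrac7{16}|g'|_{H^{\frac12}(\R)}^2+\int_\R V(g)\le\ol c+\eta$, the slab contributes $\le(\ol c+\eta)L\Haus{N-2}(Sv)+o(1)$; adding and letting $n\to\infty$ gives the assertion.

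\textbf{The main obstacle} is the patching inside the slab. The averaging lift of the compactly supported transition $\phi_n$ is \emph{not} compactly supported — its transition set opens into a cone whose width grows like $x_N$ — so the $\delta_n$-scale boundary transition cannot be closed off at a fixed scale: it must be truncated at an intermediate height $\kappa_n$ and reconciled there with the $\eps_n$-scale $\sigma$-profiles of $\ol u_n$ and with the bulk value, and one must verify that truncation plus interpolation costs only $o(1)$ after multiplication by $\eps_n^3$, which is what pins down the admissible range of $\kappa_n$. This non-locality is precisely why the constant $\ol c$ is defined with integrals over all of $\R$ rather than over bounded intervals as in $\ul c$. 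A secondary technical point is to arrange $Tu_n=v$ also near $\partial\Omega\cap\ol{Su}$, where $\ol u_n$ need not realize $v$, so that $\lambda_n\int_{E_r}V(Tu_n)$ genuinely reduces to its contribution from the $\delta_n$-tube.
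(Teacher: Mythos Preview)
Your strategy is essentially the paper's: start from the sequence $\ol u_n$ of Corollary~\ref{cor:interior_boundary}, replace it near $Sv$ by the averaging lift of a near-optimal profile $g$ at scale $\delta_n=\eps_n\lambda_n^{-1/3}$, cut off between the two at scale $\eps_n$, and pass to general $N$ by Fubini in $x''$. Two points deserve care.

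First, the intermediate scale $\kappa_n$ is unnecessary. The paper simply takes $\psi_n\equiv1$ on the inner triangle $\triangle_{R\delta_n}$ and $\psi_n\equiv0$ outside $\triangle_{R\eps_n}$, with $\|\grad^k\psi_n\|_\infty\le C\eps_n^{-k}$; the transition annulus is $L_n:=\triangle_{R\eps_n}\setminus\ol\triangle_{R\delta_n}$.

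Second, your sentence ``the energies of the cut-off layers \ldots\ are $o(1)$'' is not quite right as stated, and this is exactly where the $\R\times\R$ integration limits in $\ol c$ enter. On $L_n$ one has, after Young,
\[
|\grad^2\til u_n|^2\le(1+\eta)|\grad^2 h_n|^2+C_\eta\Bigl[|\grad^2\ol u_n|^2+\eps_n^{-2}\bigl(|\grad h_n|^2+|\grad\ol u_n|^2\bigr)+\eps_n^{-4}\bigl(|h_n|^2+|\ol u_n|^2\bigr)\Bigr].
\]
The bracketed terms are indeed $o(1)$ (via the $L^\infty$ bounds \eqref{eq:un_bar_bounds} and Gagliardo--Nirenberg for $\grad h_n$), but $(1+\eta)\,\eps_n^3\int_{L_n}|\grad^2 h_n|^2$ is \emph{not}: a direct change of variables shows it equals $(1+\eta)\,\eps_n\lambda_n^{2/3}$ times the ``tail''
\[
\tfrac{7}{16}\iint_{\R^2\setminus(-R,R)^2}\frac{|g'(s)-g'(t)|^2}{|s-t|^2}\,ds\,dt,
\]
which, combined with the inner-triangle contribution over $(-R,R)^2$, reconstitutes the full $\tfrac{7}{16}|g'|^2_{H^{1/2}(\R)}$ appearing in $\ol c$. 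Your shortcut of bounding $\eps_n^3\int|\grad^2h_n|^2$ over the whole half-plane at once (an easy extension of the estimates in Proposition~\ref{prop:lifting}) achieves the same thing, provided you first extract the $(1+\eta)|\grad^2h_n|^2$ piece on $L_n$ and merge it with the inner contribution \emph{before} declaring the remainder $o(1)$.
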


\begin{proof}

First we prove the result for $N=2$ and then treat the $N$-dimensional case.

\paragraph{Step 1. } Assume that $N=2$.

\subpar{Substep 1a. }
By the definition of $\ol{c}$ there exists $R>0$ and a function $h \in H_{\loc}^{\frac{3}{2}}(\R)$ satisfying $h(-t) = \alpha$ and $h(t)= \beta $ for all  $t \geq R$ and
\begin{equation}\label{eq:hbdy}
\frac{7}{16} \iint_{\R^2} \frac{\bigl| h'(t) - h'(s)\bigr|^2}{|t-s|^2}  \, dt \, ds + \int_{\R} V\bigl(h(t)\bigr) \, dt  \leq \ol{c} + \eta.
\end{equation}
Define
\begin{equation}\label{eq:def_w}
\ol{w}(t,s) = \frac{1}{2s} \int_{t-s}^{t+s} h(\tau)\,d\tau.
\end{equation}
By Proposition \ref{prop:lifting}, we have that $\ol{w} \in H^2_{\loc}(\R \times (0,\infty))$, $T\ol{w} = h$, and
$$
\iint_{\triangle_R} \bigl| D^2 \ol{w}(t,s)\bigr|^2  \, dt\,ds \leq \frac{7}{16} \int_{-R}^R \int_{-R}^R \frac{\bigl| h'(t) - h'(s)\bigr|^2}{|t-s|^2}  \, dt \, ds,
$$
where $\triangle_{R} := T_{2R}^+-(R,0)$ and $T_{2R}^+ := \bigl\{ (t,s) \in \R^2: 0<s<R, s<t<2R-s\bigr\}$.
For $(x,y) \in \triangle_{R\rho_n}$ define $w_{n}(x,y) := \ol{w}\left( \frac{x}{\rho_{n}}, \frac{y}{\rho_{n}}\right)$, where $\rho_n = \eps_n \lambda_n^{-\frac{1}{3}}$.

Then
\begin{align*}
\mathcal{F}_{\eps}(w_{n}, \triangle_{R\rho_n}, (-R\rho_{n},R\rho_{n}) \times \{0\}) 
	& = \iint_{\triangle_{R\rho_n}}  \left[ \eps_n^3 |\grad^2_{(x,y)} w_{n}(x,y)|^2  + \frac{1}{\eps_n} W(w_{n}(x,y))\right]  \, dx \, dy  + \lambda_{n} \int_{-R\rho_{n}}^{R\rho_{n}} V(Tw_{n}(x)) \, dx \\
	& = \iint_{\triangle_{R\rho_n}}  \left[ \frac{\eps_n^3}{\rho_n^4} \left|\grad^2_{(t,s)} \ol{w}\bigl({\ts \frac{x}{\rho_n},\frac{y}{\rho_n}}\bigr)\right|^2  + \frac{1}{\eps_n} W\left(\ol{w}\bigl({\ts \frac{x}{\rho_n},\frac{y}{\rho_n}}\bigr)\right) \right]\, dx \, dy  \\
& \hspace{7cm} + \lambda_{n} \int_{-R\rho_{n}}^{R\rho_{n}} V\left(T\ol{w}\bigl({\ts \frac{x}{\rho_n},\frac{y}{\rho_n}}\bigr)\right) \, dx \\	
	& = \iint_{\triangle_{R}} \left[  \eps_n \lambda_{n}^{\frac23} |\grad^2_{(t,s)} \ol{w}(t,s)|^2  + \eps_n \lambda_{n}^{-\frac23} W(\ol{w}(t,s)) \right] \, dt \, ds +  \eps_n \lambda_{n}^{\frac23} \int_{-R}^{R} V(T\ol{w}(t)) \, dt \\
	& \leq (L+o(1)) \left[ \frac{7}{16} \int_{-R}^R \int_{-R}^R \frac{\bigl| h'(t) - h'(s)\bigr|^2}{|t-s|^2}  \, dt \, ds + \int_{-R}^{R} V(Th(t)) \, dt \right] + C\eps_n^2,
\end{align*}
where we used the fact that $W$ is continuous and $\|\ol{w}\|_{\infty} \leq C$.
Thus
\begin{equation}\label{eq:estimate1}
\mathcal{F}_{\eps}(w_{n}, \triangle_{R\rho_{n}}, (-R\rho_{n},R\rho_{n}) \times \{0\}) 
	\leq L \left[ \frac{7}{16} \int_{-R}^R \int_{-R}^R \frac{\bigl| h_{\eta}'(t) - h_{\eta}'(s)\bigr|^2}{|t-s|^2}  \, dt \, ds + \int_{-R}^{R} V(Th_{\eta}(t)) \, dt\right] + o(1).
\end{equation}

\subpar{Substep 1b. }
To complete this step, we need to match the function $w_n$ to the function $\ol{u}_n$ given in Corollary \ref{cor:interior_boundary} (with $N=2$ and $\Omega := D_r$).

\begin{figure}[!htbp]
\begin{center}
\begin{tabular}{ccc}
\includegraphics*[width=125pt]{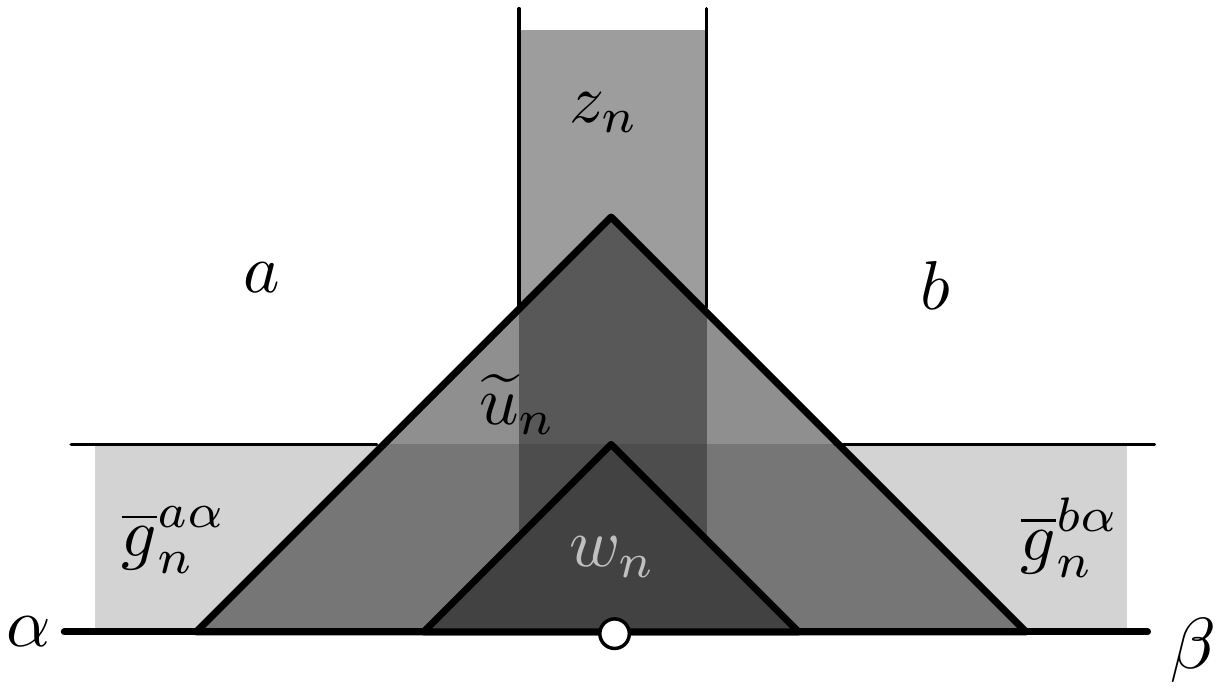}
& \hspace{2cm} & \includegraphics*[width=100pt]{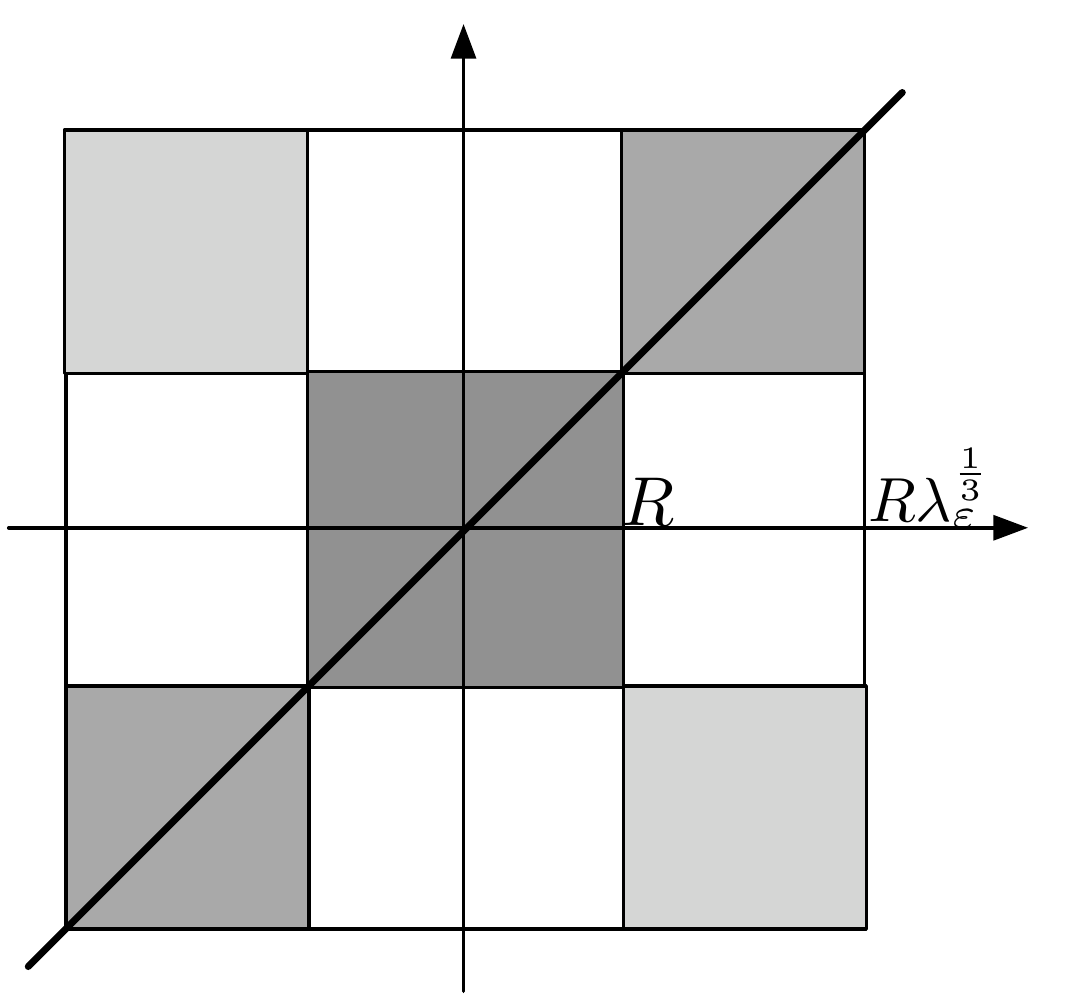} \\
(a) & & (b) \;\;
\end{tabular}
\caption{(a) Close-up view of $T_{2R\eps_n}$ and $T_{2R\rho_n}$.; (b) Domain of integration after change of variables and divided in regions.}
\label{fig:domint}
\end{center}
\end{figure}

Consider the function $
\til{u}_n(x,y) := \psi_n(x,y) w_n(x,y) 
		+ \bigl( 1 - \psi_n(x,y)\bigr) \ol{u}_n(x,y)$ for $(x,y) \in \R^2$,
where $\psi_n \in C^{\infty}(\R\times(0,\infty);[0,1])$ satisfies $\psi_n \equiv 1$ in $T^+_{R\rho_n}$, $\psi_n \equiv 0$ in $\R^N \bs\ol{T}^+_{R\eps_n}$, and
\begin{equation}\label{eq:bounds_cutoff}
\|\grad \psi_n\|_{\infty} \leq \frac{C}{\eps_n}
\quad \text{ and } \quad
\|\grad^2 \psi_n\|_{\infty} \leq \frac{C}{\eps_n^2}.
\end{equation}
Since $Tw_n = T\ol{u}_n = v$ in $\bigl(\ol{\triangle}_{R\eps_n} \bs \triangle_{R\rho_n}\bigr) \cap E_r$, we have that $T\til{u}_n = v$ on $\bigl( \ol{\triangle}_{R\eps_n} \bs \triangle_{R\rho_n}\bigr) \cap E_r$.
Hence $V\bigl( T\til{u}_n\bigr) = 0$ in $\bigl( \ol{\triangle}_{R\eps_n} \bs \triangle_{R\rho_n}\bigr) \cap E_r$.
Thus, it suffices to estimate 
\begin{equation}\label{eq:energyLn}
\mathcal{F}_{\eps}(\til{u}_{n}; L_n, \emptyset)
	= \int_{L_n} \left[ \eps_n^3 |\grad^2 \til{u}_{n}(x,y)|^2 + \frac{1}{\eps_n} W(\til{u}_{n}(x,y)) \right] \,dx\,dy,
\end{equation}
where $L_n := \triangle_{R\eps_n} \bs \ol{\triangle}_{R\rho_n}$.

%%%%%%%%%%%%%%%%%%%%%%%%%%%%%%%%%%%%%%%%
%%%%%%%%%%%%%%%%%%%%%%%%%%%%%%%%%%%%%%%%
%%%%%%%%%%%%%%%%%%%%%%%%%%%%%%%%%%%%%%%%
%%%%%%%%%%%%%%%%%%%%%%%%%%%%%%%%%%%%%%%%
%%%%%%%%%%%%%%%%%%%%%%%%%%%%%%%%%%%%%%%%
%%%%%%%%%%%%%%%%%%%%%%%%%%%%%%%%%%%%%%%%
%%%%%%%%%%%%%%%%%%%%%%%%%%%%%%%%%%%%%%%%
%%%%%%%%%%%%%%%%%%%%%%%%%%%%%%%%%%%%%%%%
%%%%%%%%%%%%%%%%%%%%%%%%%%%%%%%%%%%%%%%%
%%%%%%%%%%%%%%%%%%%%%%%%%%%%%%%%%%%%%%%%
%%%%%%%%%%%%%%%%%%%%%%%%%%%%%%%%%%%%%%%%
%%%%%%%%%%%%%%%%%%%%%%%%%%%%%%%%%%%%%%%%
%%%%%%%%%%%%%%%%%%%%%%%%%%%%%%%%%%%%%%%%
%%%%%%%%%%%%%%%%%%%%%%%%%%%%%%%%%%%%%%%%
%%%%%%%%%%%%%%%%%%%%%%%%%%%%%%%%%%%%%%%%
%%%%%%%%%%%%%%%%%%%%%%%%%%%%%%%%%%%%%%%%
%%%%%%%%%%%%%%%%%%%%%%%%%%%%%%%%%%%%%%%%
%%%%%%%%%%%%%%%%%%%%%%%%%%%%%%%%%%%%%%%%
%%%%%%%%%%%%%%%%%%%%%%%%%%%%%%%%%%%%%%%%
%%%%%%%%%%%%%%%%%%%%%%%%%%%%%%%%%%%%%%%%
%%%%%%%%%%%%%%%%%%%%%%%%%%%%%%%%%%%%%%%%
%%%%%%%%%%%%%%%%%%%%%%%%%%%%%%%%%%%%%%%%
%%%%%%%%%%%%%%%%%%%%%%%%%%%%%%%%%%%%%%%%
%%%%%%%%%%%%%%%%%%%%%%%%%%%%%%%%%%%%%%%%
%%%%%%%%%%%%%%%%%%%%%%%%%%%%%%%%%%%%%%%%
%%%%%%%%%%%%%%%%%%%%%%%%%%%%%%%%%%%%%%%%
%%%%%%%%%%%%%%%%%%%%%%%%%%%%%%%%%%%%%%%%
%%%%%%%%%%%%%%%%%%%%%%%%%%%%%%%%%%%%%%%%
%%%%%%%%%%%%%%%%%%%%%%%%%%%%%%%%%%%%%%%%
%%%%%%%%%%%%%%%%%%%%%%%%%%%%%%%%%%%%%%%%
%%%%%%%%%%%%%%%%%%%%%%%%%%%%%%%%%%%%%%%%
%%%%%%%%%%%%%%%%%%%%%%%%%%%%%%%%%%%%%%%%
%%%%%%%%%%%%%%%%%%%%%%%%%%%%%%%%%%%%%%%%
%%%%%%%%%%%%%%%%%%%%%%%%%%%%%%%%%%%%%%%%
%%%%%%%%%%%%%%%%%%%%%%%%%%%%%%%%%%%%%%%%
%%%%%%%%%%%%%%%%%%%%%%%%%%%%%%%%%%%%%%%%

By Young's inequality and \eqref{eq:bounds_cutoff}, for $(x,y) \in L_n$£ we have
\begin{multline}\label{eq:grad2u} 
|\grad^2 \til{u}_{n}(x,y)|^2
	\leq (1+\eta) |\grad^2 w_{n}(x,y)|^2 + C_{\eta} \left[|\grad^2 \ol{u}_{n}(x,y)|^2\right.\\ 
	  \left.  + \frac{1}{\eps_n^2} \bigl[ |\grad w_{n}(x,y)|^2 + |\grad \ol{u}_{n}(x,y)|^2 \bigr] + \frac{1}{\eps_n^4}\bigl[|w_{n}(x,y)|^2 + |\ol{u}_{n}(x,y)|^2 \bigr] \right],
\end{multline}
and, so
\begin{align}
\eps^3 \iint_{L_n} |\grad^2 \til{u}_{n}(x,y)|^2 \, dx\,dy
  & \leq (1+\eta) \eps_n^3 \iint_{L_n} |\grad^2 w_{n}(x,y)|^2 \, dx\,dy
   + C  \iint_{L_n} \biggl[ \eps_n |\grad w_{n}(x,y)|^2 +  \frac{1}{\eps_n} |w_{n}(x,y)|^2\nonumber\\
  & + \eps_n^3 |\grad^2 \ol{u}_{n}(x,y)|^2  
  + \eps_n |\grad \ol{u}_{n}(x,y)|^2  + \frac{1}{\eps_n}  |\ol{u}_{n}(x,y)|^2 \biggr] \, dx\,dy \nonumber\\
  & =: I_1 + I_2 = I_3. \label{eq:I1I2I3}
\end{align}
To estimate $I_1$, note  that 
\begin{equation}\label{eq:int_grad2w}
\eps_n^3 \iint_{L_n} |\grad^2_{(x,y)} w_{n}(x,y)|^2 \, dx\,dy
	 = \frac{\eps_n^3}{\rho_n^4} \iint_{L_n} \left|\grad^2_{(t,s)} w_{n}\bigl({\ts \frac{x}{\rho_n},\frac{y}{\rho_n}}\bigr)\right|^2 \, dx\,dy 
	 = \eps_n \lambda_n^{\frac23} \iint_{\frac{1}{\rho_n}L_n} | \grad^2 \ol{w}(t,s) | ^2 \, dt\,ds 
\end{equation}

Extend $\ol{w}$ to $T_{R\frac{\eps_n}{\rho_n}}$ using \eqref{eq:def_w}.
Since $\ol{w}(t', \cdot)$ is even, by Proposition \ref{prop:lifting} and \eqref{eq:int_grad2w}, we have
\begin{align*}
\eps_n^3 \iint_{L_n} |\grad^2_{(x,y)} w_{n}(x,y)|^2 \, dx\,dy
	& = \eps_n \lambda_n^{\frac23} \iint_{\frac{1}{\rho_n} L_n} | \grad^2_{(t,s)} \ol{w}(t,s) | ^2 \, dt\,ds \\
	& \leq \frac{7}{16}  \eps_n \lambda_n^{\frac23}\iint_{\frac{1}{\rho_n} L_n} \left| \frac{h'(s+t)-h'(s-t)}{2t}\right| ^2 \, ds \, dt \\
	& \leq \frac{7}{32}  \eps_n \lambda_n^{\frac23}\iint_{\frac{1}{\rho_n} L_n} \left| \frac{h'(s+t)-h'(s-t)}{2t}\right| ^2 \, ds \, dt \\
  & = \frac{7}{64}  \eps_n \lambda_n^{\frac23} \iint_{\bigl(-R\lambda_{n}^{\frac{1}{3}} ,R\lambda_{n}^{\frac{1}{3}}\bigr)^2 \bs [-R,R]^2} \left| \frac{h'(w)-h'(z)}{w-z}\right| ^2 \, dz \, dw.
\end{align*}

The integral we are estimating is the integral over the the ``square annulus'' in Figure \ref{fig:domint}(b).
Note that on all four corner squares of Figure \ref{fig:domint}(b), we have $h'(z) = h'(w) = 0$, so the integral reduces to
\begin{align}
I_1
	& \leq \frac{7}{32} \eps_n \lambda_n^{\frac23} \left[ \int_{-R}^R \int_{R}^{R\lambda_{n}^{\frac{1}{3}}} 
		\left| \frac{h'(w)}{w-z}\right| ^2 \, dz \, dw
			+ \int_{-R}^R \int_{-R\lambda_{n}^{\frac{1}{3}}}^{-R} 
		\left| \frac{h'(w)}{w-z}\right| ^2 \, dz \, dw \right] \label{eq:wgrad2} \\
	& \leq \frac{7}{32} (L + o(1)) \left[ \int_{-R}^R \int_{R}^{\infty} 
		\left| \frac{h'(w)}{w-z}\right| ^2 \, dz \, dw
			+ \int_{-R}^R \int_{-\infty}^{-R} 
		\left| \frac{h'(w)}{w-z}\right| ^2 \, dz \, dw \right]. \nonumber
\end{align}

To estimate $I_2$, note that since $\|w_n\|_{\infty}\leq C$, we have
\begin{equation}\label{eq:wpart}
\frac{1}{\eps_n}  \int_{L_n}  |w_{n}(t,s)|^2\,dt\,ds
	\leq C \eps_n
\end{equation}
and by H\u{o}lder inequality, Proposition \ref{prop:gagliardo-nirenberg} and \eqref{eq:wgrad2}, we obtain that
\begin{align}
\eps_n  \iint_{L_n}  |\grad w_{n}(t,s)|^2\,dt\,ds
	& \leq C \eps_n \left( \| w_n\|_{L^2(L_n)} \|\grad^2 w_n\|_{L^2(L_n)} + \| w_n\|_{L^2(L_n)}^2 \right) \nonumber \\
	& \leq C \left[ \left(\eps_n^{-\frac{1}{2}} \| w_n\|_{L^2(L_n)}\right) \left( \eps_n^{\frac{3}{2}} \|\grad^2 w_n\|_{L^2(L_n)} \right)+ \eps_n \| w_n\|_{L^2(L_n)}^2 \right] \nonumber \\
	& \leq C (\sqrt{\eps_n} + \eps_n^3). \label{eq:gagl}
\end{align}
Combining \eqref{eq:wpart} and \eqref{eq:gagl} yields 
\begin{equation}\label{eq:I2}
I_2 \leq C \sqrt{\eps_n}.
\end{equation}

We estimate $I_3$ using \eqref{eq:un_bar_bounds}.
Precisely,
\begin{equation}\label{eq:I3}
I_3
	\leq \frac{C}{\eps_n} \Leb{2}(L_n)
	\leq C \eps_n.
\end{equation}

Finally, using the fact that $\til{u}_n$ is bounded in $L^{\infty}(L_n)$, we have
\begin{equation}\label{eq:Wu}
\frac{1}{\eps_n} \iint_{L_n} W\bigl(\til{u}_n(x,y)\bigr) \, dx\,dy
	\leq \frac{C}{\eps_n} \Leb{2}(L_n) 
	\leq C \eps_n.
\end{equation}

Using \eqref{eq:energyLn}, \eqref{eq:I1I2I3}, \eqref{eq:wgrad2}, \eqref{eq:I2}, \eqref{eq:I3}, and \eqref{eq:Wu}, we obtain that
\begin{equation}\label{eq:estimate2}
\mathcal{F}_{\eps}(\til{u}_{n}, L_n, \emptyset)
	\leq \frac{7L}{32}  \left[ \int_{-R}^R \int_{R}^{\infty} 
		\left| \frac{h'(w)}{w-z}\right| ^2 \, dz \, dw
			+ \int_{-R}^R \int_{-\infty}^{-R} 
		\left| \frac{h'(w)}{w-z}\right| ^2 \, dz \, dw \right] + o(1).
\end{equation}

Combine \eqref{eq:estimate1} and \eqref{eq:estimate2} to obtain
\begin{multline}
\mathcal{F}_{\eps}(\til{u}_{n}, \triangle_{R\eps_n}, (-R\eps_n,R\eps_n) \times \{0\}) \\
	\leq L \left[ \frac{7}{16} \int_{-\infty}^{\infty} \int_{-\infty}^{\infty} 
		\left| \frac{h'(w)-h'(z)}{w-z}\right| ^2 \, dz \, dw
		+ \int_{-\infty}^{\infty} V(h(z))\,dz \right] + o(1)
	\leq \ol{c} + \eta + o(1). \label{eq:estimate3}
\end{multline}

On the other hand, from Corollary \ref{cor:interior_boundary} we know that
\begin{multline}
\mathcal{F}_{\eps}(\til{u}_{n}, D_r \bs T^+_{R\eps_n}, \emptyset)
	= \mathcal{F}_{\eps}(\ol{u}_{n}, D_r \bs T^+_{R\eps_n}, \emptyset) \\
	\leq (m+\eta) \Haus{1}(Su) + \sum_{z=a,b} \sum_{\xi=\alpha,\beta} (\sigma(z,\xi) + \eta) \Haus{1}(\{Tu=z,v=\xi\}) + o(1). \label{eq:estimate4}
\end{multline}

The result follows by combining \eqref{eq:estimate3} and \eqref{eq:estimate4}.

%%%%%%%%%%%%%%%%%%%%%%%%%%%%%%%%%%%%%%%%%%
%%%%%%%%%%%%%%%%%%%%%%%%%%%%%%%%%%%%%%%%%%
%%%%%%%%%%%%%%%%%%%%%%%%%%%%%%%%%%%%%%%%%%
%%%%%%%%%%%%%%%%%%%%%%%%%%%%%%%%%%%%%%%%%%
%%%%%%%%%%%%%%%%%%%%%%%%%%%%%%%%%%%%%%%%%%
%%%%%%%%%%%%%%%%%%%%%%%%%%%%%%%%%%%%%%%%%%

\paragraph{Step 2. } General $N$-dimensional problem.

In this case, we define $u_n(x) := \til{u}_n(x_{N-1},x_N)$ for $x=(x'',x_{N-1},x_N) \in D_r$.
By Fubini's theorem  and Step 1, we deduce that
\begin{align*}
\mathcal{F}_{\eps}(u_{n}, B_r^+, E_r)
	& = \int_{B_r^{N-2}} \mathcal{F}_{\eps_n}(\til{u}_n; D_r^{x''}, E_r^{x''}) \, dx''\\
	& \leq \int_{B_r^{N-2}}  \biggl[ (m+\eta) \Haus{1}(Su \cap (\{x''\} \times \R^2)) \\
	& \hspace{2cm}+ \sum_{z=a,b} \sum_{\xi=\alpha,\beta} (\sigma(z,\xi) + \eta) \Haus{1}(\{Tu=z,v=\xi\} \cap (\{x''\} \times \R^2))  \\
	& \hspace{4cm} + (\ol{c}+\eta)L\Haus{0}( Sv\cap (\{x''\} \times \R^2))\biggr] \, dx'' + o(1).
\end{align*}
Using Theorem \ref{prop:Bv_bdy}, we then deduce that
$$
\limsup_{n\to\infty} \mathcal{F}_{\eps}(u_{n}, B_r^+, E_r) 
	\leq (m+\eta) \Haus{N-1}(Su)
	+ \sum_{z=a,b} \sum_{\xi=\alpha,\beta} (\sigma(z,\xi) + \eta) \Haus{N-1}(\{Tu=z,v=\xi\})
	+ (\ol{c}+\eta)L\Haus{N-2}(Sv).
$$
This completes the proof.
\end{proof}

\begin{prop}\label{prop:recoveryC2}

Let $W:\R \to [0,\infty)$ satisfy $(H^W_1)-(H^W_2)$, let $V:\R \to [0,\infty)$ satisfy $(H^V_1)-(H^V_3)$ Let $\eps_n \to 0^+$ with $\eps_n \lambda_n^{\frac23} \to L\in(0,\infty)$, let $\eta>0$, and let $u \in BV(\Omega;\{a,b\})$ and $v \in BV(\partial\Omega;\{\alpha,\beta\})$, with $Su$ an $N-1$ manifold of class $C^2$ such that $\Haus{N-1}(\partial\Omega \cap \ol{Su}) = 0$ and $Sv$ an $N-2$ manifold of class $C^2$ .
Then there exists $\{u_{n}\} \subset  H^2(\Omega)$ such that $u_{n} \to u$ in $L^2(\Omega)$, $Tu_{n} \to v$ in $L^2(\partial\Omega)$, and
$$
\limsup_n \mathcal{F}_{\eps_n}(u_{n}) 
\leq (m + \eta) \Haus{N-1}(Su)
+ \sum_{z=a,b} \sum_{\xi=\alpha,\beta} (\sigma(z,\xi)+ \eta) \Haus{N-1}(\{Tu=z,v=\xi\}) + (\ol{c} + \eta) L \Haus{N-2}(Sv),
$$
where $m$, $\sigma$, and $\ol{c}$ are the constants defined in \eqref{eq:def_m}, \eqref{eq:def_sigma}, and \eqref{eq:def_over_c}, respectively.
\end{prop}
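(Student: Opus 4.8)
The plan is to reduce the general $C^{2}$ domain to the flat model of Proposition~\ref{prop:boundary_flat} by a localization-and-gluing argument, starting from the sequence $\bar u_n$ produced by Corollary~\ref{cor:interior_boundary}. Recall that $\bar u_n\to u$ in $L^2(\Omega)$, $T\bar u_n\to v$ in $L^2(\partial\Omega)$, that $\bar u_n$ obeys the bounds \eqref{eq:un_bar_bounds}, that it coincides with $u$ outside an $O(\eps_n)$-neighborhood of $Su\cup\partial\Omega$, and that its energy already realizes the first two groups of terms, $(m+\eta)\Haus{N-1}(Su)+\sum_{z,\xi}(\sigma(z,\xi)+\eta)\Haus{N-1}(\{Tu=z,v=\xi\})+o(1)$; only the line-tension term along $Sv$ is missing. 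I would therefore keep $u_n=\bar u_n$ away from $Sv$ and modify it solely inside a shrinking tube around $\pi^{-1}(Sv)$, where $\pi$ is the $C^{2}$ nearest-point projection onto $\partial\Omega$, which is defined on a fixed tubular neighborhood $\Omega_{\delta_0}$ because $\partial\Omega$ is $C^{2}$ and $Sv\subset\partial\Omega$ is a compact $C^{2}$ $(N-2)$-manifold.

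First I would cover $\overline{Sv}$ by finitely many balls $B^1,\dots,B^J$ centered on $\partial\Omega$ to which Proposition~\ref{prop:flatbdy2} associates bi-Lipschitz homeomorphisms $\psi^j:\overline{D_{r_j}}\to\overline{\Omega\cap B^j}$ of arbitrarily small isometry defect; composing $\psi^j$ with a further $C^{2}$ diffeomorphism of $D_{r_j}$ that preserves $\{x_N>0\}$ and $E_{r_j}$ and straightens $(\psi^j)^{-1}(Sv)$ to $\{x_{N-1}=x_N=0\}$ (an $O(r_j)$-perturbation of the identity in $C^2$, since $Sv$ is $C^2$) I obtain charts $\Phi^j$ with $\delta(\Phi^j)\leq\eta$ after shrinking the $r_j$. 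Running the construction of Proposition~\ref{prop:boundary_flat} in each $D_{r_j}$, with the interior ingredient there replaced by the pulled-back background $\bar u_n\circ\Phi^j$, yields sequences $\tilde u_n^j$ with $\tilde u_n^j\to u\circ\Phi^j$ in $L^2$, $T\tilde u_n^j\to v\circ\Phi^j$ in $L^2(E_{r_j})$, with $\|\grad^k\tilde u_n^j\|_\infty\leq C\eps_n^{-k}$, which differ from $\bar u_n\circ\Phi^j$ only on an $O(\eps_n)$-neighborhood of $(\Phi^j)^{-1}(Sv)$ (a set of $\Leb{N}$-measure $O(\eps_n^2)$), and whose energy is bounded by $(\bar c+\eta)L\,\Haus{N-2}(Sv\cap B^j)$ plus the $m$- and $\sigma$-contributions inside $B^j$ plus $o(1)$. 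Choosing a partition of unity $\{\theta_j\}_{j=1}^J$ on $\partial\Omega$ subordinate to $\{B^j\}$ with $\sum_j\theta_j\equiv1$ on $\overline{Sv}$, I set
\[
u_n:=\bar u_n+\sum_{j=1}^J(\theta_j\circ\pi)\bigl(\tilde u_n^j\circ(\Phi^j)^{-1}-\bar u_n\bigr),
\]
which makes sense because each summand is supported in $\Omega_{\delta_0}\cap B^j$ and vanishes outside the $O(\eps_n)$-tube about $\pi^{-1}(Sv)$; then $u_n\in H^2(\Omega)$, $u_n\to u$ in $L^2(\Omega)$ and $Tu_n\to v$ in $L^2(\partial\Omega)$, since the whole modification lives on a set of measure $O(\eps_n^2)$ and is bounded in $L^\infty$.

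For the energy I would split $\Omega$ into the set where $u_n=\bar u_n$, on which Corollary~\ref{cor:interior_boundary} supplies the $m$- and $\sigma$-terms with an $o(1)$ error, and the $O(\eps_n)$-tube $N_n$ around $\pi^{-1}(Sv)$. On $N_n$, expanding $\grad^2$, using $\sum_j\theta_j\equiv1$ and the convexity bound $\bigl|\sum_j\theta_j\xi_j\bigr|^2\leq\sum_j\theta_j|\xi_j|^2$ on the leading Hessian term, Young's inequality on the lower-order terms involving $\grad(\theta_j\circ\pi)$ and $\grad^2(\theta_j\circ\pi)$ (bounded, hence negligible against the $C\eps_n^{-k}$ bounds over a set of measure $O(\eps_n^2)$), and Proposition~\ref{prop:flatbdy} to transfer each localized energy from $\Omega\cap B^j$ back to $D_{r_j}$, the factor $(1-\delta(\Phi^j))^{N+4}$ costs only $1+O(\eta)$ while the correction term $\delta(\Phi^j)(1-\delta(\Phi^j))^2\eps_n^3\!\int(|D^2\tilde u_n^j||D\tilde u_n^j|+\delta(\Phi^j)|D\tilde u_n^j|^2)$ is $O(\eta)\cdot o(1)$ thanks to the derivative bounds and the $O(\eps_n^2)$-smallness of the active set. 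A fiberwise (Fubini) decomposition over $Sv$, together with $\sum_j\theta_j\equiv1$ there, then makes the line-tension contributions add up \emph{without multiplicity} to $(\bar c+O(\eta))L\,\Haus{N-2}(Sv)+o(1)$. Sending $\eps_n\to0^+$ and renaming $O(\eta)$ as $\eta$ (legitimate since $\eta>0$ is arbitrary) gives the stated estimate. The main obstacle is exactly the control of the second-order energy in the gluing: on one hand one must not count the $\bar c$-transition more than once where the charts overlap, which is the role of the Jensen/partition-of-unity device above; on the other hand, across the annular region $\triangle_{R\eps_n}\setminus\triangle_{R\rho_n}$ one must glue the lifting $\bar w$ of Proposition~\ref{prop:lifting}, which lies only in $H^2$ so that no pointwise Hessian bound is available, to the smooth background $\bar u_n$---this is precisely Substep~1b of Proposition~\ref{prop:boundary_flat} and must be re-run in the present curved setting, with the curvature of $\partial\Omega$ and of $\pi^{-1}(Sv)$ entering only through uniformly bounded Jacobian and Hessian factors that converge to $1$ and $0$ on the shrinking tube.
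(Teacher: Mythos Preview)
Your approach is essentially the same as the paper's: localize near $Sv$, straighten both $\partial\Omega$ and $Sv$ via a $C^2$ chart with small isometry defect, invoke Proposition~\ref{prop:boundary_flat} in the flat picture, and transfer back using Proposition~\ref{prop:flatbdy}. The paper handles this more briskly---it works at a single point $x_0\in Sv$, builds the composite chart $\Phi=\Psi_2\circ\Psi_1$, applies Proposition~\ref{prop:boundary_flat}, and controls the error term in Proposition~\ref{prop:flatbdy} via H\"older and Gagliardo--Nirenberg (Proposition~\ref{prop:gagliardo-nirenberg}) rather than by pointwise bounds on an $O(\eps_n^2)$ set; the global statement is then asserted without an explicit partition-of-unity argument. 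Your version is more careful on two points the paper leaves implicit: the finite cover of $Sv$ and the Jensen/partition-of-unity device to prevent double-counting of the $\bar c$-contribution on chart overlaps. One small correction: your claim that the isometry-defect error is ``$O(\eta)\cdot o(1)$ thanks to the derivative bounds and the $O(\eps_n^2)$-smallness of the active set'' is not quite right for the lifting $\bar w$, since (as you note yourself) $\bar w$ has no $L^\infty$ Hessian bound; you should instead bound $\eps_n^3\!\int|D^2\tilde u_n^j||D\tilde u_n^j|$ by H\"older and the $H^2$ energy estimate plus Gagliardo--Nirenberg, exactly as the paper does, which gives $O(\eps_n)$ directly rather than via the measure of the support.
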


\begin{proof}

From Corollary \ref{cor:interior_boundary}, it suffices to prove that
\begin{multline}\label{eq:claim}
\limsup_n \mathcal{F}_{\eps_n}(u_{n};\Omega \cap B(x_0;r), \partial\Omega \cap B(x_0;r))
\leq (m + \eta) \Haus{N-1}(Su \cap B(x_0;r)) \\
+ \sum_{z=a,b} \sum_{\xi=\alpha,\beta} (\sigma(z,\xi)+ \eta) \Haus{N-1}(\{Tu=z,v=\xi\}\cap B(x_0;r)) + (\ol{c} + \eta) L \Haus{N-2}(Sv\cap B(x_0;r)),
\end{multline}
for each point $x_0 \in Sv$ and for some neighborhood $A$ of $x_0$.

\begin{figure}[!htbp]
\begin{center}
\includegraphics*[width=300pt]{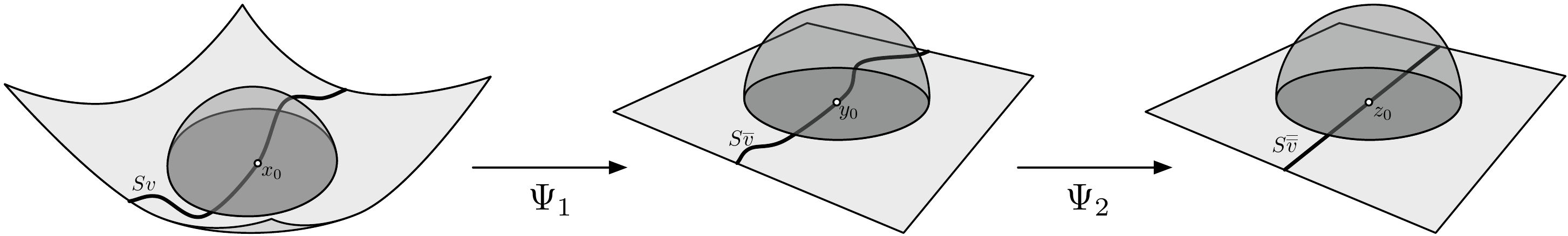}
\caption{Scheme for the flattening of the boundary and of $Sv$.}
\label{fig:change_variables}
\end{center}
\end{figure}

First we fix a point $x_0 \in Sv$. 
Since the domain is of class $C^2$, we can find $\ol{r}>0$ such that, up to a rotation,
\begin{equation}\label{eq:bdy_graph}
\partial \Omega \cap B(x_0;\ol{r}) = \bigl\{ x \in \R^N: x_N = \gamma_1(x') \bigr\},
\end{equation}
for some function $\gamma_1 \in C^2(\R^{N-1})$. 
So we define $\Psi_1(x) := \bigl(x', x_N - \gamma_1(x') \bigr)$, $\ol{u}(y) := (u \circ \Psi_1^{-1}) (y)$, and $\ol{v}(y) := (v \circ \Psi_1^{-1}) (y)$.

Moreover, $S\ol{v}$ is also of class $C^2$, so we can find $0<r<\ol{r}$ such that, up to a ``horizontal rotation'', i.e., $R = \left(\begin{tabular}{c|c} $R'$ & $0$ \\ \hline $0$ & $1$ \end{tabular}\right)$, with $R' \in SO(N-1)$, we have
$$
S\ol{v} \cap B(x_0;r) = \bigl\{ y \in \R^{N-1}\times\{0\}: y_{N-1} = \gamma_2(y'') \bigr\},
$$
for some function $\gamma_2 \in C^2(\R^{N-2})$.
Let $\Psi_2(y) := \bigl(y'', y_{N-1} - \gamma_2(y''), y_N \bigr)$, $\ol{\ol{u}}(z) := \bigl(\ol{u} \circ \Psi_2^{-1}\bigr) (z) = \left( u \circ ( \Psi_2 \circ \Psi_1)^{-1} \right)(z)$, $\ol{\ol{v}}(z) := \left(v \circ ( \Psi_2 \circ \Psi_1)^{-1}\right)(z)$.

Let $\Phi := \Psi_2 \circ \Psi_1:\R^N \to \R^N$, which is a bi-Lipschitz homeomorphism. Moreover, its isometry defect $\delta_r$ vanishes as $r \to 0$ due to the regularity of both $\partial\Omega$ and $Sv$.

Let $z_0 := \Phi(x_0) \in S\ol{\ol{z_0}}$. Note that $D_r:=\Phi\bigl(\Omega \cap B(x_0;r)\bigr)$ is a neighborhood of $z_0$, and set $E_r := \Phi\bigl(\partial\Omega \cap B(x_0;r)\bigr)$. 
Let $\{u_n\} \subset H^2\left(D_r\right)$ be defined as in Proposition \ref{prop:boundary_flat} with $\ol{\ol{u}}$ and $\ol{\ol{v}}$.
Then from Proposition \ref{prop:flatbdy}, we have that
\begin{multline*}
\mathcal{F}_{\eps_n}\left(u_n \circ \Phi; \Omega \cap B(x_0;r),  \partial\Omega \cap B(x_0;r)\right)
	\leq (1-\delta_r)^{-(N+4)} \mathcal{F}_{\eps_n}\left(u_n; D_r, E_r)\right) \\
	+ \frac{\delta_r}{(1-\delta_r)^{N+2}} \eps_n^3 \int_{D_r} \left(|\grad^2 u_n(z)| \, |\grad u_n(z)| + \delta_r |\grad u_n(z)|^2 \right) \, dz.
\end{multline*}

On the other hand, by H\"{o}lder and Propositions \ref{prop:gagliardo-nirenberg} and \ref{prop:boundary_flat}, we have that 
$$
\eps_n^3 \int_{B^+(z_0;r)} \left(|\grad^2 u_n(z)| \, |\grad u_n(z)| + \delta_r |\grad u_n(z)|^2 \right) \, dz
\leq C \eps_n,
$$
and that
\begin{multline*}
\mathcal{F}_{\eps_n}\left(u_n; D_r, E_r\right)
	\leq (m + \eta) \Haus{N-1}(S\ol{\ol{u}} \cap D_r) + \sum_{z=a,b} \sum_{\xi=\alpha,\beta} (\sigma(z,\xi)+ \eta) \Haus{N-1}(\{T\ol{\ol{u}}=z,\ol{\ol{v}}=\xi\} \cap E_r) \\
		+ (\ol{c} + \eta) L \Haus{N-2}(S\ol{\ol{v}} \cap E_r) + o(1).
\end{multline*}

Moreover,
\begin{align*}
\Haus{N-1}(S\ol{\ol{u}} \cap D_r)
	& = \Haus{N-1}(S(u \circ \Phi^{-1}) \cap D_r)
	 = \Haus{N-1}\left(\Phi(Su) \cap \Phi\bigl(\Omega \cap B(x_0;r)\bigr)\right)\\
	& \leq \Haus{N-1}\left(\Phi\bigl( Su \cap B(x_0;r)\bigr)\right)
	 \leq \text{Lip}(\Phi)^{N-1} \Haus{N-1}\left(Su \cap B(x_0;r)\bigr)\right) \\
	& = \Haus{N-1}\bigl(Su \cap B(x_0;r)\bigr),
\end{align*}
because $\Phi$ is an isomorphism.
Analogously, we deduce that 
\begin{align*}
& \Haus{N-1}(\{T\ol{\ol{u}}=z,\ol{\ol{v}}=\xi\} \cap D_r)
	\leq \Haus{N-1}\left(\{Tu=z,v=\xi\} \cap B(x_0;r)\right), \\
& \Haus{N-2}(S\ol{\ol{v}} \cap D_r)
	\leq \Haus{N-2}\left(Sv \cap B(x_0;R\eps_n)\right).
\end{align*}

Hence
\begin{multline*}
\limsup_n \mathcal{F}_{\eps_n}\left(u_n \circ \Phi; \Omega \cap B^+(x_0;r)\bigr);  \partial \Omega \cap B(x_0;r)\right)
	\leq (1-\delta_r)^{-(N+4)} \biggl[ (m + \eta) \Haus{N-1}\left(Su \cap B(x_0;r)\right) \\
	+ \sum_{z=a,b} \sum_{\xi=\alpha,\beta} (\sigma(z,\xi)+ \eta) \Haus{N-1}\left(\{Tu=z,v=\xi\} \cap B(x_0;r)\right)
	 + (\ol{c} + \eta) L \Haus{N-2}\left(Sv \cap B(x_0;r)\right) \biggr]
\end{multline*}

This proves the result.
\end{proof}

%%%%%%%%%%%%%%%%%%%%%%%%%%%%%%%%%%%%
%
%		Upper Bound	-	Generalization of Su
%
%%%%%%%%%%%%%%%%%%%%%%%%%%%%%%%%%%%%

\begin{proof}[Proof of Theorem \ref{thm:critical}(ii)]

Since $u \in BV(\Omega; \{a, b\})$, we may write $u$ as
$$
u(x) =
\begin{cases}
	a	& \text{ if } x \in E_{a}, \\
	b	& \text{ if } x \in \Omega \bs E_{a},
\end{cases}
$$
where $E_{a}$ is a set of finite perimeter in $\Omega$.
Similarly, since $v \in BV(\partial\Omega;\{\alpha,\beta\})$, we may write $v$ as
$$
v(x) =
\begin{cases}
	\alpha	& \text{ if } x \in F_{\alpha}, \\
	\beta	& \text{ if } x \in \partial\Omega \bs F_{\alpha},
\end{cases}
$$
where $F_{\alpha}$ is a set of finite perimeter in $\partial\Omega$.
Apply Proposition \ref{prop:giusti} to the set $E_a$ to obtain a sequence of sets $E_k$ of class $C^2$ such that $\Leb{N}(E_a \triangle E_k) \to 0$ and $\Haus{N-1}(\partial E_a \cap \partial E_k) \to 0$.
By slightly modifying each $E_k$, we may assume that $\Haus{N-1}(\partial \Omega \cap \partial E_k) = 0$.
Similarly, by Proposition \ref{prop:giusti:finiteper} applied to the set $F_{\alpha}$, we may find a sequence of sets $F_k \subset \partial\Omega$ of class $C^2$ such that $\Haus{N-1}(F_{\alpha} \triangle F_k) \to 0$ and $\Haus{N-2}(\partial_{\partial\Omega} F_{\alpha} \triangle \partial_{\partial\Omega} F_k) \to 0$.
Define the sequences of functions
$$
u_{k}(x) :=
\begin{cases}
	a	& \text{ if } x \in \Omega \cap E_{k}, \\
	b	& \text{ if } x \in \Omega \bs E_{k},
\end{cases} \qquad 
 v_{k}(x) :=
\begin{cases}
	\alpha	& \text{ if } x \in \partial \Omega \cap F_{k}, \\
	\beta	& \text{ if } x \in \partial \Omega \bs F_{k}.
\end{cases}
$$

Apply Proposition \ref{prop:recoveryC2} to find $\{u_{k,n}\} \subset H^2(\Omega)$ such that $u_{k,n} \xto{n} u_k$ in $L^2(\Omega)$, $Tu_{k,n} \xto{n} v_k$ in $L^2(\partial\Omega)$, and 
$$
\limsup_{n}\mathcal{F}_{\eps_n}(u_{k,n}) 
	\leq \bigl(m+{\ts \frac{1}{k}}\bigr) \Per_{\Omega}(E_k)  
	+ \sum_{z =a,b} \sum_{\xi=\alpha,\beta} \bigl(\sigma(z,\xi)+{\ts \frac{1}{k}}\bigr) \Haus{N-1}\bigl( \{ Tu_k = z\} \cap \{v_k = \xi\} \bigr)
	+ \bigl(\ol{c} + {\ts \frac{1}{k}}\bigr) L \Per_{\partial\Omega}(F_{k}).
$$

Since $u_k \to u$ in $L^2(\Omega)$ and $v_k \to v$ in $L^2(\partial\Omega)$, we have 
\begin{gather*}
\lim_{k} \lim_{n} \|u_{k,n} - u\|_{L^{2}(\Omega)} = 0,
\qquad
\lim_{k} \lim_{n} \|Tu_{k,n} - v\|_{L^{2}(\partial\Omega)} = 0, \\
\limsup_{k} \limsup_{n}\mathcal{F}_{\eps_n}(u_{k,n}) 
	\leq m \Per_{\Omega}(E_a)
	+ \sum_{z =a,b} \sum_{\xi=\alpha,\beta} \sigma(z,\xi) \Haus{N-1}\bigl( \{ Tu = z\} \cap \{v = \xi\} \bigr)
	+ \ol{c} L \Per_{\partial\Omega}(F_{\alpha}).
\end{gather*}
Diagonalize to get a subsequence $k_n \to \infty $ and 
obtain $u_n:= u_{k_n,n} \to u$ in $L^2(\Omega)$, $Tu_n \to v$ in $L^2(\partial\Omega)$, and
$$
\limsup_{n} \mathcal{F}_{\eps_n}(u_{n}) 
	\leq m \Per_{\Omega}(E_{a}) 
	+ \sum_{z =a,b} \sum_{\xi=\alpha,\beta} \sigma(z,\xi) \Haus{N-1}\bigl( \{ Tu = z\} \cap \{v = \xi\} \bigr) 
	+ \ol{c} L \Per_{\partial\Omega}(F_{\alpha}).
$$
This completes the proof.
\end{proof}

 %%%%%%%%%%%%%%%%%%%%%%%%%%%%%%%%
%%					Acknowledgments				%%
 %%%%%%%%%%%%%%%%%%%%%%%%%%%%%%%%

\ack

This research was partially funded by Funda\c{c}\~{a}o para a Ci\^{e}ncia e a Tecnologia under grant SFRH/BD/8582/2002, the Department of Mathematical Sciences of Carnegie Mellon University and its Center for Nonlinear Analysis (NSF Grants No. DMS-0405343 and DMS-0635983), Irene Fonseca (NSF Grant DMS-0401763) and Giovanni Leoni (NSF Grants No. DMS-0405423 and DMS-0708039).

The author thanks Vincent Millot and Dejan Slep\v{c}ev for the fruitful conversations, Luc Tartar for useful conversations on Proposition \ref{prop:lifting}, and is indebted to Irene Fonseca and Giovanni Leoni for uncountable discussions and advice as the work progressed that largely influenced its course.

 %%%%%%%%%%%%%%%%%%%%%%%%%%%%%%%%
%%					References					%%
 %%%%%%%%%%%%%%%%%%%%%%%%%%%%%%%%

\bibliographystyle{amsplain}
\bibliography{refs}

\end{document}